\theoremstyle{plain}
\newtheorem{thm}{Theorem}[subsection]
\newtheorem{lem}[thm]{Lemma}
\newtheorem*{claim}{Claim}
\newtheorem{prop}[thm]{Proposition}
\newtheorem{cor}[thm]{Corollary}
\newtheorem*{aim}{Aim}
\newtheorem{dfn-lem}[thm]{Definition-Lemma}
\newtheorem{ex-thm}[thm]{Example}
\theoremstyle{definition}
\newtheorem{dfn}[thm]{Definition}
\newtheorem{ex}[thm]{Example}
\newtheorem{rmk}[thm]{Remark}
\theoremstyle{remark}
\newcommand{\Z}{\mathbb{Z}}
\newcommand{\Q}{\mathbb{Q}}
\newcommand{\R}{\mathbb{R}}
\newcommand{\C}{\mathbb{C}}
\newcommand{\bbD}{\mathbb{D}}
\newcommand{\bfS}{\mathbf{S}}
\newcommand{\bfT}{\mathbf{T}}
\newcommand{\bfx}{\mathbf{x}}
\newcommand{\bmi}{\bm{i}}
\newcommand{\mcH}{\mathcal{H}}
\newcommand{\mcN}{\mathcal{N}}
\newcommand{\mcO}{\mathcal{O}}
\newcommand{\mcX}{\mathcal{X}}
\newcommand{\msC}{\mathscr{C}}
\newcommand{\msF}{\mathscr{F}}
\newcommand{\msG}{\mathscr{G}}
\newcommand{\msI}{\mathscr{I}}
\newcommand{\mfa}{\mathfrak{a}}
\newcommand{\mfz}{\mathfrak{z}}
\DeclareMathOperator{\Hom}{Hom}
\DeclareMathOperator{\Map}{Map}
\DeclareMathOperator{\Stab}{Stab}
\DeclareMathOperator{\sgn}{sgn}
\DeclareMathOperator{\id}{id}
\DeclareMathOperator{\dd}{d}
\DeclareMathOperator{\supp}{supp}
\DeclareMathOperator{\Span}{Span}
\newcommand{\re}{\operatorname{Re}}
\newcommand{\pr}{\mathrm{pr}}
\newcommand{\ev}{\mathrm{ev}}
\newcommand{\diag}{\mathrm{diag}}
\newcommand{\isomto}{\stackrel{\sim}{\longrightarrow}}
\newcommand{\isomfrom}{\stackrel{\sim}{\longleftarrow}}
\newcommand{\bs}{\backslash}
\newcommand{\ra}{\rightarrow}
\newcommand{\Ra}{\Rightarrow}
\newcommand{\Lra}{\Leftrightarrow}
\newcommand{\hra}{\hookrightarrow}
\newcommand{\tp}[1]{{}^t\!#1} 
\newcommand{\setm}{\!-\!}
\newcommand{\abs}[1]{\lvert#1\rvert}
\newcommand{\tup}[2]{#1_1, \dots,  #1_{#2}}
\numberwithin{equation}{section}
\newcommand{\Yo}{Y^{\circ}}
\newcommand{\PPC}{\mathbb{P}^{g-1}(\mathbb{C})}
\newcommand{\Rpos}{\mathbb{R}_{>0}}
\newcommand{\brk}[1]{\langle#1\rangle}
\newcommand{\XQ}{X_{\Q}}
\newcommand{\YQ}{\Q^g \setm \{0\}}
\newcommand{\Qw}{Q}
\DeclareMathOperator{\loc}{loc}
\newcommand{\shfC}{\C}
\begin{document}
\title{Shintani-Barnes cocycles and values of the zeta functions of algebraic number fields}
\author{Hohto Bekki}
\address{Department of Mathematics, Faculty of Science and Technology, Keio University, 3-14-1 Hiyoshi, Kohoku-ku, Yokohama, Kanagawa, 223-8522, Japan}
\curraddr{}
\email{bekki@math.keio.ac.jp}
\thanks{}
\keywords{}
\dedicatory{}

\begin{abstract}
In this paper, we construct a new Eisenstein cocycle called the Shintani-Barnes cocycle which specializes in a uniform way to the values of the zeta functions of general number fields at positive integers.
Our basic strategy is to generalize the construction of the Eisenstein cocycle presented in the work of Vlasenko and Zagier by using some recent techniques developed by Bannai, Hagihara, Yamada, and Yamamoto in their study of the polylogarithm for totally real fields. 
We also closely follow the work of Charollois, Dasgupta, and Greenberg. In fact, one of the key ingredients in this paper which enables us to deal with general number fields is the introduction of a new technique called the ``exponential perturbation'' which is a slight modification of the $Q$-perturbation studied in their work. 
\end{abstract}

\maketitle

\tableofcontents

\section{Introduction}\label{sect:intro}

It is classically known that the Hecke integral formula~\cite{H17:Ube} expresses the zeta function of a number field of degree $g$ as an integral of the Eisenstein series over a certain torus orbit on the locally symmetric space for $SL_g(\Z)$. 

In some special cases, typically in the case where the number field is totally real, it is known that such an integral formula has a cohomological interpretation, and this often enables us to access the algebraic properties of the special values of the zeta function.
More precisely, one can construct a certain $(g-1)$-cocycle on $SL_g(\Z)$ which can be thought as an algebraic counterpart of the Eisenstein series, and a $(g-1)$-cycle on $SL_g(\Z)$ which can be thought as an algebraic counterpart of the torus orbit, so that their paring gives the value of the zeta function of a given totally real number field.
%
%
%
Such a cocycle is often called the Eisenstein cocycle. Actually many different kinds of Eisenstein cocycles have been constructed and studied by Harder~\cite{MR892187}, Sczech~\cite{S93:Eis}, Nori~\cite{N95:Som}, Solomon~\cite{MR1631700}, Hill~\cite{H07:Shi}, Vlasenko-Zagier~\cite{VZ13:Hig}, Charollois-Dasgupta-Greenberg~\cite{CDG15:Int}, Beilinson-Kings-Levin~\cite{BKL18:Top}, Bergeron-Charollois-Garcia~\cite{BCG20:Tra}, Fl\'{o}rez-Karabulut-Wong~\cite{MR3991431}, Lim-Park~\cite{lim2019milnor}, Bannai-Hagihara-Yamada-Yamamoto~\cite{BHYY19:Can}, Sharifi-Venkatesh~\cite{sharifi2020eisenstein}, and so on, and various applications have been obtained.

However, the number fields treated in these previous works are basically limited to totally real fields or totally imaginary fields. 
The aim of this paper is to propose a new formulation in which we can treat all number fields in a uniform way.

\subsection{Shintani cocycles}

Among these many kinds of construction of the Eisenstein cocycle, a method we use in this paper is called Shintani's method, and the Eisenstein cocycles constructed by Shintani's method are often called the Shintani cocycles
\footnote{The use of these terminologies seems to depend on the authors. We adopt this convention in this paper.}
, cf. \cite{MR1631700}, \cite{H07:Shi}, \cite{CDG15:Int}, \cite{lim2019milnor}, \cite{BHYY19:Can}.
Roughly speaking, a Shintani cocycle is constructed as a family of objects (e.g., functions, formal power series, distributions, etc.) indexed by rational cones in $\R^g$. 
Therefore, what we do in this paper are basically the following:
\begin{enumerate}
\item Define a certain object ``$\psi_{C}$'' for each rational cone $C \subset \R^g$.
\item Prove that the family $(\psi_C)_C$ satisfies the ``cocycle relation''. 
\item Prove that the cohomology class defined by $(\psi_C)_C$ specializes to the special values of the zeta function of a given number field. 
\end{enumerate}

Let $g, k\geq 1$ be integers. 
In this paper, we say that a matrix $Q \in GL_g(\Q)$ is \textit{irreducible} if its characteristic polynomial is irreducible over $\Q$.  
In Section \ref{sect:shi}, for a rational open cone
\[
C_I =\sum_{i=1}^g \Rpos \alpha_i \subset \R^g
\]
generated by $I =(\alpha_1, \dots, \alpha_g) \in (\Q^g \setm \{0\})^g$, and an irreducible matrix $Q\in GL_g(\Q)$, we consider a holomorphic function 
\begin{align*}
\psi_{kg, I}^Q(y) := \sgn (I) \sum_{\substack{x \in C_{I}^Q \cap \Z^g \setm \{0\}}} \frac{1}{\brk{x,y}^{g+kg}} 
\end{align*}
on
\[
\big\{ y \in \C^g \ |\  \exists \lambda \in \C^{\times},  1 \leq \forall i \leq g, \  \re (\brk{\alpha_i, \lambda y})>0  \big\} \subset \C^g \setm \{0\}, 
\]
where
\begin{itemize}
\item $\sgn(I)=\sgn (\det (\alpha_1, \dots, \alpha_g)) \in \{0, \pm 1\}$, 
\item $\brk{\ , \ }$ denotes the scalar product, 
\item $C_I^Q$ is the ``exponential $Q$-closure'' of the cone $C_I$ (Section \ref{subsect:pert}). 
\end{itemize}
%
Then we prove that the collection $(\psi_{{kg}, I}^Q)_{I,Q}$ defines a class 
\[
[\Psi_{kg}] \in H^{g-1} \big(\Yo, SL_g(\Z), \msF_{kg}^{\Xi} \big)
\]
of the equivariant cohomology of a certain $SL_g(\Z)$-equivariant sheaf $\msF_{kg}^{\Xi}$ on $\Yo:=\C^g \setm \bmi \R^g$, cf. Section \ref{sect:sp and sh} and Theorem \ref{thm:SB class}. 
We call our Shintani cocycle the Shintani-Barnes cocycle because the function $\psi_{kg,I}^Q(y)$ is essentially the Barnes zeta function. 


Then for a number field $F/\Q$ of degree $g$; a fractional ideal $\mfa \subset F$; and a continuous map $\chi: F_{\R}^{\times} \ra \Z$, we construct a specialization map \eqref{eqn:spec}: 
\[
H^{g-1}(\Yo, SL_g(\Z), \msF_{kg}^{\Xi}) \ra H_{sing}^{g-1}(F_{\R}^{\times}/\mcO_{F,+}^{\times}, \C) \ra \C, 
\]
using a certain integral operator. 
The image of the Shintani-Barnes cocycle $[\Psi_{kg}]$ under this specialization map can be computed using the classical Hurwitz formula (Proposition \ref{prop:Hur formula}, Example \ref{ex:Hur formula}) and a version of the Shintani cone decomposition (Proposition \ref{prop:sfd}). As a result, we prove that the class $[\Psi_{kg}]$ maps to the value of the partial zeta function: 
\[
\pm \frac{\sqrt{D_{\mcO_F}}N\mfa (k!)^g}{(g+gk-1)!} 
\zeta_{\mcO_F}(\bm{\varepsilon}^{k+1}\chi, \mfa^{-1}, k+1)
\]
under the specialization map, where $\bm{\varepsilon}:F_{\R}^{\times} \ra \{\pm 1\}$ is the sign character, cf. Theorem \ref{thm:main thm}. 

The idea of using the Barnes zeta functions is based on the work of Vlasenko and Zagier~\cite{VZ13:Hig} dealing with the values of the zeta functions of real quadratic fields at positive integers, and the idea of constructing the Shintani cocycle as a \v{C}ech cocycle of an equivariant sheaf is based on the work of Bannai, Hagihara, Yamada, and Yamamoto~\cite{BHYY19:Can} in which the higher dimensional polylogarithm associated to a totally real field is studied. 
Moreover, the concept of the exponential $Q$-closure $C_I^Q$ of a cone $C_I$ is a slight modification of the $Q$-closure studied by Charollois, Dasgupta, Greenberg~\cite{CDG15:Int}, and Yamamoto~\cite{Y10:On-}. We use irreducible matrices $Q\in GL_g(\Q)$ instead of ``irrational vectors'' used in \cite{CDG15:Int}. 
These three ideas are the main ingredients in this paper which enable us to deal with general number fields.

\subsection{Structure of the paper}
%
The first four sections (Section \ref{sect:prelim} to Section \ref{sect:cone and pert}) are devoted to prepare some tools that are necessary for the definition of the Shintani-Barnes cocycle. 
More precisely, in Section \ref{sect:prelim} we review some elementary facts about irreducible matrices of $GL_g(\Q)$ and their relationship to number fields. 
In Section \ref{sect:sp and sh} we introduce the sheaves $\msF_d$ and $\msF_{d}^{\Xi}$ on $\Yo= \C^g \setm \bmi \R^g$, and examine the basic properties of these sheaves. 
Then in Section \ref{sect:equiv coh} we compute the equivariant cohomology groups of these sheaves using the equivariant \v{C}ech complex. 
In Section \ref{sect:cone and pert} we introduce the notion of the exponential perturbation, and prove the cocycle relation satisfied by rational cones. 
Based on these preparations, in Section \ref{sect:shi} we give the definition of the Shintani-Barnes cocycle. 

The remaining sections (Section \ref{sect:int} and Section \ref{sect:spec}) are devoted to show that we can obtain the special values of the zeta functions as a specialization of the Shintani-Barnes cocycle. 
In Section \ref{sect:int} we first introduce a certain integral operator, and construct the first half of the specialization map. 
In Section \ref{sect:spec} we finish the construction of the specialization map using a version of the Shintani cone decomposition, and finally prove the main result Theorem \ref{thm:main thm}.



\section{Preliminaries}\label{sect:prelim}

\noindent
\textbf{Convention.}
\begin{itemize}
\item Throughout the paper we fix an integer $g \geq 1$. 
\item For a ring $R$,  a vector $x \in R^g$ is always regarded as a column vector, and the matrix algebra $M_g(R)$ acts on $R^g$ by the matrix multiplication from the left. 
\item For $x_1, \dots, x_g \in R^g$, we often regard $(x_1, \dots, x_g)$ as a $g\times g$-matrix whose columns are $x_1, \dots, x_g$. 
\item For $\gamma \in M_g(R)$, its transpose is denoted by $\tp{\gamma} \in M_g(R)$. 
\item The bracket
\[
\brk{\ ,\ }: R^g \times R^g \ra R; (x,y) \mapsto \brk{x,y}=\tp x y
\]
denotes the scalar product. 
\item If $A$ and $B$ are sets, then $A \setm B$ denotes the relative complement of $B$ in $A$. 
\item Let $\{S_{\lambda}\}_{\lambda \in \Lambda}$ be a family of sets. For $s \in \prod_{\lambda \in \Lambda} S_{\lambda}$, the $\lambda$-component of $s$ is often denoted by $s_{\lambda} \in S_{\lambda}$. 
\end{itemize}

\subsection{Irreducible matrices}\label{subsect:irr}
In this subsection we review some basic facts about irreducible matrices of $GL_g(\Q)$. We say that a matrix $Q \in  GL_g(\Q)$ is \textit{irreducible} \textit{over $\Q$} if the characteristic polynomial of $Q$ is an irreducible polynomial over $\Q$. We often drop ``over $\Q$'' if it is obvious from the context.  Let 
\begin{align*}
\Xi :=\Big\{Q \in GL_g(\Q) \ \Big| \  \text{irreducible over $\Q$}\Big\}
\end{align*}
denote the set of irreducible matrices of $GL_g(\Q)$. 
The group $GL_g(\Q)$ acts on $\Xi$ by the conjugate action. For $Q \in \Xi$ and $\gamma \in GL_g(\Q)$, let 
\[
[\gamma](Q):=\gamma Q \gamma^{-1} \in \Xi
\]
denote this conjugate action. 

Now, for $Q \in \Xi$, let 
\[
\Gamma_Q:=\Stab_{SL_g(\Z)}(Q)=\left\{\gamma \in SL_g(\Z) \ \big|\  [\gamma](Q)=\gamma Q \gamma^{-1}=Q \right\}
\]
denote the subgroup of $SL_g(\Z)$ stabilizing $Q$. Moreover, let 
\begin{align*}
F_Q &:=\Q[Q] \subset M_g(\Q),\\
\mcO_Q &:=F_Q \cap M_g(\Z) \subset F_Q
\end{align*}
denote the subalgebras of $M_g(\Q)$ generated by $Q$ over $\Q$ and its ``$M_g(\Z)$-part'' respectively. 

\begin{lem}\label{lem:tor}
Let $Q \in \Xi$, and let $f_Q(X) \in \Q[X]$ be the characteristic polynomial of $Q$.
\begin{enumerate}
\item $Q$ has $g$ distinct eigenvalues in $\C$, and hence $Q$ is diagonalizable in $GL_g(\C)$. 
\item There are no non-zero proper $Q$-stable $\Q$-subspace of $\Q^g$. 
\item For any non-zero vector $x \in \Q^g \setm \{0\}$, the map
\begin{align*}
F_Q \isomto \Q^g; \ \gamma \mapsto \gamma x
\end{align*}
is an isomorphism of $\Q$-vector spaces.
\item The $\Q$-algebra $F_Q$ is a field of degree $g$ over $\Q$, and we have
\[
N_{F_Q/\Q}(\gamma) = \det \gamma 
\]
for $\gamma \in F_Q$, where $N_{F_Q/\Q}$ is the norm of the field extension $F_Q/\Q$. 
\item We have
\[
F_Q=\{\gamma \in M_g(\Q) \mid \gamma Q=Q \gamma  \}.
\]
\item We have
\[
\Gamma_Q =\{ \gamma \in \mcO_Q \mid N_{F_Q/\Q}(\gamma)=1 \} \subset \mcO_Q^{\times}, 
\]
i.e., $\Gamma_Q$ is the norm one unit group of $\mcO_Q$. 
\item The action of $\Gamma_Q$ on $\Q^g \setm \{0\}$ is free, i.e., for any $x \in \Q^g \setm \{0\}$ and $\gamma \in \Gamma_Q$, we have $\gamma x=x$ if and only if $\gamma =1$. 
%
\end{enumerate}
\end{lem}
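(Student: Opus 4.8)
The plan is to prove the seven items essentially in the listed order, since (3)--(7) will all be deduced from (1) and (2). I would start with (1) and (2). Because $f_Q \in \Q[X]$ is irreducible and $\Q$ has characteristic $0$, the polynomial $f_Q$ is separable and so has $g$ distinct roots in $\C$; the minimal polynomial of $Q$ is a nonconstant divisor of $f_Q$, hence equals $f_Q$, so $Q$ has $g$ distinct eigenvalues and is diagonalizable over $\C$. The same observation shows $F_Q = \Q[Q] \cong \Q[X]/(f_Q)$ is a field of degree $g$ over $\Q$, which already gives the first half of (4); I would record $\dim_\Q F_Q = g$ for later use. For (2): if $W \subseteq \Q^g$ is a nonzero $Q$-stable subspace, then choosing a basis of $\Q^g$ adapted to $W$ exhibits the characteristic polynomial of $Q|_W$ as a monic factor of $f_Q$ of positive degree, which must therefore equal $f_Q$; hence $\dim_\Q W = g$ and $W = \Q^g$.

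Next, for (3), I would fix $x \in \Q^g \setm \{0\}$ and consider the $\Q$-linear map $F_Q \to \Q^g$, $\gamma \mapsto \gamma x$: its image is stable under left multiplication by $Q$ (as $Q\gamma \in F_Q$) and contains $x \neq 0$, so by (2) it is all of $\Q^g$, and since $\dim_\Q F_Q = g$ the map is an isomorphism. For the norm identity in (4), taking $x$ to be a standard basis vector in (3) identifies the $F_Q$-module $\Q^g$ with the regular representation of $F_Q$ on itself; hence for $\gamma \in F_Q$ the matrix $\gamma$ acting on $\Q^g$ is conjugate to multiplication-by-$\gamma$ on $F_Q$, so $\det\gamma = N_{F_Q/\Q}(\gamma)$ by definition of the field norm. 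For (5), $F_Q$ clearly centralizes $Q$; conversely, given $\gamma$ with $\gamma Q = Q\gamma$, I would pick $x \neq 0$ and use (3) to find $\delta \in F_Q$ with $\delta x = \gamma x$, so that $\gamma - \delta$ commutes with $Q$ and annihilates $x$, whence $\ker(\gamma - \delta)$ is a nonzero $Q$-stable subspace, equal to $\Q^g$ by (2), i.e. $\gamma = \delta \in F_Q$. (Alternatively, (5) follows by base-changing the centralizer of $Q$ to $\C$ and using that $Q$ is regular semisimple by (1), but the argument via (2) and (3) is cleaner.)

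Finally, (6) and (7) are formal consequences. For $\gamma \in SL_g(\Z)$ one has $\gamma Q\gamma^{-1} = Q \iff \gamma Q = Q\gamma \iff \gamma \in F_Q$ by (5), so $\Gamma_Q = SL_g(\Z) \cap F_Q = \{\gamma \in \mcO_Q \mid \det\gamma = 1\} = \{\gamma \in \mcO_Q \mid N_{F_Q/\Q}(\gamma) = 1\}$ using (4); and if $\gamma \in \mcO_Q$ has $N_{F_Q/\Q}(\gamma) = \det\gamma = 1$, then its inverse in the field $F_Q$ equals the adjugate matrix $\mathrm{adj}(\gamma) \in M_g(\Z)$, hence lies in $F_Q \cap M_g(\Z) = \mcO_Q$, so $\Gamma_Q \subseteq \mcO_Q^{\times}$. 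For (7): if $\gamma \in \Gamma_Q \subseteq F_Q$ fixes some $x \neq 0$, then $\gamma - 1 \in F_Q$ annihilates $x$, and since $F_Q$ is a field this forces $\gamma = 1$. The only step needing genuine care is the norm--determinant identity in (4) — specifically the clean identification of $(\Q^g, Q)$ with the regular representation of $F_Q$ coming out of (3); everything else is routine linear algebra over $\Q$.
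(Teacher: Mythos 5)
Your proof is correct and follows the paper's overall strategy for items (1)--(4), (6), and (7). The one place you take a genuinely different route is (5): the paper shows the centralizer of $Q$ equals $F_Q$ by base-changing to $\C$, using that $Q$ is simultaneously diagonalizable by (1) so the complexified centralizer sits inside the $g$-dimensional space of diagonal matrices, and then comparing $\Q$-dimensions. You instead argue over $\Q$ directly: given $\gamma$ commuting with $Q$, pick $x \neq 0$, use (3) to find $\delta \in F_Q$ with $\delta x = \gamma x$, and observe that $\ker(\gamma-\delta)$ is a nonzero $Q$-stable subspace, hence all of $\Q^g$ by (2). Both arguments work; yours is slightly more self-contained and avoids complexification (relying only on (2) and (3) rather than (1)), while the paper's dimension count via diagonal matrices over $\C$ is a standard regular-semisimple centralizer argument, which you mention as an alternative. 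You also spell out two details the paper leaves implicit: the identification in (4) of $(\Q^g, Q)$ with the regular representation of $F_Q$ (justifying $\det\gamma = N_{F_Q/\Q}(\gamma)$), and the adjugate argument in (6) showing $\Gamma_Q \subseteq \mcO_Q^\times$; both are correct and worth including.
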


\begin{proof}
(1) is obvious since $f_Q(X)$ is an irreducible polynomial over $\Q$. 
(2) is also clear because if $V \subset \Q^g$ is a $Q$-stable $\Q$-subspace, then the characteristic polynomial of $Q|_{V}$ divides $f_Q(X)$. 

(3),(4) First, since $x\neq 0$, the image of the map 
\[
F_Q \ra \Q^g; \gamma \mapsto \gamma x
\]
is a non-zero $Q$-stable $\Q$-subspace. Hence by (2), this map is surjective. 
Now, again since $f_Q(X)$ is an irreducible polynomial over $\Q$, we see that $F_Q \simeq \Q[X]/(f_Q(X))$ is a field of degree $g$ over $\Q$. Therefore, by comparing the dimension, we find that the above map is an isomorphism. 
The identity $N_{F_Q/\Q}(\gamma)=\det \gamma$ is nothing but the definition of the norm. 

(5) Let $F_Q'$ denote the right hand side. The inclusion $F_Q \subset F_Q'$ is obvious. We compare the dimension. First we have
\[
F_Q' \otimes_{\Q} \C \subset F_Q'':=\{\gamma \in M_g(\C) \mid \gamma Q=Q \gamma \}. 
\]
Then by (1), the right hand side $F_Q''$ is simultaneously diagonalizable in $M_g(\C)$. Therefore, $F_Q''$ is isomorphic to the space of diagonal matrices. Thus we find 
\[
\dim_{\Q} F_Q' =\dim_{\C} F_Q' \otimes_{\Q}\C \leq \dim_{\C} F_Q''=g=\dim_{\Q}F_Q, 
\]
and hence we obtain $F_Q=F_Q'$. 

(6) follows directly from (4) and (5).  

(7) By (5) we see that $\Gamma_Q \subset F_Q^{\times}$, and by (3) and (4), we see that $F_Q^{\times}$ acts freely on $\Q^g \setm \{0\}$. 
\end{proof}


\subsection{Review on number fields}\label{subsect:number field}
%
In this subsection we take a closer look at the relationship between irreducible matrices and number fields. 

Let $F/\Q$ be a number field of degree $g$, and let 
\[
\tau_1, \dots, \tau_g: F \hookrightarrow \C
\]
be the field embeddings of $F$ into $\C$.  
Let $\mcO \subset F$ be an order in $F$, i.e., $\mcO \subset F$ is a subring which is a finitely generated $\Z$-module and generates $F$ over $\Q$.
Let $\mfa \subset F$ be a proper fractional $\mcO$-ideal, i.e., $\mfa \subset F$ is a finitely generated $\mcO$-submodule such that
\begin{align}
\{\alpha \in F \mid \alpha \mfa \subset \mfa \}=\mcO. \label{eqn:order}
\end{align}
Let $w_1, \dots, w_g \in \mfa$ be a basis of $\mfa$ over $\Z$, and put
\begin{align*}
w &:=\tp{(w_1, \dots, w_g)} \in F^g, \\
w^{(i)} &:= \tau_i(w)=\tp{(\tau_i(w_1), \dots, \tau_i(w_g))} \in \C^g
\end{align*}
for $i=1, \dots, g$.
We define the norm polynomial $N_w(x)=N_w(x_1, \dots, x_g) \in \Q[x_1, \dots, x_g]$ with respect to this basis by
\[
N_w(x):=\prod_{i=1}^g \brk{x, w^{(i)}} \in \Q[x_1, \dots, x_g], 
\]
where $x=(x_1, \dots, x_g)$. 
The situation can be summarized in the following diagram:
\begin{align*}
\xymatrix@=10pt{
x \ar@{|->}[d]& \in & \Z^g \ar[d]^-{\wr}& \subset & \Q^g \ar[d]^-{\wr} \ar[drr]^-{N_w}&&\\
\brk{x,w} & \in & \mfa & \subset & F  \ar[rr]_-{N_{F/\Q}}&& \Q .\\
}
\end{align*}
Moreover, let
\[
\rho_w: F \ra M_g(\Q)
\]
be the regular representation of $F$ with respect to the basis $w_1, \dots, w_g$, i.e., for $\alpha \in F$ and $x \in \Q^g$ we have
\begin{align}\label{eqn:reg rep}
\brk{\rho_w(\alpha) x, w}=\alpha\brk{x,w}=\brk{x,\alpha w} \in F.
\end{align}

\noindent
\textbf{Dual objects.}
Let $w_1^*, \dots, w_g^* \in F$ be the dual basis of $w_1, \dots, w_g$ with respect to the field trace $Tr_{F/\Q}$, i.e.,
\begin{align*}
Tr_{F/\Q}(w_iw_j^*)=\delta_{ij}
=
\begin{cases}
0 & \text{ if } i\neq j \\
1 & \text{ if } i=j. 
\end{cases}
\end{align*}
Then it is easy to see that $w_1^*, \dots, w_g^*$ form a $\Z$-basis of a proper fractional $\mcO$-ideal
\[
\mfa^*:=\{\alpha \in F \mid  Tr_{F/\Q}(\alpha \mfa) \subset \Z\}.
\]
We define 
\begin{align*}
w^* &:=\tp{(w^*_1, \dots, w^*_g)} \in F^g, \\
w^{*(i)} &:= \tau_i(w^*)=\tp{(\tau_i(w^*_1), \dots, \tau_i(w^*_g))} \in \C^g, \\
N_{w^*}(x) &:=\prod_{i=1}^g \brk{x, w^{*(i)}} \in \Q[x_1, \dots, x_g], \\
\rho_{w^*} &: F \ra M_g(\Q)
\end{align*}
in the same way as above starting from the dual basis  $w_1^*, \dots, w_g^*$.

\begin{lem}\label{lem:rev}
Let $\theta \in F^{\times}$ be an element such that $F=\Q(\theta)$. Put $Q=\rho_w(\theta) \in GL_g(\Q)$. 
\begin{enumerate}
\item We have $\Qw \in \Xi$. Conversely, any element of $\Xi$ can be obtained in this way. 
\item The regular representation $\rho_w : F\ra M_g(\Q)$ induces isomorphisms
\begin{align*}
\xymatrix@R=0pt{
F \ar[r]^-{\stackrel{\rho_w}{\sim}}&F_{\Qw} \\
\cup&\cup\\
\mcO \ar[r]^-{\sim}&\mcO_{\Qw} \\
\cup&\cup\\
\mcO^1 \ar[r]^-{\sim}&\Gamma_{\Qw} \\
}
\end{align*}
where $\mcO^1:=\{u \in \mcO^{\times} \mid N_{F/\Q}(u)=1\}$ is the norm one unit group of $\mcO$. 
\item $w^{*(1)}, \dots, w^{*(g)} \in \C^g$ are the dual basis of $w^{(1)}, \dots, w^{(g)} \in \C^g$ with respect to the scalar product $\brk{\ , \ }$, i.e., we have
\[
\brk{w^{*(i)}, w^{(j)}} =\delta_{ij}. 
\]
\item For $\alpha \in F$, we have 
\[
\rho_{w^*}(\alpha) = \tp{\rho_w(\alpha)}. 
\]
\item Let $\alpha \in F$. Then $w^{(i)}$ is an eigenvector of $\tp{\rho_w(\alpha)}$ with eigenvalue $\tau_i(\alpha)$. 
%
\item Let $\alpha \in F$. Then $w^{*(i)}$ is an eigenvector of $\rho_w(\alpha)$ with eigenvalue $\tau_i(\alpha)$. 
\item For $\gamma \in \Gamma_{\Qw}$, we have
\begin{align*}
N_w(\gamma x) &=N_w(x),\\
N_{w^*}(\tp{\gamma}x) &=N_{w^*}(x).
\end{align*} 
\end{enumerate}
\end{lem}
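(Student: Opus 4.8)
The plan is to prove the seven assertions of Lemma~\ref{lem:rev} in an order that lets the later parts build on the earlier ones, using only Lemma~\ref{lem:tor} and the defining relations \eqref{eqn:reg rep} for $\rho_w$ and the analogous relation for $\rho_{w^*}$.

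First I would establish (1). Since $F=\Q(\theta)$, the characteristic polynomial of $Q=\rho_w(\theta)$ equals the minimal polynomial of $\theta$ over $\Q$ (both have degree $g$ and the minimal polynomial annihilates $Q$ by the homomorphism property of $\rho_w$), which is irreducible; hence $Q\in\Xi$. For the converse, given $Q_0\in\Xi$, Lemma~\ref{lem:tor}(3),(4) identifies $F_{Q_0}=\Q[Q_0]$ with a degree-$g$ number field acting on $\Q^g$, and fixing any $x_0\in\Q^g\setm\{0\}$ the map $\gamma\mapsto\gamma x_0$ is a $\Q$-linear isomorphism $F_{Q_0}\isomto\Q^g$; transporting the standard basis back and taking $\theta$ to be the image of $Q_0$ realizes $Q_0$ in the stated form. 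Next, for (2): the map $\rho_w$ is an injective $\Q$-algebra homomorphism with image contained in $F_Q=\Q[Q]$ (as $\rho_w(\theta)=Q$), and by dimension count it is an isomorphism $F\isomto F_Q$. It carries $\mcO$ into $M_g(\Z)$ because the $w_i$ form a $\Z$-basis of $\mfa$ and $\mcO\mfa\subset\mfa$, so $\rho_w(\mcO)\subseteq F_Q\cap M_g(\Z)=\mcO_Q$; conversely any element of $\mcO_Q$ pulls back to an element $\alpha\in F$ with $\alpha\mfa\subset\mfa$, which by \eqref{eqn:order} lies in $\mcO$. Finally, on unit groups, Lemma~\ref{lem:tor}(4) gives $N_{F_Q/\Q}(\rho_w(\alpha))=\det\rho_w(\alpha)=N_{F/\Q}(\alpha)$, so $\rho_w$ matches $\mcO^1$ with $\{\gamma\in\mcO_Q\mid N_{F_Q/\Q}(\gamma)=1\}=\Gamma_Q$ by Lemma~\ref{lem:tor}(6).

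Then I would do (3) and (4) together, as they are the linear-algebra core. For (4): applying $\tau_i$ to the identity $\brk{\rho_w(\alpha)x,w}=\brk{x,\alpha w}$, i.e. to $\tp{(\rho_w(\alpha)x)}\,w=\tp{x}\,(\alpha w)$, gives $\tp{x}\,\tp{\rho_w(\alpha)}\,w^{(i)}=\tp{x}\,\tau_i(\alpha)\,w^{(i)}$ for all $x\in\Q^g$, hence $\tp{\rho_w(\alpha)}w^{(i)}=\tau_i(\alpha)w^{(i)}$; comparing with the defining relation $\brk{\rho_{w^*}(\alpha)x,w^*}=\brk{x,\alpha w^*}$ and the fact that $\rho_{w^*}(\alpha)$ is determined by it, one gets $\rho_{w^*}(\alpha)=\tp{\rho_w(\alpha)}$ (this last identification is the one place to be a little careful: use that the pairing of $\Q^g$ with a basis determines a matrix uniquely). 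This computation simultaneously yields (5), and (6) follows by the same argument applied to $\rho_{w^*}$ using $\rho_w(\alpha)=\tp{\rho_{w^*}(\alpha)}$ and the dual-basis vectors $w^{*(i)}$. For (3): since $Tr_{F/\Q}(w_iw_j^*)=\delta_{ij}$ and $Tr_{F/\Q}=\sum_k\tau_k$, we have $\sum_{k=1}^g\tau_k(w_i)\tau_k(w_j^*)=\delta_{ij}$; but since the eigenvalues $\tau_k(\theta)$ of $Q$ are distinct (Lemma~\ref{lem:tor}(1)) and $w^{(k)}$ (resp. $w^{*(k)}$) are eigenvectors for distinct eigenvalues of $\tp{\rho_w(\theta)}$ (resp. of $\rho_w(\theta)$), orthogonality $\brk{w^{*(i)},w^{(j)}}=0$ for $i\neq j$ is automatic, and then the trace identity forces $\brk{w^{*(i)},w^{(i)}}=1$. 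Alternatively, one can phrase (3) directly via the matrix identity $\tp{(w^{(1)},\dots,w^{(g)})}\,(w^{*(1)},\dots,w^{*(g)})$ having $(i,j)$-entry $\tau_i(\sum_k w_k w_k^{*})$-type sums; I would pick whichever is cleanest.

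Finally (7): for $\gamma\in\Gamma_{\Qw}$, part (2) gives a unit $u\in\mcO^1$ with $\gamma=\rho_w(u)$ and $N_{F/\Q}(u)=1$. Then
\[
N_w(\gamma x)=\prod_{i=1}^g\brk{\gamma x,w^{(i)}}=\prod_{i=1}^g\brk{x,\tp{\gamma}w^{(i)}}=\prod_{i=1}^g\tau_i(u)\brk{x,w^{(i)}}=N_{F/\Q}(u)\,N_w(x)=N_w(x),
\]
using (5). For the second identity, note $\tp{\gamma}=\tp{\rho_w(u)}=\rho_{w^*}(u)\in\Gamma$ for the dual data (or simply use (6) to get $\rho_w(u)w^{*(i)}=\tau_i(u)w^{*(i)}$, equivalently $\tp{(\tp\gamma)}w^{*(i)}=\tau_i(u)w^{*(i)}$), and run the same computation:
\[
N_{w^*}(\tp{\gamma}x)=\prod_{i=1}^g\brk{\tp{\gamma}x,w^{*(i)}}=\prod_{i=1}^g\brk{x,\gamma w^{*(i)}}=\prod_{i=1}^g\tau_i(u)\brk{x,w^{*(i)}}=N_{F/\Q}(u)\,N_{w^*}(x)=N_{w^*}(x).
\]
I expect the only genuine friction point to be (4)/(3): pinning down the transpose relation $\rho_{w^*}=\tp{\rho_w}$ rigorously (rather than just "it's obvious") requires noting that a matrix is determined by the functionals $x\mapsto\brk{Mx,w_j}$, and keeping the roles of $w$ versus $w^*$ and of left- versus right-action straight; everything else is a direct unwinding of definitions plus the distinct-eigenvalue fact from Lemma~\ref{lem:tor}.
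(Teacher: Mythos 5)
Parts (1), (2), (5), and (7) of your proposal match the paper's argument essentially step for step, and your observation that the friction point is (3)/(4) is correct. But as written that part is not merely delicate — it is circular. From the defining relations alone you get $\tp{\rho_w(\alpha)}w^{(i)}=\tau_i(\alpha)w^{(i)}$ (your (5)) and, by the same token, $\tp{\rho_{w^*}(\alpha)}w^{*(i)}=\tau_i(\alpha)w^{*(i)}$. To conclude (4) by ``comparison with the defining relation of $\rho_{w^*}$'' you would need $\rho_w(\alpha)w^*=\alpha w^*$, whereas the defining relation for $\rho_w$ gives only $\tp{\rho_w(\alpha)}w=\alpha w$; these concern the two different bases $w$ and $w^*$, and passing between them is exactly what the trace duality (3) encodes. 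Meanwhile your primary argument for (3) (orthogonality of eigenvectors) silently assumes that $w^{*(i)}$ is an eigenvector of $\rho_w(\theta)$, which is (6), which you derive from (4). So (4) $\Rightarrow$ (6) $\Rightarrow$ (3) $\Rightarrow$ (4), a cycle with no exit.

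The paper breaks the cycle by proving (3) first, with no input from (4)--(6): set $W=(\tau_j(w_i))_{ij}$ and $W^*=(\tau_j(w_i^*))_{ij}$; then the $(i,j)$-entry of $W\tp{W^*}$ is $\sum_k\tau_k(w_i)\tau_k(w_j^*)=Tr_{F/\Q}(w_iw_j^*)=\delta_{ij}$, so $W\tp{W^*}=I$ and hence $\tp{W^*}W=I$, which is (3). (Your ``alternative'' for (3) points at this but the entry formula is off: the $(i,j)$-entry involves the mixed sum $\sum_k\tau_k(w_i)\tau_k(w_j^*)$, not $\tau_i\bigl(\sum_k w_kw_k^*\bigr)$, which is only the diagonal case.) With (3) in hand, write (5) in matrix form $WD=\tp{\rho_w(\alpha)}W$ with $D=\diag(\tau_1(\alpha),\dots,\tau_g(\alpha))$ and its $w^*$-analogue $W^*D=\tp{\rho_{w^*}(\alpha)}W^*$; combining the first with $\tp{W^*}=W^{-1}$ gives $D\tp{W^*}=\tp{W^*}\tp{\rho_w(\alpha)}$, and transposing gives $W^*D=\rho_w(\alpha)W^*$. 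Comparing the two expressions for $W^*D$ yields $\rho_{w^*}(\alpha)=\tp{\rho_w(\alpha)}$, i.e.\ (4), and (6) then follows. So the fix is simply to reorder: prove (3) directly from the trace identity, then (5), then (4) and (6) together; your arguments for (1), (2), and (7) can stand as written.
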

\begin{proof}
(1) Since $\theta$ generates $F$, the characteristic polynomial of $\Qw=\rho_w(\theta)$ is irreducible, and hence $\Qw \in \Xi$. The latter half of the statement follows from Lemma \ref{lem:tor} (3), (4). 
Indeed, for $Q \in \Xi$, fix a non-zero vector $x \in \Q^g$ and take a basis $w_1, \dots, w_g \in F_Q$ corresponding to the standard basis of $\Q^g$ via the isomorphism 
\[
F_Q \isomto \Q^g; \ \gamma \mapsto \gamma x. 
\]
Let $\mfa \subset F_Q$ be the subset corresponding to $\Z^g \subset \Q^g$ under this isomorphism. 
Then we easily see that $\mfa$ is a proper $\mcO_Q$-ideal and that $\rho_w$ is the natural inclusion $F_Q \hookrightarrow M_g(\Q)$. Hence we find that $Q=\rho_w(Q)$.  

(2) The first isomorphism $F \isomto F_{\Qw}$ is obvious. The second isomorphism follows from (\ref{eqn:order}), and the third follows from Lemma \ref{lem:tor} (6).  

(3) Put
\begin{align*}
W &:= (w^{(1)}, \dots, w^{(g)}) = (\tau_{j}(w_i))_{ij} \in M_g(\C), \\
W^*&:=(w^{*(1)}, \dots, w^{*(g)}) = (\tau_{j}(w_i^*))_{ij} \in M_g(\C). 
\end{align*}
Then by definition, we have
\begin{align}\label{eqn:W}
W \tp{W^*} = \big( Tr_{F/\Q}(w_iw_j^*) \big)_{ij}=1 \in M_g(\C), 
\end{align}
and hence
\[
\big( \brk{w^{*(i)}, w^{(j)}} \big)_{ij} =\tp{W^*} W =1. 
\]

(4), (5), (6) First, by \eqref{eqn:reg rep}, we have
\[
\brk{x, \alpha w}= \brk{\rho_w(\alpha) x, w}= \brk{x, \tp{\rho_w(\alpha)} w} \in F
\]
for all $x \in \Q^g$. Therefore, we find that $\alpha w = \tp{\rho_w(\alpha)} w \in F^g$. By applying $\tau_{i}$, we obtain (5). In particular, we have
\begin{align}\label{eqn:W2}
W 
\diag(\tau_1(\alpha), \dots, \tau_g(\alpha))
=\tp{\rho_w(\alpha)} W, 
\end{align}
where $\diag(\tau_1(\alpha),\dots, \tau_g(\alpha)) \in M_g(\C)$ is the diagonal matrix with diagonal entries $\tau_1(\alpha), \dots, \tau_g(\alpha)$. 
Similarly, we have 
\begin{align}\label{eqn:W3}
W^* 
\diag(\tau_1(\alpha), \dots, \tau_g(\alpha))
=\tp{\rho_{w^*}(\alpha)} W^*. 
\end{align}
On the other hand, by using \eqref{eqn:W} and \eqref{eqn:W2} we also find that 
\[
\diag(\tau_1(\alpha), \dots, \tau_g(\alpha)) \tp{W^*} = \tp{W^*}\tp{\rho_w(\alpha)}, 
\]
and hence by taking the transpose, we have
\begin{align}\label{eqn:W4}
W^* \diag(\tau_1(\alpha), \dots, \tau_g(\alpha)) =\rho_w(\alpha) W^*. 
\end{align}
By comparing \eqref{eqn:W3} and \eqref{eqn:W4}, we obtain (4) and (6). 

(7) follows from (2), (5), and (6). Indeed, take $u \in \mcO^1$ such that $\rho_w(u)=\gamma$. Then we have
\[
N_w(\gamma x)=\prod_{i=1}^g \brk{\gamma x, w^{(i)}} =\prod_{i=1}^g \brk{x, \tp{\rho_w(u)} w^{(i)}}=N_{F/\Q}(u)N_w(x)=N_w(x).  
\]
The statement for $N_{w^*}(x)$ can be proved similarly. 
\end{proof}




\section{The space $\Yo$ and the sheaves $\msF_d, \msF_d^{\Xi}$}\label{sect:sp and sh}


\subsection{Definitions}\label{subsect:dfns}
Let $\PPC= (\C^g \setm \{0\})/ \C^{\times}$ be the complex projective $(g-1)-$space and let 
\[
\pi_{\C}: \C^g \setm \{0\} \ra \PPC
\]
be the natural projection. We define an open subset $\Yo$ of $\C^g \setm \{0\}$ by 
\begin{align*}
\Yo := \C^g \setm \bmi \R^g \subset \C^g \setm \{0\}, 
\end{align*}
where $\bmi \in \C$ is the imaginary unit. 
The group $GL_g(\Q)$ acts on $\C^g \setm \{0\}, \Yo, \PPC$ by the matrix action from the left. 
%
For an integer $d\geq 0$, we define a sheaf $\msF_d$ on $\Yo$ as
\begin{align*}
\msF_d:= \pi_{\C}^{-1} \Omega^{g-1}_{\PPC}(-d) \big|_{\Yo},
\end{align*}
where $\Omega^{g-1}_{\PPC}(-d)$ is the $-d$-th Serre twist of the sheaf $\Omega^{g-1}_{\PPC}$ of holomorphic $(g-1)$-forms on $\PPC$, and $\pi_{\C}^{-1}$ is the inverse image functor of sheaves. 
Furthermore, we define 
\begin{align*}
\msF_d^{\Xi} := \underline{\Hom} \big(\underline{\Z[\Xi]}, \msF_d \big) \simeq  \prod_{Q \in \Xi} \msF_d,  
\end{align*}
where $\underline{\Z[\Xi]}$ is the constant sheaf associated to the free abelian group $\Z[\Xi]$ generated by the set $\Xi$ of irreducible matrices of $GL_g(\Q)$, and $\underline{\Hom}$ is the sheaf $\Hom$. 
For $Q \in \Xi$, let 
\begin{align}\label{eqn:ev}
\ev_Q: \msF_d^{\Xi} \ra \msF_d
\end{align}
denote the evaluation map at $Q$. See Remark \ref{rmk:sheaf1} below. 

\begin{rmk}\label{rmk:sheaf1}
\begin{enumerate}
\item 
More generally, for a sheaf $\msF$ (of abelian groups) on $\Yo$, we define
\begin{align*}
\msF^{\Xi} := \underline{\Hom} \big(\underline{\Z[\Xi]}, \msF \big).  
\end{align*}
Note that for an open subset $U \subset \Yo$, we have
\begin{align*}\label{eqn:FXi map}
\Gamma \Big(U,  \underline{\Hom} \big(\underline{\Z[\Xi]}, \msF \big) \Big)
&=\Hom \big( \underline{\Z[\Xi]}|_U, \msF|_U \big) \notag \\
&= \Hom \big(\Z[\Xi], \Gamma(U, \msF)\big) \\
&= \Map \big(\Xi, \Gamma(U, \msF)\big). \notag
\end{align*}
Then the evaluation map $\ev_Q: \msF^{\Xi} \ra \msF$ is given by
\[
\ev_Q: \Gamma (U,  \msF^{\Xi})= \Map \big(\Xi, \Gamma(U, \msF)\big) \ra \Gamma(U, \msF); \phi \mapsto \phi(Q). 
\]
\item By (1) we also see that $\msF^{\Xi} \simeq \prod_{Q \in \Xi} \msF$. 
\item The sheaf $\msF_d^{\Xi}$ is an analogue of the group $\mcN$ considered in \cite{CDG15:Int}. 
\end{enumerate}
\end{rmk}

\begin{rmk}\label{rmk:sheaf}
The sections of the sheaf $\Omega^{g-1}_{\PPC}(-d)$ on an open subset $U \subset \PPC$ can be described as follows. First, let $\omega$ be a holomorphic $(g-1)$-form on $\C^g \setm \{0\}$ defined by 
\begin{align*}
\omega(y_1, \dots, y_g):= \sum_{i=1}^g (-1)^{i-1} y_i dy_1 \wedge \cdots \wedge \check{dy_i} \wedge \cdots \wedge dy_g 
\end{align*}
for $y=\tp{(y_1, \dots, y_g)} \in \C^g \setm \{0\}$, where $\check{dy_i}$ means that $dy_i$ is omitted. Then we have
\begin{align}\label{eqn:hol forms}
\Gamma \big(U, \Omega^{g-1}_{\PPC}(-d) \big) \simeq \Bigg\{ f\omega \ \Bigg| \
\begin{split}
f: \text{ holomorphic function on }\pi_{\C}^{-1}(U) \\
\text{s.t. } f(\lambda y)=\lambda^{-g-d} f(y),\ \forall \lambda \in \C^{\times}
\end{split}
\Bigg\}.
\end{align}
In this paper we use this as a definition of the sheaf $\Omega^{g-1}_{\PPC}(-d)$. 
%
%
%
\end{rmk}

The sheaf $\Omega^{g-1}_{\PPC}(-d)$ has a natural $GL_g(\Q)$-equivariant structure via the pull-back of differential forms.
Since $\pi_{\C}$ is a $GL_g(\Q)$-equivariant map, this induces $GL_g(\Q)$-equivariant structures on $\msF_d, \msF_d^{\Xi}$. We describe these $GL_g(\Q)$-equivariant structures more explicitly in Section \ref{subsect:equiv str}. 


\subsection{A vanishing result}

The aim of this subsection is to compute the cohomology group $H^q \big(U,  \pi_{\C}^{-1} \Omega^{g-1}_{\PPC}(-d) \big)$ for convex open subsets $U \subset \C^g \setm \{0\}$. Actually, we will show that 
\[
H^q \big(U,  \pi_{\C}^{-1} \Omega^{g-1}_{\PPC}(-d) \big)=0 
\]
for $q \geq 1$, and also give an explicit description of $H^0 \big(U,  \pi_{\C}^{-1} \Omega^{g-1}_{\PPC}(-d) \big)$. 

Let
\begin{align*}
\bbD:= \{z \in \C \mid \re (z) >0\}
\end{align*}
be the right-half plane.
We start with the following elementary lemma.
\begin{lem}\label{section lemma}
Let $X$ be a paracompact manifold, and let $\pr_1: X \times \bbD \ra X$ be the first projection. Let $U \subset X\times \bbD$ be an open subset such that for any $x \in X$, the set
\begin{align*}
\{ z \in \bbD \mid (x,z) \in U\}
\end{align*}
is a non-empty convex subset of $\bbD$. Then there exists a continuous section $s: X \ra U$ of $\pr_1|_U:U\ra X$ such that $s \circ \pr_1$ is homotopic to the identity map $\id_U$ over $X$, i.e., there exists a continuous map
\[
h: [0,1] \times U \ra U
\]
such that $h(0, u)=s \circ \pr_1(u)$, $h(1, u)=u$, and $\pr_1 \circ h(t,u)=\pr_1(u)$ for $t \in [0,1], u \in U$. 
\end{lem}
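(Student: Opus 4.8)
The plan is to build the section $s$ by hand, fiber over fiber, and then to produce the homotopy by the obvious straight-line contraction inside each (convex) fiber. The main work is making both the section and the homotopy \emph{continuous} in $x$; convexity of the fibers is exactly what makes this possible via a partition-of-unity averaging argument.

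First I would fix the following notation: for $x \in X$ write $U_x := \{z \in \bbD \mid (x,z) \in U\}$, a non-empty open convex subset of $\bbD$, and let $X = \bigcup_{x\in X} V_x$ be covered by open sets on which a continuous local section exists. Concretely, given $x_0 \in X$, pick any $z_0 \in U_{x_0}$; since $U$ is open in $X\times\bbD$, there is an open neighbourhood $V_{x_0}$ of $x_0$ with $V_{x_0}\times\{z_0\}\subset U$, so the constant map $x \mapsto (x, z_0)$ is a continuous section of $\pr_1|_U$ over $V_{x_0}$. Now use paracompactness of $X$: choose a locally finite refinement and a subordinate partition of unity $\{\varphi_i\}_{i\in \mathit{I}}$ with $\supp \varphi_i \subset V_i$, where over each $V_i$ we have a continuous section $x \mapsto (x, \sigma_i(x))$ with $\sigma_i(x)\in U_x$ (one may take $\sigma_i$ locally constant as above, but continuity is all that is needed). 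Define
\[
\sigma(x) := \sum_{i\in \mathit{I}} \varphi_i(x)\, \sigma_i(x) \in \C.
\]
The sum is locally finite, hence $\sigma$ is continuous; and for each fixed $x$ it is a convex combination of the points $\sigma_i(x)\in U_x$ (only those $i$ with $\varphi_i(x)>0$ contribute, and for those $x\in V_i$ so $\sigma_i(x)\in U_x$), so $\sigma(x)\in U_x$ by convexity. Hence $s(x):=(x,\sigma(x))$ is a continuous global section of $\pr_1|_U:U\to X$, and it visibly lies in $U$.

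For the homotopy, define
\[
h:[0,1]\times U \ra X\times\bbD,\qquad h(t,(x,z)) := \big(x,\ (1-t)\,\sigma(x) + t\,z\big).
\]
This is continuous; it satisfies $h(0,(x,z)) = (x,\sigma(x)) = s\circ\pr_1(x,z)$, $h(1,(x,z)) = (x,z)$, and $\pr_1\circ h(t,(x,z)) = x = \pr_1(x,z)$. It remains only to check that $h$ takes values in $U$: for fixed $(x,z)\in U$ both $\sigma(x)$ and $z$ lie in the convex set $U_x$, so the segment between them lies in $U_x$, i.e. $(x,(1-t)\sigma(x)+tz)\in U$ for all $t\in[0,1]$. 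This gives the required continuous map $h:[0,1]\times U\to U$ with the stated properties, completing the proof.

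The only genuine obstacle is the passage from local to global sections, and this is handled precisely by paracompactness together with the convexity hypothesis: averaging with a partition of unity would make no sense without convexity, since the weighted sum of points of $U_x$ need not lie in $U_x$ otherwise. Everything else — continuity of locally finite sums, and the fact that straight-line homotopies stay inside convex sets — is routine.
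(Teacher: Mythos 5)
Your proof is correct and follows essentially the same argument as the paper: pick a locally constant local section around each point using openness of $U$, glue with a partition of unity, use convexity of the fibers to ensure the weighted average stays inside $U$, and conclude with the straight-line fiberwise homotopy. The only cosmetic difference is that you phrase the partition-of-unity step as allowing arbitrary continuous local sections $\sigma_i$ rather than the constant ones that actually arise, but this does not change the substance.
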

\begin{proof}
%
In order to construct a section, it suffices to construct a continuous map
\[
f: X \ra \bbD
\]
such that $(x, f(x)) \in U$ for all $x \in X$. 
First, by the assumption, for each $x \in X$ we can take $z_x \in \bbD$ such that $(x, z_x) \in U$. Then there exist an open neighborhood $U_x \subset X$ of $x$ and an open neighborhood $V_x \subset \bbD$ of $z_x$ such that $U_x \times V_x \subset U$. 
Since $X=\bigcup_{x \in X} U_x$ and $X$ is paracompact, there exists a subset $\Lambda \subset X$ such that $\{U_{\lambda}\}_{\lambda \in \Lambda}$ is a locally finite open covering of $X$. 
Note that for $x \in U_{\lambda}$, we have 
\[
(x,z_{\lambda}) \in U_{\lambda} \times V_{\lambda} \subset U. 
\]

By using the paracompactness once again, there exists a partition of unity with respect to the open covering $\{U_{\lambda}\}_{\lambda \in \Lambda}$, i.e., a collection $\{\phi_{\lambda}\}_{\lambda \in \Lambda}$ of continuous maps
\[
\phi_{\lambda}: X \ra [0,1]
\]
such that $\supp (\phi_{\lambda}) \subset U_{\lambda}$ and $\sum_{\lambda \in \Lambda} \phi_{\lambda}(x) =1$ for all $x \in X$.
Put
\begin{align*}
f:=\sum_{\lambda \in \Lambda} z_{\lambda}\phi_{\lambda}: X \ra \bbD.
\end{align*}
Then by the convexity assumption, we see that
\[
(x, f(x)) =\left( x, \sum_{\lambda \in \Lambda} z_{\lambda}\phi_{\lambda}(x) \right) \in U
\]
for all $x \in X$. Thus we obtain a section
\[
s: X \ra U; x \mapsto (x, f(x)).
\]
Again by the convexity assumption, we see that $s \circ \pr_1$ is homotopic to the identity map $\id_U$ over $X$. 
Indeed, 
\[
h: [0,1] \times U \ra U; \  (t, (x, z)) \mapsto (x, tz+(1-t)f(x))
\]
gives a homotopy between $s \circ \pr_1$ and $\id_U$ over $X$. 
\end{proof}

\begin{lem}\label{section lemma 2}
Let $U \subset \C^g \setm \{0\}$ be a convex open subset.
\begin{enumerate}
\item There exists $x \in \C^g \setm \{0\}$ such that $U \subset V_x:=\{ y \in \C^g \setm \{0\} \mid \re (\brk{x,y}) >0 \}$.
\item The projection $\pi_{\C}|_U: U \ra \pi_{\C}(U)$ has a continuous section $s: \pi_{\C}(U) \ra U$ such that $s \circ \pi_{\C}|_U$ is homotopic to the identity map $\id_U$ over $\pi_{\C}(U)$. 
\item The image $\pi_{\C}(U)$ is a Stein manifold. 
\end{enumerate}
\end{lem}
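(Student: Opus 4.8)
The plan is to prove the three statements in order, with (1) being essentially a convexity/separation argument, (2) following by applying Lemma \ref{section lemma} along a suitable fibration, and (3) being a consequence of (2) together with the fact that $\bbD$-bundles over Stein-like bases stay Stein.

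For (1), I would first normalize: the cone $\R_{>0} \cdot U = \pi_{\C}^{-1}(\pi_{\C}(U))$ is an open convex cone in $\C^g \cong \R^{2g}$ not containing $0$ (since $U$ is convex and $0 \notin U$, and dilating a convex set missing the origin keeps it convex and missing the origin — one checks $\R_{>0}U$ is convex because $U$ is). By the Hahn--Banach separation theorem applied to $\overline{\R_{>0} U}$ (a closed convex cone, still a proper cone) and $\{0\}$, there is an $\R$-linear functional $\ell \colon \R^{2g} \to \R$ with $\ell > 0$ on $\R_{>0}U \setminus\{0\}$, hence on $U$. Writing $\ell(y) = \re(\brk{x,y})$ for a suitable $x \in \C^g \setminus \{0\}$ (every $\R$-linear functional on $\C^g$ has this form, using that $(a,b) \mapsto \re(\brk{a,b})$ is a nondegenerate $\R$-bilinear pairing) gives $U \subset V_x$.

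For (2), fix the $x$ from (1), so $U \subset V_x$. The map $y \mapsto (\pi_{\C}(y), \brk{x,y})$ identifies $V_x$ with $\pi_{\C}(V_x) \times \bbD$; more precisely, over the open set $\pi_{\C}(V_x) \subset \PPC$ the line bundle trivializes via the nonvanishing linear form $\brk{x,-}$, so $\pi_{\C}\colon V_x \to \pi_{\C}(V_x)$ becomes the first projection $X \times \bbD \to X$ with $X = \pi_{\C}(V_x)$, a paracompact complex manifold. Under this identification $U$ corresponds to an open subset of $X \times \bbD$, and I must check the fiberwise-convexity hypothesis of Lemma \ref{section lemma}: for $p \in \pi_{\C}(U)$ the fiber $\pi_{\C}^{-1}(p) \cap U$ is the intersection of the convex set $U$ with the complex line $\pi_{\C}^{-1}(p) \cup \{0\}$, hence convex and nonempty, and it maps to a convex subset of $\bbD$ under the (complex-)linear coordinate $\brk{x,-}$. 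For $p \notin \pi_{\C}(U)$ there is nothing to do since we only need a section over $\pi_{\C}(U)$, so I apply Lemma \ref{section lemma} with base $\pi_{\C}(U)$ (restrict the bundle). This produces the continuous section $s$ and the homotopy $h$ over $\pi_{\C}(U)$ exactly as stated.

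For (3), I would argue that $\pi_{\C}(U)$ is Stein by exhibiting it as a domain over a Stein base with Stein (convex) fibers, or more directly: the section $s$ from (2) and the homotopy show $U$ and $\pi_{\C}(U)$ are homotopy equivalent, but homotopy type alone does not give Steinness, so instead I use holomorphic structure. The clean route: $\pi_{\C}(U) \subset \PPC$ is contained in the affine chart $\pi_{\C}(V_x) \cong \C^{g-1}$, so it is a domain in $\C^{g-1}$, and I claim it is a domain of holomorphy. Indeed, $\pi_{\C}(U)$ is the image under the (biholomorphic onto its image) projection of the convex domain $U \cap \{y : \brk{x,y} = 1\}$ — the affine slice $\{\brk{x,y}=1\}$ is a copy of $\C^{g-1}$, it meets every fiber of $\pi_{\C}$ over $\pi_{\C}(V_x)$ in exactly one point, and $U \cap \{\brk{x,y}=1\}$ is convex (intersection of the convex $U$ with an affine subspace), mapping biholomorphically onto $\pi_{\C}(U)$. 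A convex open subset of $\C^{g-1}$ is a Stein manifold, and Steinness is a biholomorphic invariant, so $\pi_{\C}(U)$ is Stein.

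The main obstacle is the bookkeeping in (2): making the identification $V_x \cong \pi_{\C}(V_x) \times \bbD$ precise and compatible, i.e., choosing the trivialization of the tautological/Serre bundle over the chart so that $\pi_{\C}$ really becomes $\pr_1$ and the fiberwise sets $\{z \in \bbD : (p,z) \in U\}$ are genuinely convex in the coordinate $z = \brk{x,y}$ — this is where one must be careful that convexity of $U$ in $\C^g$ transports to convexity in the $\bbD$-coordinate, which works precisely because $z \mapsto $ (the point of the line with $\brk{x,\cdot} = z$) is complex-affine. Everything else is a routine application of Hahn--Banach, Lemma \ref{section lemma}, and the standard fact that convex open sets in $\C^n$ are Stein.
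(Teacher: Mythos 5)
Your part (2) is essentially the paper's argument, but (1) has a technical misstep and (3) has a genuine gap.

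In (1), the sets $\overline{\R_{>0}U}$ and $\{0\}$ are not disjoint: for any $u \in U$, $\lambda u \to 0$ as $\lambda \to 0^+$, so $0 \in \overline{\R_{>0}U}$. You cannot separate a closed set from one of its own points, so the conclusion ``$\ell > 0$ on $\R_{>0}U \setm \{0\}$'' does not follow from Hahn--Banach applied as you propose (nor is $\overline{\R_{>0}U}$ necessarily pointed: in $\C^1$ take $U$ to be the open upper half-plane, whose generated cone has closure the closed upper half-plane). The fix is to drop the cone entirely: apply the open-set version of the separation theorem directly to the open convex set $U$ and the singleton $\{0\}$, obtaining $x$ and $\mu$ with $0 = \re\brk{x,0} \leq \mu < \re\brk{x,y}$ for all $y \in U$; this is exactly what the paper does.

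Your (2) matches the paper. The paper's trivialization is $\varphi(z,\lambda)=\lambda s_x(z)$ with $s_x=(\pi_{\C}|_{V_x^1})^{-1}$ and $V_x^1=\{y : \brk{x,y}=1\}$; your coordinate $\brk{x,-}$ is its inverse, and the fiberwise-convexity check via complex-linearity of $\brk{x,-}$ on each line through $0$ is the same.

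Part (3) is where the real error lies. You claim that $U\cap\{y:\brk{x,y}=1\}$ maps biholomorphically onto $\pi_{\C}(U)$. The map is injective (because $\pi_{\C}|_{V_x^1}$ is a biholomorphism onto $\pi_{\C}(V_x)$), but it need not be surjective onto $\pi_{\C}(U)$: the unique point of a line $\pi_{\C}^{-1}(p)$ satisfying $\brk{x,\cdot}=1$ need not lie in $U$. Concretely, for $g=2$, $U=\{(y_1,y_2):|y_1-2|<1/2,\ |y_2|<1/2\}$ and $x=(1,0)$, one has $U$ convex and open, $U\subset V_x$, but $U\cap\{y_1=1\}=\emptyset$ while $\pi_{\C}(U)\neq\emptyset$; so $\pi_{\C}(U)$ is not the biholomorphic image of any slice $U\cap V_x^1$, let alone of a convex one. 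More fundamentally, in the affine chart $\pi_{\C}(V_x)\simeq V_x^1\simeq\C^{g-1}$ the projection acts on $U$ as $y\mapsto y/\brk{x,y}$, a projective-linear (not affine) map, so there is no reason for the image to be convex. The paper's route is different: $\pi_{\C}(U)$ is an open subset of $\C^{g-1}$, a convex set is $\C$-convex, $\C$-convexity passes to images under linear/projective maps (H\"ormander, \emph{Notions of convexity}, Theorem 4.6.8), and $\C$-convex open subsets of $\C^{g-1}$ are pseudoconvex (loc.\ cit., Proposition 4.6.3), hence Stein. You need to replace the slice argument by something of this kind.
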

\begin{proof}
(1) By the so-called hyperplane separation theorem \cite[Theorem 3.4 (a)]{R91:Fun} applied to $U$ and $\{0\}$, there exist $x \in \C^g \setm \{0\}$ and $\mu \in \R$ such that
\begin{align*}
0= \re (\brk{x, 0}) \leq \mu <\re (\brk{x, y})
\end{align*}
for all $y \in U$, hence $U \subset V_x=\{ y \in \C^g \setm \{0\} \mid \re (\brk{x,y}) >0 \}$.

(2) We first construct a section $s_x: \pi_{\C}(V_x) \ra V_x$ of $\pi_{\C}|_{V_x}$ as follows. 
Set
\[
V_x^1:=\{ y \in \C^g \setm \{0\} \mid \brk{x,y} =1 \} \subset V_x. 
\]
Then we easily see that $\pi_{\C}|_{V_x^1}: V_x^1 \isomto \pi_{\C}(V_x)$ is a biholomorphism. Thus we define 
\[
s_x:=(\pi_{\C}|_{V_x^1})^{-1}: \pi_{\C}(V_x) \isomto V_x^1 \subset V_x
\]
to be the inverse map of $\pi_{\C}|_{V_x^1}$, which is clearly a section of $\pi_{\C}|_{V_x}$. 
Then we have the following trivialization of $\pi_{\C}|_{V_x}$:
\begin{align*}
\xymatrix{
\varphi: \pi_{\C}(V_x)\times \bbD \ar[r]^(0.65){\sim} \ar[dr]_{\pr_1}  & V_x \ar[d]^{\pi_{\C}|_{V_x}}; & \hspace{-12mm} (z, \lambda) \mapsto \lambda s_x(z) \\
&\pi_{\C}(V_x)&
}
\end{align*}
Therefore, it suffices to construct a continuous section $s'$ of
\begin{align*}
p:= \pr_1|_{\varphi^{-1}(U)}: \varphi^{-1}(U) \stackrel{\pr_1}{\longrightarrow} \pi_{\C}(U).
\end{align*}
such that $s' \circ p$ is homotopic to $\id_{\varphi^{-1}(U)}$ over $\pi_{\C}(U)$. 
By Lemma \ref{section lemma}, it suffices to show the following:
\begin{claim}
For any $z \in \pi_{\C}(U)$, the set
\begin{align*}
\bbD_z:= \{\lambda \in \bbD \mid (z,\lambda) \in \varphi^{-1}(U) \}
\end{align*}
is a non-empty convex subset of $\bbD$.
\end{claim}
\begin{proof}[Proof of Claim]
Let $z \in \pi_{\C}(U)$. The set $\bbD_z$ is obviously non-empty. 
Suppose $\lambda, \lambda' \in \bbD_z$, 
i.e., $\lambda s_x(z), \lambda' s_x(z) \in U$. 
Then for $t\in [0,1]$, we have $(t\lambda +(1-t)\lambda') s_x(z) \in U$ because $U$ is convex, and hence $t\lambda +(1-t)\lambda' \in \bbD_z$. 
\end{proof}
%

(3) From the above argument, we see that $\pi_{\C}(U)$ is an open subset of 
\[
\pi_{\C}(V_x) \simeq V_x^1 \simeq \C^{g-1}. 
\]
Since every pseudoconvex open subset of $\C^{g-1}$ is a Stein manifold, cf. \cite[Theorem 4.2.8, Example after Definition 5.1.3]{MR0344507}, it suffices to see that $\pi_{\C}(U)$ is pseudoconvex. 
This follows, for example, from \cite[Proposition 4.6.3, Theorem 4.6.8]{MR2311920}. (Use \cite[Theorem 4.6.8]{MR2311920} for $X=U$, $z_0=0$, and $L(y)=\brk{x,y}$. Note that a convex set $U$ is obviously $\C$ convex.)
\if0
This follows from \cite[Proposition 4.6.3, Theorem 4.6.8]{MR2311920}. 
Here we give a direct proof. 

The previous argument shows that $\pi_{\C}(U)$ is a connected open subset of 
\[
\pi_{\C}(V_x) \simeq V_x^1 \simeq \C^{g-1}. 
\]
Therefore, it suffices to show that $\pi_{\C}(U)$ is a domain of holomorphy, cf. \cite[Definition 2.5.1, Example after Definition 5.1.3]{MR0344507}. To this end, it suffices to prove the following: 
\begin{claim}
For any $z_0 \in \pi_{\C}(V_x) \setm \pi_{\C}(U)$, there exists a meromorphic function $f$ on $\pi_{\C}(V_x)$ such that $f$ is holomorphic on $\pi_{\C}(U)$ and $z_0$ is a pole of $f$. 
\end{claim}
\begin{proof}[Proof of Claim]
Let $z_0 \in \pi_{\C}(V_x) \setm \pi_{\C}(U)$. Take $y_0 \in V_x$ such that $\pi_{\C}(y_0)=z_0$, and set 
\[
L_0:=\C y_0 \subset \C^g
\]
so that $L_0 \setm \{0\} = \pi_{\C}^{-1}(z_0)$ becomes the fiber of $\pi_{\C}$ over $z_0$. As $z_0 \notin \pi_{\C}(U)$, we have $L_0 \cap U = \emptyset$. Hence, by the hyperplane separation theorem \cite[Theorem 3.4 (a)]{R91:Fun} applied to $U$ and $L_0$, we find $\xi \in \C^g$ and $\nu \in \R$ such that 
\begin{align*}
\re \brk{\xi, l} \leq \nu < \re \brk{\xi, y}
\end{align*}
for all $l \in L_0$ and $y \in U$. We claim that $\brk{\xi, y_0}=0$. Indeed, if $\brk{\xi, y_0}\neq 0$, then we have $\re \brk{\xi, L_0}=\R$, which is a contradiction because $\re \brk{\xi, L_0}$ is bounded from above by $\nu$. 
In particular, we have
\begin{align}\label{eqn stein}
0= \re \brk{\xi, y_0} < \re \brk{\xi, y}
\end{align}
for all $y \in U$. 
%
Now, we define a holomorphic function $F$ on $V_x$ by
\[
F: V_x \ra \C; y \mapsto \frac{\brk{\xi, y}}{\brk{x, y}}. 
\]
Since $F$ is homogeneous of degree $0$, it induces a holomorphic function $\bar F$ on $\pi_{\C}(V_x)$. Set $f:=\frac{1}{\bar F}$ to be the reciprocal of $\bar F$. 
By \eqref{eqn stein}, we see that $f$ is holomorphic on $\pi_{\C}(U)$. Moreover, $z_0=\pi_{\C}(y_0)$ is a pole of $f$ because we have $\brk{\xi, y_0}=0$. 
\end{proof}
This completes the proof of Lemma \ref{section lemma 2}.
\fi
\end{proof}


\begin{prop}\label{local vanishing}
Let $U \subset \C^g \setm \{0\}$ be a convex open subset.
\begin{enumerate}
\item The natural map
\[
H^q \big(\pi_{\C}(U), \Omega^{g-1}_{\PPC}(-d) \big) \isomto H^q \big(U, \pi_{\C}^{-1} \Omega^{g-1}_{\PPC}(-d) \big)
\]
 is an isomorphism for all $q \geq 0$. 
 \item Under this identification, we have
 \begin{align*}
 \Gamma \big(U, \pi_{\C}^{-1} \Omega^{g-1}_{\PPC}(-d) \big)
=
 \Bigg\{ f\omega \ \Bigg| \
\begin{split}
f: \text{ holomorphic function on } \pi_{\C}^{-1}(\pi_{\C}(U)) \\
\text{s.t. } f(\lambda y)=\lambda^{-g-d} f(y),\ \forall \lambda \in \C^{\times}
\end{split}
\Bigg\}. 
 \end{align*}
 \item For all $q\geq 1$, we have
\[
H^q \big(U, \pi_{\C}^{-1} \Omega^{g-1}_{\PPC}(-d) \big)=0.
\]
\end{enumerate}
\end{prop}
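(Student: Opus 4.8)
The plan is to reduce everything to a statement about coherent cohomology on the Stein manifold $\pi_{\C}(U)$. First I would prove (1): the projection $\pi_{\C}|_U : U \to \pi_{\C}(U)$ is, by Lemma \ref{section lemma 2}(2), a fibration with a section $s$ such that $s \circ \pi_{\C}|_U$ is homotopic to $\id_U$ over $\pi_{\C}(U)$. Since $\msF_d = \pi_{\C}^{-1}\Omega^{g-1}_{\PPC}(-d)$ is a pullback sheaf, a homotopy over the base combined with the homotopy invariance of cohomology with coefficients in a pullback sheaf (applied fiberwise, the fibers being the convex — hence contractible — sets $\bbD_z$) shows that $s^{*}$ and $(\pi_{\C}|_U)^{*}$ are mutually inverse isomorphisms on cohomology. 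Concretely, one uses that for a locally constant-along-fibers situation $R^q(\pi_{\C}|_U)_{*}\pi_{\C}^{-1}\msG = \msG$ for $q=0$ and vanishes for $q\geq 1$ (the fibers being contractible Stein domains in $\C$), so the Leray spectral sequence degenerates and gives the claimed isomorphism $H^q(\pi_{\C}(U), \Omega^{g-1}_{\PPC}(-d)) \isomto H^q(U, \pi_{\C}^{-1}\Omega^{g-1}_{\PPC}(-d))$.

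Next, (2) is essentially bookkeeping: by Remark \ref{rmk:sheaf} the global sections of $\Omega^{g-1}_{\PPC}(-d)$ over the open set $\pi_{\C}(U) \subset \PPC$ are exactly the forms $f\omega$ with $f$ holomorphic on $\pi_{\C}^{-1}(\pi_{\C}(U))$ and homogeneous of degree $-g-d$; applying $\Gamma(\pi_{\C}(U), -)$ to the isomorphism of (1) with $q=0$ identifies this with $\Gamma(U, \pi_{\C}^{-1}\Omega^{g-1}_{\PPC}(-d))$. Then (3) follows by combining (1) with the vanishing of coherent sheaf cohomology on Stein manifolds (Cartan's Theorem B): by Lemma \ref{section lemma 2}(3), $\pi_{\C}(U)$ is a Stein manifold, $\Omega^{g-1}_{\PPC}(-d)$ restricts to a coherent (indeed locally free) $\mcO$-module sheaf on it, hence $H^q(\pi_{\C}(U), \Omega^{g-1}_{\PPC}(-d)) = 0$ for $q \geq 1$, and by (1) the same holds for $H^q(U, \pi_{\C}^{-1}\Omega^{g-1}_{\PPC}(-d))$.

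The main obstacle is making (1) rigorous: the cohomology of an inverse-image sheaf is not automatically computed by the base unless one knows the higher direct images vanish, and this requires the fiberwise contractibility together with some care about the topology of $\pi_{\C}|_U$. The cleanest route is to invoke the section-plus-homotopy statement of Lemma \ref{section lemma 2}(2) directly at the level of sheaf cohomology: a proper homotopy equivalence over the base identifies $H^{*}(U, \pi_{\C}^{-1}\msG)$ with $H^{*}(\pi_{\C}(U), \msG)$ for any sheaf $\msG$ on $\pi_{\C}(U)$, because $\pi_{\C}^{-1}$ is exact and the homotopy $h$ induces a chain homotopy on, say, the Godement (or any flasque) resolution pulled back along $\pi_{\C}|_U$ and along $s$. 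Alternatively — and perhaps more transparently — since $\Omega^{g-1}_{\PPC}(-d)|_{\pi_{\C}(U)}$ is the sheaf of holomorphic sections of a line bundle, one can pull that line bundle back to $U$, observe $\msF_d|_U$ is its sheaf of holomorphic sections, and use that $U$ is itself Stein (being biholomorphic via $\varphi$ to an open subset of $\pi_{\C}(V_x)\times \bbD \simeq \C^{g-1}\times\bbD$ whose fibers over the Stein base $\pi_{\C}(U)$ are convex; one checks $U$ is pseudoconvex exactly as in the proof of Lemma \ref{section lemma 2}(3)), so Theorem B applies directly to $U$ as well, giving (3) without even needing (1). I would present the argument via Lemma \ref{section lemma 2}(2)+(3) as the primary line, since those are the results already established in the excerpt.
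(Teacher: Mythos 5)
Your primary line of argument is exactly the paper's: (1) is cited to Lemma~\ref{section lemma 2}(2) together with the base-change/homotopy-invariance principle for topological inverse images (the paper invokes \cite[Corollary~2.7.7~(ii)]{KS90:She} for this), (2) is bookkeeping via Remark~\ref{rmk:sheaf}, and (3) combines (1) with Lemma~\ref{section lemma 2}(3) and Cartan's Theorem~B. That is correct.

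The alternative you sketch for (3), however, does not work. You propose to observe that $U$ is Stein and that $\msF_d|_U$ is the sheaf of holomorphic sections of the pullback line bundle, and then apply Theorem~B directly on $U$. But $\msF_d = \pi_{\C}^{-1}\Omega^{g-1}_{\PPC}(-d)$ is the \emph{topological} inverse image, not the $\mcO$-module pullback $\pi_{\C}^{*}$. Its sections over an open $V\subset U$ are sections of $\Omega^{g-1}_{\PPC}(-d)$ over neighborhoods of $\pi_{\C}(V)$ (equivalently, forms $f\omega$ with $f$ homogeneous of degree $-g-d$), whereas the sheaf of holomorphic sections of $\pi_{\C}^{*}$ of the line bundle has far more sections (arbitrary holomorphic functions on $V$, twisted). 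In particular $\msF_d|_U$ is \emph{not} a coherent $\mcO_U$-module -- its stalk at $y$ is the stalk of $\Omega^{g-1}_{\PPC}(-d)$ at $\pi_{\C}(y)$, which is a module over $\mcO_{\PPC,\pi_{\C}(y)}$ and not over $\mcO_{U,y}$ -- so Theorem~B on $U$ does not apply to it. This is precisely why the paper's proof first reduces to $\pi_{\C}(U)$ via (1) and only then uses coherence and Steinness down there. Stick with your primary line.
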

\begin{proof}
(1) follows from Lemma \ref{section lemma 2} (2) and \cite[Corollary 2.7.7 (ii)]{KS90:She}.

(2) follows directly from (1) and the description of $\Omega^{g-1}_{\PPC}(-d)$, cf. Remark \ref{rmk:sheaf}.

(3) By Lemma \ref{section lemma 2} (3), we know that $\pi_{\C}(U)$ is a Stein manifold. 
Moreover, $ \Omega^{g-1}_{\PPC}(-d)$ is a coherent sheaf on $\PPC$. 
Therefore, (3) follows from (1) and Cartan's Theorem B, cf. \cite[Theorem 7.4.3]{MR0344507}. 
\end{proof}

\subsection{$GL_g(\Q)$-equivariant structures}\label{subsect:equiv str}

In this subsection we explicitly describe the $GL_g(\Q)$-  \!\!\! equivariant structures on $\msF_d, \msF_d^{\Xi}$. 

In this paper, for a subgroup $G \subset GL_g(\Q)$ and a sheaf $\msF$ (of abelian groups) on $\Yo$, we define a $G$-equivariant structure on $\msF$ to be a collection $\{[\gamma]\}_{\gamma \in G}$ of isomorphisms 
\[
[\gamma]: \msF \isomto (\tp{\gamma})_{\ast} \msF
\]
subject to the conditions
\begin{enumerate}
\item[(i)] $[1] = \id_{\msF}$, 
\item[(ii)] $[\gamma_1\gamma_2]=(\tp{\gamma_2})_{\ast}[\gamma_1] \circ [\gamma_2], \ \forall \gamma_1, \gamma_2 \in G$. %
\end{enumerate}
Here, $\tp{\gamma}$ is the transpose matrix of $\gamma$, and $(\tp{\gamma})_{\ast} \msF$ (resp. $(\tp{\gamma_2})_{\ast}[\gamma_1]$) is the direct image of $\msF$ (resp. $[\gamma_1]$) with respect to the map $\tp{\gamma}: \Yo \ra \Yo$ (resp. $\tp{\gamma_2}: \Yo \ra \Yo$).




The $GL_g(\Q)$-equivariant structure on $\msF_d$ can be defined as follows. 
First, by Proposition \ref{local vanishing} (2), we have
 \begin{align*}
 \Gamma(U, \msF_d)=
 \Bigg\{ f \omega \ \Bigg| \
\begin{split}
f: \text{ holomorphic function on } \pi_{\C}^{-1}(\pi_{\C}(U)) \\
\text{s.t. } f(\lambda y)=\lambda^{-g-d} f(y),\ \forall \lambda \in \C^{\times}
\end{split}
\Bigg\}
\end{align*}
for a convex open subset $U \subset \Yo$, where
\[
\omega(y_1, \dots, y_g):= \sum_{i=1}^g (-1)^{i-1} y_i dy_1 \wedge \cdots \wedge \check{dy_i} \wedge \cdots \wedge dy_g.
\]

\begin{lem}\label{action on omega}
For $\gamma \in GL_g(\Q)$, we have
\[
\omega(\gamma y) =\det (\gamma) \omega(y).
\]
\end{lem}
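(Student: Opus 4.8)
The plan is to recognize $\omega$ as an interior product and then combine the transformation law of the top-degree form with the naturality of interior products. Let $\Omega := dy_1 \wedge \cdots \wedge dy_g$ be the standard holomorphic volume form on $\C^g$, and let $E := \sum_{i=1}^g y_i\,\partial/\partial y_i$ be the Euler vector field. A direct application of the alternating-sum formula for the interior product of a wedge of $1$-forms shows that $\omega = \iota_E\Omega$, since $\iota_E(dy_1\wedge\cdots\wedge dy_g) = \sum_{i=1}^g (-1)^{i-1}\,dy_i(E)\,dy_1\wedge\cdots\wedge\check{dy_i}\wedge\cdots\wedge dy_g$ and $dy_i(E) = y_i$. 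For $\gamma \in GL_g(\Q)$ write $T_\gamma\colon \C^g \to \C^g$, $y \mapsto \gamma y$, for the associated linear automorphism; then the quantity $\omega(\gamma y)$ appearing in the statement is precisely the pullback $T_\gamma^{*}\omega$.

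Two standard facts then finish the argument. First, since $T_\gamma$ is linear with matrix $\gamma$, the classical change-of-variables identity gives $T_\gamma^{*}\Omega = \det(\gamma)\,\Omega$ (substitute $d(\gamma y)_i = \sum_k \gamma_{ik}\,dy_k$ and expand the $g$-fold wedge). Second, $T_\gamma$ preserves the Euler field: it commutes with the scaling flow $y \mapsto e^t y$ by linearity, so its differential (which is $\gamma$ at every point) carries $E_y = y$ to $\gamma y = E_{\gamma y}$, i.e. $(T_\gamma)_{*}E = E$, and likewise $(T_\gamma^{-1})_{*}E = E$. Using the compatibility of interior product with pullback under a diffeomorphism, $T^{*}(\iota_X\alpha) = \iota_{(T^{-1})_{*}X}\!\big(T^{*}\alpha\big)$, we obtain
\[
T_\gamma^{*}\omega \;=\; T_\gamma^{*}(\iota_E\Omega) \;=\; \iota_E\!\big(T_\gamma^{*}\Omega\big) \;=\; \iota_E\!\big(\det(\gamma)\,\Omega\big) \;=\; \det(\gamma)\,\iota_E\Omega \;=\; \det(\gamma)\,\omega,
\]
which is the assertion. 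Note the identity is a polynomial identity in the entries of $\gamma$, so working over $\C$ suffices.

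I do not expect a genuine obstacle here; the only points needing care are bookkeeping: getting the sign right in $\omega = \iota_E\Omega$, and confirming that "substituting $\gamma y$ into $\omega$" is the pullback $T_\gamma^{*}$. As an alternative to the interior-product argument one can proceed by brute force, substituting $d(\gamma y)_i = \sum_k \gamma_{ik}\,dy_k$ directly into $\omega(\gamma y) = \sum_i (-1)^{i-1}(\gamma y)_i\, d(\gamma y)_1\wedge\cdots\wedge\check{d(\gamma y)_i}\wedge\cdots\wedge d(\gamma y)_g$, expanding by multilinearity, and collecting terms; the coefficient of each $dy_1\wedge\cdots\wedge\check{dy_j}\wedge\cdots\wedge dy_g$ then assembles, via a Laplace cofactor expansion, into $\det(\gamma)$ times $y_j$ with the matching sign. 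Since the invariant argument is cleaner, that is the route I would take.
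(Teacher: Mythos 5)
Your proof is correct. The paper's own proof simply observes that both sides of the identity are multiplicative in $\gamma$ (i.e.\ the map $\gamma \mapsto \det(\gamma)^{-1}\omega(\gamma y)$ would be a homomorphism), so it suffices to check the identity on a generating set of $GL_g(\Q)$, and it checks the elementary matrices directly, omitting the computation. Your argument is genuinely different and more conceptual: by writing $\omega = \iota_E\Omega$ with $\Omega = dy_1\wedge\cdots\wedge dy_g$ the standard volume form and $E$ the Euler vector field, and noting that any linear map $T_\gamma$ fixes $E$ (since $E_y=y$), the transformation law collapses to the determinantal transformation of $\Omega$ together with the naturality of the interior product under diffeomorphisms. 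What your route buys is that it explains where the $\det$ comes from in one structural stroke and avoids any case analysis; what the paper's route buys is that it needs no machinery beyond the definition of $\omega$ and elementary row operations, which keeps the verification entirely at the level of $2\times 2$-type computations. Both are complete. The only small bookkeeping point worth making explicit is that the formula $T^*(\iota_X\alpha)=\iota_{(T^{-1})_*X}(T^*\alpha)$ requires $T$ to be a diffeomorphism so that the pushforward $(T^{-1})_*X$ makes sense, which holds here since $\gamma\in GL_g$.
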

\begin{proof}
It suffices to prove the identity for elementary matrices $\gamma$. The case of elementary matrices can be checked easily, and we omit the proof. 
\end{proof}

\begin{dfn}\label{dfn gamma on U}
For $\gamma \in GL_g(\Q)$ and a convex open subset $U \subset \Yo$, let $[\gamma]_U$ denote the pull-back map
\begin{align*}
[\gamma]_U: \ &\Gamma(U, \msF_d) \stackrel{\sim}{\longrightarrow} \Gamma(U, (\tp{\gamma})_{\ast}\msF_d) = \Gamma((\tp{\gamma})^{-1} U, \msF_d); \\
&f(y)\omega(y) \longmapsto f(\tp{\gamma}y)\omega(\tp{\gamma}y)=\det (\gamma) f(\tp{\gamma}y)\omega(y).
\end{align*}
Here $f(\tp{\gamma}y)$ is regarded as a holomorphic function of $y \in  (\tp{\gamma})^{-1}\pi_{\C}^{-1}(\pi_{\C}(U)) =  \pi_{\C}^{-1}(\pi_{\C}((\tp{\gamma})^{-1}U))$. 
We may drop the subscript $U$ and write as $[\gamma]=[\gamma]_U$ if there is no confusion. 
\end{dfn}

\begin{lem}\label{lem:equiv str}
\begin{enumerate}
\item Let $V, U\subset \Yo$ be convex open subsets such that $V\subset U$, and let $s \in \Gamma(U, \msF_d)$ be a section. Then we have
\[
[\gamma]_U(s)|_V=[\gamma]_V(s|_V).
\]
\item The collection $\{[\gamma]_U \mid U\subset \Yo: \text{ convex open}\}$ defines an isomorphism of sheaves
\[
[\gamma]: \msF_d \stackrel{\sim}{\longrightarrow} (\tp{\gamma})_{\ast}\msF_d.
\]
\item 
The collection $\{[\gamma]\}_{\gamma \in GL_g(\Q)}$ defines a $GL_g(\Q)$-equivariant structure on $\msF_d$. 
\end{enumerate}
\end{lem}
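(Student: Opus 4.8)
The plan is to do all the real work over \emph{convex} open subsets of $\Yo$, where Proposition \ref{local vanishing}(2) identifies $\Gamma(U,\msF_d)$ with a space of homogeneous holomorphic functions on $\pi_{\C}^{-1}(\pi_{\C}(U))$ (times $\omega$), and then to propagate everything to arbitrary opens using the observation that the convex open subsets form a basis of the topology of $\Yo$: any point of $\Yo = \C^g \setm \bmi\R^g$ has a small open ball around it disjoint from the closed set $\bmi\R^g$, and balls are convex. Standard sheaf theory then says that a family of maps on sections over convex opens which is compatible with restriction glues uniquely to a morphism of sheaves, and that two morphisms of sheaves agreeing over all convex opens coincide; this is the mechanism by which (1) feeds into (2) and (3). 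I also record at the outset that, since $\gamma$ has rational (hence real) entries, $\tp\gamma$ preserves $\bmi\R^g$ and therefore restricts to a linear automorphism of $\Yo$ carrying convex opens to convex opens; in particular $(\tp\gamma)^{-1}U$ is a convex open subset of $\Yo$ whenever $U$ is, so Definition \ref{dfn gamma on U} makes sense.

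For (1) I would first note that, under the identifications of Proposition \ref{local vanishing}(2), the restriction map $\Gamma(U,\msF_d)\to\Gamma(V,\msF_d)$ for convex $V\subset U$ is nothing but restriction of the homogeneous holomorphic representative $f$: indeed $\pi_{\C}(V)\subset\pi_{\C}(U)$ forces $\pi_{\C}^{-1}(\pi_{\C}(V))\subset\pi_{\C}^{-1}(\pi_{\C}(U))$, and the isomorphisms of Proposition \ref{local vanishing}(1) are the natural (base-change/adjunction) maps and hence are compatible with restriction. Granting this, both $[\gamma]_U(f\omega)|_V$ and $[\gamma]_V\big((f\omega)|_V\big)$ are represented by the same function $y\mapsto\det(\gamma)\,f(\tp\gamma y)$ on $\pi_{\C}^{-1}\big(\pi_{\C}((\tp\gamma)^{-1}V)\big)$, so (1) holds.

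For (2), part (1) says precisely that $\{[\gamma]_U\}_U$, over convex opens $U$, is compatible with the restriction maps of $\msF_d$ and of $(\tp\gamma)_*\msF_d$; by the basis remark this family extends uniquely to a morphism of sheaves $[\gamma]\colon\msF_d\to(\tp\gamma)_*\msF_d$. It is an isomorphism because each $[\gamma]_U$ is bijective — it is precomposition of $f$ with the invertible linear map $\tp\gamma$ followed by multiplication by the nonzero scalar $\det\gamma$ (Lemma \ref{action on omega}), with inverse induced by $[\gamma^{-1}]$ — so $[\gamma]$ is an isomorphism on every stalk. For (3), condition (i) is immediate from Definition \ref{dfn gamma on U} with $\gamma=1$. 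For (ii) it again suffices, by the basis remark, to check the identity on sections over a convex open $U$: applying $[\gamma_2]_U$ sends $f\omega$ to $\det(\gamma_2)\,f(\tp{\gamma_2}y)\,\omega(y)$ over $(\tp{\gamma_2})^{-1}U$, and then $(\tp{\gamma_2})_*[\gamma_1]$ acts as $[\gamma_1]_{(\tp{\gamma_2})^{-1}U}$, producing $\det(\gamma_1)\det(\gamma_2)\,f(\tp{\gamma_2}\tp{\gamma_1}y)\,\omega(y)$ over $(\tp{\gamma_1})^{-1}(\tp{\gamma_2})^{-1}U$; since $\tp{\gamma_2}\tp{\gamma_1}=\tp{(\gamma_1\gamma_2)}$, $\det(\gamma_1)\det(\gamma_2)=\det(\gamma_1\gamma_2)$, and $(\tp{\gamma_1})^{-1}(\tp{\gamma_2})^{-1}U=(\tp{(\gamma_1\gamma_2)})^{-1}U$, this is exactly $[\gamma_1\gamma_2]_U(f\omega)$.

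The computations here are entirely routine, and I expect the only point requiring genuine care to be the compatibility used in (1): that the identification of $\Gamma(U,\msF_d)$ with a space of homogeneous holomorphic functions is natural in $U$ for ordinary restriction of functions. I would pin this down by appealing to the naturality of the maps in Proposition \ref{local vanishing}(1) (equivalently, of the adjunction $\pi_{\C}^{-1}(\pi_{\C})_{*}\to\id$). All the rest is bookkeeping with the direct-image functors and with transposes — in particular, keeping track that the various substitution domains $(\tp\gamma)^{-1}U$ match up as claimed.
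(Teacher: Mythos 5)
Your proof is correct and follows essentially the same route as the paper: verify the key identities on sections over convex opens in the explicit $f\omega$ description from Proposition \ref{local vanishing}(2), then glue using the fact that convex opens form a basis. The only difference is that you spell out the compatibility-with-restriction argument for (1) (which the paper dismisses as "clear") and are somewhat more explicit about the determinant bookkeeping in (3), but the structure and key calculations are identical.
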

\begin{proof}
(1) is clear, and (2) follows from (1) since convex open subsets form a basis of open subsets of $\Yo$.
We prove (3). The condition (i) is obvious.

Let $U\subset \Yo$ be a convex open subset, and let $s(y)=f(y) \omega(y)\in \Gamma(U, \msF_d)$ be a section. Then for $\gamma_1, \gamma_2 \in GL_g(\Q)$, we have
\begin{align*}
(\tp{\gamma_2})_{\ast}[\gamma_1] \circ [\gamma_2](s(y))
&= [\gamma_1]_{\tp{\gamma_2}^{-1}U}\circ [\gamma_2]_U (s(y)) \\
&=[\gamma_1]_{\tp{\gamma_2}^{-1}U} (s(\tp{\gamma_2}y)) \\
&=s(\tp{\gamma_2}\tp{\gamma_1}y) \\
&=[\gamma_1\gamma_2](s(y)).
\end{align*}
%
Since convex open subsets form a basis of open subsets of $\Yo$, this shows the condition (ii). 
\end{proof}


Hence we obtain the $GL_g(\Q)$-equivariant structure on $\msF_d$. 
Next we describe the $GL_g(\Q)$-equivariant structure on $\msF_d^{\Xi}$. First, note that the conjugate action 
\[
[\gamma]: \Z[\Xi] \isomto \Z[\Xi]; Q \mapsto [\gamma](Q)=\gamma Q\gamma^{-1} 
\]
of $GL_g(\Q)$ on $\Z[\Xi]$ naturally induces a $GL_g(\Q)$-equivariant structure on the associated constant sheaf $\underline{\Z[\Xi]}$. 
%
%
%
Therefore, for a $GL_g(\Q)$-equivariant sheaf $\msF$, the sheaf
\[
\msF^{\Xi} =\underline{\Hom}\big(\underline{\Z[\Xi]}, \msF \big) 
\]
has a natural $GL_g(\Q)$-equivariant structure induced from those of $\underline{\Z[\Xi]}$ and $\msF$.  In particular, we obtain a $GL_g(\Q)$-equivariant structure on $\msF_d^{\Xi}$. 

More concretely, for an open subset $U \subset \Yo$ and a section 
\[
\phi \in \Gamma(U, \msF^{\Xi}) = \Map (\Xi, \Gamma(U,\msF)), 
\]
cf. Remark \ref{rmk:sheaf1}, the $GL_g(\Q)$-equivariant structure on $\msF^{\Xi}$ can be computed as 
\[
[\gamma](\phi)(Q)=[\gamma](\phi([\gamma^{-1}](Q))=[\gamma](\phi(\gamma^{-1} Q \gamma))
\]
for $\gamma \in GL_g(\Q)$ and $Q \in \Xi$. 
%
In particular, we see that for $Q \in \Xi$, the evaluation map
\[
\ev_Q: \msF^{\Xi} \ra \msF, 
\]
cf. Remark \ref{rmk:sheaf1}, is a $\Gamma_Q$-equivariant map, where $\Gamma_Q=\Stab_{SL_g(\Z)}(Q) \subset SL_g(\Z)$ is the stabilizer of $Q \in \Xi$ in $SL_g(\Z)$.

\section{Equivariant cohomology}\label{sect:equiv coh}

Recall that $\Gamma_Q=\Stab_{SL_g(\Z)}(Q) \subset SL_g(\Z)$ denotes the stabilizer of $Q \in \Xi$ in $SL_g(\Z)$. 
In this section we compute the equivariant cohomology groups  
\begin{align*}
&H^q(\Yo, \Gamma_Q, \msF_d), \\
&H^q(\Yo, SL_g(\Z), \msF_d^{\Xi})
\end{align*}
using the equivariant \v{C}ech complexes, cf. Corollary \ref{cor:equiv coh}. 
We closely follow the argument in \cite{BHYY19:Can}.

Here, for a subgroup $G \subset GL_g(\Q)$, the equivariant cohomology
\[
H^q(\Yo, G, - ): \mathbf{Sh}(\Yo, G) \ra \mathbf{Ab}
\]
is defined to be the right derived functor of the $G$-invariant global section functor
\[
\Gamma(\Yo, G, -): \mathbf{Sh}(\Yo, G) \ra \mathbf{Ab}; \ \msF \mapsto \Gamma(\Yo, \msF)^G,
\]
where $ \mathbf{Sh}(\Yo, G)$ is the category of $G$-equivariant sheaves on $\Yo$, $\mathbf{Ab}$ is the category of abelian groups, and $\Gamma(\Yo, \msF)^G$ is the $G$-invariant part of the global section $\Gamma(\Yo, \msF)$.

\subsection{Open covering}

In this subsection we introduce a certain $GL_g(\Q)$-stable open covering of $\Yo$. 
For $\alpha \in \C^g \setm \{0\}$, we define an open subset $V_{\alpha} \subset \C^g \setm \{0\}$ by
\[
V_{\alpha} :=\{y \in \C^g \mid \re (\brk{\alpha,y} ) >0\} \subset \C^g \setm \{0\}. 
\]
%
Clearly, $V_{\alpha} \subset \C^g \setm \{0\}$ is a convex open subset. 
Let 
\begin{align*}
\XQ := \Q^g \setm \{0\}
\end{align*}
denote the set of all non-zero rational vectors on which $GL_g(\Q)$ acts by the matrix multiplication from the left. 
Then we easily see that 
\[
\Yo = \bigcup_{\alpha \in \XQ} V_{\alpha}. 
\]
Let $\mcX_{\Q}:=\{V_{\alpha}\}_{\alpha \in \XQ}$ denote this open covering of $\Yo$.  
For $r \geq 0$, $I=(\alpha_1, \dots, \alpha_r) \in (\XQ)^r$, set
\begin{align*}
V_I := \bigcap_{i=1}^r V_{\alpha_i} =\{y \in \Yo \mid \re (\brk{\alpha_i, y})>0 \ \forall i\} \subset \Yo.
\end{align*}
In the case $r=0$, we assume $(\XQ)^0=\{\emptyset\}$ and $V_{\emptyset}=\Yo$. 
Let 
\begin{align*}
j_I: V_I \hra \Yo
\end{align*}
denote the inclusion map.

First, we show that $\mcX_{\Q}=\{V_{\alpha}\}_{\alpha \in \XQ}$ is a $GL_g(\Q)$-stable open covering. 
Note that the group $GL_g(\Q)$ acts diagonally on $(\XQ)^r$. For $I=(\alpha_1, \dots, \alpha_r) \in (\XQ)^r$ and $\gamma \in GL_g(\Q)$, let
\[
\gamma I = (\gamma \alpha_1, \dots, \gamma \alpha_r) \in (\XQ)^r
\]
denote this diagonal action of $\gamma$ on $I$.
\begin{lem}\label{lem stable cov}
For $r\geq 0$, $I=(\alpha_1, \dots, \alpha_r) \in (\XQ)^r$, and $\gamma \in GL_g(\Q)$, we have
\[
V_{\gamma I} = \tp{\gamma}^{-1}V_I.
\]
In other words, we have the following commutative diagram:
\begin{align*}
\xymatrix{
V_I \  \ar@{^{(}->}[r]^{j_I} \ar[d]_{\tp{\gamma}^{-1}}^{\wr} & \Yo \ar[d]^{\tp{\gamma}^{-1}}\\
V_{\gamma I} \  \ar@{^{(}->}[r]^{j_{\gamma I}} &\Yo
}
\end{align*}
\end{lem}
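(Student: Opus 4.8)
The plan is to unwind both sides directly from the definition of $V_I$ and the defining property of the $GL_g(\Q)$-action, reducing everything to the elementary identity $\re \brk{\gamma\alpha, y} = \re \brk{\alpha, \tp{\gamma} y}$ for the scalar product. First I would recall that $\brk{x, z} = \tp{x} z$, so that for any $\gamma \in GL_g(\Q)$, $\alpha \in \XQ$, and $y \in \C^g$ we have
\[
\brk{\gamma\alpha, y} = \tp{(\gamma\alpha)} y = \tp{\alpha}\,\tp{\gamma}\, y = \brk{\alpha, \tp{\gamma} y}.
\]
Taking real parts, $y \in V_{\gamma\alpha}$ iff $\re\brk{\gamma\alpha, y} > 0$ iff $\re\brk{\alpha, \tp{\gamma}y} > 0$ iff $\tp{\gamma}y \in V_\alpha$ iff $y \in \tp{\gamma}^{-1} V_\alpha$. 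This gives $V_{\gamma\alpha} = \tp{\gamma}^{-1} V_\alpha$ for a single index.

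Next I would intersect over $i = 1, \dots, r$: since $\gamma I = (\gamma\alpha_1, \dots, \gamma\alpha_r)$, we get
\[
V_{\gamma I} = \bigcap_{i=1}^r V_{\gamma\alpha_i} = \bigcap_{i=1}^r \tp{\gamma}^{-1} V_{\alpha_i} = \tp{\gamma}^{-1}\bigcap_{i=1}^r V_{\alpha_i} = \tp{\gamma}^{-1} V_I,
\]
using that $\tp{\gamma}^{-1}$ is a bijection of $\C^g$ and hence commutes with intersections. The case $r = 0$ is handled separately but trivially: $V_{\gamma\emptyset} = V_\emptyset = \Yo = \tp{\gamma}^{-1}\Yo$ since $\tp{\gamma}^{-1}$ preserves $\Yo = \C^g \setm \bmi\R^g$ (as $\tp{\gamma}^{-1}$ has rational, hence real, entries, it maps $\bmi\R^g$ onto $\bmi\R^g$). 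Finally, the commutativity of the displayed diagram is just a restatement of the set equality $V_{\gamma I} = \tp{\gamma}^{-1} V_I$ together with the obvious fact that the inclusions $j_I, j_{\gamma I}$ are compatible with the restriction of $\tp{\gamma}^{-1}: \Yo \to \Yo$, so no further argument is needed there.

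There is no real obstacle here; the statement is a bookkeeping lemma. The only point requiring the slightest care is the direction of the action — checking that it is $\tp{\gamma}^{-1}$ and not $\tp{\gamma}$ that appears, which is forced by the computation $\brk{\gamma\alpha, y} = \brk{\alpha, \tp{\gamma}y}$ above — and confirming that $\tp{\gamma}^{-1}$ does indeed preserve $\Yo$ so that all the maps in the diagram are well-defined. Both are immediate.
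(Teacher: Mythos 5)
Your proof is correct and follows the same route as the paper: unwind the definition of $V_I$ and reduce to the identity $\brk{\gamma\alpha_i, y} = \brk{\alpha_i, \tp{\gamma}y}$. The extra checks (the $r=0$ case, $\tp{\gamma}^{-1}$ preserving $\Yo$) are harmless but not needed beyond what the paper's one-line argument already establishes.
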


\begin{proof}
For $y \in \Yo$, we have $y \in V_{\gamma I}$ if and only if
\[
0< \re (\brk{\gamma \alpha_i, y}) = \re (\brk{\alpha_i, \tp{\gamma} y})
\]
for all $i \in \{1, \dots, r\}$. This proves the lemma.
\end{proof}


\subsection{The equivariant \v{C}ech complex}\label{subsect:cech cpx}

Let $\msF$ be a $GL_g(\Q)$-equivariant sheaf on $\Yo$. We consider the $GL_g(\Q)$-equivariant ``sheaf \v{C}ech complex''
\begin{align*}
\msC^{\bullet}(\mcX_{\Q}, \msF): \msC^0(\mcX_{\Q}, \msF) \stackrel{\dd^0}{\longrightarrow} \msC^1(\mcX_{\Q}, \msF)  \stackrel{\dd^1}{\longrightarrow} \msC^2(\mcX_{\Q}, \msF)  \stackrel{\dd^2}{\longrightarrow}  \cdots
\end{align*}
defined as follows. For $q \geq 0$, put 
\[
\msC^q(\mcX_{\Q}, \msF):=\prod_{I \in (\XQ)^{q+1}} j_{I \ast}j_I^{-1} \msF,
\]
where $j_{I \ast}$ (resp. $j_I^{-1}$) is the direct image (resp. inverse image) functor induced by the inclusion map $j_I: V_I \hookrightarrow \Yo$.
By Lemma \ref{lem stable cov}, the $GL_g(\Q)$-equivariant structure 
\[
[\gamma]: \msF \isomto (\tp{\gamma})_{\ast} \msF
\]
of $\msF$ induces isomorphisms
\begin{align*}
\begin{cases}
[\gamma]: j_{I \ast}j_I^{-1} \msF \isomto j_{I \ast}j_{I}^{-1} (\tp{\gamma})_{\ast} \msF \simeq  (\tp{\gamma})_{\ast} j_{\gamma I \ast}j_{\gamma I}^{-1} \msF, \\
[\gamma]: \msC^{q}(\mcX_{\Q}, \msF) \isomto (\tp{\gamma})_{\ast} \msC^{q}(\mcX_{\Q}, \msF). 
\end{cases}
\end{align*}
We easily see that this defines a $GL_g(\Q)$-equivariant structure on $\msC^{q}(\mcX_{\Q}, \msF)$. 
More concretely, for an open subset $U \subset \Yo$ and a section
\[
s=(s_I)_{I \in (\XQ)^{q+1}} \in \Gamma(U, \msC^q(\mcX_{\Q}, \msF))=\prod_{I \in (\XQ)^{q+1}}\Gamma(U\cap V_I, \msF), 
\]
we have
\begin{align}\label{eqn gamma sI}
([\gamma](s))_I=[\gamma](s_{\gamma^{-1} I}),
\end{align}
where $([\gamma](s))_I$ is the $I$-th component of $[\gamma](s)$.

The differential map
\[
\dd^q: \msC^q(\mcX_{\Q}, \msF) \ra \msC^{q+1}(\mcX_{\Q}, \msF)
\]
is given by
\begin{align*}
(\dd^q(s))_{(\alpha_0, \dots, \alpha_{q+1})} = \sum_{i=0}^{q+1} (-1)^i s_{(\alpha_0, \dots, \check{\alpha_i}, \dots, \alpha_{q+1})}|_{U\cap V_{(\alpha_0, \dots, \alpha_{q+1})}} 
\end{align*}
for an open subset $U \subset \Yo$ and a section $s=(s_I)_{I \in (\XQ)^{q+1}} \in \Gamma(U, \msC^q(\mcX_{\Q}, \msF))$. 
Here $\check{\alpha_i}$ means that $\alpha_i$ is omitted. 
Moreover, there is a map
\[
\dd^{-1} : \msF \ra \msC^0(\mcX_{\Q}, \msF)=\prod_{\alpha \in \XQ} j_{\alpha \ast}j_{\alpha}^{-1} \msF
\]
induced by the natural maps $\msF \ra  j_{\alpha \ast}j_{\alpha}^{-1} \msF$. 

Then we have the following. 
\begin{lem}\label{lem:hom to 0}
\begin{enumerate}
\item For $q \geq -1$, the differential map $\dd^q$ is a $GL_g(\Q)$-equivariant map, i.e., $[\gamma]\circ \dd^q=\dd^q \circ [\gamma]$ for $\gamma \in GL_g(\Q)$. 
\item 
For any $\alpha_0 \in \XQ$, the sequence
\[
0 \longrightarrow \msF \big|_{V_{\alpha_0}} \stackrel{\dd^{-1}}{\longrightarrow} \msC^0(\mcX_{\Q}, \msF)\big|_{V_{\alpha_0}}  \stackrel{\dd^0}{\longrightarrow} \msC^1(\mcX_{\Q}, \msF)\big|_{V_{\alpha_0}}
\longrightarrow \cdots
\]
is homotopic to zero.
In particular, the sequence
\[
0 \longrightarrow \msF \stackrel{\dd^{-1}}{\longrightarrow} \msC^0(\mcX_{\Q}, \msF)  \stackrel{\dd^0}{\longrightarrow} \msC^1(\mcX_{\Q}, \msF)
\longrightarrow \cdots
\]
is an exact sequence of $GL_g(\Q)$-equivariant sheaves since $\Yo=\bigcup_{\alpha_0 \in \XQ}V_{\alpha_0}$. 
\end{enumerate}
\end{lem}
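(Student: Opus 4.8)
The plan is to prove (1) by a direct computation with the explicit description \eqref{eqn gamma sI} of the equivariant structure on the \v{C}ech complex, and (2) by writing down the standard contracting homotopy of a \v{C}ech complex relative to the distinguished open set $V_{\alpha_0}$ of the covering $\mcX_{\Q}$.

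For (1), I would first recall that $([\gamma](s))_I=[\gamma](s_{\gamma^{-1}I})$ for a section $s=(s_I)_I$ of $\msC^q(\mcX_{\Q},\msF)$, and that each $[\gamma]\colon\msF\isomto(\tp{\gamma})_{\ast}\msF$, being a morphism of sheaves, commutes with the restriction maps of $\msF$. Equivariance of $\dd^{-1}$ then reduces to the fact that $[\gamma]$ intertwines the adjunction map $\msF\to j_{\alpha\ast}j_{\alpha}^{-1}\msF$ (restriction to $V_{\alpha}$) with the one for $V_{\gamma\alpha}$, which is Lemma \ref{lem stable cov}. For $q\geq 0$, I would apply both $[\gamma]\circ\dd^q$ and $\dd^q\circ[\gamma]$ to a section $s$ over an open $U\subset\Yo$, compute the $(\alpha_0,\dots,\alpha_{q+1})$-component of each using the alternating-sum formula for $\dd^q$ together with $([\gamma]s)_I=[\gamma](s_{\gamma^{-1}I})$, and match the two sides term by term, using once more that $[\gamma]$ commutes with the restrictions $(\,\cdot\,)|_{U\cap V_{(\alpha_0,\dots,\alpha_{q+1})}}$. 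This is a mechanical verification; it also shows along the way that the $\dd^q$ are morphisms of sheaves and that $\dd^{q}\circ\dd^{q-1}=0$.

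For (2), fix $\alpha_0\in\XQ$. The crucial bookkeeping point is that for any open $W\subseteq V_{\alpha_0}$ and any tuple $I$ one has $W\cap V_I=W\cap V_{\alpha_0}\cap V_I=W\cap V_{(\alpha_0,I)}$, where $(\alpha_0,I)$ is the tuple obtained by prepending $\alpha_0$ to $I$; hence $\Gamma\big(W,(j_{I\ast}j_I^{-1}\msF)|_{V_{\alpha_0}}\big)=\Gamma(W\cap V_I,\msF)$ is canonically the same group as $\Gamma\big(W,(j_{(\alpha_0,I)\ast}j_{(\alpha_0,I)}^{-1}\msF)|_{V_{\alpha_0}}\big)$. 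Using this identification, define morphisms of sheaves on $V_{\alpha_0}$ by $(k^qs)_{(\alpha_1,\dots,\alpha_q)}:=s_{(\alpha_0,\alpha_1,\dots,\alpha_q)}$ for $q\geq 1$ and $k^0s:=s_{\alpha_0}$, so that $k^q\colon\msC^q(\mcX_{\Q},\msF)|_{V_{\alpha_0}}\to\msC^{q-1}(\mcX_{\Q},\msF)|_{V_{\alpha_0}}$ for $q\geq 1$ and $k^0\colon\msC^0(\mcX_{\Q},\msF)|_{V_{\alpha_0}}\to\msF|_{V_{\alpha_0}}$; these are visibly compatible with restriction in $W$. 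A direct telescoping computation with the alternating-sum formula gives, with the convention $\dd^{-1}\colon\msF|_{V_{\alpha_0}}\to\msC^0(\mcX_{\Q},\msF)|_{V_{\alpha_0}}$, the homotopy identities $k^{q+1}\circ\dd^q+\dd^{q-1}\circ k^q=\id$ for all $q\geq 0$ together with $k^0\circ\dd^{-1}=\id$: in each composite the summands indexed by tuples containing $\alpha_0$ cancel in pairs, leaving exactly the identity. Thus the complex $0\to\msF|_{V_{\alpha_0}}\to\msC^0(\mcX_{\Q},\msF)|_{V_{\alpha_0}}\to\cdots$ is homotopic to zero, hence exact. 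Since exactness of a complex of sheaves is local and $\Yo=\bigcup_{\alpha_0\in\XQ}V_{\alpha_0}$, the sequence $0\to\msF\to\msC^0(\mcX_{\Q},\msF)\to\cdots$ is exact, and by (1) it is a sequence of $GL_g(\Q)$-equivariant sheaves with $GL_g(\Q)$-equivariant differentials.

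The argument is essentially formal, so there is no real obstacle of an analytic or geometric nature — in particular the vanishing results of Section \ref{sect:sp and sh} are not needed here. The only points requiring care are the combinatorial ones: verifying the identification $W\cap V_I=W\cap V_{(\alpha_0,I)}$ for $W\subseteq V_{\alpha_0}$, which is precisely what forces the homotopy to live on $V_{\alpha_0}$ rather than on all of $\Yo$, and keeping the signs and the shifted indices straight in the two telescoping sums that produce the homotopy identity.
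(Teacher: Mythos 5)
Your proof is correct and follows the same route as the paper: for (1) the paper gives exactly the component-by-component computation you describe (using \eqref{eqn gamma sI}, the alternating-sum formula for $\dd^q$, and the compatibility of $[\gamma]$ with restrictions as in Lemma~\ref{lem:equiv str}), and for (2) the paper simply cites the standard references for the contracting homotopy of a \v{C}ech complex relative to a distinguished member of the covering, which is precisely the homotopy $k^q$ you write out explicitly — so you have filled in the details the paper delegates to \cite{MR0345092}, \cite{stacks-project}, and \cite{KS90:She}.
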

\begin{proof}
(1) Let $U \subset \Yo$ be an open subset, and let 
\[
s=(s_I)_{I \in (\XQ)^{q+1}} \in \Gamma(U, \msC^q(\mcX_{\Q}, \msF))=\prod_{I \in (\XQ)^{q+1}}\Gamma(U\cap V_I, \msF)
\]
be a section. 
Let $J=(\alpha_0, \dots, \alpha_{q+1}) \in (\XQ)^{q+2}$, and put $J^{(i)}:=(\alpha_0, \dots, \check{\alpha_i}, \dots, \alpha_{q+1}) \in (\XQ)^{q+1}$ for $i=0, \dots, q+1$. 
Then we have 
\begin{align*}
(\dd^q([\gamma](s)))_{J}
&= \sum_{i=0}^{q+1} (-1)^i  [\gamma](s_{\gamma^{-1} J^{(i)}})|_{\tp{\gamma}^{-1}U\cap V_{J}}  \\
&=\sum_{i=0}^{q+1} (-1)^i  [\gamma](s_{\gamma^{-1} J^{(i)}}|_{U\cap V_{\gamma^{-1}J}} ) \\
&=[\gamma]\left(\sum_{i=0}^{q+1} (-1)^i  s_{\gamma^{-1} J^{(i)}}|_{U\cap V_{\gamma^{-1}J}} \right) \\
&=([\gamma](\dd^q(s)))_J. 
\end{align*}

(2) 
See \cite[Th\'eor\`eme 5.2.1]{MR0345092} or \cite[{Lemma 02FU}]{stacks-project}. Although they prove only the exactness of the sequence, we can prove the statement in this lemma using essentially the same argument. 
Cf. also \cite[Lemma 2.8.2, Remark 2.8.3]{KS90:She}. 
\end{proof}



By applying the additive functor 
\[
\underline{\Hom}\big(\underline{\Z[\Xi]}, -\big): \mathbf{Sh}(\Yo, GL_g(\Q)) \ra  \mathbf{Sh}(\Yo, GL_g(\Q)); \ \msG \mapsto \msG^{\Xi} := \underline{\Hom}\big(\underline{\Z[\Xi]}, \msG \big), 
\]
we obtain the following.  
\begin{cor}\label{lem:FXi resol}
The sequence
\[
0 \longrightarrow \msF^{\Xi} \stackrel{\dd^{-1}}{\longrightarrow} \msC^0(\mcX_{\Q}, \msF)^{\Xi}  \stackrel{\dd^0}{\longrightarrow} \msC^1(\mcX_{\Q}, \msF)^{\Xi}
\longrightarrow \cdots
\]
is an exact sequence of $GL_g(\Q)$-equivariant sheaves. 
\end{cor}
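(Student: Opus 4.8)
The plan is to deduce the corollary from Lemma~\ref{lem:hom to 0} by exploiting that the functor $(-)^{\Xi} = \underline{\Hom}(\underline{\Z[\Xi]}, -)$ is additive and commutes with restriction to open subsets, so that it carries the \emph{locally null-homotopic} resolution of Lemma~\ref{lem:hom to 0}\,(2) to another locally null-homotopic complex. Exactness of a sequence of sheaves is a local property, and $\Yo = \bigcup_{\alpha_0 \in \XQ} V_{\alpha_0}$, so it will suffice to show that for each $\alpha_0 \in \XQ$ the restricted complex
\[
0 \longrightarrow \msF^{\Xi}\big|_{V_{\alpha_0}} \stackrel{\dd^{-1}}{\longrightarrow} \msC^0(\mcX_{\Q}, \msF)^{\Xi}\big|_{V_{\alpha_0}} \stackrel{\dd^0}{\longrightarrow} \msC^1(\mcX_{\Q}, \msF)^{\Xi}\big|_{V_{\alpha_0}} \longrightarrow \cdots
\]
is homotopic to zero, whence exact.

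First I would record the elementary fact that $(-)^{\Xi}$ commutes with restriction: for any sheaf $\msG$ on $\Yo$ and any open $V \subset \Yo$ one has $\underline{\Hom}(\underline{\Z[\Xi]}, \msG)|_V \simeq \underline{\Hom}(\underline{\Z[\Xi]}|_V, \msG|_V)$, this being the defining description of the internal $\Hom$ sheaf together with $\underline{\Z[\Xi]}|_V = \underline{\Z[\Xi]}$. Consequently the restricted complex displayed above is precisely the image under $\underline{\Hom}(\underline{\Z[\Xi]}|_{V_{\alpha_0}}, -)$ of the complex $0 \to \msF|_{V_{\alpha_0}} \to \msC^{\bullet}(\mcX_{\Q}, \msF)|_{V_{\alpha_0}}$ of sheaves on $V_{\alpha_0}$. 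Next I would invoke Lemma~\ref{lem:hom to 0}\,(2), which supplies a null-homotopy $h = (h^q)$ of that complex, i.e.\ maps $h^q$ with $\dd^{q-1}\circ h^q + h^{q+1}\circ \dd^q = \id$ on $V_{\alpha_0}$ (with $h^{-1} = 0$ at the left end, so that $h^0 \circ \dd^{-1} = \id_{\msF}$). Since $\underline{\Hom}(\underline{\Z[\Xi]}, -)$ is an additive functor, the maps $(h^q)^{\Xi}$ satisfy the same homotopy identity, so the restricted complex is homotopic to zero, hence exact; as exactness is local, the full sequence on $\Yo$ is exact.

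It then remains to check that this is a sequence of $GL_g(\Q)$-equivariant sheaves. Each $\msC^q(\mcX_{\Q}, \msF)$ carries a $GL_g(\Q)$-equivariant structure (Section~\ref{subsect:cech cpx}), and since $(-)^{\Xi}$ is a functor $\mathbf{Sh}(\Yo, GL_g(\Q)) \to \mathbf{Sh}(\Yo, GL_g(\Q))$ (Section~\ref{subsect:equiv str}), the sheaves $\msF^{\Xi}$ and $\msC^q(\mcX_{\Q}, \msF)^{\Xi}$ are $GL_g(\Q)$-equivariant. The differentials occurring here are the images under $(-)^{\Xi}$ of the $\dd^q$ of Lemma~\ref{lem:hom to 0}, which are $GL_g(\Q)$-equivariant by Lemma~\ref{lem:hom to 0}\,(1); functoriality of $(-)^{\Xi}$ then makes the resulting $\dd^q$ equivariant as well.

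The one point requiring care — and, I expect, the only genuine subtlety — is that one must \emph{not} attempt to argue exactness of the $\Xi$-version directly from bare exactness of the original resolution. Since $\underline{\Hom}(\underline{\Z[\Xi]}, -) \simeq \prod_{Q \in \Xi}(-)$ is an infinite product, and infinite products of exact sequences of sheaves need not be exact (exactness can fail at stalks), the exactness statement in Lemma~\ref{lem:hom to 0}\,(2) would not suffice on its own. This is precisely why that lemma is formulated with the sharper conclusion that the local complexes are homotopic to zero: a null-homotopy is preserved by every additive functor, whereas exactness is not. Thus the whole content of the corollary is the observation that the local contractibility of the equivariant \v{C}ech resolution survives the application of $(-)^{\Xi}$.
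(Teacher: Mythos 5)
Your proposal is correct and follows essentially the same route as the paper: restrict to the open sets $V_{\alpha_0}$, apply Lemma~\ref{lem:hom to 0}\,(2) to get the null-homotopy, note that the additive functor $(-)^{\Xi}$ preserves homotopies (but would not preserve mere exactness), and conclude by locality. Your remark on why the lemma had to be stated with a null-homotopy rather than only exactness — infinite products of sheaves fail to be exact at the level of stalks — makes explicit the precise point the paper's one-line proof is relying on.
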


\begin{proof}
Since the homotopy is preserved by the additive functor, by Lemma \ref{lem:hom to 0} (2), we see that for any $\alpha_0 \in \XQ$, the sequence
\[
0 \longrightarrow \msF^{\Xi} \big|_{V_{\alpha_0}} \stackrel{\dd^{-1}}{\longrightarrow} \msC^0(\mcX_{\Q}, \msF)^{\Xi}\big|_{V_{\alpha_0}}  \stackrel{\dd^0}{\longrightarrow} \msC^1(\mcX_{\Q}, \msF)^{\Xi}\big|_{V_{\alpha_0}}
\longrightarrow \cdots
\]
is homotopic to zero, and hence exact. 
\end{proof}


Now, by taking the global section, set
\begin{align*}
C^q(\mcX_{\Q}, \msF) &:= \Gamma \big(\Yo, \msC^q(\mcX_{\Q}, \msF) \big) = \prod_{I \in (\XQ)^{q+1}} \Gamma (V_I, \msF). 
\end{align*}
Then we obtain a complex 
\begin{align*}
C^{\bullet}(\mcX_{\Q}, \msF): C^0(\mcX_{\Q}, \msF) \stackrel{\dd^0}{\longrightarrow} C^1(\mcX_{\Q}, \msF)  \stackrel{\dd^1}{\longrightarrow} C^2(\mcX_{\Q}, \msF)  \stackrel{\dd^2}{\longrightarrow} \cdots
\end{align*}
of $GL_g(\Q)$-modules. Note that this is the usual \v{C}ech complex associated to the open covering $\mcX_{\Q}$. 
Furthermore, set
\begin{align*}
C^q(\mcX_{\Q}, \msF)^{\Xi} := \Gamma \big(\Yo, \msC^q(\mcX_{\Q}, \msF)^{\Xi} \big) = \Map \big(\Xi, C^q(\mcX_{\Q}, \msF) \big). 
\end{align*}
Then we obtain another complex
\begin{align*}
C^{\bullet}(\mcX_{\Q}, \msF)^{\Xi}: C^0(\mcX_{\Q}, \msF)^{\Xi} \stackrel{\dd^0}{\longrightarrow} C^1(\mcX_{\Q}, \msF)^{\Xi}  \stackrel{\dd^1}{\longrightarrow} C^2(\mcX_{\Q}, \msF)^{\Xi}  \stackrel{\dd^2}{\longrightarrow}  \cdots
\end{align*}
of $GL_g(\Q)$-modules. 
For $Q \in \Xi$, the evaluation map
\begin{align}\label{eqn:ev cech}
\ev_Q: C^{\bullet}(\mcX_{\Q}, \msF)^{\Xi} \ra C^{\bullet}(\mcX_{\Q}, \msF)
\end{align}
is a $\Gamma_Q$-equivariant morphism of complexes.


\subsection{Acyclicity}
The aim of this subsection is to prove the acyclicity of the sheaves $\msC^{q}(\mcX_{\Q}, \msF_d)$ and $\msC^{q}(\mcX_{\Q}, \msF_d)^{\Xi}$, cf. Proposition \ref{prop:acyc2}. 
Then we can compute the equivariant cohomology groups $H^q(\Yo, \Gamma_Q, \msF_d)$ and $H^q(\Yo, SL_g(\Z), \msF_d^{\Xi})$ using the \v{C}ech complexes $C^{\bullet}(\mcX_{\Q}, \msF_d)$ and $C^{\bullet}(\mcX_{\Q}, \msF_d)^{\Xi}$, cf. Corollary \ref{cor:equiv coh}. 

\begin{lem}\label{lem:acyc0}
Let $r\geq 1$, $I=(\alpha_1, \dots, \alpha_r) \in (\XQ)^r$.
\begin{enumerate}
\item For all $q \geq 1$, we have
\[
H^q(V_I, \msF_d)=0.
\]
\item For all $q \geq 1$, we have
\[
R^q j_{I \ast} \left( j_I^{-1} \msF_d \right)=0,
\]
where $R^q j_{I \ast}$ (resp. $j_I^{-1}$) is the higher direct image (resp. inverse image) functor induced by the inclusion map $j_I: V_I \hookrightarrow \Yo$.
\item For any open subset $U \subset \Yo$ and $q \geq 0$, we have an isomorphism
\[
H^q(U,  j_{I \ast}  j_I^{-1} \msF_d)  \isomto H^q(U\cap V_I, \msF_d).
\]
\end{enumerate}
\end{lem}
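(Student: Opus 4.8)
The plan is to deduce all three parts from the vanishing result of Proposition \ref{local vanishing} together with standard sheaf-theoretic machinery, exploiting the fact that each $V_I$ is a convex open subset of $\C^g \setm \{0\}$ (being a finite intersection of the convex sets $V_{\alpha_i}$).

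\textbf{Part (1).} Since $V_I = \bigcap_{i=1}^r V_{\alpha_i}$ is convex and open in $\C^g \setm \{0\}$, and $\msF_d = \pi_{\C}^{-1}\Omega^{g-1}_{\PPC}(-d)|_{\Yo}$, I would simply apply Proposition \ref{local vanishing} (3) with $U = V_I$. This gives $H^q(V_I, \msF_d) = H^q(V_I, \pi_{\C}^{-1}\Omega^{g-1}_{\PPC}(-d)) = 0$ for all $q \geq 1$. One small point to check is that $V_I$ is nonempty (so that it is genuinely a convex open subset to which the proposition applies); but if $V_I = \emptyset$ the statement is trivial anyway, so there is nothing to worry about.

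\textbf{Part (2).} The higher direct image $R^q j_{I\ast}(j_I^{-1}\msF_d)$ is the sheafification of the presheaf $U \mapsto H^q(U \cap V_I, j_I^{-1}\msF_d) = H^q(U \cap V_I, \msF_d|_{U\cap V_I})$. It therefore suffices to show this cohomology vanishes for $U$ ranging over a basis of open subsets of $\Yo$. Taking the basis of \emph{convex} open subsets $U$, the intersection $U \cap V_I$ is again convex and open in $\C^g\setm\{0\}$, so by Part (1) (applied with $V_I$ replaced by $U \cap V_I$, which is a finite intersection of $V_\alpha$'s extended by the defining inequalities of $U$ — or more simply, directly by Proposition \ref{local vanishing} (3)) we get $H^q(U\cap V_I,\msF_d) = 0$ for $q \geq 1$. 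Hence the sheafification vanishes, giving $R^q j_{I\ast}(j_I^{-1}\msF_d) = 0$ for $q \geq 1$.

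\textbf{Part (3).} This follows from Part (2) via the Leray spectral sequence $H^p(U, R^q j_{I\ast}(j_I^{-1}\msF_d)) \Rightarrow H^{p+q}(V_I \cap U, \msF_d)$ — or more precisely for the sheaf $j_{I\ast}j_I^{-1}\msF_d$ on $U$, the spectral sequence $H^p(U, R^q j_{I\ast}(j_I^{-1}\msF_d)) \Rightarrow H^{p+q}(U, j_{I\ast}j_I^{-1}\msF_d)$ is not quite what we want; rather, since $R^q j_{I\ast}(j_I^{-1}\msF_d) = 0$ for $q\geq 1$ by Part (2), the Leray spectral sequence for the composition $\Gamma(U,-) \circ j_{I\ast}$ (equivalently, for the map $V_I \cap U \hookrightarrow V_I \hookrightarrow \Yo$, restricted over $U$) degenerates, yielding the edge isomorphism $H^q(U, j_{I\ast}j_I^{-1}\msF_d) \isomto H^q(U \cap V_I, \msF_d)$ for all $q \geq 0$. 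Concretely: $j_{I\ast}$ sends injective sheaves to injective (acyclic) sheaves, so an injective resolution of $j_I^{-1}\msF_d$ on $V_I$ pushes forward to a resolution of $j_{I\ast}j_I^{-1}\msF_d$ by $\Gamma(U,-)$-acyclic sheaves precisely when the higher direct images vanish, and this computes the claimed isomorphism.

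The genuinely substantive input here is Proposition \ref{local vanishing} (3), which has already been established; the rest is routine homological algebra. The only step requiring a moment's care is Part (2), where one must remember that it suffices to check the vanishing of $H^q(U \cap V_I, \msF_d)$ on a basis of opens and choose the basis of convex opens so that the intersection stays convex — so that the hypothesis of Proposition \ref{local vanishing} is met. I do not anticipate any real obstacle.
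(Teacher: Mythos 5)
Your proof is correct and follows essentially the same route as the paper: part (1) is a direct application of Proposition \ref{local vanishing} (3) since $V_I$ is a finite intersection of the convex half-spaces $V_{\alpha_i}$ and hence convex; part (2) is checked on the basis of convex open subsets $U$ (the paper computes the stalk $\bigl(R^q j_{I\ast} j_I^{-1}\msF_d\bigr)_x$ as a filtered colimit over convex neighborhoods, which is the same idea); and part (3) is the Leray spectral sequence, degenerate by part (2). Nothing to add.
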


\begin{proof}
(1) follows directly from Proposition \ref{local vanishing} (3) since $V_I$ is convex.

(2) Let $x \in \Yo$. Since convex open subsets form a basis of open subsets of $\Yo$, we have
\begin{align*}
\left( R^q j_{I \ast} \left( j_I^{-1} \msF_d \right)\right)_x &= \varinjlim_{\substack{x \in U: \\ \text{convex}}} H^q(U \cap V_I, j_I^{-1}\msF_d) \\
&=\varinjlim_{\substack{x \in U: \\ \text{convex}}} H^q(U\cap V_I, \msF_d)=0.
\end{align*}
Here the last vanishing follows from Proposition \ref{local vanishing} (3). This proves (2).

(3) follows from (2) and the Leray spectral sequence.
\end{proof}

\begin{prop}\label{prop:acyc1}
For $q \geq 0$, the sheaves $\msC^q(\mcX_{\Q}, \msF_d)$ and $\msC^q(\mcX_{\Q}, \msF_d)^{\Xi}$ are $\Gamma (\Yo, -)$-acyclic, i.e., we have
\begin{align*}
H^p\big(\Yo, \msC^q(\mcX_{\Q}, \msF_d)\big)=0, \\
H^p\big(\Yo, \msC^q(\mcX_{\Q}, \msF_d)^{\Xi}\big)=0
\end{align*}
for $p \geq 1$. 
\end{prop}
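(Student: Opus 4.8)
The plan is to reduce the acyclicity of the two \v{C}ech sheaves to the local vanishing established in Lemma~\ref{lem:acyc0}, using the structure $\msC^q(\mcX_{\Q}, \msF_d) = \prod_{I \in (\XQ)^{q+1}} j_{I\ast}j_I^{-1}\msF_d$ and the analogous product description of the $\Xi$-version.

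\textbf{Step 1: reduce to a single index $I$.} Since cohomology commutes with products in the sense that $H^p\big(\Yo, \prod_I \msG_I\big) \simeq \prod_I H^p(\Yo, \msG_I)$ for sheaves $\msG_I$ (this holds because $\Gamma(\Yo, -)$ commutes with products and one can compute $H^p$ via an injective resolution of each factor, noting that a product of injectives is injective), it suffices to prove $H^p\big(\Yo, j_{I\ast}j_I^{-1}\msF_d\big) = 0$ for each fixed $I \in (\XQ)^{q+1}$ and each $p \geq 1$. For the $\Xi$-version, by Remark~\ref{rmk:sheaf1}(2) we have $\msC^q(\mcX_{\Q}, \msF_d)^{\Xi} \simeq \prod_{Q \in \Xi}\msC^q(\mcX_{\Q}, \msF_d)$, so the same reduction applies and the $\Xi$-case follows immediately from the non-$\Xi$-case together with the product argument.

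\textbf{Step 2: compute $H^p\big(\Yo, j_{I\ast}j_I^{-1}\msF_d\big)$.} Apply Lemma~\ref{lem:acyc0}(3) with $U = \Yo$: this gives an isomorphism $H^p\big(\Yo, j_{I\ast}j_I^{-1}\msF_d\big) \isomto H^p(\Yo \cap V_I, \msF_d) = H^p(V_I, \msF_d)$. Then Lemma~\ref{lem:acyc0}(1) gives $H^p(V_I, \msF_d) = 0$ for $p \geq 1$, since $V_I$ is a (nonempty, when it enters the product) convex open subset of $\C^g \setm \{0\}$ and Proposition~\ref{local vanishing}(3) applies. This finishes the proof.

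\textbf{Main obstacle.} The only genuinely delicate point is the commutation of sheaf cohomology with infinite products in Step 1 — $\Gamma(\Yo, -)$ commutes with products on the nose, but one must justify that $R^p\Gamma(\Yo, -)$ does as well. This is standard (a product of flabby sheaves is flabby, or a product of injective sheaves is injective, since $\Hom(-, \prod \msI_\alpha) = \prod \Hom(-, \msI_\alpha)$ is exact when each $\msI_\alpha$ is injective, so one resolves factorwise), but it is worth stating explicitly because the index set $\XQ^{q+1}$ (and $\Xi$) is infinite. Everything else is a direct citation of Lemma~\ref{lem:acyc0}. I would also note in passing that the empty-intersection issue does not arise here: for $I$ with $V_I = \emptyset$ the factor $j_{I\ast}j_I^{-1}\msF_d$ is the zero sheaf and contributes nothing, while for $V_I \neq \emptyset$ it is genuinely convex and the lemma applies.
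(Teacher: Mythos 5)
Your Step~2 is fine: for a single $I$, Lemma~\ref{lem:acyc0}(3) with $U=\Yo$ together with Lemma~\ref{lem:acyc0}(1) gives $H^p\big(\Yo, j_{I\ast}j_I^{-1}\msF_d\big)=0$ for $p\geq 1$. The gap is in Step~1, where you invoke $H^p\big(\Yo, \prod_I \msG_I\big)\simeq\prod_I H^p(\Yo, \msG_I)$ as a ``standard'' fact justified by resolving each factor injectively and then taking the product. This does not work in the generality claimed: a product of injective sheaves is indeed injective, but the product $\prod_I\msI_I^\bullet$ of factorwise injective resolutions need not be a \emph{resolution} of $\prod_I\msG_I$. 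Exactness of a complex of sheaves is tested on stalks, and stalks do not commute with infinite products; equivalently, infinite products in $\mathbf{Sh}(\Yo)$ are not exact, so $R^p\Gamma(\Yo,-)$ does not in general commute with products. Your ``Main obstacle'' paragraph names the right pressure point but dismisses it with a justification that is precisely the unproved assertion.

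The paper's proof closes exactly this gap, and it is worth seeing how, since the extra input is genuinely used: Lemma~\ref{lem:acyc0}(3) together with Proposition~\ref{local vanishing}(3) gives $H^p(U, j_{I\ast}j_I^{-1}\msF_d)=0$ for \emph{every} convex open $U\subset\Yo$ and $p\geq 1$, not only for $U=\Yo$. Hence $0\to\msF_I(U)\to\msI_I^\bullet(U)$ is exact for every convex $U$; products of exact complexes of abelian groups are exact; and since convex opens form a basis of $\Yo$, one passes to stalks by a filtered direct limit (which is exact) to conclude that $0\to\prod_I\msF_I\to\prod_I\msI_I^\bullet$ is an exact complex of sheaves. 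Only after this is established can one compute $H^p\big(\Yo,\msC^q(\mcX_{\Q},\msF_d)\big)=H^p\big(\prod_I\Gamma(\Yo,\msI_I^\bullet)\big)=\prod_I H^p(V_I,\msF_d)=0$, and likewise for the $\Xi$-version with one more product over $Q\in\Xi$. So the skeleton of your argument matches the paper's, but the basiswise acyclicity needs to be folded into your Step~1 explicitly --- it is a real input, not a formality.
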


\begin{proof}
We imitate the argument in \cite[Proposition 3.4, Lemma 3.5]{BHYY19:Can}. 
For $I \in (\XQ)^{q+1}$, put $\msF_I:= j_{I \ast}j_I^{-1} \msF_d$, and let
\[
0 \ra \msF_I \ra \msI_I^{\bullet}
\]
be an injective resolution of $\msF_I$. First we show that
\begin{align}\label{resolution1}
0 \ra \msC^q(\mcX_{\Q}, \msF_d)=\prod_{I \in(\XQ)^{q+1}} \msF_I \ra \prod_{I \in(\XQ)^{q+1}} \msI_I^{\bullet}, 
\end{align}
\begin{align}\label{resolution1Xi}
0 \ra \msC^q(\mcX_{\Q}, \msF_d)^{\Xi}=\left( \prod_{I \in(\XQ)^{q+1}} \msF_I\right)^{\Xi} \ra \left(\prod_{I \in(\XQ)^{q+1}} \msI_I^{\bullet}\right)^{\Xi}
\end{align}
are both injective resolutions of $\msC^q(\mcX_{\Q}, \msF_d)$ and $\msC^q(\mcX_{\Q}, \msF_d)^{\Xi}$ respectively. 
It is clear that $\prod_{I \in(\XQ)^{q+1}} \msI_I^{p}$ and 
\begin{align*}
\left(\prod_{I \in(\XQ)^{q+1}} \msI_I^{p}\right)^{\Xi} \simeq \prod_{Q \in \Xi}\prod_{I \in(\XQ)^{q+1}} \msI_I^{p}, 
\end{align*}
are injective sheaves because they are products of injective sheaves, cf. Remark \ref{rmk:sheaf1} (2). 
We must show the exactness of \eqref{resolution1} and \eqref{resolution1Xi}. 
Let $U \subset \Yo$ be any convex open subset. By Lemma \ref{lem:acyc0} (3) and Proposition \ref{local vanishing} (3), we have
\[
H^p(U, \msF_I) \isomto H^p(U \cap V_I, \msF_d) = 0
\]
for $p \geq 1$. Therefore, we find that
\[
0 \ra \msF_I(U) \ra \msI_I^{\bullet}(U)
\]
is exact because $H^p(U, \msF_I)$ is the cohomology of this complex.  
Hence, 
\begin{align*}
0 \ra \prod_{I \in(\XQ)^{q+1}} \msF_I(U) \ra \prod_{I \in(\XQ)^{q+1}} \msI_I^{\bullet}(U), 
\end{align*}
\begin{align*}
0 \ra \prod_{Q \in \Xi}\prod_{I \in(\XQ)^{q+1}} \msF_I(U) \ra \prod_{Q \in \Xi}\prod_{I \in(\XQ)^{q+1}} \msI_I^{\bullet}(U)
\end{align*}
are also exact.
Since convex open subsets of $\Yo$ form a basis of open subsets, we obtain the exactness of \eqref{resolution1} and \eqref{resolution1Xi}. 

Then for $p \geq 1$, we have
\begin{align*}
H^p\big(\Yo, \msC^q(\mcX_{\Q}, \msF_d)\big)&\simeq H^p\left(\Gamma\left(\Yo, \prod_{I \in(\XQ)^{q+1}} \msI_I^{\bullet}\right)\right) \\
&\simeq \prod_{I \in(\XQ)^{q+1}}H^p(\Gamma(\Yo, \msI_I^{\bullet})) \\
&\simeq \prod_{I \in(\XQ)^{q+1}}H^p(\Yo, \msF_I) \\
&\simeq \prod_{I \in(\XQ)^{q+1}} H^p(V_I, \msF_d)=0, 
\end{align*}
and similarly, 
\begin{align*}
H^p\big(\Yo, \msC^q(\mcX_{\Q}, \msF_d)^{\Xi}\big) &\simeq H^p\left(\Gamma\left(\Yo, \left(\prod_{I \in(\XQ)^{q+1}} \msI_I^{\bullet}\right)^{\Xi} \right)\right) \\
&\simeq \prod_{Q \in \Xi}\prod_{I \in(\XQ)^{q+1}}H^p(\Gamma(\Yo, \msI_I^{\bullet})) \\
&\simeq \prod_{Q \in \Xi} \prod_{I \in(\XQ)^{q+1}} H^p(V_I, \msF_d)=0. 
\end{align*}
This completes the proof. 
\end{proof}

\begin{prop}\label{prop:acyc2}
\begin{enumerate}
\item Let $Q \in \Xi$. For $q \geq 0$, the sheaf $\msC^q(\mcX_{\Q}, \msF_d)$ is $\Gamma(\Yo, \Gamma_Q, -)$-acyclic, i.e., we have
\[
H^p \big(\Yo, \Gamma_Q,  \msC^q(\mcX_{\Q}, \msF_d)\big)=0
\]
for $p \geq 1$. In particular, the complex
\begin{align*}
0 \ra \msF_d \stackrel{\dd^{-1}}{\ra} \msC^{\bullet}(\mcX_{\Q}, \msF_d) \label{eqn:resol 2}
\end{align*}
gives a $\Gamma(\Yo, \Gamma_Q, -)$-acyclic resolution of $\msF_d$. 
\item For $q \geq 0$, the sheaf $\msC^q(\mcX_{\Q}, \msF_d)^{\Xi}$ is $\Gamma(\Yo, SL_g(\Z), -)$-acyclic, i.e., we have
\[
H^p\big(\Yo, SL_g(\Z),  \msC^q(\mcX_{\Q}, \msF_d)^{\Xi}\big)=0
\]
for $p \geq 1$. In particular, the complex
\begin{align*}
0 \ra \msF_d^{\Xi} \stackrel{\dd^{-1}}{\ra} \msC^{\bullet}(\mcX_{\Q}, \msF_d)^{\Xi} \label{eqn:resol 2}
\end{align*}
gives a $\Gamma(\Yo, SL_g(\Z), -)$-acyclic resolution of $\msF_d^{\Xi}$.  
\end{enumerate}
\end{prop}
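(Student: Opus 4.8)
The plan is to exploit the fact that the equivariant global section functor is a composite, $\Gamma(\Yo, G, -) = (-)^G \circ \Gamma(\Yo, -)$, so that for any $G$-equivariant sheaf $\msF$ on $\Yo$ one has the standard spectral sequence of equivariant cohomology
\[
E_2^{p,q} = H^p\big(G, H^q(\Yo, \msF)\big) \ \Longrightarrow\ H^{p+q}(\Yo, G, \msF)
\]
(cf.\ the analogous discussion in \cite{BHYY19:Can}). I would apply this with $(G, \msF) = (\Gamma_Q, \msC^q(\mcX_{\Q}, \msF_d))$ for part (1) and with $(G, \msF) = (SL_g(\Z), \msC^q(\mcX_{\Q}, \msF_d)^{\Xi})$ for part (2). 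By Proposition \ref{prop:acyc1} we have $H^q(\Yo, \msF) = 0$ for $q \geq 1$ in both cases, so the spectral sequence degenerates and yields isomorphisms $H^p(\Yo, G, \msF) \cong H^p\big(G, \Gamma(\Yo, \msF)\big)$ for all $p$. Hence the proposition reduces to showing that the $G$-modules
\begin{align*}
C^q(\mcX_{\Q}, \msF_d) &= \prod_{I \in (\XQ)^{q+1}} \Gamma(V_I, \msF_d), \\
C^q(\mcX_{\Q}, \msF_d)^{\Xi} &= \prod_{(Q, I) \in \Xi \times (\XQ)^{q+1}} \Gamma(V_I, \msF_d)
\end{align*}
are acyclic for $(-)^{\Gamma_Q}$, resp.\ for $(-)^{SL_g(\Z)}$, in positive degree.

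\textbf{Reduction to a free action.} The key observation is that both index sets carry a \emph{free} action of the relevant group. For part (1), $\Gamma_Q$ acts on $(\XQ)^{q+1}$ by the diagonal of its linear action on $\XQ = \Q^g \setm \{0\}$, and this is free because the action on $\XQ$ is free by Lemma \ref{lem:tor} (7) (if $\gamma I = I$, then already $\gamma\alpha_1 = \alpha_1$, so $\gamma = 1$). For part (2), $SL_g(\Z)$ acts on $\Xi \times (\XQ)^{q+1}$ by the diagonal of the conjugation action on $\Xi$ and the linear action on $(\XQ)^{q+1}$; the stabilizer of $(Q, I)$ lies in $\Gamma_Q \cap \Stab_{SL_g(\Z)}(\alpha_1)$, which is trivial by Lemma \ref{lem:tor} (7), so this action is free as well.

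\textbf{Identifying a coinduced module.} Given freeness, I would choose orbit representatives and use the $G$-equivariant structure on $\msC^q(\mcX_{\Q}, \msF_d)$ (resp.\ on $\msC^q(\mcX_{\Q}, \msF_d)^{\Xi}$) from Section \ref{subsect:cech cpx} --- whose transition isomorphisms $[\gamma]\colon \Gamma(V_I, \msF_d) \isomto \Gamma(V_{\gamma I}, \msF_d)$ satisfy the cocycle condition in the definition of a $G$-equivariant structure --- to identify, orbit by orbit, the module $\prod_{I \in G\cdot I_0} \Gamma(V_I, \msF_d)$ with $\Map\big(G, \Gamma(V_{I_0}, \msF_d)\big)$ carrying its standard $G$-action, i.e.\ with a module coinduced from the trivial subgroup. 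Since coinduction commutes with products and is exact, $C^q(\mcX_{\Q}, \msF_d)$ and $C^q(\mcX_{\Q}, \msF_d)^{\Xi}$ are themselves coinduced from the trivial subgroup, so $H^p(G, -)$ of them vanishes for $p \geq 1$ by Shapiro's lemma. Feeding this back into the degenerate spectral sequence gives the asserted acyclicity, $H^p\big(\Yo, \Gamma_Q, \msC^q(\mcX_{\Q}, \msF_d)\big) = 0$ and $H^p\big(\Yo, SL_g(\Z), \msC^q(\mcX_{\Q}, \msF_d)^{\Xi}\big) = 0$ for $p \geq 1$. The ``in particular'' statements then follow at once: $0 \to \msF_d \stackrel{\dd^{-1}}{\ra} \msC^{\bullet}(\mcX_{\Q}, \msF_d)$ is a resolution by Lemma \ref{lem:hom to 0} (2) and $0 \to \msF_d^{\Xi} \stackrel{\dd^{-1}}{\ra} \msC^{\bullet}(\mcX_{\Q}, \msF_d)^{\Xi}$ is a resolution by Corollary \ref{lem:FXi resol}, so in each case we obtain an acyclic resolution.

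\textbf{Main obstacle.} The step I expect to require the most care is the last identification: one must check that the isomorphisms $[\gamma]$ patch compatibly --- precisely via the cocycle condition, together with the compatibility of $[\gamma]_U$ with restriction (Lemma \ref{lem:equiv str} (1)) --- into an honest $G$-module isomorphism between the product over a free orbit and the coinduced module $\Map\big(G, \Gamma(V_{I_0}, \msF_d)\big)$. The freeness supplied by Lemma \ref{lem:tor} (7) is exactly what makes this work; without it one would only get a module coinduced from a (possibly infinite) stabilizer, which need not be cohomologically trivial. A secondary, routine point is to record that the spectral sequence quoted at the outset is available for equivariant sheaves on the paracompact (non-compact) complex manifold $\Yo$; this is standard and only needs an appropriate reference.
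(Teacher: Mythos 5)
Your proposal is correct and follows essentially the same route as the paper: the spectral sequence $H^p(G, H^q(\Yo, -)) \Rightarrow H^{p+q}(\Yo, G, -)$ degenerates by Proposition \ref{prop:acyc1}, reducing the claim to showing $C^q(\mcX_{\Q}, \msF_d)$ and $C^q(\mcX_{\Q}, \msF_d)^{\Xi}$ are co-induced modules, which both you and the paper deduce from the freeness of the $\Gamma_Q$-action on $(\XQ)^{q+1}$ (resp.\ of $SL_g(\Z)$ on $\Xi \times (\XQ)^{q+1}$) via Lemma \ref{lem:tor}~(7) and the transition isomorphisms $[\gamma]$ of the equivariant structure. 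The only cosmetic difference is that you identify the module orbit-by-orbit and then take a product, while the paper forms the product over a full system of orbit representatives first; these are the same by interchange of products.
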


\begin{proof}
(1) First note that the functor $\Gamma(\Yo, \Gamma_Q, -)$ is a composition of two left exact functors $\Gamma(\Yo, -)$ and $(-)^{\Gamma_Q}$. Moreover, $\Gamma(\Yo, -)$ sends injective objects to injective objects. Therefore, we have a spectral sequence
\[
E_2^{ab}=H^a (\Gamma_Q, H^b(\Yo,  \msC^q(\mcX_{\Q}, \msF_d))) \Ra H^{a+b}(\Yo, \Gamma_Q,  \msC^q(\mcX_{\Q}, \msF_d)),
\]
where $H^a(\Gamma_Q, -)$ is the usual group cohomology of $\Gamma_Q$.
Now by Proposition \ref{prop:acyc1}, we already have
\[
H^b(\Yo,  \msC^q(\mcX_{\Q}, \msF_d))=0 , \ \forall b\geq 1.
\]
Therefore, it suffices to show
\[
H^a(\Gamma_Q, \Gamma(\Yo,  \msC^q(\mcX_{\Q}, \msF_d)))=H^a(\Gamma_Q, C^q(\mcX_{\Q}, \msF_d))=0, \ \forall a \geq 1.
\]
Actually, we will prove that $C^q(\mcX_{\Q}, \msF_d)$ is a co-induced $\Gamma_Q$-module.
First, recall that
\[
C^q(\mcX_{\Q}, \msF_d)=\prod_{I \in (\XQ)^{q+1}} \Gamma(V_I, \msF_d),
\]
and that $\Gamma_Q$ acts freely on $(\XQ)^{q+1}$ by Lemma \ref{lem:tor} (7).
Let $A \subset (\XQ)^{q+1}$ be a system of representatives of $\Gamma_Q \bs (\XQ)^{q+1}$, and set
\[
M:=\prod_{I \in A} \Gamma(V_I, \msF_d).
\]
Then recall that the $GL_g(\Q)$-equivariant structure on $\msF_d$ gives an isomorphism
\begin{align}\label{eqn:co-ind}
[\gamma]: \Gamma(V_I, \msF_d) \isomto \Gamma((\tp{\gamma})^{-1}V_I, \msF_d) =\Gamma(V_{\gamma I}, \msF_d)
\end{align}
for each $I \in (\XQ)^{q+1}$ and $\gamma \in GL_g(\Q)$, cf. Lemma \ref{lem stable cov}.
Therefore, for each $\gamma \in \Gamma_Q$, we have an isomorphism
\[
M=\prod_{I \in A} \Gamma(V_I, \msF_d) \isomto \prod_{I \in A} \Gamma(V_{\gamma I}, \msF_d); \  (s_I)_{I \in A
} \mapsto ([\gamma](s_I))_{I \in A},
\]
and hence we obtain an isomorphism
\begin{align*}
\Hom_{\Z} (\Z[\Gamma_Q], M)=\prod_{\gamma \in \Gamma_Q} M &\isomto C^q(\mcX_{\Q}, \msF_d)=\prod_{\gamma \in \Gamma_Q}\prod_{I \in A} \Gamma(V_{\gamma I}, \msF_d).
\end{align*}
Since this is clearly a $\Gamma_Q$-equivariant isomorphism, we see $C^q(\mcX_{\Q}, \msF_d)$ is a co-induced $\Gamma_Q$-module.

(2) can be proved similarly. First, by the spectral sequence
\[
E_2^{ab}=H^a(SL_g(\Z), H^b(\Yo,  \msC^q(\mcX_{\Q}, \msF_d)^{\Xi})) \Ra H^{a+b}(\Yo, SL_g(\Z),  \msC^q(\mcX_{\Q}, \msF_d)^{\Xi})
\]
and Proposition \ref{prop:acyc1}, it suffices to show 
\[
H^a \big(SL_g(\Z), C^q(\mcX_{\Q}, \msF_d)^{\Xi} \big)=0,  \ \forall a \geq 1. 
\]
Again, we will prove that 
\[
C^q(\mcX_{\Q}, \msF_d)^{\Xi} \simeq \prod_{Q \in \Xi} \prod_{I \in (\XQ)^{q+1}} \Gamma(V_I, \msF_d)
\]
is a co-induced $SL_g(\Z)$-module. 
Note that the action of $SL_g(\Z)$ on $\Xi \times (\XQ)^{q+1}$ is free.  Indeed, if we have 
\[
\gamma (Q, I)=([\gamma](Q),
\gamma I)=(Q, I),
\]
then it follows that $\gamma \in \Gamma_Q$, and hence $\gamma=1$ since the action of  $\Gamma_Q$ on $(\XQ)^{q+1}$ is free. 
Let $A' \subset \Xi \times (\XQ)^{q+1}$ be a system of representatives of $SL_g(\Z) \bs (\Xi \times (\XQ)^{q+1})$, and set 
\[
M':= \prod_{(Q, I) \in A'} \Gamma(V_I, \msF_d). 
\]
Then again by using \eqref{eqn:co-ind}, we obtain an isomorphism 
\begin{align*}
\Hom_{\Z} \big(\Z[SL_g(\Z)], M' \big)=\prod_{\gamma \in SL_g(\Z)} M' \isomto C^q(\mcX_{\Q}, \msF_d)^{\Xi}\simeq\prod_{\gamma \in SL_g(\Z)}\prod_{(Q, I) \in A'} \Gamma(V_{\gamma I}, \msF_d).
\end{align*} 
of $SL_g(\Z)$-modules. 
Thus we find that $C^q(\mcX_{\Q}, \msF_d)^{\Xi}$ is a co-induced $SL_g(\Z)$-module. 
\end{proof}


\begin{cor}\label{cor:equiv coh}
\begin{enumerate}
\item Let $Q \in \Xi$. For $q \geq 0$, we have
\[
H^q (\Yo, \Gamma_Q, \msF_d) \simeq H^q \big( \Gamma\big(\Yo, \Gamma_Q, \msC^{\bullet}(\mcX_{\Q}, \msF_d) \big) \big)
= H^q \big(C^{\bullet}\big(\mcX_{\Q}, \msF_d \big)^{\Gamma_Q}\big), 
\]
where the second and third $H^q$ are the cohomology of complexes. 
\item For $q \geq 0$, we have
\begin{align*}
H^q (\Yo, SL_g(\Z), \msF_d^{\Xi}) &\simeq H^q \big( \Gamma\big(\Yo, SL_g(\Z), \msC^{\bullet}(\mcX_{\Q}, \msF_d)^{\Xi} \big) \big) \\
&= H^q \big( \Map_{SL_g(\Z)}\big(\Xi, C^{\bullet}\big(\mcX_{\Q}, \msF_d \big)\big)\big), 
\end{align*}
where $\Map_{SL_g(\Z)}(-,-)$ is the set of $SL_g(\Z)$-equivariant maps. 
\item For $Q \in \Xi$, we have the following commutative diagram
\begin{align*}
\xymatrix@R=0pt{
H^q (\Yo, SL_g(\Z), \msF_d^{\Xi}) \ar[r]^-{\ev_Q}&H^q (\Yo, \Gamma_Q, \msF_d) \\
|\wr&|\wr \\
 H^q \big( \Map_{SL_g(\Z)}\big(\Xi, C^{\bullet}\big(\mcX_{\Q}, \msF_d \big)\big)\big)  \ar[r]^-{\ev_Q}&H^q \big(C^{\bullet}\big(\mcX_{\Q}, \msF_d \big)^{\Gamma_Q}\big)\\
}
\end{align*}
where the two $\ev_Q$ are the evaluation maps induced by \eqref{eqn:ev} and \eqref{eqn:ev cech}. 
\end{enumerate}
\end{cor}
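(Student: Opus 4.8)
The plan is to read off all three statements from the acyclicity results of the previous subsection; the only genuinely new point is the bookkeeping behind the two evaluation maps in part (3).

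\emph{Parts (1) and (2).} By Proposition \ref{prop:acyc2}(1) the sequence $0 \ra \msF_d \stackrel{\dd^{-1}}{\ra} \msC^{\bullet}(\mcX_{\Q}, \msF_d)$ is a resolution of $\msF_d$ by $\Gamma(\Yo, \Gamma_Q, -)$-acyclic sheaves, so $H^q(\Yo, \Gamma_Q, \msF_d)$ is the $q$-th cohomology of the complex obtained by applying $\Gamma(\Yo, \Gamma_Q, -)$ term by term. Since $\Gamma(\Yo, \Gamma_Q, \msC^q(\mcX_{\Q}, \msF_d)) = \Gamma(\Yo, \msC^q(\mcX_{\Q}, \msF_d))^{\Gamma_Q} = C^q(\mcX_{\Q}, \msF_d)^{\Gamma_Q}$ by the definitions, this gives (1). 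Part (2) follows in the same way from Proposition \ref{prop:acyc2}(2), using $\Gamma(\Yo, \msC^q(\mcX_{\Q}, \msF_d)^{\Xi})^{SL_g(\Z)} = \Map(\Xi, C^q(\mcX_{\Q}, \msF_d))^{SL_g(\Z)} = \Map_{SL_g(\Z)}(\Xi, C^q(\mcX_{\Q}, \msF_d))$; the last identification is precisely the explicit form of the $SL_g(\Z)$-equivariant structure on $\msF_d^{\Xi}$ recorded in Section \ref{subsect:equiv str}, since a map $\phi \colon \Xi \ra C^q(\mcX_{\Q}, \msF_d)$ is $SL_g(\Z)$-invariant if and only if $\phi(\gamma Q' \gamma^{-1}) = [\gamma]\phi(Q')$, i.e.\ if and only if it is $SL_g(\Z)$-equivariant.

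\emph{Part (3), preparation.} First I would slightly strengthen Proposition \ref{prop:acyc2}: the sheaf $\msC^q(\mcX_{\Q}, \msF_d)^{\Xi}$ is also $\Gamma(\Yo, \Gamma_Q, -)$-acyclic. Indeed $\Gamma_Q$ acts freely on $\Xi \times (\XQ)^{q+1}$ (if $[\gamma](Q') = Q'$ and $\gamma I = I$ with $\gamma \in \Gamma_Q$, then $\gamma I = I$ already forces $\gamma = 1$ by Lemma \ref{lem:tor}(7)), so the argument of Proposition \ref{prop:acyc2}(2) shows verbatim that $C^q(\mcX_{\Q}, \msF_d)^{\Xi} \simeq \prod_{Q' \in \Xi}\prod_{I \in (\XQ)^{q+1}} \Gamma(V_I, \msF_d)$ is a co-induced $\Gamma_Q$-module; combined with Proposition \ref{prop:acyc1} and the spectral sequence of the composite functor $\Gamma(\Yo, \Gamma_Q, -) = (-)^{\Gamma_Q} \circ \Gamma(\Yo, -)$ this yields $H^p(\Yo, \Gamma_Q, \msC^q(\mcX_{\Q}, \msF_d)^{\Xi}) = 0$ for $p \geq 1$. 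Hence $0 \ra \msF_d^{\Xi} \stackrel{\dd^{-1}}{\ra} \msC^{\bullet}(\mcX_{\Q}, \msF_d)^{\Xi}$ is a $\Gamma(\Yo, \Gamma_Q, -)$-acyclic resolution of $\msF_d^{\Xi}$, regarded as a $\Gamma_Q$-equivariant sheaf.

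\emph{Part (3), conclusion.} The evaluation $\ev_Q \colon \msF_d^{\Xi} \ra \msF_d$ of \eqref{eqn:ev} is $\Gamma_Q$-equivariant and, being natural in the sheaf, extends component-wise to a $\Gamma_Q$-equivariant morphism of complexes $\msC^{\bullet}(\mcX_{\Q}, \msF_d)^{\Xi} \ra \msC^{\bullet}(\mcX_{\Q}, \msF_d)$ over $\dd^{-1}$ which on $\Gamma_Q$-invariant global sections is the map $\phi \mapsto \phi(Q)$, i.e.\ the restriction to invariants of $\ev_Q$ of \eqref{eqn:ev cech}. Since the source and target complexes are both $\Gamma(\Yo, \Gamma_Q, -)$-acyclic resolutions (of $\msF_d^{\Xi}$ and $\msF_d$), functoriality of the derived functor identifies $H^q(\Yo, \Gamma_Q, \ev_Q)$ with $H^q$ of this morphism. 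On the other hand the top arrow of the diagram is, by construction, the composite of the restriction-of-group map $H^q(\Yo, SL_g(\Z), \msF_d^{\Xi}) \ra H^q(\Yo, \Gamma_Q, \msF_d^{\Xi})$ with $H^q(\Yo, \Gamma_Q, \ev_Q)$; because $\msC^{\bullet}(\mcX_{\Q}, \msF_d)^{\Xi}$ is acyclic for both $\Gamma(\Yo, SL_g(\Z), -)$ and $\Gamma(\Yo, \Gamma_Q, -)$, that restriction map is computed on this common resolution by the inclusion $\Map_{SL_g(\Z)}(\Xi, C^{\bullet}(\mcX_{\Q}, \msF_d)) \hra \Map(\Xi, C^{\bullet}(\mcX_{\Q}, \msF_d))^{\Gamma_Q}$ of invariant sections. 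Composing, and transporting everything through the isomorphisms of (1) and (2), gives the commutativity of the square.

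\emph{Main obstacle.} The only subtle point is this last compatibility of the restriction-of-group map with the \v{C}ech resolution: an injective object of $\mathbf{Sh}(\Yo, SL_g(\Z))$ need not stay injective in $\mathbf{Sh}(\Yo, \Gamma_Q)$, so one must compare $\msC^{\bullet}(\mcX_{\Q}, \msF_d)^{\Xi}$ with injective resolutions over each of the two groups and invoke the double acyclicity established in the preparation step. This is routine homological algebra, and apart from it the proof is purely formal.
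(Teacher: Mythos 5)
The paper states this result without proof, treating it as immediate from Proposition~\ref{prop:acyc2}; your argument supplies exactly the reasoning that Proposition is set up to support, and I see no errors. Parts (1) and (2) are the direct consequence of computing the derived functor by the acyclic resolution, and the identification of $SL_g(\Z)$-invariants of $\Map(\Xi,-)$ with $SL_g(\Z)$-equivariant maps is correctly unwound from the explicit description of the equivariant structure in Section~\ref{subsect:equiv str}.

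Your treatment of part (3) is the genuinely useful contribution: the paper's Proposition~\ref{prop:acyc2} establishes acyclicity of $\msC^q(\mcX_{\Q},\msF_d)^{\Xi}$ only for $\Gamma(\Yo,SL_g(\Z),-)$, but the restriction step in the top arrow requires acyclicity for $\Gamma(\Yo,\Gamma_Q,-)$ as well. You correctly note that $\Gamma_Q$ acts freely on $\Xi\times(\XQ)^{q+1}$ --- indeed, freeness on the $(\XQ)^{q+1}$ factor alone (Lemma~\ref{lem:tor}(7)) already suffices --- so $C^q(\mcX_{\Q},\msF_d)^{\Xi}$ is co-induced over $\Gamma_Q$ and the spectral-sequence argument of Proposition~\ref{prop:acyc2} transfers verbatim. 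Once that double acyclicity is in hand, the comparison of the two invariant complexes computes the restriction map, and $\Gamma_Q$-equivariance of $\ev_Q$ (end of Section~\ref{subsect:equiv str}) gives the rest of the square; this is the standard universal-$\delta$-functor / acyclic-resolution comparison, as you say. The proof is complete and, as far as one can tell from a corollary stated without proof, matches the authors' intent.
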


We end this section by adding one more corollary concerning an operation which shifts the index $d \geq 0$ of $\msF_d$.  

\begin{cor}\label{cor:eq coh1.5}
  Let $P(y_1, \dots, y_g)\in \C[y_1, \dots, y_g]$ be a homogeneous polynomial of degree $d' \leq d$ such that
  \[
    P(\tp{\gamma}y)=P(y), \forall \gamma \in \Gamma_Q.
  \]
  Then the multiplication by $P$:
  \begin{align*}
    P: C^q(\mcX_{\Q}, \msF_d) \ra C^q(\mcX_{\Q}, \msF_{d-d'});\ (s_I(y))_{I\in (\XQ)^{q+1}} \mapsto (P(y)s_I(y))_{I \in (\XQ)^{q+1}}
  \end{align*}
  gives a $\Gamma_Q$-equivariant map of complexes, and hence induces a map
  \begin{align*}
    P: H^q(\Yo, \Gamma_Q, \msF_d) \ra H^q(\Yo, \Gamma_Q, \msF_{d-d'}).
  \end{align*}
\end{cor}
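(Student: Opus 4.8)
The plan is to check directly that multiplication by $P$ preserves every structure in sight, after which the assertion on cohomology is a formal consequence of Corollary \ref{cor:equiv coh} (1). I do not expect any genuine obstacle here; the only points requiring care are the homogeneity bookkeeping (this is where $d' \leq d$ is needed) and the equivariance check (this is where the hypothesis $P(\tp{\gamma}y)=P(y)$ is used).

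First I would verify well-definedness at the level of \v{C}ech cochains. Fix $I \in (\XQ)^{q+1}$; by Proposition \ref{local vanishing} (2) a section of $\msF_d$ over $V_I$ has the form $f\omega$ with $f$ holomorphic on $\pi_{\C}^{-1}(\pi_{\C}(V_I))$ and $f(\lambda y)=\lambda^{-g-d}f(y)$ for all $\lambda \in \C^{\times}$. Since $P$ is a polynomial, $Pf$ is again holomorphic on $\pi_{\C}^{-1}(\pi_{\C}(V_I))$, and since $P$ is homogeneous of degree $d'$ we get $(Pf)(\lambda y)=\lambda^{-g-(d-d')}(Pf)(y)$; as $d-d' \geq 0$, this exhibits $Pf\omega$ as a section of $\msF_{d-d'}$ over $V_I$. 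Hence multiplication by $P$ gives a homomorphism $C^q(\mcX_{\Q}, \msF_d) \ra C^q(\mcX_{\Q}, \msF_{d-d'})$ for every $q \geq 0$. It commutes with the \v{C}ech differential $\dd^q$ because $\dd^q$ is a signed sum of restriction maps and multiplication by the fixed function $P$ commutes with restriction of sections; thus $P$ is a morphism of complexes $C^{\bullet}(\mcX_{\Q}, \msF_d) \ra C^{\bullet}(\mcX_{\Q}, \msF_{d-d'})$.

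Next I would check $\Gamma_Q$-equivariance. For $\gamma \in \Gamma_Q$ and $s=(f_I(y)\omega(y))_{I} \in C^q(\mcX_{\Q}, \msF_d)$, combining \eqref{eqn gamma sI} with Definition \ref{dfn gamma on U} gives
\[
([\gamma](P\cdot s))_I = \det(\gamma)\, P(\tp{\gamma}y)\, f_{\gamma^{-1}I}(\tp{\gamma}y)\,\omega(y), \qquad (P\cdot[\gamma](s))_I = \det(\gamma)\, P(y)\, f_{\gamma^{-1}I}(\tp{\gamma}y)\,\omega(y),
\]
and these agree precisely because $P(\tp{\gamma}y)=P(y)$ for $\gamma \in \Gamma_Q$ by hypothesis. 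Therefore multiplication by $P$ is a $\Gamma_Q$-equivariant morphism of complexes. Taking $\Gamma_Q$-invariants yields a morphism of complexes $C^{\bullet}(\mcX_{\Q}, \msF_d)^{\Gamma_Q} \ra C^{\bullet}(\mcX_{\Q}, \msF_{d-d'})^{\Gamma_Q}$, hence a homomorphism on cohomology, which by Corollary \ref{cor:equiv coh} (1) is the desired map $H^q(\Yo, \Gamma_Q, \msF_d) \ra H^q(\Yo, \Gamma_Q, \msF_{d-d'})$. (Alternatively, the same computation shows that multiplication by $P$ is a morphism of $\Gamma_Q$-equivariant sheaves $\msF_d \ra \msF_{d-d'}$ on $\Yo$, and one concludes by functoriality of the derived functors of $\Gamma(\Yo, \Gamma_Q, -)$; I would use whichever phrasing fits the surrounding exposition.)
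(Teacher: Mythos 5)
The paper states this corollary without proof, so there is no argument of the paper's to compare against; your proof supplies exactly the routine verification the paper leaves implicit, and it is correct. The three checks — homogeneity bookkeeping (using $d-d' \ge 0$ so that $\msF_{d-d'}$ is defined), commutation with the \v{C}ech differential (multiplication by a global function commutes with restriction), and $\Gamma_Q$-equivariance via \eqref{eqn gamma sI} and Definition \ref{dfn gamma on U} (this is where $P(\tp{\gamma}y)=P(y)$ enters) — are all carried out correctly, and passing to $\Gamma_Q$-invariants and invoking Corollary \ref{cor:equiv coh}~(1) is precisely how the paper intends the induced map on equivariant cohomology to be understood.
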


\begin{ex}\label{ex:norm}
A typical example of such a $\Gamma_Q$-invariant homogeneous polynomial $P$ is the norm polynomial $N_{w^*}$ defined in Section \ref{subsect:number field}, cf. Lemma \ref{lem:rev}. 
More generally, let $k \geq 1$ be an integer. Under the notations in Lemma \ref{lem:rev}, the $k$-th power $N_{w^*}^k$ of the norm polynomial $N_{w^*}$ is a $\Gamma_{\Qw}$-invariant homogeneous polynomial of degree $kg$. In particular, we have a map 
\[
 N_{w^*}^k: H^q(\Yo, \Gamma_{\Qw}, \msF_{kg}) \ra H^q(\Yo, \Gamma_{\Qw}, \msF_{0}).
\]
\end{ex}


\section{Cones and the exponential perturbation}\label{sect:cone and pert}

In this section we introduce the notion of exponential perturbation which is a modification of the so-called upper closure or $Q$-perturbation (Colmez perturbation) used in \cite{Y10:On-}, \cite{BHYY19:Can}, and \cite{CDG15:Int}. This is one of the key ingredients which enables us to deal with general number fields.

For $r\geq 0$, $I=(\alpha_1, \dots, \alpha_r) \in (\R^g \setm \{0\})^r$, let 
\begin{align*}
C_I&:=\sum_{i=1}^r \Rpos \alpha_i \subset \R^g,
\end{align*}
denote the \textit{open cone generated by} $\alpha_1, \dots, \alpha_r$.
In the case $r=0, I=\emptyset$, we assume $C_{\emptyset}:=\{0\}$.
\begin{rmk}
We follow the convention to call $C_I$ an ``open'' cone although it is not necessarily an open subset of $\R^g$. Note that, however, $C_I$ is open in $\Span_{\R} \{\alpha_1, \dots, \alpha_r\}$, where $\Span_{\R} \{\alpha_1, \dots, \alpha_r\} \subset \R^g$ is the $\R$-subspace spanned by $\alpha_1, \dots, \alpha_r$, cf. Lemma \ref{lem:rsoc}.
\end{rmk}

Recall that $\XQ:=\Q^g \setm \{0\}$ denotes the set of non-zero vectors of $\Q^g$. 
In this paper we fix the terminologies concerning cones as follows. 
\begin{dfn}
\begin{enumerate}
\item An open cone $C_I$ is called \textit{rational} if we can take $I \in (\XQ)^r$. 
\item An open cone $C_I$ is called \textit{simplicial} if $\alpha_1, \dots \alpha_r$ are linearly independent over $\R$.
\item We refer to a subset of $\R^g$ which can be written as a disjoint union of a finite number of rational simplicial open cones as a \textit{rational constructible cone}. 
\end{enumerate} %
\end{dfn}

\subsection{The exponential perturbation}\label{subsect:pert}

Recall that 
\begin{align*}
\Xi =\Big\{Q \in GL_g(\Q) \ \Big| \  \text{irreducible over $\Q$}\Big\} 
\end{align*}
denotes the set of irreducible matrices of $GL_g(\Q)$, cf. Section \ref{subsect:irr}.

\begin{dfn}
For $Q \in \Xi$ and a subset $A \subset \R^g$, we define the $Q$-\textit{closure} $A^Q$ of $A$ as
\[
A^Q:=\left\{x \in \R^g \mid \exists \delta >0, \forall \varepsilon \in (0, \delta), \ \exp(\varepsilon Q)x \in A  \right\},
\]
where $\exp(\varepsilon Q) \in GL_g(\R)$ is the matrix exponential of $\varepsilon Q \in GL_g(\R)$.
\end{dfn}

\begin{rmk}
The $Q$-closure is defined by considering the perturbation of $x \in \R^g$ by $\exp(\varepsilon Q)$, and we call this process \textit{the exponential perturbation}. The Colmez perturbation used in \cite{CDG15:Int} is the perturbation of $x$ by the vectors $Q \in \R^g$ whose components are linearly independent over $\Q$.
\end{rmk}

\begin{lem}\label{lem:capQ}
Let $Q \in \Xi$.
\begin{enumerate}
\item Let $A, B \subset \R^g$ be subsets such that $A \subset B$. Then we have
\[
A^Q \subset B^Q.
\]
\item Let $A_1, \dots , A_m \subset \R^g$ be subsets. Then we have
\[
(A_1\cap \cdots \cap A_m)^Q = A_1^Q \cap \cdots \cap A_m^Q.
\]
In particular, if $A_1\cap \cdots \cap A_m=\emptyset$, then $A_1^Q \cap \cdots \cap A_m^Q=\emptyset$.
\end{enumerate}
\end{lem}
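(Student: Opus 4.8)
\textbf{Proof plan for Lemma~\ref{lem:capQ}.}
The plan is to unwind both assertions directly from the definition of the $Q$-closure, exploiting only that $\exp(\varepsilon Q)$ is a single matrix acting on $\R^g$ simultaneously. Part (1) is almost immediate: if $x \in A^Q$, there is a threshold $\delta>0$ with $\exp(\varepsilon Q)x \in A$ for all $\varepsilon\in(0,\delta)$; since $A\subset B$, the same $\delta$ witnesses $x\in B^Q$. No properties of $Q$ being irreducible are needed here — just monotonicity of membership.

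For part (2), I would prove the two inclusions separately. The inclusion $(A_1\cap\cdots\cap A_m)^Q \subset A_1^Q\cap\cdots\cap A_m^Q$ follows from part (1), since $A_1\cap\cdots\cap A_m \subset A_j$ for each $j$. For the reverse inclusion, suppose $x \in A_j^Q$ for every $j=1,\dots,m$; then for each $j$ there is a threshold $\delta_j>0$ such that $\exp(\varepsilon Q)x \in A_j$ for all $\varepsilon\in(0,\delta_j)$. The key point is that the \emph{same} perturbed vector $\exp(\varepsilon Q)x$ is being tested for membership in every $A_j$, so taking $\delta := \min(\delta_1,\dots,\delta_m) > 0$ (a finite minimum of positive reals, hence positive) gives $\exp(\varepsilon Q)x \in A_1\cap\cdots\cap A_m$ for all $\varepsilon\in(0,\delta)$, i.e.\ $x \in (A_1\cap\cdots\cap A_m)^Q$. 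The final sentence about the empty intersection is the special case $A_1\cap\cdots\cap A_m=\emptyset$: then $(A_1\cap\cdots\cap A_m)^Q = \emptyset^Q = \emptyset$ (no $x$ can have any of its perturbations land in the empty set), so the displayed equality forces $A_1^Q\cap\cdots\cap A_m^Q=\emptyset$ as well.

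There is essentially no obstacle here; the only thing to be careful about is that the finiteness of the index set $\{1,\dots,m\}$ is genuinely used, since an infinite intersection of thresholds need not stay bounded away from $0$. It is also worth noting explicitly that irreducibility of $Q$ plays no role in this lemma — it is a purely formal statement about the closure operator $A\mapsto A^Q$, valid for any fixed matrix $Q$. I would keep the write-up to a few lines, emphasizing the $\delta=\min_j\delta_j$ step as the crux.
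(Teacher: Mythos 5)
Your proposal is correct and matches the paper's proof essentially line for line: part (1) directly from the definition, the forward inclusion in (2) from (1), and the reverse inclusion via $\delta = \min\{\delta_1,\dots,\delta_m\}$. The observations that finiteness of $m$ is essential and that irreducibility of $Q$ is not used are accurate, though the paper does not state them explicitly.
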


\begin{proof}
(1) is obvious. We prove (2). The inclusion $\subset$ is clear. We prove $\supset$. Let $x \in A_1^Q \cap \cdots \cap A_m^Q$.
Then by definition, there exist $\delta_1, \dots , \delta_m >0$ such that
\[
\exp((0, \delta_i)Q)x \subset A_i
\]
for $i=1, \dots, m$. Put $\delta:= \min \{\delta_1, \dots, \delta_m \}>0$. Then we have
\[
\exp((0,\delta)Q)x \subset A_1\cap \cdots \cap A_m,
\]
and hence $x \in (A_1\cap \cdots \cap A_m)^Q$.
\end{proof}

In the following, we study the $Q$-closure $C_I^Q$ of rational open cones $C_I$, which play an important role in the construction of our Shintani cocycle.

\begin{lem}\label{lem:gammaCQ}
For $r \geq 0$, $I=(\alpha_{1}, \dots, \alpha_{r}) \in (\XQ)^r$, $Q\in \Xi$, and $\gamma \in GL_g(\Q)$, we have
\[
\gamma (C_{\gamma^{-1}I}^Q)= C_I^{[\gamma](Q)},
\]
where $[\gamma](Q)=\gamma Q \gamma^{-1} \in \Xi$. 
\end{lem}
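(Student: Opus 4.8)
The plan is to unwind both sides directly from the definition of the $Q$-closure, using the fact that $\gamma$ conjugates the one-parameter perturbation $\exp(\varepsilon Q)$ into $\exp(\varepsilon [\gamma](Q))$. First I would recall that for a subset $A \subset \R^g$,
\[
A^Q = \left\{ x \in \R^g \mid \exists \delta > 0, \ \forall \varepsilon \in (0,\delta), \ \exp(\varepsilon Q)x \in A \right\},
\]
and that $C_{\gamma^{-1}I} = \gamma^{-1} C_I$ since the cone generated by $\gamma^{-1}\alpha_1, \dots, \gamma^{-1}\alpha_r$ is the $\gamma^{-1}$-image of the cone generated by $\alpha_1, \dots, \alpha_r$ (this is immediate from $\gamma^{-1}(\sum_i \Rpos \alpha_i) = \sum_i \Rpos \gamma^{-1}\alpha_i$, as $\gamma^{-1}$ is linear).

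The key computational identity is the conjugation relation for the matrix exponential: for any $\gamma \in GL_g(\Q)$ and $Q \in \Xi$,
\[
\gamma \exp(\varepsilon Q) \gamma^{-1} = \exp(\varepsilon \gamma Q \gamma^{-1}) = \exp(\varepsilon [\gamma](Q)),
\]
which follows termwise from the power series $\exp(\varepsilon Q) = \sum_{n\geq 0} \frac{\varepsilon^n}{n!} Q^n$ together with $\gamma Q^n \gamma^{-1} = (\gamma Q \gamma^{-1})^n$. With this in hand the argument is a short chain of equivalences: a point $y \in \R^g$ lies in $\gamma(C_{\gamma^{-1}I}^Q)$ iff $\gamma^{-1}y \in C_{\gamma^{-1}I}^Q$, iff there is $\delta > 0$ with $\exp(\varepsilon Q)\gamma^{-1}y \in C_{\gamma^{-1}I} = \gamma^{-1}C_I$ for all $\varepsilon \in (0,\delta)$, iff $\gamma \exp(\varepsilon Q)\gamma^{-1} y \in C_I$ for all such $\varepsilon$, iff $\exp(\varepsilon [\gamma](Q)) y \in C_I$ for all such $\varepsilon$, iff $y \in C_I^{[\gamma](Q)}$. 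Each step is reversible, so the two sets coincide.

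I would present this as a single displayed computation or a brief enumerated list of the equivalences, being careful that $[\gamma](Q) = \gamma Q \gamma^{-1}$ lies in $\Xi$ (the characteristic polynomial is conjugation-invariant, so irreducibility is preserved — this is already implicit in the paper's setup of the conjugation action of $GL_g(\Q)$ on $\Xi$). There is essentially no obstacle here: the only thing to be slightly careful about is keeping the direction of the $\gamma$'s straight, since we are perturbing by $Q$ on the $C_{\gamma^{-1}I}$ side and by $[\gamma](Q)$ on the $C_I$ side, and it is the conjugation identity for $\exp$ that makes the bookkeeping work out. The main (very mild) point to state explicitly is $C_{\gamma^{-1}I} = \gamma^{-1}C_I$, which is where linearity of the matrix action on $\R^g$ enters; everything else is formal manipulation of the defining quantifiers.
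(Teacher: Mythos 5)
Your argument is correct and coincides with the paper's: both proofs chase the defining quantifiers of the $Q$-closure through the conjugation identity $\gamma \exp(\varepsilon Q)\gamma^{-1} = \exp(\varepsilon\,\gamma Q\gamma^{-1})$ together with the linearity fact $C_{\gamma^{-1}I}=\gamma^{-1}C_I$. Your write-up is a touch more explicit about each equivalence but adds no new idea.
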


\begin{proof}
Indeed, for $x \in \R^g$ and $\varepsilon >0$, we see that 
\begin{align*}
\exp(\varepsilon [\gamma](Q))x \in C_I &\Leftrightarrow
\exp(\varepsilon \gamma Q \gamma^{-1})x \in C_I  \\
&\Leftrightarrow  \exp (\varepsilon Q) \gamma^{-1}x \in \gamma^{-1} (C_I) =C_{\gamma^{-1}I}.
\end{align*}
This proves the lemma.
\end{proof}


\subsection{Rationality}\label{sect:rat}

The aim of this subsection is to prove the following proposition:
\begin{prop}\label{prop:rat}
Let $0 \leq r \leq g$, $I=(\alpha_{1}, \dots, \alpha_{r}) \in (\XQ)^r$, and $Q \in \Xi$.
\begin{enumerate}
\item Suppose $\dim_{\Q} \Span_{\Q}\{\alpha_1, \dots \alpha_r\} \leq g-1$, then 
\begin{align*}
C_I^Q=
\begin{cases}
\{0\} & \text{ if } 0 \in C_I\\
\  \emptyset & \text{ if } 0 \notin C_I.
\end{cases}
\end{align*}
\item The $Q$-closure $C_I^Q$ of the rational open cone $C_I$ generated by $I$ is a rational constructible cone, i.e., a disjoint union of a finite number of rational simplicial open cones. 
\end{enumerate}
\end{prop}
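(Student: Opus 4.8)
The plan is to reduce to the simplicial case and then analyze the $Q$-closure of a single rational simplicial open cone directly via the matrix exponential. For part (1), if $\dim_{\Q}\Span_{\Q}\{\alpha_1,\dots,\alpha_r\}\le g-1$, then $C_I$ lies in a proper rational subspace $V=\Span_{\R}\{\alpha_1,\dots,\alpha_r\}$ of $\R^g$. The key observation is that $V$ contains no nonzero $Q$-stable $\Q$-subspace by Lemma \ref{lem:tor}(2), so in particular $\exp(\varepsilon Q)$ does not preserve $V$; more precisely, for any fixed nonzero $x$, I would show that $\exp(\varepsilon Q)x \notin V$ for all sufficiently small $\varepsilon > 0$ unless $x = 0$. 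Indeed, writing $\exp(\varepsilon Q)x = x + \varepsilon Qx + \tfrac{\varepsilon^2}{2}Q^2x + \cdots$, if this stayed in $V$ for a sequence $\varepsilon_n \to 0$, then by taking successive derivatives (or dividing by powers of $\varepsilon$ and passing to the limit) one gets $x, Qx, Q^2x, \dots \in V$, so the $Q$-stable subspace generated by $x$ lies in $V$; by Lemma \ref{lem:tor}(2), (3) this forces $x = 0$. Hence no nonzero $x$ can lie in $C_I^Q$, and the origin lies in $C_I^Q$ iff $0 \in C_I$ (i.e. iff $r = 0$), giving the stated dichotomy.

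For part (2), I would first reduce to the case where $C_I$ is simplicial: by the standard simplicial subdivision of a rational cone (which I would cite or establish as a lemma, splitting $C_I$ into finitely many disjoint rational simplicial open cones and lower-dimensional pieces), together with Lemma \ref{lem:capQ}(1) and the compatibility of $(-)^Q$ with the relevant unions — more carefully, I would note $A^Q \supset \bigsqcup (A_j)^Q$ when $A = \bigsqcup A_j$, and conversely use that the $\exp(\varepsilon Q)$-orbit of a point eventually stays in one piece — one sees that $C_I^Q$ is a finite disjoint union of the $Q$-closures of rational simplicial cones, plus contributions from lower-dimensional faces which by part (1) are either empty or $\{0\}$. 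So it suffices to treat a single rational simplicial open cone $C = C_I$ with $\alpha_1,\dots,\alpha_r$ linearly independent over $\Q$; by part (1) again we may assume $r = g$, so $\alpha_1,\dots,\alpha_g$ form a $\Q$-basis of $\Q^g$.

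In the case $r = g$, let $A = (\alpha_1,\dots,\alpha_g) \in GL_g(\Q)$ be the matrix with columns $\alpha_i$, so that membership $x \in C_I$ is equivalent to all coordinates of $A^{-1}x$ being positive. Then $\exp(\varepsilon Q)x \in C_I$ iff all coordinates of $A^{-1}\exp(\varepsilon Q)x = \exp(\varepsilon\, A^{-1}QA)\,A^{-1}x$ are positive, i.e., setting $Q' := A^{-1}QA \in \Xi$ and $z := A^{-1}x \in \R^g$, iff $\exp(\varepsilon Q')z$ lies in the positive orthant for all small $\varepsilon > 0$. So we are reduced to describing $(\R_{>0}^g)^{Q'}$ for $Q' \in \Xi$, and then pulling back by the rational linear isomorphism $A$. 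For this, I would expand $\exp(\varepsilon Q')z = z + \varepsilon Q'z + \cdots$ coordinate-by-coordinate: the $j$-th coordinate is eventually positive (for small $\varepsilon>0$) iff, letting $m_j$ be the smallest index with $(Q'^{m_j}z)_j \ne 0$ — such $m_j$ exists and is finite whenever $z \neq 0$, by the irreducibility argument of part (1) applied to hyperplanes — we have $(Q'^{m_j}z)_j > 0$ (and if the $j$-th coordinate of $z, Q'z, Q'^2 z, \dots$ is identically zero then $z=0$). Thus $(\R_{>0}^g)^{Q'}$ is cut out by a finite Boolean combination of the strict polynomial inequalities $(Q'^{m}z)_j > 0$ and equalities $(Q'^{m}z)_j = 0$ in the coordinates of $z$; since these are all linear (homogeneous of degree $1$) in $z$, the set $(\R_{>0}^g)^{Q'}$ is a finite disjoint union of relatively open rational polyhedral cones — each obtained by fixing, for each $j$, the threshold index $m_j$, imposing the linear vanishing conditions $(Q'^{m}z)_j = 0$ for $m < m_j$ and the open condition $(Q'^{m_j}z)_j > 0$ — which can be further subdivided into rational simplicial open cones. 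Transporting back through $A$ gives the desired description of $C_I^Q$.

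The main obstacle I anticipate is the bookkeeping in the reduction to the simplicial case and in verifying that the ``eventual sign'' description of each coordinate genuinely yields a \emph{disjoint} decomposition into rational cones that exhausts $C_I^Q$ — one must be careful that the various threshold indices $m_j$ interact correctly and that the resulting pieces do not overlap, which is essentially a matter of organizing the lexicographic comparison of the sequences $\bigl((Q'^m z)_j\bigr)_{m\ge 0}$ against $0$ for each $j$ simultaneously. The genuinely substantive input — that these sequences are not identically zero for $z\neq 0$ — is exactly the irreducibility of $Q$ via Lemma \ref{lem:tor}, which also powers part (1); everything after that is linear algebra and combinatorial cone decomposition.
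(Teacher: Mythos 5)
Your plan is, at bottom, the paper's: reduce (using part~(1) together with $r\le g$) to $r=g$ with $\alpha_1,\dots,\alpha_g$ a basis of $\Q^g$, then stratify the $Q$-closure by the ``first index $m$ where the $j$-th coordinate $(Q'^m z)_j=\brk{x,\tp{Q}^m\alpha^*_j}$ becomes nonzero''; this is exactly the $H^{(k)}_{\pm}$ stratification of Lemma~\ref{lem:hypQ}, and your change of coordinates $z=A^{-1}x$, $Q'=A^{-1}QA$ is precisely the paper's passage to the dual basis $\alpha^*_1,\dots,\alpha^*_g$ of Lemma~\ref{lem:rsoc}~(1). The preliminary excursion through simplicial subdivision is unnecessary: once the non-spanning case is discarded by part~(1), the hypothesis $r\le g$ forces $r=g$ and the $\alpha_i$ to be a basis, so $C_I$ is already simplicial. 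That said, two steps need tightening.

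In part~(1) you argue that $x,Qx,Q^2x,\dots\in V$ implies that ``the $Q$-stable subspace generated by $x$ lies in $V$; by Lemma~\ref{lem:tor}~(2),(3) this forces $x=0$.'' As stated this does not quite work: $x$ ranges over $\R^g$, so $\Span_{\R}\{x,Qx,Q^2x,\dots\}$ is only an $\R$-subspace and need not be rational, while Lemma~\ref{lem:tor}~(2) concerns $\Q$-subspaces and (3) requires $x\in\Q^g$. (When $F$ is totally real, for example, the $\R$-line through a real eigenvector of $Q$ is a nonzero proper $Q$-stable $\R$-subspace.) The fix is to use the \emph{largest} $Q$-stable subspace of $V$, namely $W=\bigcap_{m\ge0}Q^{-m}V$: each $Q^{-m}V$ is rational since $V$ is spanned by the rational $\alpha_i$ and $Q\in GL_g(\Q)$, the chain stabilizes for dimension reasons, so $W$ is a rational $Q$-stable subspace with $W\subset V\subsetneq\R^g$; then $W=0$ by Lemma~\ref{lem:tor}~(2), and $x\in W$ gives $x=0$. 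This is in substance what the paper does inside the proof of the Claim in Lemma~\ref{lem:hypQ}.

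In part~(2) you claim $(\R_{>0}^g)^{Q'}$ is cut out by a \emph{finite} Boolean combination of the conditions $(Q'^m z)_j>0$ and $(Q'^m z)_j=0$, but the justification offered (``$m_j$ is finite whenever $z\ne 0$'') is only a pointwise statement; it does not by itself bound the threshold indices uniformly in $z$, which is what you need for finiteness of the decomposition. The uniform bound is exactly Lemma~\ref{lem:hypQ}~(1): the subspaces $\{z\mid (Q'^mz)_j=0,\ 0\le m\le k-1\}$ form a strictly decreasing chain of rational subspaces whose first repeated term would be a proper $Q'$-stable rational subspace, hence $0$; thus $m_j\le g-1$ uniformly. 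With these two points made precise, your argument coincides with the paper's.
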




In order to prove this proposition, we first prepare several lemmas. 
In the following, for $\alpha \in \R^g \setm \{0\}$, we put
\begin{align*}
U_{\alpha, \pm} &:=\{x \in \R^g \mid \pm\brk{x, \alpha}>0 \}, \\
H_{\alpha} &:=\{x \in \R^g \mid \brk{x, \alpha}=0\}.
\end{align*}
We start with recalling the following fact.

\begin{lem}[\cite{S76:On-} Section 1.2, \cite{H93:Ele} pp.~68--69, Lemma 1]\label{lem:polyhedron}~
\begin{enumerate}
\item Let $W \subset \Q^g$ be a $\Q$-subspace, and let $l_1, \dots, l_m \in \YQ$. Then the subset
\[
X=\{x \in W\otimes_{\Q}\R \subset \R^g \mid \brk{x, l_i}>0, \  i=1, \dots, m \} \subset \R^g
\]
is a rational constructible cone.
\item Let $C,C' \subset \R^g$ be rational constructible cones. Then $C\cup C', C\cap C', C\setm C'$ are rational constructible cones.
\end{enumerate}
\end{lem}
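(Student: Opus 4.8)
The plan is to deduce both parts from a single classical input — that a convex cone cut out by finitely many rational linear inequalities admits a finite rational simplicial subdivision — organised around the auxiliary notion of a \emph{sign region}. For a finite tuple $\mcL = (l_1, \dots, l_m)$ of vectors in $\YQ$ and a sign vector $\eta = (\eta_1, \dots, \eta_m) \in \{+, 0, -\}^m$, put
\[
R(\mcL, \eta) := \bigl\{ x \in \R^g \ \big|\ \sgn \brk{x, l_i} = \eta_i, \ i = 1, \dots, m \bigr\}.
\]
The key step is the claim that every such $R(\mcL, \eta)$ is a rational constructible cone; granting it, parts (1) and (2) are short.

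\emph{The key step.} If $R := R(\mcL, \eta) = \emptyset$ there is nothing to prove. Otherwise fix $x_0 \in R$ and let $\sigma \subset \R^g$ be the closed cone obtained from $R$ by relaxing each strict condition $\brk{x,l_i}>0$ (resp.\ $<0$) to $\geq 0$ (resp.\ $\leq 0$) and keeping each condition $\brk{x,l_i}=0$. For $y \in \sigma$ and $t \in (0,1]$ the point $(1-t)y + t x_0$ satisfies every strict condition defining $R$, so it lies in $R$; letting $t \to 0$ gives $\sigma \subseteq \overline{R}$, whence $\sigma = \overline{R}$. Since $R$ is a relatively open convex cone (the equality conditions cut out a subspace, in which the strict conditions are open), we get $R = \operatorname{relint}(\sigma)$. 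By the triangulation theorem, $\sigma$ carries a finite fan $\Sigma$ of rational simplicial cones with union $\sigma$; then $\sigma = \bigsqcup_{\tau \in \Sigma} \operatorname{relint}(\tau)$, and $\operatorname{relint}(\sigma)$ is the disjoint union of those $\operatorname{relint}(\tau)$ with $\tau$ not contained in any proper face of $\sigma$. Since the relative interior of a rational simplicial cone $\tau = \sum_j \R_{\geq 0} w_j$ is exactly the rational simplicial open cone $C_{(w_1, \dots, w_s)}$, this exhibits $R$ as a finite disjoint union of rational simplicial open cones. (If one prefers to avoid quoting the triangulation theorem one can instead induct on $m$, the inductive step being the elementary but slightly tedious subdivision of a single rational simplicial cone by a rational hyperplane $H_{l}$, using the edge--hyperplane intersection vectors $\beta_{ij} = \brk{\alpha_i, l}\alpha_j - \brk{\alpha_j, l}\alpha_i$.)

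\emph{Deducing (1) and (2).} For (1): pick a $\Q$-basis $w_1, \dots, w_{g - \dim W}$ of the orthogonal complement of $W$ in $\Q^g$, so that $W \otimes_{\Q} \R = \{x \mid \brk{x, w_j} = 0 \ \forall j\}$; then the set $X$ of the statement is the sign region for the functionals $w_1, \dots, w_{g-\dim W}, l_1, \dots, l_m$ with sign $0$ on each $w_j$ and $+$ on each $l_i$, hence a rational constructible cone by the key step. For (2): every rational simplicial open cone $C_{(\alpha_1, \dots, \alpha_r)}$ is itself a sign region — extend $\alpha_1, \dots, \alpha_r$ to a rational basis of $\R^g$ with dual basis $\alpha_1^*, \dots, \alpha_g^* \in \YQ$, and then $C_{(\alpha_1,\dots,\alpha_r)} = R\bigl((\alpha_1^*, \dots, \alpha_g^*), (+,\dots,+,0,\dots,0)\bigr)$ with $r$ plus signs. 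Write $C = \bigsqcup_k A_k$, $C' = \bigsqcup_l B_l$ as disjoint unions of rational simplicial open cones, and let $\mcL$ be the finite tuple of all functionals occurring in these presentations. Each $A_k$ and $B_l$ is a union of the $\mcL$-sign regions it contains; since these $\mcL$-sign regions partition $\R^g$, disjointness gives
\[
C = \bigsqcup_{R(\mcL, \eta) \subseteq C} R(\mcL, \eta), \qquad C' = \bigsqcup_{R(\mcL, \eta) \subseteq C'} R(\mcL, \eta).
\]
Consequently $C \cup C'$, $C \cap C'$ and $C \setm C'$ are the disjoint unions of those $\mcL$-sign regions lying in $C$ or $C'$, in both, and in $C$ but not $C'$, respectively; by the key step each is a rational constructible cone, and a finite disjoint union of rational constructible cones is again one.

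\emph{Main obstacle.} The only non-formal ingredient is the triangulation statement invoked in the key step; it is classical (it is essentially what the cited references supply), but a fully self-contained treatment requires the explicit hyperplane-subdivision step, which is the single genuinely computational point of the argument — everything else is elementary convexity together with bookkeeping over sign vectors.
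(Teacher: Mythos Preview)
Your argument is correct. The paper itself gives no proof of this lemma at all --- it simply cites Shintani \cite[Lemma~2 and its Corollary]{S76:On-} and Hida \cite[pp.~68--69, Lemma~1]{H93:Ele}, remarking only that Hida's hypothesis that the ambient space be $F\otimes_{\Q}\R$ is not actually used in his argument. So your proposal supplies considerably more than the paper does.

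Your sign-region device $R(\mcL,\eta)$ is a clean way to package the classical argument: it makes the reduction of (1) to the triangulation of a single rational polyhedral cone transparent, and for (2) it replaces the usual inductive ``cut by one more hyperplane'' bookkeeping with a one-shot common refinement, which is tidier. The convexity step $R=\operatorname{relint}(\sigma)$ and the identification of $\operatorname{relint}(\sigma)$ as the disjoint union of the $\operatorname{relint}(\tau)$ for $\tau$ not lying in a proper face are both correct (the latter because a supporting hyperplane that meets $\operatorname{relint}(\tau)$ must contain all of $\tau$). You are right that the only genuinely nontrivial input is the rational simplicial subdivision of a rational polyhedral cone, which is exactly what the cited references furnish; the alternative inductive subdivision you sketch (via the vectors $\beta_{ij}$) is the standard hands-on route and would also work.
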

\begin{proof}
See \cite[Lemma 2, Corollary to Lemma 2]{S76:On-} and \cite[pp.~68--69, Lemma 1]{H93:Ele}. 
Although \cite{H93:Ele} is assuming that the total space is of the form $F\otimes_{\Q}\R$ for a number field $F$ and that $W$ is a subspace generated by elements in $F$, its proof does not use this special assumption.
\end{proof}

The following is the key lemma of this section.
\begin{lem}\label{lem:hypQ}
Let $Q \in \Xi$ and $\alpha \in \YQ$. For $k \geq 0$, put
\[
H_{\pm}^{(k)}:=\{x\in \R^g \mid \pm \brk{x, \tp{Q}^k\alpha}>0, \ \brk{x, \tp{Q}^j\alpha}=0,\  \text{for } 0\leq j \leq k-1 \}.
\]
Note that $H_{\pm}^{(0)}=U_{\alpha, \pm}$ by definition.
\begin{enumerate}
\item There exists $k_0 \geq 0$ such that $H_{\pm}^{(k)} = \emptyset$ for all $k \geq k_0+1$. Moreover, we have
\[
\R^g \setm \{0\} = \coprod_{k=0}^{k_0} \left(H_+^{(k)} \coprod H_-^{(k)} \right),
\]
where $\coprod$ denotes the disjoint union.
\item For all $k \geq 0$, the sets $H_+^{(k)}, H_-^{(k)}$ are rational constructible cones.
\item For all $k \geq 0$, we have $H_+^{(k)} \subset (H_+^{(0)})^Q=(U_{\alpha,+})^Q$ and $H_-^{(k)} \subset (H_-^{(0)})^Q=(U_{\alpha,-})^Q$.
\item We have $H_{\alpha}^Q =\{0\}$ and
\begin{align*}
\R^g \setm \{0\} = (U_{\alpha,+})^Q \coprod (U_{\alpha,-})^Q. 
\end{align*}
In particular, $\R^g = H_{\alpha}^Q \coprod (U_{\alpha,+})^Q \coprod (U_{\alpha,-})^Q$.
\item We have
\begin{align*}
(U_{\alpha,+})^Q &= \coprod_{k=0}^{k_0} H_+^{(k)}, \\
(U_{\alpha,-})^Q &= \coprod_{k=0}^{k_0} H_-^{(k)}.
\end{align*}
In particular, $(U_{\alpha,+})^Q$ and $(U_{\alpha,-})^Q$ are rational constructible cones.
\end{enumerate}
\if0
In particular, we have
\begin{align*}
\R^g \setm \{0\} &= (U_{\alpha,+})^Q \coprod (U_{\alpha,-})^Q, \\
(U_{\alpha,+})^Q&= \coprod_{k=0}^{k_0} H_+^{(k)}, \\
(U_{\alpha,-})^Q&= \coprod_{k=0}^{k_0} H_-^{(k)}, \\
H_{\alpha}^Q=\emptyset,
\end{align*}
and hence $(H_+^{(0)})^Q$ and $(H_-^{(0)})^Q$ are rational constructible cones.
\fi
\end{lem}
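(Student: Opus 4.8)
The plan is to prove the five assertions roughly in the order (1) $\Rightarrow$ (2) $\Rightarrow$ (3) $\Rightarrow$ (5) $\Rightarrow$ (4), since the disjointness statements of (1) feed into everything and (4) is basically the $\alpha$-coordinate bookkeeping of (5). First I would prove (1). The key input is Lemma \ref{lem:tor}: since $Q$ is irreducible, $\tp{Q}$ is also irreducible, so $\Q^g$ has \emph{no} nonzero proper $\tp{Q}$-stable subspace. Consider the descending chain of subspaces $W_k := \{ x \in \R^g \mid \brk{x, \tp{Q}^j \alpha} = 0,\ 0 \le j \le k-1\}$, i.e. the common kernel of the functionals $\alpha, \tp{Q}\alpha, \dots, \tp{Q}^{k-1}\alpha$. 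This is a decreasing sequence of rational subspaces, so it stabilizes; let $W_\infty$ be the limit. Then $W_\infty$ is stable under $Q$ (because if $\brk{x,\tp{Q}^j\alpha}=0$ for all $j\ge0$ then $\brk{Qx,\tp{Q}^j\alpha}=\brk{x,\tp{Q}^{j+1}\alpha}=0$ for all $j\ge 0$), hence by Lemma \ref{lem:tor}(2) applied to $Q$, $W_\infty$ is either $0$ or $\Q^g$; the latter is impossible since $\alpha \ne 0$. So $W_\infty = 0$, meaning the functionals $\alpha, \tp{Q}\alpha, \dots, \tp{Q}^{g-1}\alpha$ already span the dual, and in particular for $k \ge k_0+1$ with $k_0$ at most $g-1$ the set $W_k = 0$, forcing $H_\pm^{(k)} = \emptyset$. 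The disjoint decomposition of $\R^g \setminus \{0\}$ is then immediate: any $x \ne 0$ lies in $W_k \setminus W_{k+1}$ for a unique $k \le k_0$, and the sign of $\brk{x, \tp{Q}^k\alpha}$ then places $x$ in exactly one of $H_+^{(k)}, H_-^{(k)}$.

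For (2), each $H_\pm^{(k)}$ is cut out inside the rational subspace $W_k$ by finitely many rational linear equalities (already used to define $W_k$) and one strict rational linear inequality $\pm\brk{x,\tp{Q}^k\alpha}>0$; so Lemma \ref{lem:polyhedron}(1) (applied with $W = W_k$ and the single functional $\tp{Q}^k\alpha$) shows $H_\pm^{(k)}$ is a rational constructible cone. The heart of the lemma is (3): $H_+^{(k)} \subset (U_{\alpha,+})^Q$, i.e. if $x \in W_k$ and $\brk{x,\tp{Q}^k\alpha}>0$ then $\brk{\exp(\varepsilon Q)x, \alpha} > 0$ for all small $\varepsilon > 0$. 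I would expand $\brk{\exp(\varepsilon Q)x, \alpha} = \sum_{j \ge 0} \frac{\varepsilon^j}{j!}\brk{Q^j x, \alpha} = \sum_{j\ge0}\frac{\varepsilon^j}{j!}\brk{x, \tp{Q}^j\alpha}$. By the definition of $W_k$ the terms with $j < k$ vanish, so the expansion begins with $\frac{\varepsilon^k}{k!}\brk{x,\tp{Q}^k\alpha}$, whose coefficient is strictly positive; hence for $\varepsilon > 0$ sufficiently small the whole (analytic) sum is positive, giving $x \in (U_{\alpha,+})^Q$. The case of $H_-^{(k)}$ is identical with signs flipped. I expect \textbf{this Taylor-expansion-with-leading-term argument for (3) to be the crux} — everything else is either the no-invariant-subspace trick or Schmidt--Hill-style cone combinatorics — and the one subtlety to be careful about is that $x \mapsto \exp(\varepsilon Q)x$ is real-analytic in $\varepsilon$, so "leading coefficient positive'' genuinely implies "positive on a punctured neighborhood of $0$''.

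Finally, (5) follows by combining (1), (2), (3): from (3) we get $\coprod_{k=0}^{k_0} H_+^{(k)} \subset (U_{\alpha,+})^Q$, and similarly for the minus sets; from (1) the union of all the $H_\pm^{(k)}$ exhausts $\R^g \setminus\{0\}$ disjointly, while Lemma \ref{lem:capQ}(2) (or just the fact that $U_{\alpha,+}\cap U_{\alpha,-}=\emptyset$) shows $(U_{\alpha,+})^Q$ and $(U_{\alpha,-})^Q$ are disjoint; so the inclusions cannot be strict and we get equality $(U_{\alpha,\pm})^Q = \coprod_k H_\pm^{(k)}$, which is a rational constructible cone by (2) and Lemma \ref{lem:polyhedron}(2). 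For (4), the decomposition $\R^g \setminus \{0\} = (U_{\alpha,+})^Q \coprod (U_{\alpha,-})^Q$ is just the union over all $k$ of the decomposition in (1) regrouped by sign; and $H_\alpha^Q = \{0\}$ because $0 \in H_\alpha$ trivially gives $0 \in H_\alpha^Q$, while any $x \ne 0$ lies in some $H_\pm^{(k)} \subset (U_{\alpha,\pm})^Q$, and $x \in H_\alpha^Q$ would force $\exp(\varepsilon Q)x \in H_\alpha$ for small $\varepsilon$, i.e. the analytic function $\varepsilon \mapsto \brk{\exp(\varepsilon Q)x,\alpha}$ vanishes identically, contradicting that its order of vanishing at $0$ is exactly the finite index $k \le k_0$. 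Adding back $0$ gives $\R^g = H_\alpha^Q \coprod (U_{\alpha,+})^Q \coprod (U_{\alpha,-})^Q$.
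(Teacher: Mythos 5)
Your proposal is correct and follows essentially the same route as the paper: the decreasing filtration $W_k = H^{(k)}$, irreducibility of $Q$ (via Lemma \ref{lem:tor}(2)) to force $W_{k_0+1}=0$, the Taylor-expansion-with-positive-leading-term argument for (3), and the cone combinatorics plus Lemma \ref{lem:capQ}(2) for (4) and (5). The only cosmetic difference is that you prove (5) before (4), which means in (5) you should note explicitly that $0\notin (U_{\alpha,\pm})^Q$ (trivially, since $\exp(\varepsilon Q)0=0\notin U_{\alpha,\pm}$) before concluding that the inclusions cannot be strict; the paper instead establishes this as part of (4) first.
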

\begin{proof}
For $k \geq 0$, put
\[
H^{(k)}:=\{x\in \R^g \mid \brk{x, \tp{Q}^j\alpha}=0,\  \text{for } 0\leq j \leq k-1 \}.
\]
Then we have a descending chain
\[
\R^g=H^{(0)} \supset H^{(1)} \supset H^{(2)} \supset \cdots
\]
of $\R$-vector spaces.
Note that the subspaces $H^{(k)}$ are all defined over $\Q$ since we have $\tp{Q}^j\alpha \in \YQ$ for $j \geq 0$.
Since $\R^g$ is a finite dimensional vector space, there exists $k_0 \geq 0$ such that $H^{(k)}=H^{(k_0+1)}$ for all $k \geq k_0+1$.
\begin{claim}
We have $H^{(k_0+1)}=0$.
\end{claim}
Indeed, let $x \in H^{(k_0+1)}=H^{(k_0+2)}$. Then we have
\[
\brk{Qx, \tp{Q}^j\alpha} =\brk{x, \tp{Q}^{j+1}\alpha} =0, \text{ for } 0 \leq j \leq k_0,
\]
and hence $Qx \in H^{(k_0+1)}$. Therefore, $H^{(k_0+1)}$ is a $Q$-stable subspace of $\R^g$ defined over $\Q$. Moreover, since $\alpha \neq 0$, we have
\[
H^{(k_0+1)} \subset H^{(1)} \subsetneq \R^g.
\]
Therefore, we obtain $H^{(k_0+1)}=0$ by Lemma \ref{lem:tor} (2). 

Now (1) follows from the fact
\[
H^{(k)}\setm H^{(k+1)} = H_+^{(k)} \coprod H_-^{(k)}, \ \forall k \geq 0, 
\]
and (2) follows from Lemma \ref{lem:polyhedron} (1).

(3)
Let $x \in H_+^{(k)}$. Then we have
\[
\brk{\exp(\varepsilon Q)x, \alpha}=\sum_{m\geq k}\frac{\brk{x, \tp{Q}^m\alpha}}{m!}\varepsilon^m.
\]
Now since $\brk{x, \tp{Q}^k\alpha}>0$, there exists $\delta >0$ such that
\[
\brk{\exp(\varepsilon Q)x, \alpha}=\sum_{m\geq k}\frac{\brk{x, \tp{Q}^m\alpha}}{m!}\varepsilon^m >0
\]
for all $\varepsilon \in (0, \delta)$. Hence $x \in (H_+^{(0)})^Q$. 
The inclusion $H_-^{(k)} \subset (H_-^{(0)})^Q$ can be proved similarly. 

(4) First, by Lemma \ref{lem:capQ} (2), we see $(U_{\alpha,+})^Q \cap (U_{\alpha,-})^Q=\emptyset$, and $H_{\alpha}^Q \cap (U_{\alpha,\pm})^Q=\emptyset$.
On the other hand, we obviously have $0 \in H_{\alpha}^Q$, and hence $0 \notin (U_{\alpha,\pm})^Q$.
Therefore, by (1) and (3), we obtain
\[
\R^g\setm \{0\} \subset (U_{\alpha,+})^Q \coprod (U_{\alpha,-})^Q \subset \R^g\setm \{0\}.
\]
Thus we find $\R^g \setm \{0\}=(U_{\alpha,+})^Q \coprod (U_{\alpha,-})^Q$ and $H_{\alpha}^Q =\{0\}$. 

(5) The first part follows from (1), (3), and (4). Then the latter part follows from (2). 
\end{proof}

\begin{lem}\label{lem:rsoc}
Let $I=(\alpha_1, \dots, \alpha_r) \in (\XQ)^r$ such that $\alpha_1, \dots, \alpha_r \in \Q^g \setm \{0\}$ are linearly independent. Note that we automatically have $r \leq g$.
\begin{enumerate}
\item There exist $\alpha^*_1, \dots, \alpha^*_r, \beta^*_1, \dots , \beta^*_{g-r} \in \YQ$ such that
\[
C_I=\left(\bigcap_{i=1}^rU_{\alpha^*_i,+}\right) \cap \left(\bigcap_{i=1}^{g-r}H_{\beta^*_i}\right).
\]
\item Let $Q \in \Xi$. Then we have
\[
\R^g=C_I^Q \coprod \left( \R^g \setm C_I \right)^Q.
\]
\end{enumerate}
\end{lem}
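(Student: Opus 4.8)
For part (1), the plan is to choose coordinates adapted to the span of $\alpha_1,\dots,\alpha_r$. Since these vectors are linearly independent over $\Q$, complete them to a $\Q$-basis $\alpha_1,\dots,\alpha_r,\gamma_1,\dots,\gamma_{g-r}$ of $\Q^g$. Let $\alpha_1^*,\dots,\alpha_r^*,\gamma_1^*,\dots,\gamma_{g-r}^*\in\YQ$ be the dual basis with respect to the scalar product $\brk{\ ,\ }$, so that $\brk{\alpha_i,\alpha_j^*}=\delta_{ij}$, $\brk{\alpha_i,\gamma_j^*}=0$, $\brk{\gamma_i,\gamma_j^*}=\delta_{ij}$, and $\brk{\gamma_i,\alpha_j^*}=0$. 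Writing an arbitrary $x\in\R^g$ in the dual coordinates, one checks directly that $x=\sum_i a_i\alpha_i + \sum_j b_j\gamma_j$ lies in $C_I=\sum_{i=1}^r\Rpos\alpha_i$ precisely when all $a_i>0$ and all $b_j=0$; but $a_i=\brk{x,\alpha_i^*}$ and $b_j=\brk{x,\gamma_j^*}$, so setting $\beta_j^*:=\gamma_j^*$ gives
\[
C_I=\left(\bigcap_{i=1}^r U_{\alpha_i^*,+}\right)\cap\left(\bigcap_{j=1}^{g-r}H_{\beta_j^*}\right),
\]
as desired. This step is routine linear algebra; the only point to keep in mind is that the dual basis vectors are rational because the original basis is.

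For part (2), I would argue by induction on the number $g-r$ of hyperplane conditions plus the number $r$ of half-space conditions appearing in the description from part (1), using Lemma \ref{lem:hypQ} and Lemma \ref{lem:capQ} as the two engines. The base case is a single condition. Lemma \ref{lem:hypQ}(4) handles a half-space: $\R^g = H_{\alpha}^Q\coprod(U_{\alpha,+})^Q\coprod(U_{\alpha,-})^Q$, with $H_\alpha^Q=\{0\}$; and since $\R^g\setm U_{\alpha,+}=H_\alpha\cup U_{\alpha,-}$, one gets $\R^g=(U_{\alpha,+})^Q\coprod(\R^g\setm U_{\alpha,+})^Q$ once one knows $(H_\alpha\cup U_{\alpha,-})^Q=H_\alpha^Q\cup(U_{\alpha,-})^Q$ — this "$Q$-closure commutes with finite unions" fact I would extract as a preliminary observation (it is immediate from the definition: if $\exp((0,\delta)Q)x$ lands in $A\cup B$, then by connectedness of the image and openness it lands entirely in $A$ or entirely in $B$ for small enough $\delta$, or rather one uses that a punctured interval mapped continuously into $A\sqcup B$ near $0$ must eventually stay in one piece). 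Combined with Lemma \ref{lem:capQ}(2) for the intersections, the general decomposition $C_I = E_1\cap\cdots\cap E_{g}$ (with each $E_m$ either a half-space $U_{\alpha_i^*,+}$ or a "hyperplane" $H_{\beta_j^*}=U_{\beta_j^*,+}^c\cap U_{\beta_j^*,-}^c$, hence a finite intersection of half-space complements) yields that $\R^g$ is partitioned by the $Q$-closures of all the "cells" cut out by the signs of $\brk{x,\alpha_i^*}$ and $\brk{x,\beta_j^*}$; grouping these cells into the one equal to $C_I$ and the rest gives the claimed disjoint decomposition $\R^g=C_I^Q\coprod(\R^g\setm C_I)^Q$.

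The main obstacle I anticipate is the bookkeeping in part (2): one must be careful that $(\R^g\setm C_I)^Q$ really is the union of the $Q$-closures of all the complementary cells, which requires both that $Q$-closure commutes with the relevant finite unions (so $(\R^g\setm C_I)^Q$ is a union of cell-closures) and that it commutes with the relevant finite intersections (Lemma \ref{lem:capQ}(2), to identify each cell-closure correctly and to see they are pairwise disjoint). The disjointness of $C_I^Q$ from $(\R^g\setm C_I)^Q$ is exactly Lemma \ref{lem:capQ}(2) applied to $C_I\cap(\R^g\setm C_I)=\emptyset$; the surjectivity (that the two pieces cover $\R^g$) is where the systematic cell decomposition via iterated application of Lemma \ref{lem:hypQ}(4) is needed, and I would organize this as a clean induction rather than trying to write the partition explicitly. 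Note also that part (2), together with Lemma \ref{lem:hypQ}(2) and (5) and Lemma \ref{lem:polyhedron}, will feed directly into the proof of Proposition \ref{prop:rat}.
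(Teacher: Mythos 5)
Your part (1) is correct and essentially the same argument as the paper's, up to the cosmetic choice of whether to dualize $\alpha_1,\dots,\alpha_r$ inside $W=\Span_\Q\{\alpha_i\}$ and pick an arbitrary basis of $W^\perp$ (as the paper does) or to complete $\{\alpha_i\}$ to a full basis and dualize globally (as you do); both give rational $\alpha_i^*,\beta_j^*$.

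Your part (2) has a genuine gap. The ``preliminary observation'' that $Q$-closure commutes with finite unions --- $(A\cup B)^Q=A^Q\cup B^Q$ --- is \emph{false} for general subsets $A,B\subset\R^g$, and the justification you sketch does not repair it. A continuous map from a punctured interval into $A\sqcup B$ need \emph{not} eventually stay in one piece: take, say, $A=(0,1)$, $B=(-1,0)$, and a curve of the form $\varepsilon\mapsto\varepsilon\sin(1/\varepsilon)$, which oscillates between $A$ and $B$ for arbitrarily small $\varepsilon$. The ``by connectedness and openness'' heuristic fails for exactly the same reason; and in your application it fails doubly, since one of the two pieces you want to split off, namely $H_\alpha$, is not open in $H_\alpha\cup U_{\alpha,-}$. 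What makes the union claim true for the \emph{specific} sets you need is the analyticity of $\varepsilon\mapsto\brk{\exp(\varepsilon Q)x,\alpha}$: its sign is eventually constant as $\varepsilon\to 0^+$. But that analytic input is exactly the content of Lemma \ref{lem:hypQ}(1)--(4); it is not ``immediate from the definition'' of $Q$-closure, and once you have invoked it you no longer need a separate union lemma at all.

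The paper's proof of (2) avoids the union question entirely. Starting from the expression of $C_I$ from (1), it uses Lemma \ref{lem:capQ}(2) (intersections) to write $C_I^Q$ as an intersection of $(U_{\alpha_i^*,+})^Q$'s and $H_{\beta_j^*}^Q$'s, then applies de Morgan together with the exact tripartition $\R^g=H_\alpha^Q\coprod(U_{\alpha,+})^Q\coprod(U_{\alpha,-})^Q$ of Lemma \ref{lem:hypQ}(4) to compute $\R^g\setm C_I^Q$ explicitly as a union of $Q$-closures of pieces of $\R^g\setm C_I$. By monotonicity (Lemma \ref{lem:capQ}(1)) this union is contained in $(\R^g\setm C_I)^Q$, giving $\R^g\setm C_I^Q\subset(\R^g\setm C_I)^Q$ and hence $\R^g=C_I^Q\cup(\R^g\setm C_I)^Q$; disjointness is again Lemma \ref{lem:capQ}(2). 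Note the asymmetry: only one inclusion and disjointness are needed, because $\R^g=C_I^Q\cup(\R^g\setm C_I^Q)$ is tautological. Your cell-decomposition picture (partitioning $\R^g$ into the $Q$-closures of the $3^g$ sign cells, which one obtains from Lemma \ref{lem:capQ}(2) and Lemma \ref{lem:hypQ}(4) using only intersections) can also be pushed through correctly, but then the ``union commutes'' lemma is unnecessary and should be dropped --- as stated, it is both false in general and unsupported by the argument you sketch for it.
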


\begin{proof}
(1) Put $W:=\Span_{\Q}\{\alpha_1, \dots, \alpha_r \} \subset \Q^g$, and let $W^{\perp}\subset \Q^g$ be its orthogonal complement with respect to the scalar product $\brk{\ , \ }$. Let $\alpha^*_1, \dots, \alpha^*_r \in W$ be the dual basis of $\alpha_1, \dots, \alpha_r$ in $W$ with respect to $\brk{\ ,\ }$, i.e.,
\begin{align*}
\brk{\alpha_i, \alpha^*_j}=
\begin{cases}
1 & (i=j)\\
0 & (i \neq j)
\end{cases},
\end{align*}
and let $\beta^*_1, \dots , \beta^*_{g-r} \in W^{\perp}$ be a basis of $W^{\perp}$ over $\Q$.
Then $\alpha^*_1, \dots, \alpha^*_r, \beta^*_1, \dots , \beta^*_{g-r}$ satisfy the desired property.
Indeed, let $\beta_1, \dots, \beta_{g-r} \in W^{\perp}$ be the dual basis of $\beta^*_1, \dots , \beta^*_{g-r}$ in $W^{\perp}$, and let $x \in \R^g$. Since $\alpha_1, \dots, \alpha_r, \beta_1, \dots , \beta_{g-r}$ form a basis of $\R^g$, we have
\[
x =\sum_{i=1}^rc_i \alpha_i +\sum_{j=1}^{g-r}d_j \beta_j,
\]
for some $c_i, d_j \in \R$. Then we have $x \in C_I$ if and only if
\[
\brk{x, \alpha^*_i}=c_i >0 \text{ and } \brk{x,\beta^*_j}=d_j=0, \ \forall i,j.
\]
This proves (1).

(2) Using (1), we take $\alpha^*_1, \dots, \alpha^*_r, \beta^*_1, \dots , \beta^*_{g-r} \in \YQ$ such that
\begin{align}\label{eqn CI}
C_I=\left(\bigcap_{i=1}^rU_{\alpha^*_i,+}\right) \cap \left(\bigcap_{i=1}^{g-r}H_{\beta^*_i}\right). 
\end{align}
We then have
\begin{align*}
\R^g \setm C_I = \bigcup_{i=1}^r \left(U_{\alpha^*_i,-} \cup H_{\alpha^*_i}\right) \cup \bigcup_{i=1}^{g-r}\left(U_{\beta^*_i,+}\cup U_{\beta^*_i,-} \right). 
\end{align*}
By taking the $Q$-closure and using Lemma \ref{lem:capQ} (1), we obtain
\begin{align}\label{eqn CI3}
\bigcup_{i=1}^r \left((U_{\alpha^*_i,-})^Q \cup H_{\alpha^*_i}^Q \right) \cup \bigcup_{i=1}^{g-r}\left((U_{\beta^*_i,+})^Q\cup (U_{\beta^*_i,-})^Q \right) \subset \left( \R^g \setm C_I \right)^Q. 
\end{align}
On the other hand, by \eqref{eqn CI}, Lemma \ref{lem:capQ} (2), and Lemma \ref{lem:hypQ} (4), we obtain
\begin{align}\label{eqn CI4}
\begin{split}
\R^g \setm C_I^Q&=\R^g \setm \Bigg( \left(\bigcap_{i=1}^r(U_{\alpha^*_i,+})^Q\right) \cap \left(\bigcap_{i=1}^{g-r}H_{\beta^*_i}^Q\right) \Bigg)\\
&= \bigcup_{i=1}^r \left((U_{\alpha^*_i,-})^Q \cup H_{\alpha^*_i}^Q \right) \cup \bigcup_{i=1}^{g-r}\left((U_{\beta^*_i,+})^Q\cup (U_{\beta^*_i,-})^Q \right).
\end{split}
\end{align}
Therefore,  by combining \eqref{eqn CI3} and \eqref{eqn CI4}, we find that $\R^g \setm C_I^Q  \subset \left( \R^g \setm C_I \right)^Q$, and hence $\R^g = C_I^Q \cup  \left( \R^g \setm C_I \right)^Q$.
Finally, since we have $C_I^Q \cap  \left( \R^g \setm C_I \right)^Q=\emptyset$ by Lemma \ref{lem:capQ} (2), we obtain $\R^g=C_I^Q \coprod \left( \R^g \setm C_I \right)^Q$. 
\end{proof}

Now we prove Proposition \ref{prop:rat}. 

\begin{proof}[Proof of Proposition \ref{prop:rat}]
(1) Since $\Span_{\Q} \{\alpha_1, \dots, \alpha_r\} \subsetneq \Q^g$, there exists $\beta \in \YQ$ such that
\[
C_I \subset \Span_{\R} \{\alpha_1, \dots, \alpha_r\} \subset H_{\beta}.
\]
Therefore, by Lemma \ref{lem:capQ} (1) and Lemma \ref{lem:hypQ} (4), we have either $C_I^Q=\emptyset$ or $C_I^Q=\{0\}$.
Then it is clear that $C_I^Q=\{0\}$ if and only if $0 \in C_I$. This proves (1).

(2) Since $\emptyset$ and $\{0\}$ are obviously rational constructible cones, we may assume $\alpha_1, \dots, \alpha_r$ generates $\R^g$. 
In particular, we have $r=g$ and $C_I$ is a rational simplicial open cone. By Lemma \ref{lem:rsoc} (1), there exist $\alpha^*_1, \dots, \alpha^*_g \in \YQ$ such that
\[
C_I=\bigcap_{i=1}^gU_{\alpha^*_i,+}.
\]
Then by Lemma \ref{lem:capQ} (2), we have
\[
C_I^Q= \bigcap_{i=1}^g\left(U_{\alpha^*_i,+}\right)^Q. 
\]
Now, we already know that $\left(U_{\alpha^*_i,+}\right)^Q$ is a rational constructible cone by Lemma \ref{lem:hypQ} (5), and hence $C_I^Q$ is also a rational constructible cone by Lemma \ref{lem:polyhedron} (2).
\end{proof}

\subsection{Cocycle relation}\label{subsec:coc}

\begin{dfn}
\begin{enumerate}
\item For a subset $A \subset \R^g$, let 
\begin{align*}
\mathbf 1_A: \R^g \ra \R; x \mapsto
\begin{cases}
0 & \text{ if } x \notin A\\
1 & \text{ if } x \in A
\end{cases}
\end{align*}
denote the characteristic function of $A$.
\item For $I=(\alpha_1, \dots, \alpha_g) \in (\R^g \setm \{0\})^{g}$, we set
\[
\sgn (I) := \sgn \det (\alpha_1, \dots, \alpha_g) \in \{-1, 0, 1\},
\]
where $(\alpha_1, \dots, \alpha_g)$ is regarded as an element in $M_g(\R)$.
We assume $\sgn 0:=0$.
\item Let $r\geq 1$ and $I=(\alpha_{1}, \dots, \alpha_{r}) \in (\R^g \setm \{0\})^r$. We say that $x \in \R^g$ is \textit{in general position relative to} $I$ if $x$ is not contained in any proper $\R$-subspace of $\R^g$ generated by a subset of $\{\alpha_1, \dots, \alpha_r\}$.
\end{enumerate}
\end{dfn}

\begin{rmk}
The condition ``in general position relative to $I$'' is slightly more strict than the condition ``generic with respect to $\{\alpha_1, \dots, \alpha_r\}$'' in the sense of Yamamoto \cite[p.~471]{Y10:On-}. Actually, this difference is not important at all, but we adopt this definition since it is more useful in this paper. 
\end{rmk}

\begin{lem}\label{lem:gen}
Let $r\geq 1$, $I=(\alpha_{1}, \dots, \alpha_{r}) \in (\XQ)^r$, $x \in \R^g \setm \{0\}$, and $Q \in \Xi$. Then there exists $\delta>0$ such that $\exp (\varepsilon Q)x$ is in general position relative to $I$ for all $\varepsilon \in (0, \delta)$.
\end{lem}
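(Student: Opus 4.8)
The plan is to unwind the definition of ``in general position'' into a finite list of subspace-avoidance conditions and then dispatch each one using Lemma~\ref{lem:hypQ}(4).

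First I would observe that $\exp(\varepsilon Q)x$ fails to be in general position relative to $I$ precisely when it lies in one of the subspaces
\[
W_S := \Span_{\R} S, \qquad S \subseteq \{\alpha_1, \dots, \alpha_r\}, \ W_S \subsetneq \R^g.
\]
There are at most $2^r$ of these, and each is a \emph{proper rational} subspace of $\R^g$ because $\alpha_1, \dots, \alpha_r \in \XQ$. Hence for every such $S$ we may choose $\beta_S \in \YQ$ with $W_S \subseteq H_{\beta_S}$, and it suffices to produce $\delta > 0$ with $\brk{\exp(\varepsilon Q)x, \beta_S} \neq 0$ for all these $S$ and all $\varepsilon \in (0,\delta)$; indeed $\brk{\exp(\varepsilon Q)x,\beta_S}\neq 0$ means $\exp(\varepsilon Q)x \notin H_{\beta_S} \supseteq W_S$. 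Since only finitely many $S$ occur, it is enough to handle each $\beta := \beta_S$ separately and take the minimum of the resulting thresholds.

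Now fix $\beta \in \YQ$. By Lemma~\ref{lem:hypQ}(4) applied to $Q$ and $\beta$ we have $\R^g \setm \{0\} = (U_{\beta,+})^Q \coprod (U_{\beta,-})^Q$, so, as $x \neq 0$, the point $x$ lies in exactly one of $(U_{\beta,+})^Q$, $(U_{\beta,-})^Q$. In either case the definition of the $Q$-closure yields $\delta_\beta > 0$ with $\exp(\varepsilon Q)x \in U_{\beta,+}$ (respectively $U_{\beta,-}$) for all $\varepsilon \in (0,\delta_\beta)$, hence $\brk{\exp(\varepsilon Q)x, \beta} \neq 0$ there. Setting $\delta := \min_S \delta_{\beta_S}$ over the finitely many relevant $S$ completes the argument.

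There is essentially no hard step: the only real input is Lemma~\ref{lem:hypQ}(4), whose proof already carries the content (irreducibility of $Q$ forces the descending chain of $\tp{Q}$-stable rational subspaces to terminate at $0$). Alternatively, one can bypass Lemma~\ref{lem:hypQ} entirely by expanding
\[
\brk{\exp(\varepsilon Q)x,\beta} = \sum_{m\geq 0} \frac{\brk{x, \tp{Q}^m\beta}}{m!}\,\varepsilon^m,
\]
noting that $\Span_{\R}\{\tp{Q}^m\beta\}_{m\geq 0}$ is a nonzero $\tp{Q}$-stable subspace and hence equals $\R^g$ by Lemma~\ref{lem:tor}(2) (since $\tp{Q} \in \Xi$), so some coefficient $\brk{x, \tp{Q}^m\beta}$ is nonzero because $x \neq 0$; then $\varepsilon \mapsto \brk{\exp(\varepsilon Q)x,\beta}$ is a nonzero real-analytic germ at $0$ and so is nonvanishing on some punctured interval $(0,\delta_\beta)$. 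Either way the lemma is a routine preparation for the cocycle relation, and I do not anticipate any genuine obstacle.
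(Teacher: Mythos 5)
Your main argument is exactly the paper's proof: enumerate the finitely many proper rational subspaces spanned by subsets of $\{\alpha_1,\dots,\alpha_r\}$, pick a rational normal $\beta_S\in\YQ$ for each, apply Lemma~\ref{lem:hypQ}(4) together with the definition of the $Q$-closure to obtain a threshold $\delta_{\beta_S}$ for each, and take the minimum. Your alternative via the power series of $\varepsilon \mapsto \brk{\exp(\varepsilon Q)x,\beta}$ and Lemma~\ref{lem:tor}(2) is also sound (it essentially inlines the content of Lemma~\ref{lem:hypQ}), but the paper's proof just quotes Lemma~\ref{lem:hypQ}(4) as you do in your main argument.
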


\begin{proof}
Let $W_1, \dots, W_m \subsetneq \R^g$ be all the proper $\R$-subspaces which can be generated by some subset of $\{\alpha_1, \dots, \alpha_r\}$. In particular, $y \in \R^g$ is in general position relative to $I$ if and only if 
\[
y \notin \bigcup_{j=1}^m W_j.
\]
Take $\beta_1, \dots, \beta_m \in \YQ$ such that $W_j \subset H_{\beta_j}$ for $j=1, \dots, m$. (See Section \ref{sect:rat} for the definition of $H_{\beta_j}$.)
Then by Lemma \ref{lem:hypQ} (4), for each $j$, there exists $\delta_j >0$ such that
\[
\exp((0,\delta_j) Q)x \subset U_{\beta_j, +} \cup U_{\beta_j, -}= \R^g \setm H_{\beta_j}.
\]
Put $\delta:= \min \{\delta_1, \dots, \delta_m\}>0$. Then for all $\varepsilon \in (0,\delta)$, we have
\[
\exp (\varepsilon Q)x \notin \bigcup_{j=1}^m H_{\beta_j} \supset  \bigcup_{j=1}^m W_j,
\]
and hence $\exp (\varepsilon Q)x$ is in general position relative to $I$.
\end{proof}

The following is the main proposition of this subsection.
\begin{prop}\label{prop:cocycle}
Let $J=(\alpha_{0}, \dots, \alpha_{g}) \in (\XQ)^{g+1}$ and $Q \in \Xi$. Assume that there exists $y \in \R^g \setm \{0\}$ such that for all $i=0, \dots, g$ we have $\brk{\alpha_i, y}>0$. 
Then we have
\[
\sum_{i=0}^g (-1)^i \sgn (J^{(i)}) \mathbf 1_{C_{J^{(i)}}^Q} (x)=0
\]
for $x \in \R^g \setm \{0\}$, 
where $J^{(i)} = (\alpha_{0}, \dots, \check{\alpha_{i}}, \dots, \alpha_{g}) \in (\XQ)^g$. 
\end{prop}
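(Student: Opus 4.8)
The plan is to reduce the statement, via the exponential perturbation, to the classical ``signed cocycle relation'' for a simplicial cone decomposition of a half-space. The key observation is that the characteristic function $\mathbf{1}_{C_{J^{(i)}}^Q}(x)$ can be computed by perturbing $x$: by Lemma \ref{lem:capQ} and the definition of the $Q$-closure, for a fixed $x \in \R^g \setm \{0\}$ there is a single $\delta > 0$ (take the minimum over the finitely many $i$, using also Lemma \ref{lem:gen}) such that, for all $\varepsilon \in (0,\delta)$, one has $\mathbf{1}_{C_{J^{(i)}}^Q}(x) = \mathbf{1}_{C_{J^{(i)}}}(\exp(\varepsilon Q)x)$ simultaneously for $i = 0, \dots, g$, and moreover $x_\varepsilon := \exp(\varepsilon Q)x$ is in general position relative to $J$. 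So it suffices to prove
\[
\sum_{i=0}^g (-1)^i \sgn(J^{(i)}) \mathbf{1}_{C_{J^{(i)}}}(x_\varepsilon) = 0
\]
for every vector $x_\varepsilon$ that is in general position relative to $J = (\alpha_0, \dots, \alpha_g)$.

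Next I would prove this ``unperturbed'' identity, which is purely combinatorial/geometric and independent of $Q$. The hypothesis that there exists $y$ with $\brk{\alpha_i, y} > 0$ for all $i$ means all the $\alpha_i$ lie in an open half-space; this is exactly the condition under which the $g+1$ vectors $\alpha_0, \dots, \alpha_g$ in $\R^g$ span the ``positive'' side in a controlled way. There are two cases. If the $\alpha_i$ do not span $\R^g$, then every $\sgn(J^{(i)}) = 0$ since each $J^{(i)}$ consists of $g$ vectors lying in a proper subspace, and the sum is trivially zero. If they do span $\R^g$, then (after possibly relabeling) one shows that the open cones $C_{J^{(i)}}$ with $\sgn(J^{(i)}) \neq 0$, taken with the signs $(-1)^i \sgn(J^{(i)})$, form a ``triangulation with boundary'' of the cone $C_{(\alpha_0,\dots,\alpha_g)}^{\mathrm{conv}}$ spanned by all of them: concretely, for a point $z$ in general position relative to $J$, the number of $i$ with $z \in C_{J^{(i)}}$ and a given orientation sign is either $0$ or comes in a cancelling pair. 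The cleanest way to see this is to invoke the standard fact that the alternating sum $\sum_i (-1)^i \mathbf{1}_{C_{J^{(i)}}}$, weighted by orientations, is the boundary in the sense of the simplicial chain complex of the $g$-simplex on vertices $\{0, 1, \dots, g\}$ — equivalently, one can cite the analogous ``Shintani cocycle'' relations in \cite{CDG15:Int} (cf. also \cite{BHYY19:Can}), since the signed indicator functions of simplicial cones satisfy precisely the defining relation of an $(g-1)$-cocycle on the level of the standard resolution.

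Concretely, for the spanning case I would argue directly: let $z$ be in general position relative to $J$. Writing $z = \sum_{j=0}^g c_j \alpha_j$ is not quite well-defined since there are $g+1$ vectors, but using that they satisfy a single linear relation $\sum_j \lambda_j \alpha_j = 0$ (unique up to scalar), one checks that $z \in C_{J^{(i)}}$ precisely when a certain system of signs of the barycentric-type coordinates holds, and then the alternating sum over $i$ of $(-1)^i\sgn(J^{(i)})\mathbf{1}_{C_{J^{(i)}}}(z)$ telescopes to zero — this is exactly the computation that the signed cones $C_{J^{(i)}}$ tile $C_{J \setm \{\text{one vertex}\}}$ when we remove a vertex on the far side. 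The half-space hypothesis (existence of $y$) is what guarantees the relation $\sum_j \lambda_j \alpha_j = 0$ has all $\lambda_j$ of the same sign, which forces the tiling picture rather than a degenerate configuration.

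\textbf{Main obstacle.} The genuinely delicate point is the bookkeeping of signs and the ``measure-zero'' boundary behaviour: the identity is false for $x$ on the boundary of some $C_{J^{(i)}}$, so the reduction to vectors in general position (via Lemmas \ref{lem:gen} and \ref{lem:capQ}(2), choosing a uniform $\delta$) is essential and must be done carefully, and then the orientation signs $(-1)^i\sgn(J^{(i)})$ must be matched precisely against the combinatorial boundary map. I expect that once the problem is transported by the exponential perturbation to a fixed generic vector, the remaining identity is a known lemma on simplicial cone decompositions that can be quoted from \cite{CDG15:Int} or proved in half a page by induction on $g$ or by the explicit barycentric-coordinate computation sketched above.
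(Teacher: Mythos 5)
Your proposal reproduces the paper's own strategy: reduce via exponential perturbation to a generic $x' = \exp(\varepsilon Q)x$, then invoke a known combinatorial identity for simplicial cones contained in a half-space --- the paper cites Yamamoto~\cite[Proposition 6.2]{Y10:On-} after a linear change of variables, and remarks that~\cite[Theorem 2.1]{CDG15:Int}, which you propose to use, also works. The one genuine imprecision is in your reduction step: you justify $\mathbf 1_{C_{J^{(i)}}^Q}(x) = \mathbf 1_{C_{J^{(i)}}}(\exp(\varepsilon Q)x)$ for small $\varepsilon$ by appealing to ``Lemma~\ref{lem:capQ} and the definition of the $Q$-closure,'' but that does not suffice. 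The direction $x \in C_{J^{(i)}}^Q \Rightarrow \exp(\varepsilon Q)x \in C_{J^{(i)}}$ is indeed immediate from the definition, but the complementary direction --- that $x \notin C_{J^{(i)}}^Q$ forces $\exp(\varepsilon Q)x \notin C_{J^{(i)}}$ for \emph{all} small $\varepsilon$, rather than merely for some sequence $\varepsilon_n \to 0$ --- is exactly the dichotomy $\R^g = C_I^Q \coprod (\R^g \setm C_I)^Q$ of Lemma~\ref{lem:rsoc}(2), which Lemma~\ref{lem:capQ} alone does not give. Citing Lemma~\ref{lem:rsoc}(2) there closes the gap, and the rest of your reduction (taking the minimum of the $\delta_i$ together with a $\delta$ from Lemma~\ref{lem:gen}) is as in the paper. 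One further small point: before applying the cited cone relation you should dispatch the case $\brk{x',y} \leq 0$, where every indicator term already vanishes because all $\alpha_i$ lie in the open half-space $\brk{\,\cdot\,, y}>0$; the half-space hypothesis is used for this case split as well as to match the normalisation in~\cite{Y10:On-} or~\cite{CDG15:Int}, not only to control the signs in the linear relation $\sum_j \lambda_j\alpha_j = 0$.
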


\begin{proof}
Take such $y \in \R^g \setm \{0\}$. 
We will reduce the problem to the ``generic case''. 
By Lemma \ref{lem:rsoc} (2), for each $i=0, \dots, g$, there exists $\delta_i>0$ such that
\[
\exp((0, \delta_i)Q)x \subset C_{J^{(i)}} \text{ or } \exp((0, \delta_i)Q)x \subset \R^g \setm C_{J^{(i)}}.
\]
In particular, we have
\[
\mathbf 1_{C_{J^{(i)}}^Q}(x)= \mathbf 1_{C_{J^{(i)}}} (\exp(\varepsilon Q)x), \ \forall \varepsilon \in (0,\delta_i).
\]
On the other hand, by Lemma \ref{lem:gen}, there exists $\delta>0$ such that for all $\varepsilon \in (0,\delta)$, $\exp (\varepsilon Q)x$ is in general position relative to $J$. 
Set $\varepsilon_0 :=\frac{1}{2} \min \set{\delta_0, \dots, \delta_g, \delta}$, and put $x' :=\exp (\varepsilon_0 Q)x$. Then we have:
\begin{itemize}
\item $\mathbf 1_{C_{J^{(i)}}^Q}(x)= \mathbf 1_{C_{J^{(i)}}}(x')$ for $i=0, \dots, g$, 
\item $x'$ is in general position relative to $J$.
\end{itemize}
Therefore, it suffices to prove
\begin{align}\label{eqn:cocycle}
\sum_{i=0}^g (-1)^i \sgn (J^{(i)}) \mathbf 1_{C_{J^{(i)}}} (x')=0 
\end{align}
for any $x'$ which is in general position relative to $J$.
First, if $\brk{x',y}\leq 0$, then we have
\[
\mathbf 1_{C_{J^{(i)}}} (x')=0, \ \forall i \in \{0, \dots, g\} 
\]
because $\brk{\alpha_i,y}>0$ for all $i=0, \dots, g$. Therefore, we may assume $\brk{x',y}>0$.
In this case, the identity \eqref{eqn:cocycle} follows from \cite[Proposition 6.2]{Y10:On-}.

Indeed, let $\gamma \in GL_g(\R)$ such that $\tp \gamma e_g=y$, where $e_g=\tp (0, \dots, 0, 1) \in \R^g$.
Then we have:
\begin{itemize}
\item $\gamma x', \gamma\alpha_0, \dots, \gamma\alpha_g \in \mcH := \{v \in \R^g \mid \brk{v, e_g}>0 \}$, 
\item $\gamma x'$ is in general position relative to $\gamma J$, 
\item $\sgn(\gamma J^{(i)})= \sgn (\det (\gamma)) \sgn (J^{(i)})$, 
\item $\mathbf 1_{C_{J^{(i)}}}(x')=\mathbf 1_{C_{\gamma J^{(i)}}}(\gamma x')$,
\end{itemize}
and hence we can use \cite[Proposition 6.2]{Y10:On-}. This completes the proof.
\end{proof}

\begin{rmk}
It is also possible to prove the last part using \cite[Theorem 2.1]{CDG15:Int}.
\end{rmk}


\section{Construction of the Shintani-Barnes cocycle}\label{sect:shi}

Recall that for $d \geq 0$, we have sheaves
\begin{align*}
\msF_d &= \pi_{\C}^{-1} \Omega^{g-1}_{\PPC}(-d)\big|_{\Yo}, \\
\msF_d^{\Xi} &=  \underline{\Hom}(\underline{\Z}[\Xi], \msF_d) \simeq \prod_{Q\in \Xi} \msF_d,
\end{align*}
on $\Yo= \C^g \setm \bmi \R^g$.
In this section we construct a certain cohomology class in $H^{g-1}(\Yo, SL_g(\Z), \msF_d^{\Xi})$ using the \v{C}ech complex $\msC^{\bullet}(\mcX_{\Q}, \msF_d)^{\Xi}$.

\subsection{Barnes zeta function associated to $C_I^Q$}

Recall that for $I=(\alpha_{1}, \dots, \alpha_{g}) \in (\XQ)^g$, the open subset $V_I \subset \Yo$ is defined as
\[
V_I=\{y \in \Yo \mid \re (\brk{\alpha_i, y})>0,\  i=1, \dots, g \}, 
\]
and we have
\begin{align*}
 \Gamma \big(V_I, \msF_d \big)=
 \Bigg\{ f\omega \ \Bigg| \
\begin{split}
f: \text{ holomorphic function on } \pi_{\C}^{-1}(\pi_{\C}(V_I)) \\
\text{s.t. } f(\lambda y)=\lambda^{-g-d} f(y),\ \forall \lambda \in \C^{\times}
\end{split}
\Bigg\}
 \end{align*}
 by Proposition \ref{local vanishing}. 
Note that $\pi_{\C}^{-1}(\pi_{\C}(V_I)) \subset \C^g \setm \{0\}$ is an open subset of the following form:
\[
\pi_{\C}^{-1}(\pi_{\C}(V_I))=\{y \in \C^g \mid \exists \lambda \in \C^{\times}, \ \lambda y \in V_I \} \subset \C^g \setm \{0\}.
\]

\begin{dfn}\label{dfn:psi}
For $d \geq 1$,  $I=(\alpha_{1}, \dots, \alpha_{g}) \in (\XQ)^g$, $Q \in \Xi$, and $y \in \pi_{\C}^{-1}(\pi_{\C}(V_I))$, set 
\begin{align}
\psi_{d, I}^Q(y):= \sgn (I) \sum_{\substack{x \in C_I^Q \cap \Z^g \setm \{0\}}} \frac{1}{\brk{x,y}^{g+d}}, \label{eqn:psi}
\end{align}
where $\sgn (I)=\sgn \det (\alpha_1 \dots \alpha_g) \in \{-1, 0, 1\}$, cf. Section \ref{subsec:coc}.
\end{dfn}

\begin{prop}\label{prop:conv}
The infinite series \eqref{eqn:psi} converges absolutely and locally uniformly for $y \in \pi_{\C}^{-1}(\pi_{\C}(V_I))$. In particular, $\psi_{d,I}^Q$ is a holomorphic function on $\pi_{\C}^{-1}(\pi_{\C}(V_I))$. Moreover, we have
\[
\psi_{d,I}^Q(\lambda y)= \lambda^{-g-d}\psi_{d,I}^Q(y)
\]
for all $\lambda \in \C^{\times}$ and $y \in \pi_{\C}^{-1}(\pi_{\C}(V_I))$.
\end{prop}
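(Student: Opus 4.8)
The plan is to bound the series \eqref{eqn:psi} termwise by an absolutely convergent lattice sum over $\Z^g$, and then deduce holomorphy from the Weierstrass theorem on locally uniform limits.

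First I would record the scaling behaviour and reduce to the case $y\in V_I$. The termwise identity $\brk{x,\lambda y}=\lambda\brk{x,y}$ shows that absolute convergence of \eqref{eqn:psi} at $y$ is equivalent to absolute convergence at $\lambda y$, so, since $\pi_{\C}^{-1}(\pi_{\C}(V_I))=\{y\mid\exists\lambda\in\C^\times,\ \lambda y\in V_I\}=\bigcup_{\lambda\in\C^\times}\lambda^{-1}V_I$, it suffices to prove convergence for $y\in V_I$; once that is done, the same scaling yields the homogeneity identity $\psi_{d,I}^Q(\lambda y)=\lambda^{-g-d}\psi_{d,I}^Q(y)$, and on each open set $\lambda^{-1}V_I$ the function $\psi_{d,I}^Q$ is the constant $\lambda^{g+d}$ times $y\mapsto\psi_{d,I}^Q(\lambda y)$, which will be convenient for the holomorphy and local-uniformity statements. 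Also, if $\sgn(I)=0$ then $\alpha_1,\dots,\alpha_g$ are linearly dependent, so $\dim_{\Q}\Span_{\Q}\{\alpha_1,\dots,\alpha_g\}\leq g-1$ and Proposition \ref{prop:rat}(1) forces $C_I^Q\cap(\Z^g\setm\{0\})=\emptyset$; hence the series is empty and I may assume from now on that $\alpha_1,\dots,\alpha_g$ form an $\R$-basis of $\R^g$.

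The heart of the argument is one elementary estimate. Letting $\varepsilon\to 0^+$ in the definition of the $Q$-closure shows $C_I^Q\subseteq\overline{C_I}=\sum_{i=1}^g\R_{\geq0}\alpha_i$, so every $x\in C_I^Q$ can be written $x=\sum_i c_i\alpha_i$ with all $c_i\geq 0$. For $y\in V_I$ put $m(y):=\min_i\re\brk{\alpha_i,y}>0$. Then
\[
|\brk{x,y}|\ \geq\ \re\brk{x,y}\ =\ \sum_i c_i\,\re\brk{\alpha_i,y}\ \geq\ m(y)\sum_i c_i,
\]
and because the matrix $(\alpha_1,\dots,\alpha_g)$ is invertible, equivalence of norms on $\R^g$ furnishes a constant $\kappa_I>0$ depending only on $I$ with $\sum_i c_i\geq\kappa_I\|x\|$. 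Hence $|\brk{x,y}|\geq\kappa_I\,m(y)\,\|x\|$ for every $x\in C_I^Q\setm\{0\}$; in particular no denominator in \eqref{eqn:psi} vanishes, and
\[
\sum_{x\in C_I^Q\cap\Z^g\setm\{0\}}\frac{1}{|\brk{x,y}|^{g+d}}\ \leq\ \big(\kappa_I\,m(y)\big)^{-g-d}\sum_{x\in\Z^g\setm\{0\}}\frac{1}{\|x\|^{g+d}},
\]
where the last sum converges since $g+d\geq g+1>g$. As $m(y)$ is continuous and strictly positive on $V_I$, it is bounded below on every compact subset, which gives absolute and locally uniform convergence on $V_I$; the reduction above then propagates this to all of $\pi_{\C}^{-1}(\pi_{\C}(V_I))$.

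Finally, holomorphy follows from the fact that each finite partial sum of \eqref{eqn:psi} is holomorphic wherever its denominators are nonzero — which, by the estimate above, includes all of $\pi_{\C}^{-1}(\pi_{\C}(V_I))$ — so $\psi_{d,I}^Q$, being a locally uniform limit of such functions, is holomorphic; the homogeneity identity then passes to the limit. I do not expect any serious obstacle here: the only point needing a little care is the bookkeeping to pass from local uniformity on $V_I$ to local uniformity on $\pi_{\C}^{-1}(\pi_{\C}(V_I))$, which the covering $\bigcup_{\lambda}\lambda^{-1}V_I$ and the scaling relation dispatch cleanly.
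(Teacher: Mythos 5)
Your argument is correct, and the key convergence estimate is genuinely different from the one in the paper, though the overall scaffolding (the scaling reduction from $\pi_{\C}^{-1}(\pi_{\C}(V_I))$ to $V_I$, the observation $C_I^Q\subset\overline{C_I}$, the treatment of $\sgn(I)=0$ via Proposition~\ref{prop:rat}(1), and the passage to holomorphy by Weierstrass) is the same. The paper first rescales so that $\alpha_i\in\Z^g$, writes $\overline{C_I}\cap\Z^g$ as the set of translates of the finite set $R_I\cap\Z^g$ (with $R_I=\sum_i[0,1)\alpha_i$) by $\Z_{\geq0}$-combinations of the $\alpha_i$, and then bounds the sum by two explicit Barnes-type series $\sum_n (\sum_i n_ia_i)^{-g-d}$ and $\sum_n (b+\sum_i n_ia_i)^{-g-d}$. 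You instead invoke equivalence of norms on $\R^g$ to get the uniform lower bound $|\brk{x,y}|\geq\kappa_I\,m(y)\,\|x\|$ for $x\in\overline{C_I}$, and then dominate by the standard lattice sum $\sum_{x\neq0}\|x\|^{-(g+d)}$, which converges precisely because $d\geq1$. Your route avoids the integer-rescaling step and the box decomposition and is somewhat cleaner; the paper's route produces a more explicit majorant directly tied to the cone structure (useful if one wants quantitative information). Both are correct, and the bookkeeping for local uniformity that you flag — propagating from $V_I$ to $\bigcup_{\lambda}\lambda^{-1}V_I$ by the homogeneity of $\brk{x,\cdot}$ — is indeed the same in both arguments.
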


\begin{proof}
If $\sgn (I)=0$, then by Proposition \ref{prop:rat} (1), we see that $C_I^Q \cap \Z^g \setm \{0\}=\emptyset$, and hence the sum is zero. (In particular, the series converges.)
Therefore, we may assume that $\alpha_1, \dots, \alpha_g$ form a basis of $\Q^g$.
%
Furthermore, since $\sgn (I)$ and $C_I^Q$ do not change if we replace $\alpha_i$ by its multiple by positive integers, we may assume that $\alpha_1, \dots, \alpha_g \in \Z^g \setm \{0\}$. 

Let $y \in \pi_{\C}^{-1}(\pi_{\C}(V_I))$ and take $\lambda \in \C^{\times}$ such that $\lambda y \in V_I$. Then take a relatively compact open neighborhood $U \subset V_I$ of $\lambda y$, 
i.e., $U$ is an open neighborhood of $\lambda y$ such that its closure $\overline U$ is compact and $\overline U \subset V_I$. 
Since $y \in \lambda^{-1} \overline{U} \subset \pi_{\C}^{-1}(\pi_{\C}(V_I))$, it suffices to show that \eqref{eqn:psi} converges absolutely and uniformly on $\lambda^{-1}\overline U$.

First, note that by the definition of $C_I^Q$, we have
\[
C_I^Q \subset \overline{C_I}=\sum_{i=1}^g \R_{\geq 0} \alpha_i,
\]
where $\overline{C_I}$ is the closed cone generated by $I$. 
Put
\[
R_I:=\sum_{i=1}^g [0,1) \alpha_i.
\]
Then we see
\begin{itemize}
\item $C_I^Q \cap \Z^g \subset \overline{C_I} \cap \Z^g
= \left\{x + \sum_{i=1}^g n_i \alpha_i  \ \Bigg|\  x \in  R_I \cap \Z^g,  n_i \in \Z_{\geq 0} \right\}$, 
\item $R_I \cap \Z^g$ is a finite set, 
\item $\{ \re (\brk{x, y'}) \ |\  x \in R_I \cap \Z^g \setm \{0\}, y' \in \overline U \}$ is a compact subset of $\Rpos$. 
\end{itemize}
Therefore, set
\[
b:=\min \{ \re (\brk{x, y'}) \ |\  x \in R_I \cap \Z^g \setm \{0\}, y' \in \overline U \} >0.
\]
Moreover, for $i=1, \dots, g$, set
\[
a_i:= \min \{\re (\brk{\alpha_i, y'}) \mid y' \in \overline U \} >0.
\]
Then for $y'' =\lambda^{-1}y' \in \lambda^{-1}\overline U$, where $y' \in \overline U$, we have
\begin{align*}
\sum_{\substack{x \in C_I^Q \cap \Z^g \setm \{0\}}} \left| \frac{1}{\brk{x,y''}^{g+d}} \right|
&\leq
\left| \lambda \right|^{g+d} \sum_{\substack{x \in \overline{C_I} \cap \Z^g \setm \{0\}}} \frac{1}{\left|\brk{x,y'}\right|^{g+d}}
\leq
\left| \lambda \right|^{g+d} \sum_{\substack{x \in \overline{C_I} \cap \Z^g \setm \{0\}}} \frac{1}{(\re (\brk{x,y'}))^{g+d}} \\
&\leq
\left| \lambda \right|^{g+d} \sum_{\substack{x' \in R_I \cap \Z^g, (n_1, \dots, n_g) \in (\Z_{\geq 0})^g \\ x' + \sum_{i=1}^g n_i \alpha_i \neq 0}} \frac{1}{(\re (\brk{x', y'}) + \sum_{i=1}^g n_i \re (\brk{\alpha_i, y'}))^{g+d}} \\
&\leq
\left| \lambda \right|^{g+d} \sum_{(n_1, \dots, n_g) \in (\Z_{\geq 0})^g\setm \{0\}} \frac{1}{(\sum_{i=1}^g n_i a_i)^{g+d}} \\
&~ + \left| \lambda \right|^{g+d}\#(R_I \cap \Z^g\setm \{0\}) \sum_{(n_1, \dots, n_g) \in (\Z_{\geq 0})^g}  \frac{1}{( b + \sum_{i=1}^g n_i a_i)^{g+d}}, 
\end{align*}
where $\#(R_I \cap \Z^g\setm \{0\})$ is the order of the finite set $R_I \cap \Z^g\setm \{0\}$. 
It is now clear that the last two series converge for $d\geq 1$. The last statement in the proposition follows directly from the definition. 
\end{proof}

\begin{rmk}\label{rmk:decomp eis}
Since $C_I^Q$ is a rational constructible cone, cf. Proposition \ref{prop:rat}, we see that $\psi_{d,I}^Q$ can be written as a sum of a finite number of the Barnes zeta functions, cf.~\cite{10019989677}, \cite{Y10:On-}. 
Conceptually, we may also view $\psi_{d,I}^Q$ as a decomposed piece of the ``Eisenstein series'' along the cone $C_I^Q$. 
\end{rmk}

\begin{cor}\label{cor:psi}
Let $d\geq 1$. For $I=(\alpha_{1}, \dots, \alpha_{g}) \in (\XQ)^g$ and $Q \in \Xi$, we have
\[
\psi_{d,I}^Q \omega  \in \Gamma(V_I, \msF_d), 
\]
where
\[
\omega(y)= \sum_{i=1}^g (-1)^{i-1} y_i dy_1 \wedge \cdots \wedge \check{dy_i} \wedge \cdots \wedge dy_g.
\]
\end{cor}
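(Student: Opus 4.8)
The statement to prove is that $\psi_{d,I}^Q \omega$ is a section of $\msF_d$ over $V_I$, which by Proposition~\ref{local vanishing}~(2) amounts to two things: (a) $\psi_{d,I}^Q$ extends to a holomorphic function on $\pi_{\C}^{-1}(\pi_{\C}(V_I))$, and (b) it satisfies the homogeneity $\psi_{d,I}^Q(\lambda y) = \lambda^{-g-d}\psi_{d,I}^Q(y)$ for all $\lambda \in \C^\times$. The plan is essentially to invoke Proposition~\ref{prop:conv}, which has already established both (a) and (b). Concretely, I would argue as follows.

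First I would recall that $V_I$ is a convex open subset of $\Yo$, so Proposition~\ref{local vanishing}~(2) applies and gives the explicit description of $\Gamma(V_I, \msF_d)$ as the set of $f\omega$ where $f$ is holomorphic on $\pi_{\C}^{-1}(\pi_{\C}(V_I))$ and transforms by $f(\lambda y) = \lambda^{-g-d} f(y)$. Then I would apply Proposition~\ref{prop:conv}: it states precisely that the series defining $\psi_{d,I}^Q$ converges absolutely and locally uniformly on $\pi_{\C}^{-1}(\pi_{\C}(V_I))$, hence defines a holomorphic function there, and that this function satisfies the required homogeneity relation of weight $-g-d$. Setting $f := \psi_{d,I}^Q$ in the description above, we conclude $\psi_{d,I}^Q \omega \in \Gamma(V_I, \msF_d)$.

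There is essentially no obstacle here — this corollary is a direct bookkeeping consequence of Proposition~\ref{prop:conv} together with the identification of sections of $\msF_d = \pi_{\C}^{-1}\Omega^{g-1}_{\PPC}(-d)|_{\Yo}$ from Proposition~\ref{local vanishing}~(2). The only point deserving a word is that the domain appearing in Proposition~\ref{local vanishing}~(2), namely $\pi_{\C}^{-1}(\pi_{\C}(V_I))$, is exactly the domain on which Proposition~\ref{prop:conv} asserts holomorphy, so the two statements match up without any further argument; one might also note, as the excerpt does, that $\pi_{\C}^{-1}(\pi_{\C}(V_I)) = \{y \in \C^g \mid \exists \lambda \in \C^\times,\ \lambda y \in V_I\}$, which is manifestly $\C^\times$-stable, consistent with the homogeneity. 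Thus the proof is a single line: combine Proposition~\ref{prop:conv} with Proposition~\ref{local vanishing}~(2).
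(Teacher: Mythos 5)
Your proposal is correct and matches the paper's proof exactly: the paper also deduces the corollary directly from Proposition~\ref{local vanishing}~(2) (which characterizes $\Gamma(V_I,\msF_d)$) together with Proposition~\ref{prop:conv} (which supplies the holomorphy and homogeneity of $\psi_{d,I}^Q$). Nothing further is required.
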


\begin{proof}
This follows directly from Proposition \ref{local vanishing} (2) and Proposition \ref{prop:conv}.
\end{proof}


\subsection{The Shintani-Barnes cocycle}

\begin{dfn}
For $d\geq 1$, we define a map
\[
\Psi_d: \Xi \ra  C^{g-1}(\mcX_{\Q}, \msF_d)
\]
by
\[
\Psi_d(Q) := (\psi_{d,I}^Q \omega)_{I \in (\XQ)^g} \in C^{g-1}(\mcX_{\Q}, \msF_d) = \prod_{I \in (\XQ)^g} \Gamma(V_I, \msF_d)
\]
for $Q \in \Xi$.
\end{dfn}

The aim of this subsection is to show that $\Psi_d$ defines a class in $H^{g-1}(\Yo, SL_g(\Z), \msF_d^{\Xi})$ via Corollary \ref{cor:equiv coh}.

\begin{prop}\label{prop:psi eqiv}
The map $\Psi_d$ is a $SL_g(\Z)$-equivariant map, i.e., we have
\[
\Psi_d([\gamma](Q)) = [\gamma](\Psi_d(Q))
\]
for $Q \in \Xi$ and $\gamma \in SL_g(\Z)$. In other words, we have
\[
\Psi_d \in  \Map_{SL_g(\Z)} (\Xi, C^{g-1}(\mcX_{\Q}, \msF_d))=\Gamma(\Yo, SL_g(\Z), \msC^{g-1}(\mcX_{\Q}, \msF_d)^{\Xi}).
\]
\end{prop}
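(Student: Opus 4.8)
The plan is to unwind the definition of the $SL_g(\Z)$-equivariant structure on $\msC^{g-1}(\mcX_{\Q}, \msF_d)^{\Xi}$ as recorded in \eqref{eqn gamma sI} and in Section \ref{subsect:equiv str}, and to check the required identity component by component, indexed by $I \in (\XQ)^g$. Concretely, for $\gamma \in SL_g(\Z)$ and $Q \in \Xi$, the $I$-th component of $[\gamma](\Psi_d(Q))$ is $[\gamma]_{V_I}\big(\Psi_d(Q)_{\gamma^{-1}I}\big) = [\gamma]_{V_I}\big(\psi_{d, \gamma^{-1}I}^{Q}\,\omega\big)$, while the $I$-th component of $\Psi_d([\gamma](Q))$ is $\psi_{d, I}^{\gamma Q\gamma^{-1}}\,\omega$. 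So the statement reduces to the pointwise identity
\[
[\gamma]_{V_I}\big(\psi_{d, \gamma^{-1}I}^{Q}(y)\,\omega(y)\big) = \psi_{d, I}^{[\gamma](Q)}(y)\,\omega(y)
\]
of sections over $V_I$, for every $I \in (\XQ)^g$.

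Next I would compute the left-hand side using Definition \ref{dfn gamma on U}: by definition $[\gamma]_{V_I}\big(f(y)\omega(y)\big) = \det(\gamma)\, f(\tp{\gamma}y)\,\omega(y)$, and since $\gamma \in SL_g(\Z)$ we have $\det(\gamma)=1$, so the left-hand side equals $\psi_{d, \gamma^{-1}I}^{Q}(\tp{\gamma}y)\,\omega(y)$. It thus remains to verify the identity of holomorphic functions
\[
\psi_{d, \gamma^{-1}I}^{Q}(\tp{\gamma}y) = \psi_{d, I}^{[\gamma](Q)}(y).
\]
I would prove this directly from the series definition \eqref{eqn:psi}. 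Writing $\gamma^{-1}I = (\gamma^{-1}\alpha_1, \dots, \gamma^{-1}\alpha_g)$, the left-hand side is $\sgn(\gamma^{-1}I)\sum_{x \in C_{\gamma^{-1}I}^{Q}\cap \Z^g\setm\{0\}} \brk{x, \tp{\gamma}y}^{-g-d}$. Three observations combine to finish this: first, $\sgn(\gamma^{-1}I) = \sgn(\det \gamma^{-1})\sgn(I) = \sgn(I)$ since $\det\gamma = 1$; second, $\brk{x, \tp{\gamma}y} = \brk{\gamma x, y}$ so the summand becomes $\brk{\gamma x, y}^{-g-d}$; third, the substitution $x \mapsto \gamma x$ is a bijection of $\Z^g\setm\{0\}$ (as $\gamma \in SL_g(\Z)$) carrying $C_{\gamma^{-1}I}^{Q}$ onto $\gamma\big(C_{\gamma^{-1}I}^{Q}\big) = C_I^{[\gamma](Q)}$ by Lemma \ref{lem:gammaCQ}. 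Re-indexing the sum by $x' = \gamma x$ then yields exactly $\sgn(I)\sum_{x' \in C_I^{[\gamma](Q)}\cap\Z^g\setm\{0\}} \brk{x', y}^{-g-d} = \psi_{d, I}^{[\gamma](Q)}(y)$, as desired.

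One bookkeeping point to address is the domains: $[\gamma]_{V_I}$ is a priori an isomorphism $\Gamma(V_{\gamma^{-1}I}, \msF_d) \to \Gamma(\tp{\gamma}^{-1}V_{\gamma^{-1}I}, \msF_d)$, and one needs $\tp{\gamma}^{-1}V_{\gamma^{-1}I} = V_I$, which is precisely Lemma \ref{lem stable cov} applied with $\gamma^{-1}I$ in place of $I$ (note $V_{\gamma(\gamma^{-1}I)} = \tp{\gamma}^{-1}V_{\gamma^{-1}I}$); alternatively one works on $\pi_{\C}^{-1}(\pi_{\C}(V_I))$ throughout, where all the functions involved are defined and holomorphic by Proposition \ref{prop:conv}. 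I do not expect any serious obstacle here: the whole argument is a careful but routine manipulation of the series, the sign, and the cone under the $\gamma$-action, and every needed ingredient — the behavior of $\sgn$ under $GL_g$, the adjunction $\brk{x, \tp{\gamma}y} = \brk{\gamma x, y}$, the bijectivity of $\gamma$ on $\Z^g$, and the transformation $\gamma(C_{\gamma^{-1}I}^Q) = C_I^{[\gamma](Q)}$ — has already been established in the excerpt. The only mildly delicate point is making sure one uses $\det\gamma = 1$ in both the $\sgn$ computation and the $[\gamma]_{V_I}$ formula, which is exactly why the statement is about $SL_g(\Z)$ rather than $GL_g(\Q)$. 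The final sentence of the proof records that this pointwise verification, holding for all $I$, is precisely the assertion $[\gamma](\Psi_d(Q)) = \Psi_d([\gamma](Q))$ in $C^{g-1}(\mcX_{\Q},\msF_d)$, hence $\Psi_d \in \Map_{SL_g(\Z)}(\Xi, C^{g-1}(\mcX_{\Q},\msF_d))$.
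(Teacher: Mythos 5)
Your proof is correct and follows essentially the same line as the paper's: both reduce to the $I$-th component, apply Definition \ref{dfn gamma on U}, simplify the $\sgn$ and $\det$ factors using $\det\gamma=1$, use the adjunction $\brk{x,\tp{\gamma}y}=\brk{\gamma x,y}$, re-index the lattice sum by $x\mapsto\gamma x$, and invoke Lemma \ref{lem:gammaCQ} to identify $\gamma(C_{\gamma^{-1}I}^Q)$ with $C_I^{[\gamma](Q)}$. The only cosmetic difference is that the paper carries $\omega(\tp{\gamma}y)=\det(\tp{\gamma})\omega(y)$ one step further before cancelling the determinants, whereas you cancel $\det\gamma=1$ immediately; your remark about the domain identification $\tp{\gamma}^{-1}V_{\gamma^{-1}I}=V_I$ via Lemma \ref{lem stable cov} is a sound bit of bookkeeping the paper leaves implicit.
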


\begin{proof}
Let $I=(\alpha_{1}, \dots, \alpha_{g}) \in (\XQ)^g$. We need to show
\[
\Psi_d([\gamma](Q))_I = ([\gamma](\Psi_d(Q)))_I \in \Gamma(V_I, \msF_d),
\]
where $\Psi_d([\gamma](Q))_I $ (resp. $ ([\gamma](\Psi_d(Q)))_I$) is the $I$-th component of $\Psi_d([\gamma](Q))$ (resp. $ [\gamma](\Psi_d(Q))$) as always.
%
Indeed, we have
\begin{align*}
([\gamma](\Psi_d(Q)))_I (y)&=( [\gamma](\psi_{d,\gamma^{-1}I}^Q\omega))(y) \\
&=
\psi_{d,\gamma^{-1}I}^Q(\tp{\gamma}y)\omega(\tp{\gamma}y) \\
&=
\sgn (\gamma^{-1}I) \sum_{\substack{x \in C_{\gamma^{-1}I}^Q \cap \Z^g \setm \{0\}}} \frac{ \omega (\tp{\gamma}y)}{\brk{x,\tp{\gamma}y}^{g+d}} \\
&=
\sgn (\det (\gamma^{-1}))\sgn (I) \det (\tp{\gamma}) \sum_{\substack{x \in C_{\gamma^{-1}I}^Q \cap \Z^g \setm \{0\}}} \frac{\omega (y) }{\brk{\gamma x, y}^{g+d}} \\
&=
\sgn (I) \sum_{\substack{x \in \gamma(C_{\gamma^{-1}I}^Q) \cap \Z^g \setm \{0\}}} \frac{\omega (y) }{\brk{x, y}^{g+d}} \\
&=
\sgn (I) \sum_{\substack{x \in C_{I}^{[\gamma](Q)} \cap \Z^g \setm \{0\}}} \frac{\omega (y) }{\brk{x, y}^{g+d}}
=
\Psi_d([\gamma](Q))_I(y)
\end{align*}
for $y \in \pi_{\C}^{-1}(\pi_{\C}(V_I))$. 
Here, the first and the second equality follow from the definition of $[\gamma]$ (cf. \eqref{eqn gamma sI} and Definition \ref{dfn gamma on U}), the fourth equality follows from Lemma \ref{action on omega}, and the sixth equality follows from Lemma \ref{lem:gammaCQ}. 
\end{proof}

\begin{cor}\label{cor:psi inv}
For $Q \in \Xi$, we have
\[
\Psi_d(Q) \in C^{g-1}(\mcX_{\Q}, \msF_d)^{\Gamma_Q}=\Gamma(\Yo, \Gamma_Q, \msC^{g-1}(\mcX_{\Q}, \msF_d)).
\]
\end{cor}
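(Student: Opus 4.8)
The plan is to deduce Corollary \ref{cor:psi inv} directly from Proposition \ref{prop:psi eqiv} together with the compatibility between $\ev_Q$ and the equivariant structures that was established at the end of Section \ref{subsect:equiv str}. First I would recall that, by Proposition \ref{prop:psi eqiv}, $\Psi_d$ is an element of $\Map_{SL_g(\Z)}(\Xi, C^{g-1}(\mcX_{\Q}, \msF_d)) = \Gamma(\Yo, SL_g(\Z), \msC^{g-1}(\mcX_{\Q}, \msF_d)^{\Xi})$, i.e.\ it satisfies $\Psi_d([\gamma](Q)) = [\gamma](\Psi_d(Q))$ for all $\gamma \in SL_g(\Z)$ and $Q \in \Xi$.

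Next I would specialise this identity to $\gamma \in \Gamma_Q = \Stab_{SL_g(\Z)}(Q)$. For such $\gamma$ we have $[\gamma](Q) = \gamma Q \gamma^{-1} = Q$ by the very definition of $\Gamma_Q$, so the left-hand side becomes $\Psi_d([\gamma](Q)) = \Psi_d(Q)$, while the right-hand side is $[\gamma](\Psi_d(Q))$. Hence $[\gamma](\Psi_d(Q)) = \Psi_d(Q)$ for all $\gamma \in \Gamma_Q$, which is precisely the statement that $\Psi_d(Q)$ lies in the $\Gamma_Q$-invariant part $C^{g-1}(\mcX_{\Q}, \msF_d)^{\Gamma_Q}$. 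The identification $C^{g-1}(\mcX_{\Q}, \msF_d)^{\Gamma_Q} = \Gamma(\Yo, \Gamma_Q, \msC^{g-1}(\mcX_{\Q}, \msF_d))$ is just the definition of the equivariant global sections applied to the sheaf $\msC^{g-1}(\mcX_{\Q}, \msF_d)$ with its $GL_g(\Q)$-equivariant structure restricted to $\Gamma_Q$, and $\Psi_d(Q)$ is already known to be a global section of this sheaf since $\Psi_d(Q) \in C^{g-1}(\mcX_{\Q}, \msF_d) = \Gamma(\Yo, \msC^{g-1}(\mcX_{\Q}, \msF_d))$ by construction (each $\psi_{d,I}^Q\omega$ lies in $\Gamma(V_I, \msF_d)$ by Corollary \ref{cor:psi}).

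There is essentially no obstacle here; the corollary is a purely formal consequence of the equivariance already proven, obtained by restricting the variance variable $\gamma$ to the stabiliser of $Q$. The only point to be slightly careful about is the bookkeeping of which equivariant structure is being used: the $[\gamma]$ acting on $\Psi_d(Q) \in C^{g-1}(\mcX_{\Q}, \msF_d)$ on the right-hand side of Proposition \ref{prop:psi eqiv} is exactly the $GL_g(\Q)$-equivariant structure on the \v{C}ech module $C^{g-1}(\mcX_{\Q}, \msF_d) = \Gamma(\Yo, \msC^{g-1}(\mcX_{\Q}, \msF_d))$ described in Section \ref{subsect:cech cpx} (formula \eqref{eqn gamma sI}), so restricting to $\Gamma_Q$ and taking invariants gives exactly $\Gamma(\Yo, \Gamma_Q, \msC^{g-1}(\mcX_{\Q}, \msF_d))$ as defined in Section \ref{sect:equiv coh}. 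Thus the proof is a one-line specialisation argument.

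\begin{proof}
By Proposition \ref{prop:psi eqiv}, we have $\Psi_d([\gamma](Q)) = [\gamma](\Psi_d(Q))$ for all $\gamma \in SL_g(\Z)$ and $Q \in \Xi$. Now fix $Q \in \Xi$ and take $\gamma \in \Gamma_Q = \Stab_{SL_g(\Z)}(Q)$. Then $[\gamma](Q) = \gamma Q \gamma^{-1} = Q$, so the identity above reads
\[
\Psi_d(Q) = \Psi_d([\gamma](Q)) = [\gamma](\Psi_d(Q)).
\]
Since this holds for every $\gamma \in \Gamma_Q$, the element $\Psi_d(Q)$ lies in the $\Gamma_Q$-invariant part of $C^{g-1}(\mcX_{\Q}, \msF_d) = \Gamma(\Yo, \msC^{g-1}(\mcX_{\Q}, \msF_d))$; that is,
\[
\Psi_d(Q) \in C^{g-1}(\mcX_{\Q}, \msF_d)^{\Gamma_Q} = \Gamma(\Yo, \Gamma_Q, \msC^{g-1}(\mcX_{\Q}, \msF_d)),
\]
as claimed.
\end{proof}
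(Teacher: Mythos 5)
Your argument is correct and coincides with the paper's own proof: both deduce the $\Gamma_Q$-invariance of $\Psi_d(Q)$ directly from the $SL_g(\Z)$-equivariance established in Proposition \ref{prop:psi eqiv} by restricting to $\gamma \in \Gamma_Q$ and using $[\gamma](Q)=Q$. Your version merely writes out the one-line specialisation in full detail.
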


\begin{proof}
Since $\Gamma_Q$ is the stabilizer of $Q$ in $SL_g(\Z)$ and $\Psi_d$ is a $SL_g(\Z)$-equivariant map, it follows that $\Psi_d(Q)$ is a $\Gamma_Q$-invariant element.
\end{proof}

\begin{prop}\label{prop:psi cocycle}
\begin{enumerate}
\item Let $Q \in \Xi$. 
We have 
\[
\dd^{g-1}(\Psi_d(Q))=0
\]
under the differential map
\[
\dd^{g-1}: C^{g-1}(\mcX_{\Q}, \msF_d) \ra C^{g}(\mcX_{\Q}, \msF_d). 
\]
\item 
We have
\[
\dd^{g-1}(\Psi_d)=0 
\]
under the differential map
\[
\dd^{g-1}: \Gamma \big(\Yo, SL_g(\Z), \msC^{g-1}(\mcX_{\Q}, \msF_d)^{\Xi} \big) \ra \Gamma \big(\Yo, SL_g(\Z), \msC^{g}(\mcX_{\Q}, \msF_d)^{\Xi}\big). 
\]
In the following, we refer to $\Psi_d$ as the Shintani-Barnes cocycle.
\end{enumerate}
\end{prop}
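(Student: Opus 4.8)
The plan is to prove part (1) first and then deduce part (2) from it. For the reduction: by Proposition \ref{prop:psi eqiv} we regard $\Psi_d$ as an element of $\Gamma(\Yo, SL_g(\Z), \msC^{g-1}(\mcX_{\Q}, \msF_d)^{\Xi}) = \Map_{SL_g(\Z)}(\Xi, C^{g-1}(\mcX_{\Q}, \msF_d))$, on which the differential acts componentwise, so $\ev_Q(\dd^{g-1}\Psi_d) = \dd^{g-1}(\Psi_d(Q))$ for every $Q \in \Xi$ (cf. \eqref{eqn:ev cech} and Lemma \ref{lem:hom to 0} (1)). Hence once every $\dd^{g-1}(\Psi_d(Q))$ vanishes, $\dd^{g-1}\Psi_d$ is the zero map, and everything comes down to (1).

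For (1), I would fix $Q \in \Xi$ and a tuple $J = (\alpha_0, \dots, \alpha_g) \in (\XQ)^{g+1}$ and show that the $J$-component of $\dd^{g-1}(\Psi_d(Q))$ vanishes in $\Gamma(V_J, \msF_d)$. If $V_J = \emptyset$ there is nothing to prove, so assume $V_J \neq \emptyset$. Unwinding the definition of $\Psi_d(Q)$ together with the Čech differential of Section \ref{subsect:cech cpx}, this $J$-component equals $h\,\omega|_{V_J}$, where
\[
h := \sum_{i=0}^g (-1)^i \psi_{d, J^{(i)}}^Q, \qquad J^{(i)} = (\alpha_0, \dots, \check{\alpha_i}, \dots, \alpha_g),
\]
a holomorphic function on $\pi_{\C}^{-1}(\pi_{\C}(V_J))$ (well-defined there since $V_J \subseteq V_{J^{(i)}}$). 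So it suffices to prove $h \equiv 0$. To be able to apply the combinatorial cocycle relation Proposition \ref{prop:cocycle} I first produce a vector $\eta \in \R^g \setm \{0\}$ with $\brk{\alpha_i, \eta} > 0$ for all $i = 0, \dots, g$: pick any $y_0 \in V_J$ and set $\eta := \re(y_0)$; since each $\alpha_i$ is a rational (hence real) vector, $\brk{\alpha_i, \eta} = \re(\brk{\alpha_i, y_0}) > 0$, and in particular $\eta \neq 0$.

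Next, for each fixed $y \in \pi_{\C}^{-1}(\pi_{\C}(V_J))$ I expand $\psi_{d,J^{(i)}}^Q(y) = \sgn(J^{(i)}) \sum_{x \in C_{J^{(i)}}^Q \cap \Z^g \setm\{0\}} \brk{x,y}^{-(g+d)}$. Each such series converges absolutely by Proposition \ref{prop:conv} and the outer sum over $i$ is finite, so $h(y)$ is a finite sum of absolutely convergent series and may be freely rearranged; grouping the terms by the value of $x$ yields
\[
h(y) = \sum_{x \in \Z^g \setm\{0\}} \left( \sum_{i=0}^g (-1)^i \sgn(J^{(i)}) \mathbf 1_{C_{J^{(i)}}^Q}(x) \right) \frac{1}{\brk{x,y}^{g+d}},
\]
where no term with $\brk{x,y} = 0$ occurs since every $x$ appearing lies in $\bigcup_i \overline{C_{J^{(i)}}} \subset \overline{C_J}$. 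By Proposition \ref{prop:cocycle}, applied with the $\eta$ just constructed, each inner coefficient is $0$, so $h(y) = 0$; as $y$ was arbitrary, $h \equiv 0$, which proves (1).

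The substantive inputs --- absolute convergence (Proposition \ref{prop:conv}), rationality of $C_I^Q$ (Proposition \ref{prop:rat}), and the cocycle identity for characteristic functions (Proposition \ref{prop:cocycle}) --- are all already in hand, so I do not anticipate a real obstacle. The only two points that need a moment's care are the legitimacy of interchanging the finite sum over $i$ with the infinite sums over $x$ (immediate from the absolute convergence in Proposition \ref{prop:conv}) and the verification that the nonemptiness of $V_J$ genuinely furnishes the hypothesis of Proposition \ref{prop:cocycle} via the real-part trick.
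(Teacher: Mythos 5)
Your proposal is correct and follows essentially the same route as the paper: show $V_J \neq \emptyset$ yields, via $\eta = \re(y')$ for $y' \in V_J$, the positivity hypothesis of Proposition~\ref{prop:cocycle}, then rearrange the absolutely convergent sums to collect coefficients $\sum_i (-1)^i \sgn(J^{(i)}) \mathbf{1}_{C_{J^{(i)}}^Q}(x)$, which vanish. The only cosmetic difference is that you reduce (2) to (1) before proving (1), while the paper proves (1) and then observes (2) is immediate.
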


\begin{proof}
(1) 
Let $J=(\alpha_{0}, \dots, \alpha_{g}) \in (\XQ)^{g+1}$. For $i=0, \dots, g$, put $J^{(i)} = (\alpha_{0}, \dots, \check{\alpha_{i}}, \dots, \alpha_{g}) \in (\XQ)^g$. We need to show
\begin{align}\label{eqn:Psi cocycle}
(\dd^{g-1}(\Psi_d(Q)))_J=\sum_{i=0}^g (-1)^i \Psi_d(Q)_{J^{(i)}} \big|_{V_J}=0.
\end{align}
First if $V_J=\emptyset$, then \eqref{eqn:Psi cocycle} is obvious because $\Gamma(\emptyset, \msF_d)=0$. Assume $V_J \neq \emptyset$, and take $y' \in V_J$. 
Then we have $\brk{\alpha_i, \re(y')}=\re(\brk{\alpha_i,y'})>0$ for all $i=0, \dots, g$, and hence the assumption in Proposition \ref{prop:cocycle} is satisfied. 
Therefore, by Proposition \ref{prop:cocycle}, we find
\begin{align*}
\sum_{i=0}^g (-1)^i \Psi_d(Q)_{J^{(i)}} \big|_{V_J}(y)
&=
\sum_{i=0}^g (-1)^i \sgn (J^{(i)}) \sum_{\substack{x \in C_{J^{(i)}}^Q \cap \Z^g \setm \{0\}}} \frac{1}{\brk{x,y}^{g+d}} \omega(y) \\
&=
\sum_{x \in \Z^g \setm \{0\}} \left( \sum_{i=0}^g (-1)^i  \sgn (J^{(i)}) \mathbf 1_{C_{J^{(i)}}^Q} (x)\right)\frac{\omega(y)}{\brk{x,y}^{g+d}} \\
&=0 
\end{align*}
for $y \in \pi_{\C}^{-1}(\pi_{\C}(V_J))$. This proves (1). 

(2) follows from (1).
\end{proof}

We obtain the following.
\begin{thm}\label{thm:SB class}
 For $d\geq 1$, the Shintani-Barnes cocycle $\Psi_d$ defines a class
 \[
 [\Psi_d] \in H^{g-1} \big(\Yo, SL_g(\Z), \msF_d^{\Xi}\big).
 \]
Moreover, for $Q \in \Xi$, the element $\Psi_d(Q) \in C^{g-1}(\mcX_{\Q}, \msF_d)^{\Gamma_Q}$ defines a class
 \[
 [\Psi_d(Q)] \in H^{g-1}(\Yo, \Gamma_Q, \msF_d),
 \]
 and we have
 \[
 \ev_Q([\Psi_d])=[\Psi_d(Q)]. 
 \]
\end{thm}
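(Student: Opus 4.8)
The proof is essentially a matter of assembling the results of Section \ref{sect:equiv coh} and of the present section. The plan is to work on the level of (equivariant) \v{C}ech cochains and then invoke Corollary \ref{cor:equiv coh} to transfer the conclusions to the equivariant cohomology groups.

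First I would establish the existence of $[\Psi_d]$. By Proposition \ref{prop:psi eqiv}, $\Psi_d$ belongs to $\Map_{SL_g(\Z)}(\Xi, C^{g-1}(\mcX_{\Q}, \msF_d)) = \Gamma(\Yo, SL_g(\Z), \msC^{g-1}(\mcX_{\Q}, \msF_d)^{\Xi})$, i.e.\ it sits in degree $g-1$ of the complex $\Map_{SL_g(\Z)}(\Xi, C^{\bullet}(\mcX_{\Q}, \msF_d))$; by Proposition \ref{prop:psi cocycle} (2) it satisfies $\dd^{g-1}(\Psi_d) = 0$, hence it is a cocycle and defines a class in the $(g-1)$-st cohomology of that complex. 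By Corollary \ref{cor:equiv coh} (2) this group is canonically isomorphic to $H^{g-1}(\Yo, SL_g(\Z), \msF_d^{\Xi})$, which produces $[\Psi_d]$. The argument for $[\Psi_d(Q)]$ is identical in structure: by Corollary \ref{cor:psi inv}, $\Psi_d(Q)$ lies in $C^{g-1}(\mcX_{\Q}, \msF_d)^{\Gamma_Q}$, the degree $g-1$ term of $C^{\bullet}(\mcX_{\Q}, \msF_d)^{\Gamma_Q}$, and by Proposition \ref{prop:psi cocycle} (1) it is annihilated by $\dd^{g-1}$; hence it defines a class in $H^{g-1}(C^{\bullet}(\mcX_{\Q}, \msF_d)^{\Gamma_Q})$, which by Corollary \ref{cor:equiv coh} (1) is identified with $H^{g-1}(\Yo, \Gamma_Q, \msF_d)$.

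For the compatibility $\ev_Q([\Psi_d]) = [\Psi_d(Q)]$, I would use the commutative square of Corollary \ref{cor:equiv coh} (3): its vertical arrows are the canonical isomorphisms just invoked, and its bottom horizontal arrow is the evaluation map $\ev_Q$ on the \v{C}ech side, which by its very definition (see Remark \ref{rmk:sheaf1} and \eqref{eqn:ev cech}) sends the cocycle $\Psi_d$ to the cocycle $\Psi_d(Q)$. Chasing this diagram on the level of cohomology classes then yields $\ev_Q([\Psi_d]) = [\Psi_d(Q)]$. The main obstacle, such as it is, would have been to check that the evaluation map is compatible with all the identifications between equivariant cohomology and \v{C}ech cohomology; but this has already been isolated and dealt with in Corollary \ref{cor:equiv coh} (3), and all the genuinely substantive content --- the cocycle relation (Proposition \ref{prop:cocycle}, Proposition \ref{prop:psi cocycle}), the $SL_g(\Z)$-equivariance (Proposition \ref{prop:psi eqiv}), and the acyclicity underlying the \v{C}ech computation (Proposition \ref{prop:acyc2}) --- was proved earlier. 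So in the end only the routine bookkeeping above remains.
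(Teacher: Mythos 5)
Your argument is correct and is precisely the paper's own proof, which simply cites Corollary \ref{cor:equiv coh}, Proposition \ref{prop:psi eqiv}, Corollary \ref{cor:psi inv}, and Proposition \ref{prop:psi cocycle} in a single sentence; you have merely spelled out the diagram chase that these references encapsulate.
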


\begin{proof}
This follows from Corollary \ref{cor:equiv coh}, Proposition \ref{prop:psi eqiv}, Corollary \ref{cor:psi inv}, and Proposition \ref{prop:psi cocycle}. 
\end{proof}



\section{Integration}\label{sect:int}
The goal of the remaining sections is to construct a specialization map \eqref{eqn:spec}, and prove that the Shintani-Barnes cocycle class $[\Psi_{d}]$ specializes to the special value of the zeta functions of number fields, cf. Theorem \ref{thm:main thm}. 


Let $Q \in \Xi$ be fixed throughout this section. 
In this section we define an integral map
\[
\int_Q: H^q (\Yo, \Gamma_Q, \msF_0) \ra H^q_Q(\Yo, \Gamma_Q, \shfC), 
\]
where $H^q_Q(\Yo, \Gamma_Q, \shfC)$ is a certain auxiliary cohomology group defined later, cf. Section \ref{subsect:intQ}. 
This group $H^q_Q(\Yo, \Gamma_Q, \shfC)$ will be studied more closely in Section \ref{sect:spec} using a topological method.  

\subsection{Integration and the Hurwitz formula}\label{subsect:Hur}

For $q \geq 0$, let
\[
\Delta^q:=\left\{(t_1, \dots, t_{q+1}) \in \R^{q+1} \ \Bigg| \  \sum_{i=1}^{q+1}t_i=1,\ t_i \geq 0  \right\}
\]
denote the standard $q$-simplex. 
Note that we can also embed $\Delta^q$ into $\R^q$ by
\begin{align*}
\Delta^q \hookrightarrow \R^q; (t_1, \dots, t_{q+1}) \mapsto (t_2, \dots, t_{q+1}), 
\end{align*}
and we equip $\Delta^q$ with an orientation induced from the standard orientation of $\R^q$. 
Moreover, for $\xi_1, \dots, \xi_{q+1} \in \C^g \setm \{0\}$, let
\[
\sigma_{(\xi_1, \dots, \xi_{q+1})}: \Delta^q \ra \C^g; (t_1, \dots, t_{q+1}) \mapsto \sum_{i=1}^{q+1} t_i \xi_i
\]
denote the affine $q$-simplex with vertices $\xi_1, \dots, \xi_{q+1}$, and let
\[
|\sigma_{(\xi_1, \dots, \xi_{q+1})}| := \sigma_{(\xi_1, \dots, \xi_{q+1})} (\Delta^q) \subset \C^g
\]
denote the image of $\sigma_{(\xi_1, \dots, \xi_{q+1})}$.

Now, let $U \subset \C^g \setm \{0\}$ be a convex open subset and let $\xi_1, \dots, \xi_g \in U$ be a basis of $\C^g$. 
Then for a homogeneous holomorphic function $f$ on $ \pi_{\C}^{-1}(\pi_{\C}(U))$ of degree $-g$, (i.e., $f(\lambda y)=\lambda^{-g} f(y)$ for all $\lambda \in \C^{\times}$,) we consider the integral
\begin{align}\label{eqn:int1}
\int_{\sigma_{(\xi_1, \dots, \xi_g)}} f\omega := \int_{\Delta^{g-1}}  (\sigma_{(\xi_1, \dots, \xi_g)})^{\ast}(f\omega), 
\end{align}
where
\[
\omega(y)= \sum_{i=1}^g (-1)^{i-1} y_i dy_1 \wedge \cdots \wedge \check{dy_i} \wedge \cdots \wedge dy_g.
\]
Here note that $f\omega$ is a holomorphic $(g-1)$-form on $\pi_{\C}^{-1}(\pi_{\C}(U)) \supset U$, and we have $|\sigma_{(\xi_1, \dots, \xi_g)}| \subset U$ since $U$ is convex.

\begin{rmk}\label{rmk:int}
Note that via the identification \eqref{eqn:hol forms}, the above $f\omega$ corresponds to a holomorphic $(g-1)$-form on $\pi_{\C}(U) \subset \PPC$.   
More precisely, there exists a holomorphic $(g-1)$-form $\eta$ on $\pi_{\C}(U) \subset \PPC$ such that
\[
(\pi_{\C})^*\eta = f\omega.
\]
Then we see that the integral \eqref{eqn:int1} is actually an integral on $\PPC$:
\[
\int_{\sigma_{(\xi_1, \dots, \xi_g)}} f\omega = \int_{\pi_{\C} \circ \sigma_{(\xi_1, \dots, \xi_g)}} \eta.
\]
\end{rmk}

\begin{lem}\label{lem:int indep}
Let $U \subset \C^g \setm \{0\}$ be a convex open subset, and let $\xi_1, \dots, \xi_g \in \C^g\setm \{0\}$ be a basis of $\C^g$ such that
\begin{align*}
\xi_1, \dots, \xi_g \in U.
\end{align*}
Furthermore, let $\lambda_1, \dots, \lambda_g \in \C^{\times}$ be any complex numbers such that
\begin{align*}
\lambda_1 \xi_1, \dots, \lambda_g \xi_g \in U.
\end{align*}
Then for a homogeneous holomorphic function $f$ on $ \pi_{\C}^{-1}(\pi_{\C}(U))$ of degree $-g$,
we have
\begin{align*}
\int_{\sigma_{(\xi_1, \dots, \xi_g)}} f\omega = \int_{\sigma_{(\lambda_1 \xi_1, \dots, \lambda_g \xi_g)}} f\omega.
\end{align*}
\end{lem}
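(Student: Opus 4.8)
The plan is to reduce the claim to the case where only one vertex is rescaled, and then prove that case by constructing an explicit homotopy (a $g$-dimensional chain) whose boundary relates the two affine $(g-1)$-simplices, and by invoking Stokes' theorem together with the fact that $f\omega$ is closed. First I would observe that since the statement is symmetric, it suffices to show
\[
\int_{\sigma_{(\xi_1, \dots, \xi_g)}} f\omega = \int_{\sigma_{(\lambda_1\xi_1, \xi_2, \dots, \xi_g)}} f\omega
\]
for a single $\lambda_1 \in \C^{\times}$ with $\lambda_1\xi_1 \in U$; the general case follows by changing the vertices one at a time, noting that at each stage the relevant simplices have all their vertices in $U$ (indeed, any intermediate simplex has vertices among $\{\xi_i, \lambda_i\xi_i\}$, all of which lie in $U$ by hypothesis). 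Moreover, since $U$ is convex and does not meet $0$, the whole segment path we use will stay inside $\pi_\C^{-1}(\pi_\C(U))$ where $f\omega$ is defined.

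The key step is the following. Consider the map $\pi_\C\circ\sigma$ to $\PPC$ as in Remark \ref{rmk:int}, so that the integral equals $\int_{\pi_\C\circ\sigma_{(\xi_1,\dots,\xi_g)}}\eta$ for the holomorphic $(g-1)$-form $\eta$ on the Stein open set $\pi_\C(U)$ with $\pi_\C^*\eta = f\omega$. Since $\eta$ is a holomorphic form of top degree $g-1$ on a complex $(g-1)$-manifold, it is automatically closed: $d\eta = 0$. Now $\pi_\C(\lambda_1\xi_1) = \pi_\C(\xi_1)$ in $\PPC$, so the two affine simplices $\sigma_{(\xi_1,\dots,\xi_g)}$ and $\sigma_{(\lambda_1\xi_1,\xi_2,\dots,\xi_g)}$ have the \emph{same} image under $\pi_\C$; more precisely, $\pi_\C\circ\sigma_{(\xi_1,\dots,\xi_g)}$ and $\pi_\C\circ\sigma_{(\lambda_1\xi_1,\xi_2,\dots,\xi_g)}$ are two singular $(g-1)$-simplices in $\pi_\C(U)$ with identical underlying set and the same boundary faces (the faces $t_1 = 0$ literally agree, and the other faces are reparametrizations of the fixed geometric simplex spanned by $\pi_\C(\xi_1),\dots,\pi_\C(\xi_g)$). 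Their difference is therefore a singular $(g-1)$-cycle supported on a single topological $(g-1)$-simplex in $\pi_\C(U)$; filling it with a $g$-chain (the standard prism construction, or simply the constant homotopy since the images coincide) and applying Stokes' theorem to the closed form $\eta$ gives that the two integrals agree. Equivalently, and perhaps more cleanly, one parametrizes the homotopy $\sigma_s$ from $\sigma_{(\xi_1,\dots,\xi_g)}$ to $\sigma_{(\lambda_1\xi_1,\xi_2,\dots,\xi_g)}$ by moving the first vertex along a path from $\xi_1$ to $\lambda_1\xi_1$ inside the punctured cone $\C^\times\xi_1$ (which lands inside $\pi_\C^{-1}(\pi_\C(U))$ after projecting), pulls back $f\omega$, and uses $d(f\omega) = 0$ on $\pi_\C^{-1}(\pi_\C(U))$ together with the homogeneity $f(\lambda y) = \lambda^{-g}f(y)$; the boundary contributions from the homotopy collapse because $\pi_\C\circ\sigma_s$ is independent of $s$ up to the geometric simplex.

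The main obstacle — really the only subtlety — is bookkeeping the boundary terms in the homotopy/Stokes argument and checking that everything stays inside the domain of definition $\pi_\C^{-1}(\pi_\C(U))$ of $f\omega$. For the domain issue: the path from $\xi_1$ to $\lambda_1\xi_1$ can be taken inside $\C^\times\xi_1$, which projects to the single point $\pi_\C(\xi_1)\in\pi_\C(U)$, so its preimage under $\pi_\C$ lies in $\pi_\C^{-1}(\pi_\C(U))$; and all the intermediate affine simplices, having vertices in the convex set $U$, have image in $U\subset\pi_\C^{-1}(\pi_\C(U))$. For the boundary terms: it is cleanest to pass to $\PPC$ as above, where the two simplices literally have equal image and equal faces, so the closedness of the holomorphic top-form $\eta$ immediately forces the integrals to coincide. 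I would write the argument in that language, citing Remark \ref{rmk:int} for the reduction to $\PPC$ and the fact that a holomorphic $(g-1)$-form on a $(g-1)$-dimensional complex manifold is closed.
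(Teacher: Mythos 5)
Your overall strategy (prism homotopy, Stokes' theorem, closedness of the holomorphic top form $\eta$ with $\pi_\C^*\eta = f\omega$) is the same as the paper's, but there is a genuine gap at the crucial step: the claim that $\sigma_{(\xi_1,\dots,\xi_g)}$ and $\sigma_{(\lambda_1\xi_1,\xi_2,\dots,\xi_g)}$ have the same image under $\pi_\C$ is false. For example, with $g=2$, $\xi_1=(1,0)$, $\xi_2=(0,1)$, and $\lambda_1=1+\bmi$, the image of $\sigma_{(\xi_1,\xi_2)}$ in the affine chart $z=y_2/y_1$ is the positive real ray $[0,\infty]$, while the image of $\sigma_{(\lambda_1\xi_1,\xi_2)}$ is the ray $\{r(1-\bmi):r\ge 0\}\cup\{\infty\}$; these are distinct. (For $\lambda_1$ a positive real the images would agree; but $\lambda_1$ is a general element of $\C^\times$.) Consequently the difference of the two simplices is not a cycle, even after projecting to $\PPC$, and neither the ``constant homotopy'' suggestion nor the statement that ``$\pi_\C\circ\sigma_s$ is independent of $s$ up to the geometric simplex'' is correct.

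The actual reason the side terms of the prism contribute nothing is different and needs to be said explicitly: taking the homotopy $h(u,t)=\sum_i(u+(1-u)\lambda_i)t_i\xi_i$ (which the paper uses for all vertices at once, making your one-vertex-at-a-time reduction unnecessary), the restriction $h'$ of $h$ to $[0,1]\times\partial\Delta^{g-1}$ lands in $Z:=\bigcup_{j}\{y:\brk{\xi_j^*,y}=0\}$, because on the face $t_j=0$ one has $\brk{\xi_j^*,h(u,t)}=(u+(1-u)\lambda_j)t_j=0$. Since $\pi_\C(Z\setminus\{0\})$ is a union of hyperplanes in $\PPC$, a divisor of complex codimension one, and $\eta$ is a holomorphic $(g-1)$-form on the $(g-1)$-dimensional complex manifold $\pi_\C(U)$, the restriction of $\eta$ to that divisor vanishes, hence $\int_{h'}f\omega=\int_{\pi_\C\circ h'}\eta=0$. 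With that in hand the Stokes argument you sketch goes through. In short: keep the prism-plus-Stokes plan, drop the claim that the projectivized images coincide, and replace it with the observation that the side faces of the prism lie in the union of coordinate hyperplanes for the dual basis.
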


\begin{proof}
Let
\begin{align}\label{eqn:h}
\begin{split}
h: [0,1] \times \Delta^{g-1} \ra U; (u,t) \mapsto &u \sigma_{( \xi_1, \dots,  \xi_g)}(t) + (1-u) \sigma_{(\lambda_1 \xi_1, \dots, \lambda_g \xi_g)}(t) \\
&= \sum_{i=1}^g (u +(1-u)\lambda_i)t_i \xi_i
\end{split}
\end{align}
be a homotopy between $\sigma_{( \xi_1, \dots,  \xi_g)}$ and $\sigma_{(\lambda_1 \xi_1, \dots, \lambda_g \xi_g)}$. Note that we have
$h(u,t) \in U$
because $U$ is convex. 
We regard $h$ as a singular $g$-chain in a usual way using the standard decomposition of the prism $[0,1]\times \Delta^{g-1}$, cf. \cite[Section 2.1, Proof of 2.10]{H02:Alg}. 
Then we have
\[
\partial h= \sigma_{( \xi_1, \dots,  \xi_g)} - \sigma_{(\lambda_1 \xi_1, \dots, \lambda_g \xi_g)} + h',
\]
where
\[
h': [0,1]\times \partial \Delta^{g-1} \ra U; (u,t) \mapsto h(u,t), 
\]
which is also regarded as a singular $(g-1)$-chain. 
Let $\xi_1^*, \dots, \xi_g^* \in \C^g$ be the dual basis of $\xi_1, \dots, \xi_g$, and let
\[
Z:=\bigcup_{i=1}^g \{y \in \C^g \mid \brk{\xi^*_i,y}=0 \}
\]
be the union of hyperplanes defined by $\xi_1^*, \dots, \xi_g^*$. Then by \eqref{eqn:h}, we easily see
\[
h' ( [0,1]\times \partial \Delta^{g-1}) \subset Z.
\]

Now, by Remark \ref{rmk:int}, there exists a holomorphic $(g-1)$-form $\eta$ on $\pi_{\C}(U)$ such that
\[
(\pi_{\C})^*\eta =f\omega.
\]
In particular, we have
\[
d(f\omega)=(\pi_{\C})^*(d\eta)=0,
\]
where $d$ is the usual derivative of differential forms.
Moreover, we also have
\[
\int_{h'} f\omega=\int_{\pi_{\C}\circ h'}\eta = 0
\]
because $\pi_{\C}\circ h'$ is contained in a divisor $\pi_{\C}(Z\setm\{0\}) \subset \PPC$.
Therefore, we obtain
\begin{align*}
0 &= \int_{h} d(f\omega)
=
\int_{\partial h} f\omega\\
&=
\int_{ \sigma_{( \xi_1, \dots,  \xi_g)}} f\omega -\int_{\sigma_{(\lambda_1 \xi_1, \dots, \lambda_g \xi_g)}}f\omega +\int_{h'} f\omega \\
&=
\int_{ \sigma_{( \xi_1, \dots,  \xi_g)}} f\omega -\int_{\sigma_{(\lambda_1 \xi_1, \dots, \lambda_g \xi_g)}}f\omega. 
\end{align*}
This completes the proof.
\end{proof}


An important example of such an integral is the following Hurwitz formula, cf. \cite{MR1512118}, \cite{S93:Eis}, which is also known as the Feynman parametrization. 

\begin{prop}[\cite{MR1512118}]\label{prop:Hur formula}
Let $x \in \C^g \setm \{0\}$, and let $\xi_1, \dots, \xi_g \in \C^g \setm \{0\}$ be a basis of $\C^g$ such that
\[
\xi_1, \dots, \xi_g \in V_x=\{y \in \C^g \setm \{0\} \ |\ \re (\brk{x,y})>0 \}. 
\]
\begin{enumerate}
\item We have
\[
\int_{\sigma_{(\xi_1, \dots, \xi_g)}} \frac{\omega(y)}{\brk{x,y}^g} = \frac{1}{(g-1)!} \frac{\det (\xi_1, \dots, \xi_g)}{\brk{x,\xi_1}\cdots \brk{x,\xi_g}}.
\]
\item Let $\xi_1^*, \dots, \xi_g^* \in \C^g$ be the dual basis of $\xi_1, \dots, \xi_g$, and let $\underline k =(\tup{k}{g}) \in (\Z_{\geq 0})^g$. Then we have
\[
\int_{\sigma_{(\xi_1, \dots, \xi_g)}} \brk{\xi^*_1,y}^{k_1}\cdots  \brk{\xi^*_g,y}^{k_g}  \frac{\omega(y)}{\brk{x,y}^{g+\abs{\underline k}}} = \frac{\underline k!}{(g+\abs{\underline k}-1)!} \frac{\det (\xi_1, \dots, \xi_g)}{\brk{x,\xi_1}^{k_1+1}\cdots \brk{x,\xi_g}^{k_g+1}},
\]
where $\abs{\underline k}:=k_1+\cdots +k_g$ and $\underline k!:= k_1!\cdots k_g!$.
\end{enumerate}
\end{prop}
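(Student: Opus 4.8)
The plan is to prove part (2) first, since part (1) is its special case $\underline{k} = 0$, or alternatively derive (2) from (1) by analytic continuation in the exponents. I prefer the direct route via a change of variables. By Lemma \ref{lem:int indep}, the integral is invariant under rescaling each $\xi_i$ by a positive real number, so I would first reduce to the case $\re(\brk{x,\xi_i}) > 0$ for all $i$; in fact after rescaling I may assume $\brk{x,\xi_i}$ has positive real part, and by a further harmless normalization think of the $\xi_i$ as close to a configuration where everything is manifestly convergent. The key computational step is to pull back $f\omega$ along $\sigma_{(\xi_1,\dots,\xi_g)}$: writing $y = \sum_{i=1}^g t_i \xi_i$ with $t = (t_1,\dots,t_g) \in \Delta^{g-1}$, one computes $\sigma^{*}\omega = \det(\xi_1,\dots,\xi_g)\, dt_2 \wedge \cdots \wedge dt_g$ (this is the standard fact that $\omega$ is, up to the Jacobian, the Euclidean volume form pulled back to the simplex; it follows from Lemma \ref{action on omega} applied to the matrix $(\xi_1,\dots,\xi_g)$ together with the elementary identity $\sigma_{(e_1,\dots,e_g)}^*\omega = dt_2\wedge\cdots\wedge dt_g$ on the standard simplex). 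Also $\brk{x, y} = \sum_i t_i \brk{x,\xi_i}$ and $\brk{\xi_j^*, y} = t_j$.

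With these substitutions the integral in (2) becomes
\[
\det(\xi_1,\dots,\xi_g) \int_{\Delta^{g-1}} \frac{t_1^{k_1} \cdots t_g^{k_g}}{\big(\sum_{i=1}^g t_i \brk{x,\xi_i}\big)^{g + \abs{\underline k}}}\, dt_2 \wedge \cdots \wedge dt_g,
\]
so everything reduces to the classical Dirichlet-type integral
\[
\int_{\Delta^{g-1}} \frac{\prod_i t_i^{k_i}}{\big(\sum_i a_i t_i\big)^{g+\abs{\underline k}}}\, dt = \frac{\prod_i k_i!}{(g + \abs{\underline k} - 1)!}\, \frac{1}{\prod_i a_i^{k_i+1}}
\]
valid whenever $\re(a_i) > 0$, where I write $a_i = \brk{x,\xi_i}$. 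I would prove this last identity by the Feynman/Schwinger trick: insert the Gamma integral $(\sum a_i t_i)^{-(g+\abs{\underline k})} = \frac{1}{\Gamma(g+\abs{\underline k})}\int_0^\infty s^{g+\abs{\underline k}-1} e^{-s\sum a_i t_i}\, ds$, substitute $u_i = s t_i$ to unfold the simplex into the positive orthant (the Jacobian of $(s,t_2,\dots,t_g) \mapsto (u_1,\dots,u_g)$ on $\{s>0\}\times\Delta^{g-1}$ is $s^{g-1}$, using $t_1 = 1 - \sum_{i\geq 2} t_i$), and the integral factors as $\prod_i \int_0^\infty u_i^{k_i} e^{-a_i u_i}\, du_i = \prod_i k_i!/a_i^{k_i+1}$. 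Collecting constants gives the stated formula, and part (1) is the case $\underline k = 0$.

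The main obstacle is bookkeeping rather than conceptual: one has to be careful that the pullback $\sigma^*(f\omega)$ is computed correctly with the right orientation and sign of $\det(\xi_1,\dots,\xi_g)$, and that the Fubini/change-of-variables manipulations with the Gamma integral are justified—this requires the absolute convergence guaranteed by $\re\brk{x,\xi_i} > 0$, which in turn is why one first uses Lemma \ref{lem:int indep} to rescale into the region $\xi_i \in V_x$ where this holds. Once the region of absolute convergence is secured, the interchange of $\int_{\Delta^{g-1}}$ and $\int_0^\infty$ is routine (Tonelli on absolute values), and the final answer, being holomorphic in the $\xi_i$, extends by the identity theorem to the full open set where $\xi_1,\dots,\xi_g$ form a basis lying in $V_x$. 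I should also double-check the orientation convention on $\Delta^{g-1}$ fixed in Section \ref{subsect:Hur} (via the embedding $(t_1,\dots,t_g)\mapsto(t_2,\dots,t_g)$) matches the one implicit in the sign of the formula; this is the one place a spurious $(-1)$ could sneak in.
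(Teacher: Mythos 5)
Your proof is correct, and it takes a genuinely different route from the paper. Both begin with the same pullback computation, $\sigma_{(\xi_1,\dots,\xi_g)}^{*}\omega = \det(\xi_1,\dots,\xi_g)\, dt_2\wedge\cdots\wedge dt_g$ with $\brk{x,y}=\sum_i a_i t_i$ (where $a_i=\brk{x,\xi_i}$) and $\brk{\xi_j^{*},y}=t_j$. You then prove part (2) directly by evaluating the resulting Dirichlet integral $\int_{\Delta^{g-1}}\prod_i t_i^{k_i}/(\sum_i a_i t_i)^{g+\abs{\underline k}}\,dt$ via the Schwinger/Gamma parametrization, with (1) as the case $\underline k = 0$. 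The paper instead proves (1) first and avoids the Gamma function entirely: it exploits the degree $-g$ homogeneity together with Lemma \ref{lem:int indep} to rescale the vertices $a_i e_i$ down to $e_i$, whereupon the denominator $(\sum_i y_i)^g$ is identically $1$ on the standard simplex and the integral collapses to the simplex volume $1/(g-1)!$; it then deduces (2) by applying the differential operator $D_1^{k_1}\cdots D_g^{k_g}$, with $D_i = \brk{\xi_i^{*},\partial/\partial x}$, to both sides of (1) viewed as holomorphic functions of $x$. Your route is more classical and self-contained at the cost of a Fubini/Tonelli interchange; the paper's route is slicker and stays inside the homogeneity/rescaling framework already built around Lemma \ref{lem:int indep}. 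One small remark: your opening caveat about first rescaling into the region $\re\brk{x,\xi_i}>0$ is superfluous, since the hypothesis $\xi_i\in V_x$ already gives $\re\brk{x,\xi_i}>0$ for all $i$, so absolute convergence holds outright and no identity-theorem extension is required.
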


\begin{proof}
(1) Let $W:=(\xi_1, \dots, \xi_g) \in GL_g(\C)$ be the matrix whose columns  are $\xi_1, \dots, \xi_g$ so that the $(g-1)$-simplex $\sigma_{(\xi_1, \dots, \xi_g)}$ is represented by the linear transformation $W$.  Then we have
\begin{align*}
\int_{\sigma_{(\xi_1, \dots, \xi_g)}} \frac{\omega(y)}{\brk{x,y}^g}
=
\int_{\Delta^{g-1}} \frac{\omega(Wy)}{\brk{x,Wy}^g}
=
\det W \int_{\Delta^{g-1}} \frac{\omega(y)}{\brk{\tp{W}x,y}^g}. 
\end{align*}
For $i=1, \dots, g$, put
\[
a_i:=\brk{x,\xi_i} \neq 0,
\]
and let $\tup{e}{g} \in \C^g$ be the standard basis, i.e., $e_i=\tp{(0, \dots, 0, \stackrel{i}{1}, 0, \dots, 0)}$.
Then we find
\begin{align*}
\det W \int_{\Delta^{g-1}} \frac{\omega(y)}{\brk{\tp{W}x,y}^g}
&=
\frac{\det W}{a_1\cdots a_g} \int_{\Delta^{g-1}} \frac{\omega((a_1y_1, \dots, a_gy_g))}{(a_1y_1+\cdots +a_gy_g)^g}\\
&=
\frac{\det W}{a_1\cdots a_g} \int_{\sigma_{(a_1e_1, \dots, a_ge_g)}} \frac{\omega(y)}{(y_1+\cdots +y_g)^g} \\
&=
\frac{\det W}{a_1\cdots a_g} \int_{\sigma_{(e_1, \dots, e_g)}} \frac{\omega(y)}{(y_1+\cdots +y_g)^g} \\
&=
\frac{\det W}{a_1\cdots a_g} \int_{\sigma_{(e_1, \dots, e_g)}} \omega(y) \\
&=
\frac{1}{(g-1)!}\frac{\det W}{a_1\cdots a_g}.
\end{align*}
Here, the third equality follows from Lemma \ref{lem:int indep}, and the last equality follows from an elementary computation.
This proves (1).

(2) First note that for fixed $\xi_1, \dots, \xi_g$, the formula (1) can be seen as an equality of holomorphic functions in $x$-variable.
Thus, for $1 \leq i \leq g$, we consider a linear differential operator
\[
D_i:=\brk{\xi_i^*, \frac{\partial}{\partial x}}=\xi^*_{i1}\frac{\partial}{\partial x_1} +\cdots + \xi^*_{ig}\frac{\partial}{\partial x_g},
\]
where $\xi^*_{ij}$ is the $j$-th component of $\xi^*_i$. 
Then we can compute the action of $D_i$ on the both sides of (1) using the following formula:
\begin{align*}
D_i \frac{1}{\brk{x,y}^n} =-n \brk{\xi^*_i, y} \frac{1}{\brk{x,y}^{n+1}},
\end{align*}
where $y \in \C^g$, $\brk{x,y}\neq 0$, and $n\geq 1$. 
Now (2) follows from (1) by applying the operator
\[
D_1^{k_1}\cdots D_g^{k_g}
\]
to (1). 
This completes the proof.
\end{proof}

\begin{rmk}
It should be noted that the right hand side of the Hurwitz formula (Proposition \ref{prop:Hur formula}) is exactly the building block of Sczech's Eisenstein cocycle~\cite{S93:Eis}. 
\end{rmk}


\subsection{The Integral map $\int_Q$}\label{subsect:intQ}

Let $Q \in \Xi$, and let $\theta^{(1)}, \dots, \theta^{(g)} \in \C$ be the distinct eigenvalues of $\tp{Q}$. Note that by Proposition \ref{lem:tor} (1), $\tp{Q}$ has $g$ distinct eigenvalues.

%
In this subsection we introduce an auxiliary cohomology group $H_Q^q(\Yo, \Gamma_Q, \shfC)$ and define the integral map $\int_Q$. 
\begin{dfn}\label{dfn:Q-int}
%
%
Let $q \geq 0$. We say that $I \in (\XQ)^{q+1}$ is \textit{$Q$-admissible} if we can take a system of eigenvectors $\xi_1, \dots, \xi_g$ of $\tp{Q}$ in $V_I$, i.e., 
\[
\exists \ \xi_1, \dots, \xi_g \in V_I \text{ s.t. } \tp{Q}\xi_i=\theta^{(i)} \xi_i \text{ for } i=1, \dots, g.
\]
We define $(\XQ)^{q+1}_Q$ to be the set of all $Q$-admissible elements of $(\XQ)^{q+1}$.
\end{dfn}

Recall that we have
 \begin{align*}
 \Gamma(V_I, \msF_0)=
 \Bigg\{ f\omega \ \Bigg| \
\begin{split}
f: \text{ holomorphic function on }\pi_{\C}^{-1}(\pi_{\C}(V_I)) \\
\text{s.t. } f(\lambda y)=\lambda^{-g} f(y),\ \forall \lambda \in \C^{\times}
\end{split}
\Bigg\}.
\end{align*}

\begin{dfn}
For $q \geq 0$ and a $Q$-admissible $I \in (\XQ)^{q+1}_Q$, we define a map
\begin{align}\label{eqn:map intQI}
\int_{Q, I}: \Gamma (V_I, \msF_0) \ra \C;\ s \mapsto \int_{Q, I} s 
\end{align}
as follows. 
Take $\xi_1, \dots, \xi_g \in V_I$ such that $\tp{Q}\xi_i=\theta^{(i)} \xi_i$ for $i=1, \dots, g$, and define 
\[
\int_{Q, I} f\omega := \int_{\sigma_{(\xi_1, \dots, \xi_g)}} f\omega 
\]
for $f \omega \in \Gamma (V_I, \msF_0)$. 
Note that by Lemma \ref{lem:int indep}, the map $\int_{Q, I}$ is independent of the choice of the eigenvectors $\xi_1, \dots, \xi_g$. 
\end{dfn}

\begin{rmk}\label{rmk:intQ}
Strictly speaking, the map $\int_Q$ is depending on the (fixed) choice of the order of the eigenvalues $\theta^{(1)}, \dots, \theta^{(g)}$ up to sign. 
\end{rmk}

\begin{ex}\label{ex:Hur formula}
Let the notations be the same as in Section \ref{subsect:number field}. Furthermore, let $\theta \in F^{\times}$ and $Q=\rho_w(\theta) \in \Xi$ be as in Lemma \ref{lem:rev}, and let $I \in (\XQ)^{g}_Q$. 
%
%
\begin{enumerate}
\item For $k \geq 0$ and $x \in C_I^Q \setm \{0\}$, we have
\begin{align*}
N_{w^*}(y)^k \frac{\omega(y)}{\brk{x,y}^{g+kg}} \in \Gamma(V_I, \msF_0), 
\end{align*}
and
\begin{align*}
\int_{Q, I} N_{w^*}(y)^k \frac{\omega(y)}{\brk{x,y}^{g+kg}}
= \frac{(k!)^g}{(g+kg-1)!} \frac{\det (w^{(1)}, \dots, w^{(g)})}{N_w(x)^{k+1}}. 
\end{align*}
\item For $k \geq 1$, we have
\begin{align*}
N_{w^*}(y)^k \psi_{kg, I}^Q(y)\omega(y) \in \Gamma(V_I, \msF_0), 
\end{align*}
and
\begin{align*}
\int_{Q, I} N_{w^*}(y)^k \psi_{kg, I}^Q(y)\omega(y)
= 
\frac{(k!)^g \det (w^{(1)}, \dots, w^{(g)})}{(g+kg-1)!}
\sgn (I)\sum_{\substack{x \in C_I^Q \cap \Z^g \setm \{0\}}} \frac{1}{N_w(x)^{k+1}}. 
\end{align*}
\end{enumerate}
\begin{proof}
(1) First, since $x \in C_I^Q \setm \{0\}$, we easily see that $\re (\brk{x,y})>0$ for all $y \in V_I$, i.e., $V_I \subset V_x$. In particular, we have $\brk{x,y}\neq 0$ for all $y \in \pi_{\C}^{-1}(\pi_{\C}(V_I))$, and hence we obtain the first assertion. 
Now, by Lemma \ref{lem:rev} (5), we know that $w^{(1)}, \dots, w^{(g)} \in \C^g$ are the eigenvectors of $\tp{Q}$ with eigenvalues $\theta^{(1)}:=\tau_1(\theta), \dots, \theta^{(g)}:=\tau_g(\theta) \in \C$ respectively. 
Take $\mu_1, \dots, \mu_g \in \C^{\times}$ so that $\xi_1:= \mu_1w^{(1)}, \dots, \xi_g:=\mu_g w^{(g)} \in V_I$. This is possible because $I$ is $Q$-admissible. 
Then by Lemma \ref{lem:rev} (3), we see that $\xi_1^*:=\mu_1^{-1}w^{*(1)}, \dots, \xi_g^*:=\mu_g^{-1}w^{*(g)}$ form the dual basis of $\xi_1, \dots, \xi_g$. 
Thus by Proposition \ref{prop:Hur formula}, we find
\begin{align*}
\int_{Q, I} N_{w^*}(y)^k \frac{\omega(y)}{\brk{x,y}^{g+kg}} 
&=
\int_{\sigma(\xi_1,\dots, \xi_g)}  \prod_{i=1}^g\brk{\mu_i \xi_i^*,y}^k \frac{\omega(y)}{\brk{x,y}^{g+kg}} \\
&=
(\mu_1 \dots \mu_g)^k 
\frac{(k!)^g}{(g+kg-1)!} 
\frac{\det (\xi_1, \dots, \xi_g)}{\prod_{i=1}^g \brk{x, \xi_i}^{k+1}} \\
&=
(\mu_1 \dots \mu_g)^k 
\frac{(k!)^g}{(g+kg-1)!} 
\frac{\det (\mu_1w^{(1)}, \dots, \mu_g w^{(g)})}{\prod_{i=1}^g \brk{x, \mu_iw^{(i)}}^{k+1}} \\
&=\frac{(k!)^g}{(g+kg-1)!} \frac{\det (w^{(1)}, \dots, w^{(g)})}{N_w(x)^{k+1}}.  
\end{align*}

(2) The first assertion follows from Proposition \ref{prop:conv}. The integral formula follows from (1) by taking the sum over $x \in C_I^Q \cap \Z^g \setm \{0\}$. 
\end{proof}
\end{ex}

Next, we extend the map \eqref{eqn:map intQI} to the cohomology group. 

\begin{lem}\label{lem:Q-int} Let  $I =(\alpha_{0}, \dots, \alpha_{q}) \in (\XQ)^{q+1}$.
\begin{enumerate}
\item If $q\geq 1$ and $I$ is $Q$-admissible, then so is $I^{(i)}=(\alpha_{0}, \dots, \check{\alpha_{i}}, \dots, \alpha_{q})$ for $i=0, \dots, q$.
\item Let $\gamma \in \Gamma_Q$. If $I$ is $Q$-admissible, then so is $\gamma I$, i.e., $(\XQ)^{q+1}_Q$ is a $\Gamma_Q$-stable subset of $(\XQ)^{q+1}$.
\end{enumerate}
\end{lem}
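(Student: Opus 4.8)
The plan is to prove both parts directly from the definition of $Q$-admissibility together with the explicit descriptions of $V_I$ and of the action of $\Gamma_Q$ on the covering $\mcX_{\Q}$, using the fact that $\tp Q$ has $g$ fixed eigenlines in $\C^g$.

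For (1), fix $I = (\alpha_0, \dots, \alpha_q)$ which is $Q$-admissible, and choose eigenvectors $\xi_1, \dots, \xi_g \in V_I$ of $\tp Q$ with $\tp Q \xi_j = \theta^{(j)} \xi_j$. By definition $V_I = \bigcap_{l=0}^q V_{\alpha_l}$, so $\re(\brk{\alpha_l, \xi_j}) > 0$ for all $l$ and all $j$. Since $V_{I^{(i)}} = \bigcap_{l \neq i} V_{\alpha_l} \supseteq V_I$, the same vectors $\xi_1, \dots, \xi_g$ lie in $V_{I^{(i)}}$, and they are still eigenvectors of $\tp Q$ with the prescribed eigenvalues. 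Hence $I^{(i)}$ is $Q$-admissible. This step is essentially a one-line monotonicity argument: deleting a component only enlarges $V_I$, so a witnessing system of eigenvectors for $I$ witnesses $Q$-admissibility of every $I^{(i)}$.

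For (2), let $\gamma \in \Gamma_Q$, so $\gamma Q \gamma^{-1} = Q$, equivalently $\tp{\gamma}^{-1} \tp Q \tp{\gamma} = \tp Q$, i.e.\ $\tp Q \tp{\gamma}^{-1} = \tp{\gamma}^{-1} \tp Q$. Suppose $I$ is $Q$-admissible with witnessing eigenvectors $\xi_1, \dots, \xi_g \in V_I$. Set $\xi_j' := \tp{\gamma}^{-1} \xi_j$. Then $\tp Q \xi_j' = \tp Q \tp{\gamma}^{-1}\xi_j = \tp{\gamma}^{-1}\tp Q \xi_j = \theta^{(j)} \tp{\gamma}^{-1}\xi_j = \theta^{(j)} \xi_j'$, so each $\xi_j'$ is again an eigenvector of $\tp Q$ with eigenvalue $\theta^{(j)}$. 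Moreover, since $\xi_j \in V_I$ and $V_{\gamma I} = \tp{\gamma}^{-1} V_I$ by Lemma \ref{lem stable cov}, we have $\xi_j' = \tp{\gamma}^{-1}\xi_j \in \tp{\gamma}^{-1} V_I = V_{\gamma I}$. Thus $\xi_1', \dots, \xi_g'$ is a system of eigenvectors of $\tp Q$ lying in $V_{\gamma I}$, and these span $\C^g$ because $\tp{\gamma}^{-1}$ is invertible; hence $\gamma I$ is $Q$-admissible, which is exactly the assertion that $(\XQ)^{q+1}_Q$ is $\Gamma_Q$-stable.

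Neither part presents a genuine obstacle; the only point requiring minor care is getting the transpose conventions right in (2) — the group $\Gamma_Q$ acts on cohomology/cones via $\gamma$ but on the space $\Yo$ (hence on the open sets $V_I$) via $\tp{\gamma}$, so the correct substitution is $\xi \mapsto \tp{\gamma}^{-1}\xi$, and one must verify that this commutes with $\tp Q$ using $\gamma \in \Stab_{SL_g(\Z)}(Q)$. Once the bookkeeping is fixed via Lemma \ref{lem stable cov} and Lemma \ref{lem:tor}(5), both statements follow immediately.
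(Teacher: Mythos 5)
Your proof is correct and follows essentially the same route as the paper: part (1) via the monotonicity $V_I \subset V_{I^{(i)}}$, and part (2) by transporting the witnessing eigenvectors with $\tp{\gamma}^{-1}$, using $\tp{Q}\tp{\gamma}^{-1}=\tp{\gamma}^{-1}\tp{Q}$ (from $\gamma Q\gamma^{-1}=Q$) together with Lemma \ref{lem stable cov}. The only superfluous addition is the remark that the $\xi_j'$ span $\C^g$, which the definition of $Q$-admissibility does not require (and which is automatic anyway since the eigenvalues $\theta^{(j)}$ are distinct).
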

\begin{proof}
(1) follows from the fact $V_I= V_{I^{(i)}} \cap V_{\alpha_{i}} \subset V_{I^{(i)}}$.

(2) Take $\xi_1, \dots, \xi_g \in V_I$ such that $\tp{Q}\xi_i=\theta^{(i)} \xi_i$ for $i=1, \dots, g$. 
Then since $\tp{Q}\tp{\gamma}=\tp{\gamma}\tp{Q}$, we see that $\tp{\gamma}^{-1}\xi_1, \dots, \tp{\gamma}^{-1}\xi_g$ are again eigenvectors of $\tp{Q}$ with eigenvalues $\theta^{(1)}, \dots, \theta^{(g)}$ respectively. On the other hand, by Lemma \ref{lem stable cov}, we have
\[
\tp{\gamma}^{-1} \xi_i \in \tp{\gamma}^{-1} V_I =V_{\gamma I}
\]
for $i=1, \dots, g$. Thus we find that $\tp{\gamma}^{-1}\xi_1 , \dots, \tp{\gamma}^{-1}\xi_g$ are a system of eigenvectors of $\tp{Q}$ in $V_{\gamma I}$. 
\end{proof}

For a $\Gamma_Q$-equivariant sheaf $\msF$ on $\Yo$, set
\begin{align*}
C^q_Q(\mcX_{\Q}, \msF) &:=\prod_{I \in (\XQ)^{q+1}_Q} \Gamma(V_I, \msF), \\
{}_QC^q(\mcX_{\Q}, \msF) &:=\prod_{\substack{I \in (\XQ)^{q+1} \\ I \notin (\XQ)^{q+1}_Q}} \Gamma(V_I, \msF). 
\end{align*}
Then we have a natural short exact sequence
\begin{align}\label{eqn:Q-int es}
0\ra {}_QC^q(\mcX_{\Q}, \msF) \ra C^q(\mcX_{\Q}, \msF) \stackrel{{p}_Q}{\ra} C^q_Q(\mcX_{\Q}, \msF) \ra 0, 
\end{align}
where $p_Q$ is the natural projection. 
By Lemma \ref{lem:Q-int}, we easily see that ${}_QC^{\bullet}(\mcX_{\Q}, \msF)$ becomes a $\Gamma_Q$-equivariant subcomplex of $C^{\bullet}(\mcX_{\Q}, \msF)$, and hence $C^{\bullet}_Q(\mcX_{\Q}, \msF)$ has a natural structure of $\Gamma_Q$-equivariant complex induced from that of $C^{\bullet}(\mcX_{\Q}, \msF)$.
%
For a subgroup $\Gamma \subset \Gamma_Q$, we define
\[
H_Q^q(\Yo, \Gamma, \msF):=H^q \big(C^{\bullet}_Q(\mcX_{\Q}, \msF)^{\Gamma}\big)
\]
to be the $q$-th cohomology group of the complex $C^{\bullet}_Q(\mcX_{\Q}, \msF)^{\Gamma}$.
%

Now, by taking the product of \eqref{eqn:map intQI} over $I \in (\XQ)^{q+1}_Q$, we define
\[
\int_Q: C^q_Q(\mcX_{\Q}, \msF_0) \ra C^q_Q(\mcX_{\Q}, \shfC);\ (s_I)_{I \in (\XQ)^{q+1}_Q} \mapsto \left( \int_{Q, I} s_I \right)_{I \in (\XQ)^{q+1}_Q}.
\]
Here $\shfC$ is regarded as a constant sheaf associated to $\C$ with the trivial $\Gamma_Q$-equivariant structure.

\begin{prop}\label{prop:intQ}
The map
\[
\int_Q: C^{\bullet}_Q(\mcX_{\Q}, \msF_0) \ra C^{\bullet}_Q(\mcX_{\Q}, \C)
\]
is a morphism of $\Gamma_Q$-equivariant complexes, and hence induces a map
\[
\int_Q: H^q_Q(\Yo, \Gamma_Q, \msF_0) \ra H^q_Q(\Yo, \Gamma_Q, \C)
\]
for $q \geq 0$.
\end{prop}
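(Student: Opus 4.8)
The statement has two parts: $\int_Q$ should commute with the \v{C}ech differentials $\dd^{q}$, and it should be $\Gamma_Q$-equivariant. The plan is to treat these separately, reducing each to the independence of $\int_{Q,I}$ from the chosen system of eigenvectors of $\tp{Q}$ in $V_I$ (Lemma \ref{lem:int indep}) and to the stability properties collected in Lemma \ref{lem:Q-int}.

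\emph{Compatibility with differentials.} I would fix a $Q$-admissible $J=(\alpha_0,\dots,\alpha_{q+1})\in(\XQ)^{q+2}_Q$; by Lemma \ref{lem:Q-int}(1) each face $J^{(i)}$ is again $Q$-admissible, so $\big(\int_Q(\dd^{q}s)\big)_J$ and $\big(\dd^{q}(\int_Q s)\big)_J$ are both defined. The one idea needed is that, since $V_J\subset V_{J^{(i)}}$ for every $i$, a single system of eigenvectors $\xi_1,\dots,\xi_g\in V_J$ of $\tp{Q}$ (which exists because $J$ is $Q$-admissible) can be used to evaluate every $\int_{Q,J^{(i)}}$ as well as $\int_{Q,J}$; Lemma \ref{lem:int indep} is precisely what licenses using eigenvectors taken from the smaller set $V_J$. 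As the affine simplex $\sigma_{(\xi_1,\dots,\xi_g)}$ lies in $V_J$ by convexity, the identity $\int_{Q,J}\big(\sum_i(-1)^i s_{J^{(i)}}|_{V_J}\big)=\sum_i(-1)^i\int_{Q,J^{(i)}}s_{J^{(i)}}$ is just linearity of the integral of holomorphic $(g-1)$-forms over that one simplex.

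\emph{$\Gamma_Q$-equivariance.} For $\gamma\in\Gamma_Q$ and a $Q$-admissible $I$, I would unwind the equivariant structure on the quotient complex $C^{\bullet}_Q$: by \eqref{eqn gamma sI} we have $([\gamma]s)_I=[\gamma](s_{\gamma^{-1}I})$, while $([\gamma]\int_Q s)_I=\int_{Q,\gamma^{-1}I}s_{\gamma^{-1}I}$, and $\gamma^{-1}I$ is again $Q$-admissible by Lemma \ref{lem:Q-int}(2). Writing $s_{\gamma^{-1}I}=f\omega$, the claim reduces to $\int_{Q,I}\big([\gamma](f\omega)\big)=\int_{Q,\gamma^{-1}I}(f\omega)$. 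I would choose eigenvectors $\xi_1,\dots,\xi_g\in V_I$ of $\tp{Q}$ for the eigenvalues $\theta^{(1)},\dots,\theta^{(g)}$; since $\gamma Q=Q\gamma$ gives $\tp{Q}\,\tp{\gamma}=\tp{\gamma}\,\tp{Q}$, the vectors $\tp{\gamma}\xi_j$ are again eigenvectors for the same eigenvalues, and $\tp{\gamma}V_I=V_{\gamma^{-1}I}$ by Lemma \ref{lem stable cov}, so the $\tp{\gamma}\xi_j$ form an admissible system computing $\int_{Q,\gamma^{-1}I}$. Then, identifying $[\gamma](f\omega)$ with the pull-back $(\tp{\gamma})^{*}(f\omega)$ of the form $f\omega$ along the linear map $\tp{\gamma}$ (Definition \ref{dfn gamma on U} and Lemma \ref{action on omega}), and noting $\tp{\gamma}\circ\sigma_{(\xi_1,\dots,\xi_g)}=\sigma_{(\tp{\gamma}\xi_1,\dots,\tp{\gamma}\xi_g)}$, the equality follows from the change-of-variables formula for integration of forms over singular chains plus one more use of Lemma \ref{lem:int indep}. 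Having checked both parts, passing to $\Gamma_Q$-invariant subcomplexes and then to cohomology yields the induced map $\int_Q:H^q_Q(\Yo,\Gamma_Q,\msF_0)\ra H^q_Q(\Yo,\Gamma_Q,\C)$.

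I do not expect a real obstacle; the argument is essentially bookkeeping once Lemmas \ref{lem:int indep} and \ref{lem:Q-int} are available. The only step needing genuine care is the change of variables in the equivariance part: one must confirm that $[\gamma](f\omega)$ is exactly $(\tp{\gamma})^{*}(f\omega)$ with no stray sign — the determinant factor in Definition \ref{dfn gamma on U} is precisely $\omega(\tp{\gamma}y)/\omega(y)$ by Lemma \ref{action on omega} — and that pushing the chain $\sigma_{(\xi_1,\dots,\xi_g)}$ forward by $\tp{\gamma}$ introduces no orientation discrepancy, which holds because we compose maps out of the fixed oriented simplex $\Delta^{g-1}$ rather than reparametrize it.
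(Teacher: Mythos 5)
Your proposal is correct and follows essentially the same route as the paper's proof: both parts reduce, via Lemma \ref{lem:int indep}, to the freedom of using a single system of eigenvectors in $V_J$ for all faces $J^{(i)}$, and to transporting eigenvectors by $\tp{\gamma}$ (using Lemma \ref{lem:Q-int}(2) and Lemma \ref{lem stable cov}) together with the change-of-variables formula. The care you take in pinning down the sign bookkeeping in Definition \ref{dfn gamma on U} and Lemma \ref{action on omega} is consistent with — and implicit in — the paper's computation.
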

\begin{proof}
First we must show $\int_Q \circ \dd^q=\dd^q \circ \int_Q$ for $q \geq 0$.
Let $J=(\alpha_{0}, \dots, \alpha_{q+1}) \in (\XQ)^{q+2}_Q$, and let $\xi_1, \dots, \xi_g \in V_J$ be a system of eigenvectors of $\tp{Q}$ with eigenvalues $\theta^{(1)}, \dots, \theta^{(g)}$ respectively.
Then for $s=(s_I)_{I\in (\XQ)^{q+1}_Q} \in C^q_Q(\mcX_{\Q}, \msF_0)$, we have
\begin{align*}
\left( \int_Q d^q(s) \right)_J &= \int_{Q, J} (d^q(s))_J \\
&=
\int_{\sigma_{(\xi_1, \dots, \xi_g)}} \sum_{i=0}^{q+1} (-1)^i s_{J^{(i)}}|_{V_J} \\
&=
\sum_{i=0}^{q+1} (-1)^i  \int_{\sigma_{(\xi_1, \dots, \xi_g)}} s_{J^{(i)}} \\
&=
\sum_{i=0}^{q+1} (-1)^i  \left(\int_Q s\right)_{J^{(i)}}
=
\left(d^q \left(\int_Q s\right)\right)_J,
\end{align*}
where $J^{(i)}=(\alpha_{0}, \dots, \check{\alpha_{i}}, \dots, \alpha_{q+1})$.

Next we must show $\int_Q \circ [\gamma]=[\gamma] \circ \int_Q$ for $\gamma \in \Gamma_Q$. Let $J=(\alpha_{1}, \dots, \alpha_{q+1}) \in (\XQ)^{q+1}_Q$, and let again $\xi_1, \dots, \xi_g \in V_J$ be a system of eigenvectors of $\tp{Q}$ with eigenvalues $\theta^{(1)}, \dots, \theta^{(g)}$ respectively. 
Then as in the proof of Lemma \ref{lem:Q-int}, we see that $\tp{\gamma}\xi_1, \dots, \tp{\gamma}\xi_g$ are eigenvectors of $\tp{Q}$ in $V_{\gamma^{-1}J}$ with eigenvalues $\theta^{(1)}, \dots, \theta^{(g)}$ respectively. 
Therefore, for $s=(s_I)_{I\in (\XQ)^{q+1}_Q} \in C^q_Q(\mcX_{\Q}, \msF_0)$, we have
\begin{align*}
\left( \int_Q [\gamma](s) \right)_J
&=
\int_{Q, J} ([\gamma](s))_J \\
&=
\int_{\sigma_{(\xi_1, \dots, \xi_g)}} s_{\gamma^{-1}J}(\tp{\gamma}y) \\
&=
\int_{\sigma_{(\tp{\gamma}\xi_1, \dots, \tp{\gamma}\xi_g)}} s_{\gamma^{-1}J}(y) \\
&=
\int_{Q, \gamma^{-1}J} s_{\gamma^{-1}J}
=\left(\int_Q s\right)_{\gamma^{-1}J}
=\left([\gamma] \left(\int_Q s \right)\right)_J.
\end{align*}
This completes the proof.
\end{proof}

Let $\int_Q$ also denote the composition 
\begin{align}\label{map:intQ}
\int_Q: H^q(\Yo, \Gamma_Q, \msF_0) \stackrel{{p}_Q}{\ra} H_Q^q(\Yo, \Gamma_Q, \msF_0) \stackrel{\int_Q}{\ra} H_Q^q(\Yo, \Gamma_Q, \shfC), 
\end{align}
where $p_Q$ is the natural map induced from the projection $p_Q$ in \eqref{eqn:Q-int es}. See also Corollary \ref{cor:equiv coh}.


\section{Specialization to the zeta values}\label{sect:spec}
In this section we compute the group $H^q_Q(\Yo, \Gamma_Q, \shfC)$ explicitly, and show that we can get the values of the zeta function as a specialization of the Shintani-Barnes cocycle $[\Psi_d]$.

First we return to the setting in Section \ref{subsect:number field}. 
%
Let 
\begin{itemize}
\item $F/\Q$ be a number field of degree $g$,
\item $\tau_1, \dots, \tau_g: F \hookrightarrow \C$ be the field embeddings of $F$ into $\C$, 
\item $\mcO \subset F$ be an order, 
\item $\mfa \subset F$ be a proper fractional $\mcO$-ideal,
\item $w_1, \dots, w_g \in \mfa$ be a basis of $\mfa$ over $\Z$, 
\item $w:=\tp{(w_1, \dots, w_g)} \in F^g$, and $w^{(i)}:=\tau_i(w)=\tp{(\tau_1(w_1), \dots, \tau_g(w_g))} \in \C^g$, 
\item $\rho_w: F \ra M_g(\Q)$ be the regular representation with respect to 
\begin{align*}
w: \Q^g \isomto F; x \mapsto \brk{x,w},
\end{align*}
\item $N_w(x_1, \dots, x_g) \in \Q[x_1, \dots, x_g]$ be the norm polynomial with respect to $w$, 
\item $w_1^*, \dots, w_g^* \in F$ be the dual basis of $w_1, \dots, w_g$ with respect to the trace $Tr_{F/\Q}$, 
\item $w^*$, $w^{*(i)}$, $N_{w^*}$, $\rho_{w^*}$ be the dual objects obtained from $w_1^*, \dots, w_g^*$. 
\end{itemize}
%
%
%

Take $\theta \in F^{\times}$ such that $F=\Q(\theta)$ and put $Q:=\rho_w(\theta) \in \Xi$. 
Moreover, we set $\theta^{(1)}:=\tau_1(\theta), \dots, \theta^{(g)}:=\tau_g(\theta) \in \C^{\times}$ to be the eigenvalues of $\tp{Q}$. 
We fix these notations.


\subsection{Computation of $H^q_Q(\Yo, \Gamma_Q, \C)$}\label{subsect:comp HQ}
Define 
\[
T_w:=\{x \in \R^g \mid N_w(x) \neq 0\} \subset \R^g \setm \{0\}
\]
to be the set of real vectors whose norm is non-zero.
By Lemma \ref{lem:rev} (7), it is clear that $T_w$ is a $\Gamma_Q$-stable subset of $\R^g \setm \{0\}$. 
Note that under the isomorphism
\begin{align*}
w: \R^g \isomto F_{\R}:=F\otimes_{\Q}\R; x \mapsto \brk{x,w}, 
\end{align*}
$T_w$ corresponds to $F_{\R}^{\times}=\{\alpha \in F_{\R} \mid N_{F/\Q}(\alpha)\neq 0\}$, i.e., 
\begin{align}\label{eqn:isom w2}
w: T_w \isomto F_{\R}^{\times}.
\end{align}
The aim of this subsection is to obtain the following isomorphism:
\begin{align}\label{eqn:isom HQ}
H^q_Q(\Yo, \Gamma_Q, \shfC) \isomfrom H^q(T_w/\Gamma_Q, \C) \simeq H^q(F_{\R}^{\times}/ \mcO^1, \C),
\end{align}
where the last two cohomology groups are the usual singular cohomology groups.

As in Section \ref{sect:int}, for $I=(\alpha_{1}, \dots, \alpha_{q+1}) \in (\XQ)^{q+1}$, let 
\begin{align*}
\sigma_I: \Delta^q \ra \R^g; \ t=(t_1, \dots, t_{q+1})\mapsto \sum_{i=1}^{q+1} \alpha_i t_i 
\end{align*}
denote the affine $q$-simplex with vertices $\alpha_1, \dots, \alpha_{q+1}$, and let $|\sigma_I|:=\sigma_I (\Delta^q) \subset \R^g$ denote the image of $\sigma_I$.  
The following lemma enables us to compute the group $H^q_Q(\Yo, \Gamma_Q, \shfC)$ using these simplices.

\begin{lem}\label{lem:Q-int simpl}
Let $q\geq 0$, $I=(\alpha_{1}, \dots, \alpha_{q+1}) \in (\XQ)^{q+1}$. The following conditions are equivalent:
\begin{enumerate}
\item[\textrm{(i)}] $I$ is $Q$-admissible.
\item[\textrm{(ii)}] $|\sigma_I| \subset T_w$.
\end{enumerate}
\end{lem}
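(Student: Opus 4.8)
The statement claims the equivalence, for $I=(\alpha_1,\dots,\alpha_{q+1})\in(\XQ)^{q+1}$, of $Q$-admissibility of $I$ with the condition $\abs{\sigma_I}\subset T_w$. The strategy is to translate both conditions into the language of the field $F$ via the isomorphism $w\colon \R^g\isomto F_\R$ and the eigenvector description of $\tp Q$ furnished by Lemma \ref{lem:rev} (5). First I would set up the dictionary: under $w$, the cone $\abs{\sigma_I}$ is the set of $\R_{\geq 0}$-combinations $\sum t_i\alpha_i$ with $\sum t_i=1$, and $T_w$ corresponds to $F_\R^\times$, so $\abs{\sigma_I}\subset T_w$ says precisely that every non-negative combination $\sum t_i\alpha_i$ (with not all $t_i=0$) is a unit in $F_\R$, i.e.\ has all archimedean components non-zero. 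Meanwhile, by Lemma \ref{lem:rev} (5) the vectors $w^{(i)}\in\C^g$ are a full system of eigenvectors of $\tp Q$ (with distinct eigenvalues $\theta^{(i)}=\tau_i(\theta)$, using Lemma \ref{lem:tor} (1)); since the eigenspaces of $\tp Q$ are the lines $\C w^{(i)}$, the condition that $V_I$ contains eigenvectors $\xi_1,\dots,\xi_g$ of $\tp Q$ for each eigenvalue amounts to: for every $i$ there is $\mu_i\in\C^\times$ with $\mu_i w^{(i)}\in V_I$, i.e.\ $\re(\brk{\alpha_j,\mu_i w^{(i)}})>0$ for all $j$.

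For the direction (i)$\Rightarrow$(ii): suppose $I$ is $Q$-admissible and pick eigenvectors $\xi_i=\mu_i w^{(i)}\in V_I$. Take any point $x=\sum_{j} t_j\alpha_j\in\abs{\sigma_I}$ with $t_j\geq 0$, $\sum t_j=1$. For each fixed $i$, $\brk{x,\xi_i}=\sum_j t_j\brk{\alpha_j,\xi_i}$ is a non-negative combination of the numbers $\brk{\alpha_j,\xi_i}$, each of which has strictly positive real part; since at least one $t_j>0$, we get $\re(\brk{x,\xi_i})>0$, in particular $\brk{x,\xi_i}\neq 0$, hence $\brk{x,w^{(i)}}\neq 0$. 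As this holds for every $i=1,\dots,g$, we obtain $N_w(x)=\prod_i\brk{x,w^{(i)}}\neq 0$, i.e.\ $x\in T_w$. Thus $\abs{\sigma_I}\subset T_w$.

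For the direction (ii)$\Rightarrow$(i): assume $\abs{\sigma_I}\subset T_w$. Fix $i$; I need to produce $\mu_i\in\C^\times$ with $\re(\brk{\alpha_j,\mu_i w^{(i)}})>0$ for all $j$, i.e.\ a single rotation $\mu_i$ of the finitely many non-zero complex numbers $\brk{\alpha_j,w^{(i)}}$ (these are non-zero: taking $x=\alpha_j\in\abs{\sigma_I}$ shows $\brk{\alpha_j,w^{(i)}}\neq 0$ for each $j$) into the open right half-plane. Such a $\mu_i$ exists precisely when these complex numbers all lie in some open half-plane through the origin, equivalently when the convex cone $\sum_j \R_{\geq 0}\brk{\alpha_j,w^{(i)}}\subset\C$ does not contain a line through $0$ — equivalently, $0$ is not a non-trivial non-negative combination $\sum_j t_j\brk{\alpha_j,w^{(i)}}$. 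But such a vanishing combination would mean exactly that the point $x=\sum_j t_j\alpha_j\in\abs{\sigma_I}$ (after normalizing $\sum t_j=1$, which is legitimate since the $t_j$ are not all zero) satisfies $\brk{x,w^{(i)}}=0$, hence $N_w(x)=0$, contradicting $x\in T_w$. Therefore the desired $\mu_i$ exists for each $i$; the vectors $\xi_i:=\mu_i w^{(i)}$ then lie in $V_I$ and are eigenvectors of $\tp Q$ with the prescribed eigenvalues, so $I$ is $Q$-admissible.

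\textbf{Main obstacle.} The routine parts are the $F$-dictionary and the half-plane bookkeeping; the one point requiring a little care is the elementary convexity fact used in (ii)$\Rightarrow$(i): a finite set of non-zero complex numbers can be simultaneously rotated into the open right half-plane if and only if $0$ is not a non-trivial non-negative combination of them. I would prove this directly (a finite set of vectors in $\R^2\setminus\{0\}$ whose non-negative hull is a pointed cone is contained in an open half-plane — take $\mu_i$ to rotate the bisector of that cone to the positive real axis), or cite the hyperplane separation theorem \cite[Theorem 3.4 (a)]{R91:Fun} applied to the origin and the (closed) convex cone generated by the $\brk{\alpha_j,w^{(i)}}$. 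Everything else is a direct unwinding of definitions together with Lemma \ref{lem:rev} (5) and Lemma \ref{lem:tor} (1).
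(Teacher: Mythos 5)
Your argument is correct and is essentially the paper's: both reduce $Q$-admissibility via Lemma \ref{lem:rev} (5) to a statement about the complex numbers $\brk{\alpha_j,w^{(i)}}$, and both settle it with the hyperplane separation theorem for the compact convex set $\brk{\abs{\sigma_I},w^{(i)}}$ (the paper isolates this as Lemma \ref{lem:comp 1}). Your phrasing in terms of the vertices $\brk{\alpha_j,w^{(i)}}$ and pointedness of their non-negative hull is just the vertex-level version of the same convexity fact, so the proofs coincide in substance.
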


To prove this lemma, recall the following fact:
\begin{lem}\label{lem:comp 1}
Let $A \subset \C$ be a convex compact subset. The following conditions are equivalent:
\begin{enumerate}
\item[\textrm{(i)}] $0 \notin A$.
\item[\textrm{(ii)}] There exists $\lambda \in \C^{\times}$ such that $\re (\lambda A) \subset \Rpos$.
\end{enumerate}
\end{lem}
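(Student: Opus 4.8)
The plan is to prove the two implications separately, with the nontrivial content being (i) $\Rightarrow$ (ii). The implication (ii) $\Rightarrow$ (i) is immediate: if $\re(\lambda A) \subset \Rpos$ for some $\lambda \in \C^{\times}$, then in particular $\re(\lambda a) > 0$ for every $a \in A$, so $\lambda a \neq 0$, hence $a \neq 0$ for all $a \in A$; that is, $0 \notin A$.

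For (i) $\Rightarrow$ (ii), the idea is to apply a hyperplane separation argument in $\C \cong \R^2$, exactly as was done in Lemma \ref{section lemma 2}(1). First I would note that $A$ is a convex compact subset of $\C$ with $0 \notin A$, and $\{0\}$ is a compact convex set disjoint from $A$; by the hyperplane separation theorem \cite[Theorem 3.4]{R91:Fun} (the version separating two disjoint convex sets, one of them compact), there exist a nonzero $\R$-linear functional $\ell: \C \ra \R$ and a real number $\mu$ such that $\ell(0) = 0 < \mu \leq \ell(a)$ for all $a \in A$ (using compactness of $A$ to get strict inequality with a uniform bound, or simply strict separation). Any nonzero $\R$-linear functional on $\C$ has the form $z \mapsto \re(\bar\beta z)$ for some $\beta \in \C^{\times}$; setting $\lambda := \bar\beta$ we get $\re(\lambda a) \geq \mu > 0$ for all $a \in A$, so $\re(\lambda A) \subset \Rpos$, as required.

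I do not anticipate a genuine obstacle here — the statement is an elementary consequence of convex separation in the plane, and the only mild care needed is to invoke the strict-separation form of the hyperplane theorem (justified by compactness of $A$ and closedness of $\{0\}$), so that we land in the open right-half plane rather than merely its closure. An alternative, completely self-contained route avoiding any citation: the set $\{a/|a| : a \in A\}$ is a compact subset of the unit circle, hence (since $A$ is convex and $0 \notin A$) the arguments of points of $A$ all lie in a closed arc of length strictly less than $\pi$; rotating by a suitable $\lambda = e^{-\bmi \varphi}$ brings this arc into an arc centered at $0$ of half-length $< \pi/2$, which forces $\re(\lambda a) > 0$ for all $a \in A$. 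Either presentation is short; I would use whichever matches the surrounding citations, and since Lemma \ref{section lemma 2} already cites \cite[Theorem 3.4 (a)]{R91:Fun}, I would reuse that reference for consistency.
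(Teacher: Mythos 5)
Your proposal is correct and matches the paper's approach exactly: the paper's entire proof is a one-line citation to the strict separation theorem, Rudin \cite[Theorem 3.4 (b)]{R91:Fun}, applied to the compact convex set $A$ and the closed set $\{0\}$. The only small discrepancy is that you suggest reusing part (a) for consistency with Lemma \ref{section lemma 2}, whereas the paper deliberately cites part (b) here precisely because (b) is the strict-separation statement for a compact-versus-closed pair that you correctly identify as necessary.
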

\begin{proof}
This follows from \cite[Theorem 3.4 (b)]{R91:Fun}.
\end{proof}

\begin{proof}[Proof of Lemma \ref{lem:Q-int simpl}]
First, by Lemma \ref{lem:rev} (5), we know that $w^{(1)}, \dots, w^{(g)}$ are the eigenvectors of $\tp{Q}$ with eigenvalues $\theta^{(1)}, \dots, \theta^{(g)}$ respectively. 
Therefore, $I$ is $Q$-admissible if and only if 
\begin{align*}
&\forall  j\in \{1, \dots, g\}, \exists \lambda_j \in \C^{\times}, \lambda_j w^{(j)} \in V_I \\
\Lra&
\forall  j\in \{1, \dots, g\}, \exists \lambda_j \in \C^{\times}, \forall i\in \{1, \dots, q+1\}, \re (\brk{\alpha_i, \lambda_j w^{(j)}})>0 \\
\Lra&
\forall  j\in \{1, \dots, g\}, \exists \lambda_j \in \C^{\times}, \re (\lambda_j \brk{|\sigma_I|, w^{(j)}}) \subset \Rpos \\
\stackrel{\ast}{\Lra}&
\forall  j\in \{1, \dots, g\}, 0 \notin \brk{|\sigma_I|, w^{(j)}} \\
\Lra&
\forall x \in |\sigma_I|, N_w(x) \neq 0\\
\Lra&
|\sigma_I| \subset T_w
\end{align*}
Note that the third equivalence $\stackrel{\ast}{\Lra}$ follows from Lemma \ref{lem:comp 1} since $\brk{|\sigma_I|, w^{(i)}} \subset \C$ is a convex compact subset.
This proves the lemma.
\end{proof}

For $q\geq 0$, let $\Sigma_q:=\big\{\sigma: \Delta^q \ra T_w \ \big| \  \text{continuous} \big\}$ denote the set of singular $q$-simplices in $T_w$, and let
\begin{align*}
S_q:=\Z \left[ \Sigma_q \right] 
\end{align*}
denote the group of singular $q$-chains of $T_w$. 
For $j=1, \dots, q+1$, let 
\[
\delta^q_j: \Delta^{q-1} \ra \Delta^q; (t_1, \dots, t_q) \mapsto (t_1, \dots, t_{j-1}, 0, t_{j}, \dots, t_q) 
\]
denote the $j$-th face map. 
Then we have a boundary map $\partial: S_q \ra S_{q-1}$ which maps $\sigma \in \Sigma_q$ to
\[
\partial \sigma = \sum_{j=1}^{q+1} (-1)^{j-1} \sigma \circ \delta_j^q \in S_{q-1}. 
\]
%
The action of $\Gamma_Q$ on $T_w$ naturally induces an action of $\Gamma_Q$ on $S_q$ and we have a $\Gamma_Q$-equivariant singular chain complex $S_{\bullet}$. 
Moreover, let 
\begin{align*}
K_{q}:=\Z \left[(\XQ)^{q+1}_Q\right] 
\end{align*}
denote the free abelian group generated by $(\XQ)^{q+1}_Q$. 
By Lemma \ref{lem:Q-int} (2), we have a natural action of $\Gamma_Q$ on $K_q$. 
%
Then by Lemma \ref{lem:Q-int simpl}, we have a natural injective homomorphism
\begin{align*}
K_{q} \hookrightarrow S_{q}; I \mapsto \sigma_I, 
\end{align*}
which is clearly a $\Gamma_Q$-equivariant map. 
In the following, we identify $K_{q}$ with a $\Gamma_Q$-submodule 
\begin{align*}
\Z \left[\sigma_I \ \Big| \  I \in (\XQ)^{q+1}_Q\right] 
=
\Z \left[\sigma_I \ \Big| \  I \in (\XQ)^{q+1}, |\sigma_I| \subset T_w \right] \subset S_q 
\end{align*}
of $S_{q}$ via this injective map. 
%
Then by Lemma \ref{lem:Q-int} (1), we see that the boundary map $\partial$ maps $K_q$ to $K_{q-1}$, and hence $K_{\bullet} \subset S_{\bullet}$ becomes a $\Gamma_Q$-equivariant subcomplex of $S_{\bullet}$. 

Note that we have a natural isomorphism
\begin{align*}
K_{\C}^{\bullet}:= \Hom_{\Z}(K_{\bullet}, \C) \simeq \prod_{I \in (\XQ)^{\bullet +1}_Q}\C =  C^{\bullet}_Q (\mcX_{\Q}, \shfC) 
\end{align*}
of $\Gamma_Q$-equivariant complexes, 
and hence 
\begin{align*}
H^q_Q(\Yo, \Gamma_Q, \shfC) \simeq H^q \big((K_{\C}^{\bullet})^{\Gamma_Q} \big).
\end{align*}
Therefore, in order to obtain \eqref{eqn:isom HQ}, we compare $K_{\bullet}$ and $S_{\bullet}$.

\begin{prop}\label{prop:qis1}
\begin{enumerate}
\item Let $\Gamma \subset \Gamma_Q$ be a subgroup. For $q \geq 0$, the quotient group $S_{q}/ K_{q}$ is an induced $\Gamma$-module.
\item The inclusion map
\[
K_{\bullet} \hookrightarrow S_{\bullet}
\]
is a quasi-isomorphism. In other words, the quotient complex $S_{\bullet}/ K_{\bullet}$ is exact.
\end{enumerate}
\end{prop}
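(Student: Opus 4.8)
The plan is to treat the two assertions separately: (1) will follow from the freeness of the relevant group action, while (2) will be reduced to a Mayer--Vietoris patching over a finite convex cover of $T_w$.

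\textbf{Part (1).} First I would record that $\Gamma_Q$, hence any subgroup $\Gamma \subset \Gamma_Q$, acts freely on $\R^g \setm \{0\}$: by Lemma \ref{lem:tor} (4), $\det(\gamma - 1) = N_{F_Q/\Q}(\gamma - 1) \neq 0$ for every $\gamma \in \Gamma_Q \setm \{1\}$, since $\gamma - 1$ is a nonzero element of the field $F_Q$, so $\gamma - 1 \in GL_g(\Q)$ fixes no nonzero vector (cf.\ Lemma \ref{lem:tor} (7)). Therefore $\Gamma$ acts freely on the set $\Sigma_q$ of singular $q$-simplices of $T_w$ — if $\gamma\sigma = \sigma$ then $\gamma$ fixes $\sigma(t) \in T_w \subset \R^g \setm \{0\}$ for any $t \in \Delta^q$, forcing $\gamma = 1$ — so $S_q = \Z[\Sigma_q]$ is a free $\Z[\Gamma]$-module. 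The map $I \mapsto \sigma_I$ is injective and $\Gamma$-equivariant ($\gamma \cdot \sigma_I = \sigma_{\gamma I}$, by $\R$-linearity of the action on $\R^g$), and $(\XQ)^{q+1}_Q$ is $\Gamma_Q$-stable by Lemma \ref{lem:Q-int} (2); hence $\Sigma_q \setm \{\sigma_I \mid I \in (\XQ)^{q+1}_Q\}$ is a $\Gamma$-stable subset of a free $\Gamma$-set, so itself a free $\Gamma$-set, and $S_q / K_q \simeq \Z[\Sigma_q \setm \{\sigma_I\}] \simeq \bigoplus \Z[\Gamma]$ is an induced (indeed free) $\Gamma$-module.

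\textbf{Part (2).} By the long exact homology sequence of the pair, exactness of $S_\bullet / K_\bullet$ is equivalent to the inclusion $\iota\colon K_\bullet \hra S_\bullet$ being a quasi-isomorphism; this is what I would prove. The geometric input is that $T_w$ has a \emph{finite} open cover by convex subsets of $\R^g$ all of whose finite intersections are again convex (or empty): under the $\R$-linear isomorphism $w\colon \R^g \isomto F_{\R}$ of \eqref{eqn:isom w2}, $T_w$ corresponds to $F_{\R}^{\times} \simeq (\R^{\times})^{r_1} \times (\C^{\times})^{r_2}$, and each factor $\R^{\times}$ (resp.\ $\C^{\times}$) is covered by its two half-lines (resp.\ the four open half-planes $\pm\re(z) > 0$, $\pm\im(z) > 0$), whose intersections are convex; taking products and transporting back along $w$ yields a finite cover $T_w = C_1 \cup \dots \cup C_n$ with the desired property. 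For a convex open $C \subset \R^g$, let $K_\bullet(C) \subset S_\bullet(C)$ be the subcomplex spanned by affine simplices with rational vertices and image in $C$; for $C \subset T_w$ this image condition is exactly $Q$-admissibility by Lemma \ref{lem:Q-int simpl}, so $K_\bullet = K_\bullet(T_w)$ and each $K_\bullet(C_i) \subset K_\bullet$. Fixing a rational point $v_0 \in C \cap \Q^g$, the cone operator $\sigma_{(\alpha_0, \dots, \alpha_q)} \mapsto \sigma_{(v_0, \alpha_0, \dots, \alpha_q)}$ stays inside $K_\bullet(C)$ by convexity and contracts $K_\bullet(C)$ onto $\Z\cdot\sigma_{(v_0)}$; as $C$ is contractible, $S_\bullet(C)$ is a resolution of $\Z$ too, so $K_\bullet(C) \hra S_\bullet(C)$ is a quasi-isomorphism, and likewise for every nonempty intersection of the $C_i$.

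The final step is the patching. Barycentric subdivision sends a rational affine simplex to a sum of rational affine simplices lying in the same convex hull, and the standard subdivision homotopy does the same, so the cover-small subcomplex is quasi-isomorphic to the whole complex both for $S_\bullet$ and for $K_\bullet$, and one gets compatible Mayer--Vietoris short exact sequences
\[
0 \to (-)_\bullet(A \cap B) \to (-)_\bullet(A) \oplus (-)_\bullet(B) \to (-)_\bullet^{\{A,B\}}(A \cup B) \to 0
\]
for $(-) = S$ and for $(-) = K$. Inducting on the size of a subcover and running the local quasi-isomorphisms through these sequences and the five lemma then shows $\iota\colon K_\bullet(C_{i_1} \cup \dots \cup C_{i_m}) \to S_\bullet(C_{i_1} \cup \dots \cup C_{i_m})$ is a quasi-isomorphism for all $m$, in particular for $U = T_w$. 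The main point requiring care is precisely this globalization: one must check that each piece of the singular machinery (the subdivision and prism operators, the cover-small subcomplexes, the Mayer--Vietoris sequences) restricts to and is natural on the rational-affine complex $K_\bullet$, so that the comparison map survives throughout; the cone-off computation and the finite convex cover of $T_w$ are the only genuinely new ingredients.
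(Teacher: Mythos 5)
Both parts are correct, and Part (1) is the paper's own argument with a useful added remark: you justify freeness of $\Gamma_Q$ on all of $\R^g\setm\{0\}$ (via $\gamma-1\in F_Q^{\times}$ being an invertible real matrix), whereas the paper's Lemma~\ref{lem:tor}~(7) literally only states freeness on $\Q^g\setm\{0\}$; the stronger real statement is what is actually needed and your argument supplies it.

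For Part (2) you take a genuinely different route. The paper proves exactness of $S_{\bullet}/K_{\bullet}$ by hand: given $a\in S_q$ with $\partial a\in K_{q-1}$, it subdivides until each simplex lives in a convex member of the cover, replaces each vertex by a nearby rational vertex \emph{coherently} (the conditions (V1)--(V2) force coincident vertices to stay coincident, so that (F1)--(F2) and (H1)--(H2) hold), and then shows via the prism homotopy that $a$ differs from a chain in $K_q$ by a boundary plus boundary-face corrections which vanish precisely because $\partial a\in K_{q-1}$. Your argument instead localises: for each convex $C$ the cone operator at a rational point contracts $K_{\bullet}(C)$ to $\Z$, matching $S_{\bullet}(C)$, and then Mayer--Vietoris plus the small-simplices theorem (applied to both $S_{\bullet}$ and $K_{\bullet}$, once one checks that $S$, $T$ and hence the iterated subdivision homotopy preserve the rational-affine subcomplex) propagate the local quasi-isomorphism along the finite convex cover. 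The technical ingredient is the same in both proofs --- barycentric subdivision of a rational affine simplex is again rational affine, and the prism homotopy likewise preserves rationality, which the paper records explicitly --- but where the paper trades the black-box for an explicit coherent-approximation construction, you trade the approximation bookkeeping for the standard Mayer--Vietoris machinery. Either is a complete argument; yours is conceptually cleaner and more modular, while the paper's is more self-contained and does not presuppose the small-simplices theorem in the relative/subcomplex form you need. One small wording point: the cone operator satisfies $\partial b + b\partial = \id - \epsilon_{v_0}$, where $\epsilon_{v_0}$ is the augmentation onto $\sigma_{(v_0)}$ in degree zero, so it contracts $K_\bullet(C)$ to the subcomplex $\Z\cdot\sigma_{(v_0)}$ (with $H_0=\Z$), not onto $0$; your statement is correct as written but a reader could misread ``contracts'' as acyclicity in all degrees.
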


\begin{proof}
(1) is clear since we have
\[
S_{q}/ K_{q} \simeq \Z \left[\sigma \in \Sigma_q \ \Big|\  \sigma \notin K_q \right] 
\]
and $\Gamma \subset \Gamma_Q$ acts freely on the basis $\big\{\sigma \in \Sigma_q \ \big|\  \sigma \notin K_q \big\}$.

(2) 
This kind of fact may be well-known to experts, but here we give a proof for the sake of completeness of the paper. 
First take any finite open covering
\[
T_w=\bigcup_{k=1}^NU_k
\]
of $T_w$ such that $U_k$ is a convex open subset of $T_w$ for all $k$. 
The existence of such a covering can be easily seen from the identification $w: T_w \isomto F_{\R}^{\times}$.

%
We will prove that the quotient complex $S_{\bullet}/K_{\bullet}$ is exact. 
Let $q \geq 0$ and let ${a} \in S_q$ such that $\partial {a} \in K_{q-1}$. 
We need to show the following:
\begin{aim}
There exist $\eta \in S_{q+1}$ and ${b} \in K_{q}$ such that ${a}=\partial \eta+{b}$.
\end{aim}
Suppose ${a} \in S_q$ is of the form
\[
{a}=\sum_{i=1}^r c_i\sigma_i,
\]
where $\sigma_i$ are distinct singular $q$-simplices in $T_w$, and $c_i \in \Z$.
By using the barycentric subdivision if necessary, without loss of generality we may assume
%
%
\begin{align}\label{eqn:small chain complex}
\forall i \in \{1, \dots, r\}, \exists \kappa_i \in \{1, \dots, N\}, \sigma_i(\Delta^q) \subset U_{\kappa_i}. 
\end{align}
Indeed, let
\begin{align*}
&S: S_n \ra S_n, \\
&T: S_n \ra S_{n+1}
\end{align*}
be the subdivision operator and the chain homotopy between $S$ and $\id_{S_n}$ defined as in \cite[Section 2.1, Proof of Proposition 2.21]{H02:Alg}. Then taking into account the fact that the barycenter of any $\sigma_I \in K_n$ ($I \in (\XQ)^{n+1}_Q$) belongs to $\Q^g \cap |\sigma_I|$, we easily see that $S$ (resp. $T$) maps $K_n$ to $K_n$ (resp. $K_{n+1}$). 
Hence we have
\begin{align*}
\partial S(a) &= S(\partial a) \in K_{q-1}, \\
a-S(a) &= \partial T(a) + T(\partial a) \in \partial S_{q+1} + K_q. 
\end{align*}
Therefore, we can replace $a$ with its (iterated) barycentric subdivision $S^m(a)$ ($m$: sufficiently large) until we have \eqref{eqn:small chain complex}.

We fix such $\kappa_i$ for each $i=1, \dots, r$.

\vspace{2mm}
\noindent
\textbf{Step 1.} In order to ``approximate'' $\sigma_i$ by the elements in $K_{q}$, we first approximate their vertices ``simultaneously''. For $i=1, \dots, r$ and $j=1, \dots, q+1$, let ${v}_{ij} \in U_{\kappa_i} \subset T_w$ denote the $j$-th vertex of $\sigma_i$, i.e.,
\[
{v}_{ij} = \sigma_i(0, \dots, 0,\stackrel{j}{1},0,\dots, 0) \in T_w.
\]
Then for $i=1, \dots, r$ and $j=1, \dots, q+1$, take ${v}'_{ij} \in U_{\kappa_i} \cap \Q^g$ satisfying the following conditions:
%
%
%
%

%
%
%

\begin{enumerate}
\item[\textbf{(V1):}] If ${v}_{ij} \in \Q^g$, then ${v}'_{ij}={v}_{ij}$. 
\item[\textbf{(V2):}] If ${v}_{ij}={v}_{mn}$ for some $i,m \in \{1, \dots, r\}$ and $j,n \in \{1, \dots, q+1\}$, then ${v}'_{ij}={v}'_{mn}$. 
(In other words, if the $j$-th vertex of $\sigma_i$ and the $n$-th vertex of $\sigma_m$ are the same, then ${v}'_{ij}$ and ${v}'_{mn}$ are the same as well.) 
\end{enumerate}
This is possible because $\Q^g$ is dense in $\R^g$. Then for $i=1, \dots, r$, set 
\begin{align*}
I_i :=({v}'_{i1}, \dots, {v}'_{i,q+1}) \in (\XQ)^{q+1} 
\end{align*}
and 
\[
a':= \sum_{i=1}^r c_i \sigma_{I_i}.
\]
Since $U_{\kappa_i}$ is convex, we have $\sigma_{I_i} \subset U_{\kappa_i} \subset T_w$, and hence $\sigma_{I_i} \in K_q$. Therefore, we see that 
$a' \in K_{q}$.    %


Now, recall that for $j=1, \dots, q+1$, 
\[
\delta^q_j: \Delta^{q-1} \ra \Delta^q; (t_1, \dots, t_q) \mapsto (t_1, \dots, t_{j-1}, 0, t_{j}, \dots, t_q)
\]
denotes the $j$-th face map. 
Then by the conditions (V1) and (V2), we have the following:


%

\begin{enumerate}
\item[\textbf{(F1):}] If $\sigma_i \circ \delta^q_j \in K_{q-1}$, then $\sigma_{I_i} \circ \delta^q_j=\sigma_i \circ \delta^q_j$. 
\item[\textbf{(F2):}] If $\sigma_i \circ \delta^q_j= \sigma_m \circ \delta^q_n$ for $i,m \in \{1, \dots, r\}$ and $j,n \in \{1, \dots, q+1\}$, then $\sigma_{I_i} \circ \delta^q_j=\sigma_{I_m} \circ \delta^q_n$. 
(In other words, if the $j$-th face of $\sigma_i$ and the $n$-th face of $\sigma_m$ are the same, then the $j$-th face of $\sigma_{I_i}$ and the $n$-th face of $\sigma_{I_m}$ are the same as well.)  
\end{enumerate}

\vspace{2mm}
\noindent
\textbf{Step 2.} 
Next we consider the homotopy between $a$ and $a'$. 
For $i=1, \dots, r$, let
\[
h_i: [0,1]\times \Delta^q \ra T_w; (u,t) \mapsto u \sigma_i(t) +(1-u) \sigma_{I_i}(t)
\]
be a homotopy between $\sigma_i$ and $\sigma_{I_i}$. Note that since $U_{\kappa_i}$ is convex, we have
\[
h_i([0,1]\times \Delta^q) \subset U_{\kappa_i}.
\]
The homotopy $h_i$ defines a $(q+1)$-chain $\eta_i \in S_{q+1}$in a usual way using the standard decomposition of the prism $[0,1]\times \Delta^q$.  
More precisely, for $j=1, \dots, q+1$, put
\begin{align*}
\epsilon^q_j: \Delta^{q+1} \ra [0,1] \times \Delta^q; (t_1, \dots, t_{q+2}) \mapsto
\left(
\sum_{m\geq j+1} t_m, \Big(t_1, \dots, t_{j-1}, t_j+t_{j+1}, t_{j+2}, \dots, t_{q+2}\Big) 
\right).
\end{align*}
Using these maps, the $(q+1)$-chain $\eta_i \in S_{q+1}$ is defined as
\[
\eta_i:=\sum_{j=1}^{q+1} (-1)^{j-1}  h_i\circ \epsilon^q_j. 
\]
Set $\eta:=\sum_{i=1}^r c_i \eta_i \in S_{q+1}$.

\vspace{2mm}
\noindent
\textbf{Step 3.}
Now we examine the assumption $\partial a \in K_{q-1}$. 
First, we have
\[
\partial {a}=\sum_{i=1}^r \sum_{j=1}^{q+1} (-1)^{j-1} c_i  \sigma_i \circ \delta^q_j.
\]
For each singular $(q-1)$-simplex $\sigma \in \Sigma_{q-1}$, set 
\[
C_{\sigma} := \sum_{\substack{i=1, \dots, r,\\ j=1, \dots, q+1,\\ \sigma_i \circ \delta^q_j=\sigma}}(-1)^{j-1} c_i \in \Z, 
\]
so that we have
\[
\partial {a}=\sum_{\sigma \in \Sigma_{q-1}} C_{\sigma} \sigma. 
\]
Then by the assumption $\partial a \in K_{q-1}$, we find that $C_{\sigma}=0$ for all $\sigma \notin K_{q-1}$ since the set $\Sigma_{q-1}$ of singular $(q-1)$-simplices is a basis of $S_{q-1}$.

\vspace{2mm}
\noindent
\textbf{Step 4.}
Next we compute the boundary of the homotopy $\eta \in S_{q+1}$. 
By an elementary computation we see 
\[
\partial \eta_i=\sigma_i -\sigma_{I_i} - \sum_{j=1}^{q+1}\sum_{m=1}^{q}(-1)^{j+m} h_{ij} \circ \epsilon^{q-1}_{m},
\]
where
\[
h_{ij}: [0,1] \times \Delta^{q-1} \ra T_w;\ (u,t) \mapsto u \sigma_i \circ \delta^q_j(t) +(1-u) \sigma_{I_i}\circ \delta^q_j(t).
\]
is a homotopy between $\sigma_i \circ \delta^q_j$ and $\sigma_{I_i} \circ \delta^q_j$, cf. \cite[Section 2.1, Proof of 2.10]{H02:Alg}


Now, by the properties (F1) and (F2), we see the following: 
\begin{enumerate}
\item[\textbf{(H1):}] If $\sigma_i \circ \delta^q_j \in K_{q-1}$, then $h_{ij}(u,t)=\sigma_i \circ \delta^q_j(t)$ for $(u,t) \in [0,1] \times \Delta^{q-1}$. 
\item[\textbf{(H2):}] If $\sigma_i \circ \delta^q_j= \sigma_m \circ \delta^q_n$ for $i,m \in \{1, \dots, r\}$ and $j,n \in \{1, \dots, q+1\}$, then $h_{ij}=h_{mn}$. 
\end{enumerate}
%
Then for each singular $(q-1)$-simplex $\sigma \in \Sigma_{q-1}$, we define a map
\[
h_{\sigma}: [0,1] \times \Delta^{q-1} \ra T_w
\]
as follows: If $\sigma$ is of the form $\sigma = \sigma_i \circ \delta_j^q$ for some $i \in \{1, \dots, r\}$ and $j \in \{1, \dots, q+1\}$, we set $h_{\sigma} := h_{ij}$. This is well-defined by the property (H2). If $\sigma$ is not of the form $\sigma_i \circ \delta_j^q$, then simply set $h_{\sigma}(u,t):=\sigma(t)$ for $(u,t) \in [0,1] \times \Delta^{q-1}$.

Then we find
\begin{align*}
\partial \eta &= a -a' - \sum_{i=1}^r\sum_{j=1}^{q+1}\sum_{m=1}^q (-1)^{j+m} c_i h_{ij} \circ \epsilon^{q-1}_m  \\
&=
a -a' - \sum_{i=1}^r\sum_{j=1}^{q+1}\sum_{m=1}^q (-1)^{j+m} c_i h_{\sigma_i \circ \delta^q_j} \circ \epsilon^{q-1}_m \\
&=
a-a' -\sum_{\sigma \in \Sigma_{q-1}}  \sum_{m=1}^q (-1)^{m-1} h_{\sigma} \circ \epsilon^{q-1}_m  \sum_{\substack{i=1, \dots, r,\\ j=1, \dots, q+1,\\ \sigma_i \circ \delta^q_j=\sigma}} (-1)^{j-1}c_i \\
&=
a-a' -\sum_{\sigma \in \Sigma_{q-1}} C_{\sigma}  \sum_{m=1}^q (-1)^{m-1} h_{\sigma} \circ \epsilon^{q-1}_m \\
&= 
a-a' -\sum_{\sigma \in \Sigma_{q-1}\cap K_{q-1}} C_{\sigma}  \sum_{m=1}^q (-1)^{m-1} h_{\sigma} \circ \epsilon^{q-1}_m. 
\end{align*}
%
Note that the last equality holds since we have $C_{\sigma}=0$ for $\sigma \notin K_{q-1}$. 
%
%
Moreover, by the property (H1), we easily see that if $\sigma = \sigma_i \circ \delta^q_j \in K_{q-1}$, then $h_{\sigma} \circ \epsilon^{q-1}_m \in K_q$ for all $m=1, \dots, q$. 
Therefore, by setting
\begin{align*}
b:= a' +\sum_{\sigma \in \Sigma_{q-1}\cap K_{q-1}} C_{\sigma}  \sum_{m=1}^q (-1)^{m-1} h_{\sigma} \circ \epsilon^{q-1}_m
 \in K_q, 
\end{align*}
we obtain the desired identity $a=\partial \eta +b$. 
\end{proof}

Let $S^{\bullet}_{\C}:= \Hom_{\Z}(S_{\bullet}, \C)$ denote the singular cochain complex of $T_w$ with coefficients in $\C$. 

\begin{cor}\label{cor:qis}
Let $\Gamma \subset \Gamma_Q$ be a subgroup. 
\begin{enumerate}
\item The map $K_{\bullet} \hookrightarrow S_{\bullet}$ induces a quasi-isomorphism
\[
(K_{\bullet})_{\Gamma} \ra (S_{\bullet})_{\Gamma}, 
\]
where $(-)_{\Gamma}$ denotes the $\Gamma$-coinvariant part. In particular, we obtain an isomorphism
\[
H_q \big( (K_{\bullet})_{\Gamma}\big) \isomto H_q(T_w/\Gamma, \Z).
\]
\item The map $K_{\bullet} \hookrightarrow S_{\bullet}$ induces a quasi-isomorphism
\[
(S^{\bullet}_{\C})^{\Gamma} \ra (K^{\bullet}_{\C})^{\Gamma}.
\]
In particular, we obtain an isomorphism
\[
H^q(T_w/\Gamma, \C) \isomto H^q \big( (K^{\bullet}_{\C})^{\Gamma}\big) \simeq H^q_Q(\Yo, \Gamma, \shfC).
\]
\end{enumerate}
\end{cor}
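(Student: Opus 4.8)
The plan is to deduce both statements from Proposition \ref{prop:qis1} by a routine homological-algebra argument, once we know that $S_{\bullet}$ computes the singular (co)homology of the quotient space $T_w/\Gamma$. The first thing I would record is that $\Gamma \subset \Gamma_Q$ acts freely and properly discontinuously on $T_w$: under the isomorphism $w \colon T_w \isomto F_{\R}^{\times}$ of \eqref{eqn:isom w2} the $\Gamma_Q$-action is intertwined with the translation action of a subgroup of the unit group (cf. Lemma \ref{lem:rev}), which is free since $F_{\R}^{\times}$ is a group, and properly discontinuous by Dirichlet's unit theorem. Hence $T_w \to T_w/\Gamma$ is a covering map, and since the standard simplices $\Delta^q$ are simply connected, every singular simplex in $T_w/\Gamma$ lifts to $T_w$ uniquely up to the $\Gamma$-action. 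This identifies the coinvariants $(S_{\bullet})_{\Gamma}$ with the singular chain complex of $T_w/\Gamma$, and (as $\shfC$ carries the trivial action and is a divisible, hence injective, $\Z$-module) $(S^{\bullet}_{\C})^{\Gamma} = \Hom_{\Z}\big((S_{\bullet})_{\Gamma},\shfC\big)$ with the singular cochain complex of $T_w/\Gamma$; in particular $H_q\big((S_{\bullet})_{\Gamma}\big) \simeq H_q(T_w/\Gamma,\Z)$ and $H^q\big((S^{\bullet}_{\C})^{\Gamma}\big) \simeq H^q(T_w/\Gamma,\C)$.

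For part (1), put $Q_{\bullet} := S_{\bullet}/K_{\bullet}$, so that we have a short exact sequence $0 \to K_{\bullet} \to S_{\bullet} \to Q_{\bullet} \to 0$ of complexes of $\Z[\Gamma]$-modules. By Proposition \ref{prop:qis1}(1) each $Q_q$ is an induced $\Gamma$-module, hence free over $\Z[\Gamma]$, so $\mathrm{Tor}_1^{\Z[\Gamma]}(Q_q,\Z)=0$ and applying $(-)_{\Gamma} = -\otimes_{\Z[\Gamma]}\Z$ in each degree preserves exactness, giving a short exact sequence of complexes
\[
0 \longrightarrow (K_{\bullet})_{\Gamma} \longrightarrow (S_{\bullet})_{\Gamma} \longrightarrow (Q_{\bullet})_{\Gamma} \longrightarrow 0.
\]
By Proposition \ref{prop:qis1}(2), $Q_{\bullet}$ is an exact complex of free $\Z[\Gamma]$-modules concentrated in degrees $\geq 0$, hence split exact (contractible) over $\Z[\Gamma]$; therefore $(Q_{\bullet})_{\Gamma}$ is contractible, in particular acyclic. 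The long exact homology sequence then shows that $(K_{\bullet})_{\Gamma} \to (S_{\bullet})_{\Gamma}$ is a quasi-isomorphism, and combining with the previous paragraph yields $H_q\big((K_{\bullet})_{\Gamma}\big) \isomto H_q(T_w/\Gamma,\Z)$.

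For part (2), apply $\Hom_{\Z}(-,\shfC)$ to the degreewise short exact sequence $0 \to (K_q)_{\Gamma} \to (S_q)_{\Gamma} \to (Q_q)_{\Gamma} \to 0$ (exact since $\shfC$ is an injective $\Z$-module), using $\Hom_{\Z}\big((-)_{\Gamma},\shfC\big) = \big(\Hom_{\Z}(-,\shfC)\big)^{\Gamma}$ for the trivial $\Gamma$-module $\shfC$. This produces a short exact sequence of cochain complexes $0 \to (Q^{\bullet}_{\C})^{\Gamma} \to (S^{\bullet}_{\C})^{\Gamma} \to (K^{\bullet}_{\C})^{\Gamma} \to 0$, and since $(Q_{\bullet})_{\Gamma}$ is contractible so is its $\shfC$-linear dual; the long exact cohomology sequence then gives a quasi-isomorphism $(S^{\bullet}_{\C})^{\Gamma} \to (K^{\bullet}_{\C})^{\Gamma}$. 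Together with the identification of $(S^{\bullet}_{\C})^{\Gamma}$ with the singular cochain complex of $T_w/\Gamma$ and the isomorphism $K^{\bullet}_{\C} \simeq C^{\bullet}_Q(\mcX_{\Q},\shfC)$ recorded just before the statement, this gives $H^q(T_w/\Gamma,\C) \isomto H^q\big((K^{\bullet}_{\C})^{\Gamma}\big) \simeq H^q_Q(\Yo,\Gamma,\shfC)$.

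Essentially all of the geometric and combinatorial content is already packaged into Proposition \ref{prop:qis1}, so the two previous paragraphs are straightforward diagram chasing. The one point that needs genuine care is the very first step: checking that $T_w \to T_w/\Gamma$ is an honest topological covering, so that $(S_{\bullet})_{\Gamma}$ really is the singular complex of $T_w/\Gamma$ — this rests on the freeness of the $\Gamma$-action (the extension of Lemma \ref{lem:tor}(7) to $T_w$ via $w$) and on proper discontinuity from the structure of the unit group — together with keeping track that $\shfC$-linear duality is exact on the complexes of free abelian groups in play.
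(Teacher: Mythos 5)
Your proof is correct and follows essentially the same route as the paper's, reducing everything to Proposition \ref{prop:qis1} via the short exact sequence $0 \to K_\bullet \to S_\bullet \to S_\bullet/K_\bullet \to 0$. The only deviations are cosmetic: you give a direct covering-space lifting argument to identify $(S_\bullet)_\Gamma$ with the singular chain complex of $T_w/\Gamma$ where the paper cites Cartan--Eilenberg, and you phrase the acyclicity of $(S_\bullet/K_\bullet)_\Gamma$ via split-exactness of a bounded-below complex of free $\Z[\Gamma]$-modules where the paper argues with the $(-)_\Gamma$-acyclic resolution of the zero module.
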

\begin{proof}
First note that since the action of $\Gamma_Q$ on $T_w$ is free and properly discontinuous, the singular homology $H_q(T_w/\Gamma, \Z)$ (resp. singular cohomology $H^q(T_w/\Gamma, \C)$) can be computed by the equivariant singular homology (resp. equivariant singular cohomology), i.e., we have
\begin{align*}
H_q(T_w/\Gamma, \Z) &\simeq H_q\big( (S_{\bullet})_{\Gamma} \big), \\
H^q(T_w/\Gamma, \C) &\simeq H^q\big( (S^{\bullet}_{\C})^{\Gamma}\big). 
\end{align*}
Cf. \cite[Chapter XVI Section 9]{MR0077480}. 

(1) We consider the tautological exact sequence
\begin{align}\label{eqn:ses}
0 \ra K_{q} \ra S_{q} \ra S_{q}/K_{q} \ra 0. 
\end{align}
By Proposition \ref{prop:qis1} (1), we obtain a short exact sequence
\[
0=H_1(\Gamma, S_q/K_q) \ra (K_q)_{\Gamma} \ra (S_q)_{\Gamma} \ra (S_q/K_q)_{\Gamma} \ra 0, 
\]
where $H_1(\Gamma, -)$ is the first group homology of $\Gamma$. 
This induces a long exact sequence
\[
\cdots \ra
H_{q+1} \big((S_{\bullet}/K_{\bullet})_{\Gamma} \big) \ra H_{q} \big( (K_{\bullet})_{\Gamma} \big) \ra H_{q} \big( (S_{\bullet})_{\Gamma} \big) \ra H_{q} \big((S_{\bullet}/K_{\bullet})_{\Gamma} \big)
\ra \cdots
\]
Therefore, it remains to show
\[
H_{q} \big((S_{\bullet}/K_{\bullet})_{\Gamma} \big) = 0
\]
for $q \geq 0$. 
Indeed, by Proposition \ref{prop:qis1}, we see that 
\begin{align}\label{eqn:exact seq}
\cdots \ra 
S_2/K_2 \ra S_1/K_1 \ra S_0/K_0 \ra 0
\end{align}
is an exact sequence of induced $\Gamma$-modules. Therefore, \eqref{eqn:exact seq} can be seen as a $(-)_{\Gamma}$-acyclic resolution of $0$. Thus we see
\[
H_q \big( (S_{\bullet}/K_{\bullet})_{\Gamma} \big) =H_q(\Gamma, 0)= 0
\]
for all $q \geq 0$. 

(2) can be proved similarly. 
By applying $\Hom_{\Z}(-,\C)$ to \eqref{eqn:ses}, we obtain a short exact sequence
\[
0 \ra (S_{q}/ K_{q})^{\vee}_{\C} \ra S^{q}_{\C} \ra K^{q}_{\C} \ra 0, 
\]
where $(S_{q}/K_{q})^{\vee}_{\C} := \Hom_{\Z}(S_{q}/K_{q}, \C)$. 
Then by Proposition \ref{prop:qis1} (1), we see that $(S_{q}/ K_{q})^{\vee}_{\C}$ is a co-induced $\Gamma$-module, and hence we obtain another short exact sequence
\[
0 \ra ((S_{q}/ K_{q})^{\vee}_{\C})^{\Gamma} \ra (S^{q}_{\C})^{\Gamma} \ra (K^{q}_{\C})^{\Gamma} \ra H^1(\Gamma, (S_{q}/ K_{q})^{\vee}_{\C})=0.
\]
Furthermore, this exact sequence induces a long exact sequence
\begin{align*}
\cdots \ra 
H^q\big(((S_{\bullet}/ K_{\bullet})^{\vee}_{\C})^{\Gamma} \big) \ra H^q \big((S^{\bullet}_{\C})^{\Gamma} \big) \ra H^q \big( (K^{\bullet}_{\C})^{\Gamma} \big) \ra H^{q+1}\big(((S_{\bullet}/ K_{\bullet})^{\vee}_{\C})^{\Gamma} \big)
\ra \cdots. 
\end{align*} 
Therefore, it remains to show that 
\[
H^q\big(((S_{\bullet}/ K_{\bullet})^{\vee}_{\C})^{\Gamma} \big)=0
\] 
for $q \geq 0$. 
%
Indeed, by applying $\Hom_{\Z}(-, \C)$ to \eqref{eqn:exact seq}, we see that
\[
0 \ra (S_{0}/ K_{0})^{\vee}_{\C} \ra (S_{1}/ K_{1})^{\vee}_{\C} \ra (S_{2}/ K_{2})^{\vee}_{\C} \ra \cdots. 
\]
is a $(-)^{\Gamma}$-acyclic resolution of $0$, and hence
\[
H^q\big( ((S_{\bullet}/ K_{\bullet})^{\vee}_{\C})^{\Gamma} \big) \simeq H^q(\Gamma, 0)=0
\]
for all $q \geq 0$. 
\end{proof}

As a result, for a subgroup $\Gamma \subset \Gamma_Q$ and a homology class $\mfz \in H_{g-1}(T_w/\Gamma, \Z)$, we can define an evaluation map
\begin{align}\label{eqn:pairing z}
\brk{\mfz, \ }: H_Q^{g-1} (\Yo, \Gamma_Q, \shfC) \simeq H^{g-1}(T_w/\Gamma_Q, \C)\ra H^{g-1}(T_w/\Gamma, \Z) \stackrel{\brk{\mfz, \ }}{\longrightarrow} \C 
\end{align}
by taking the paring with $\mfz$.

\subsection{Shintani decomposition}
Using Corollary \ref{cor:qis}, here we construct a cone decomposition of a homology class $\mfz \in H_{g-1}(T_w/\Gamma, \Z)$. Cf. Proposition \ref{prop:sfd} and Remark \ref{rmk:sfd}. 
We need such a cone decomposition in order to compute the specialization of the Shintani-Barnes cocycle.

Recall that $\tau_1, \dots, \tau_g$ are the field embeddings of $F$ into $\C$. Clearly, $\tau_i$ extends to 
\[
\tau_i : F_{\R}=F \otimes \R \ra \C. 
\]
Let $F_{\tau_i}$ denote the completion of $F$ with respect to the embedding $\tau_i$. In the following, we assume $\tau_1, \dots, \tau_{r_1}$ are the real embeddings, i.e., $F_{\tau_i}=\R$ for $i=1\dots, r_1$, and $\tau_{r_1+1}, \dots, \tau_g$ are the non-real embeddings, i.e., $F_{\tau_i}=\C$ for $i=r_1+1, \dots, g$, for simplicity. 

For $\mu=(\mu_1, \dots, \mu_{r_1}) \in \{\pm 1\}^{r_1}(:=\{-1, 1\}^{r_1})$, set
\begin{align*}
F_{\R, \mu}^{\times}:=\{x \in F_{\R}^{\times} \ |\ \mu_i \tau_i(x)>0, \ i=1, \dots, r_1 \}. 
\end{align*}
Clearly, $F_{\R, \mu}^{\times}$ are the connected components of $F_{\R}^{\times}$, and we have $F_{\R}^{\times}=\coprod_{\mu \in \{\pm 1\}^{r_1}} F_{\R, \mu}^{\times}$. 
Then let $T_{w, \mu} \subset T_w$ be the connected component of $T_w$ corresponding to $F_{\R,\mu}^{\times}$ via the identification \eqref{eqn:isom w2}:
\[
w: T_w \isomto F_{\R}^{\times}. 
\]

If $\mu=(1,1, \dots, 1)$, then $F_{\R, \mu}^{\times}$ is the totally positive component of $F_{\R}^{\times}$, and simply denoted by $F_{\R, +}^{\times}$. Furthermore, let 
\begin{align*}
F_{+}^{\times} &:= F^{\times} \cap F_{\R,+}^{\times}=\{x \in F^{\times} \mid \tau_i(x)>0, \  i=1, \dots, r_1  \}, \\
\mcO_{+}^{\times} &:=\mcO^{\times} \cap F_{\R,+}^{\times}=\{u \in \mcO^{\times} \mid \tau_i(u)>0, \ i=1, \dots, r_1  \} 
\end{align*}
denote the totally positive parts of $F^{\times}$ and $\mcO^{\times}$ respectively, and let $\Gamma_Q^+ \subset \Gamma_Q$ be the image of $\mcO_{+}^{\times}$ under the isomorphism
\[
\rho_w: \mcO^1 \isomto \Gamma_Q,
\]
cf. Section \ref{subsect:number field}.

By Dirichlet's unit theorem, we know that 
\[
T_w/\Rpos \Gamma_Q^+ \simeq F_{\R}^{\times}/\Rpos \mcO^{\times}_+
\]
is compact, and its connected components 
\[
T_{w, \mu}/\Rpos \Gamma_Q^+ \simeq F_{\R,\mu}^{\times}/\Rpos \mcO^{\times}_+, \ \mu \in \{\pm 1\}^{r_1}
\]
are homeomorphic to $(g-1)$-dimensional topological tori. 
%
Therefore, we have
\begin{align}\label{eqn:ori}
H_{g-1}(T_w/ \Gamma_Q^+, \Z) \simeq H_{g-1}(T_w/\Rpos \Gamma_Q^+, \Z) \simeq \Z^{\{\pm 1\}^{r_1}}.
\end{align}
%
%
%
Here the first isomorphism is a canonical isomorphism induced from the projection
\[
T_w/\Gamma_Q^+ \ra T_w/\Rpos \Gamma_Q^+,
\]
which is clearly a homotopy equivalence. 
In order to fix the second isomorphism of \eqref{eqn:ori}, we equip $T_w/\Rpos \Gamma_Q^+$ with an orientation as follows. 
%

\vspace{2mm}
\noindent
\textbf{Orientation.}
Set 
\begin{align*}
\mathbf T_{\mu}:=  T_{w, \mu}/\Rpos \Gamma_Q^+ \subset \mathbf T:=  T_w/\Rpos \Gamma_Q^+ 
\end{align*}
for simplicity. 
Recall that an orientation of a $(g-1)$-dimensional manifold $X$ is defined as a system $(\nu_x)_{x\in X}$ of generators $\nu_x \in H_{g-1}(X, X \setm \{x\}, \Z) \simeq \Z$ satisfying certain compatibility, cf. \cite[Section 3.3]{H02:Alg}. Note that giving a generator $\nu_x$ of $H_{g-1}(X, X\setm \{x\},\Z) \simeq \Z$ is equivalent to giving an isomorphism
\begin{align*}
o_x: H_{g-1}(X, X\setm \{x\}, \Z) \isomto \Z; \ \nu_x \mapsto 1. 
\end{align*}
We first fix an orientation of the $(g-1)$-sphere $\bfS^{g-1}=(\R^g \setm \{0\})/\Rpos$ as follows. 
Let $x \in \R^g \setm \{0\}$ and let $\bar x \in \bfS^{g-1}$ be its image. Moreover, let $I=(\alpha_1, \dots, \alpha_g) \in (\XQ)^g$ such that $0 \notin |\sigma_I|$ and $x \notin  \partial C_I$, where $\partial C_I$ is the boundary of the cone $C_I$.  
Then we see
\[
\overline{\sigma}_I: \Delta^{g-1} \stackrel{\sigma_I}{\ra} \R^g\setm \{0\} \ra \bfS^{g-1}
\]
defines a class $[\overline{\sigma_I}] \in H_{g-1}(\bfS^{g-1}, \bfS^{g-1}\setm \{\bar x\},\Z)$. We fix the isomorphism $o_{\bar x}$ so that we have
\begin{align*}
o_{\bar x}([\overline{\sigma_I}])=\sgn (I) \mathbf{1}_{C_I}(x)
\end{align*}
for all such $I$, 
where $\sgn (I)= \sgn (\det I) \in \{0, \pm 1\}$. 
This defines an orientation of $\bfS^{g-1}$. 
Then this orientation of $\bfS^{g-1}$ induces orientations of $T_w/\Rpos \subset \bfS^{g-1}$ and $\bfT=T_w/ \Rpos\Gamma_Q^+$ because the action of $\Gamma_Q^+$ on $T_w/\Rpos$ is free, properly discontinuous, and orientation preserving. 
More explicitly, for $x \in T_w$ and its image $\bfx \in \bfT$, the local orientation isomorphism
\begin{align*}
o_{\bfx}: H_{g-1}(\bfT, \bfT \setm \{\bfx\}, \Z) \isomto \Z  
\end{align*}
can be computed as follows. 
let $I=(\alpha_1, \dots, \alpha_g) \in (\XQ)_Q^g$ such that $\gamma x \notin \partial C_I$ for all $\gamma \in \Gamma_Q^+$. Then 
\[
\bm{\sigma}_I: \Delta^{g-1} \stackrel{\sigma_I}{\ra} T_w \ra \bfT
\]
defines a class $[\bm{\sigma}_I] \in H_{g-1}(\bfT, \bfT \setm \{\bfx\}, \Z)$, and we have
\begin{align}\label{eqn:loc ori T}
o_{\bfx}([\bm{\sigma}_I])= \sgn(I) \sum_{\gamma \in \Gamma_Q^+} \mathbf{1}_{C_I}(\gamma x). 
\end{align}
%

Now, since $\bfT_{\mu}$ are the connected components of $\bfT$, this orientation defines isomorphisms
\[
o_{\mu}: H_{g-1}(\bfT_{\mu}, \Z) \isomto \Z, \ \mu \in \{\pm 1\}^{r_1}, 
\]
\[
o=\oplus_{\mu}o_{\mu}:  H_{g-1}(\bfT, \Z) \simeq \bigoplus_{\mu \in \{\pm 1\}^{r_1}} H_{g-1}(\bfT_{\mu}, \Z) \isomto \bigoplus_{\mu \in \{\pm 1\}^{r_1}}\Z
\]
such that for all $\bfx \in \bfT_{\mu}$, the following diagram is commutative:
\begin{align}\label{eqn:loc glo ori}
\begin{split}
\xymatrix{
H_{g-1}(\bfT_{\mu}, \Z) \ar[r]^-{\stackrel{o_{\mu}}{\sim}}\ar[d]_-{\loc_{\bfx}}& \Z \ar@{=}[d]\\
H_{g-1}(\bfT, \bfT \setm \{\bfx\}, \Z) \ar[r]^-{\stackrel{o_{\bfx}}{\sim}}&\Z \\
}
\end{split}
\end{align}
where the left vertical arrow is the natural localization map, 
cf. \cite[Theorem 3.26, Lemma 3.27]{H02:Alg}.

\vspace{2mm}

For $\chi =(\chi_{\mu})_{\mu} \in \bigoplus_{\mu \in \{\pm 1\}^{r_1}} \Z$, let
\[
\mfz_{\chi} \in H_{g-1}(\bfT, \Z) \simeq H_{g-1}(T_w/\Gamma_Q^+, \Z)
\]
denote the class such that $o(\mfz_{\chi})=\chi$. 
%
Note that if $\mfz_{\mu}$ denotes the fundamental class of $\bfT_{\mu}$, then $\mfz_{\chi}$ can be written as $\mfz_{\chi}=\sum_{\mu}\chi_{\mu} \mfz_{\mu}$.

\begin{prop}\label{prop:sfd}
Let $\chi =(\chi_{\mu})_{\mu} \in \bigoplus_{\mu \in \{\pm 1\}^{r_1}} \Z$. 
\begin{enumerate}
\item There exists 
\[
\Phi = \sum_{i=1}^{r} c_i \sigma_{I_i} \in K_{g-1}=\Z \big[\sigma_I \ \big|\ I \in (\XQ)^g_Q \big] \subset S_{g-1}
\]
which represents the homology class $\mfz_{\chi} \in H_{g-1}(T_w/\Gamma_Q^+, \Z)$, where  $I_1, \dots, I_{r} \in (\XQ)^g_Q$, and $c_i \in \Z$. 
\item Then for $x \in \R^g \setm \{0\}$, we have
\[
\sum_{\gamma \in \Gamma_Q^+} \sum_{i=1}^{r} c_i \sgn (I_i) \mathbf{1}_{C_{I_i}^Q}(\gamma x) =\chi(x)\mathbf{1}_{T_w}(x),
\]
where $\chi$ is regarded as a locally constant function $\chi: T_w \ra \Z$ which has value $\chi_{\mu}$ on $T_{w,\mu}$, i.e., $\chi(x)=\chi_{\mu}$ for $x \in T_{w, \mu}$. 
\end{enumerate}
\end{prop}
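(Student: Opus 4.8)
The plan is to prove part (1) first and then derive part (2) as an essentially formal consequence, using the cocycle relation of Proposition~\ref{prop:cocycle} together with the $Q$-closure decomposition of Lemma~\ref{lem:rsoc}. For part~(1), I would start from the orientation isomorphism $o$ fixed in the text, which identifies $H_{g-1}(\bfT,\Z)$ with $\bigoplus_{\mu}\Z$, and the fundamental classes $\mfz_{\mu}$ of the tori $\bfT_{\mu}$. Since $\bfT = T_w/\Rpos\Gamma_Q^+$ is a compact oriented $(g-1)$-manifold with finitely many components, the class $\mfz_{\chi}=\sum_{\mu}\chi_{\mu}\mfz_{\mu}$ lies in $H_{g-1}(T_w/\Gamma_Q^+,\Z)\simeq H_{g-1}(\bfT,\Z)$ (the first isomorphism of \eqref{eqn:ori}, induced by the homotopy equivalence $T_w/\Gamma_Q^+\ra\bfT$). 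By Corollary~\ref{cor:qis}(1), the inclusion $K_{\bullet}\hookrightarrow S_{\bullet}$ induces an isomorphism $H_{g-1}((K_{\bullet})_{\Gamma_Q^+})\isomto H_{g-1}(T_w/\Gamma_Q^+,\Z)$; so any singular cycle representing $\mfz_{\chi}$ can be replaced by a $\Gamma_Q^+$-coinvariant class represented by a $\Z$-linear combination $\Phi=\sum_i c_i\sigma_{I_i}$ of affine simplices $\sigma_{I_i}$ with $I_i\in(\XQ)^g_Q$ (using Lemma~\ref{lem:Q-int simpl} to see $|\sigma_{I_i}|\subset T_w$ forces $Q$-admissibility). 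This gives (1).

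For part~(2), fix $x\in\R^g\setm\{0\}$ and consider the function
\[
\Theta(x):=\sum_{\gamma\in\Gamma_Q^+}\sum_{i=1}^r c_i\,\sgn(I_i)\,\mathbf 1_{C_{I_i}^Q}(\gamma x).
\]
First I would argue this sum is finite: by Lemma~\ref{lem:rsoc}(2), each $\mathbf 1_{C_{I_i}^Q}(\gamma x)$ equals $\mathbf 1_{C_{I_i}}(\exp(\varepsilon Q)\gamma x)$ for small $\varepsilon>0$, and since $C_{I_i}$ is contained in the closed cone $\overline{C_{I_i}}$, only finitely many $\gamma\in\Gamma_Q^+$ can have $\gamma x$ (or a small perturbation thereof) inside $\overline{C_{I_i}}$ — this uses that $\Gamma_Q^+$ acts properly discontinuously on $T_w$ and that the closed cones are ``pointed'' away from a fixed neighborhood, exactly as in the convergence estimate of Proposition~\ref{prop:conv}. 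Next, if $x\notin T_w$ then $N_w(x)=0$, so $x$ lies on one of the finitely many hyperplanes $H_{w^{(j)}}$ cut out by the eigenvectors; by Lemma~\ref{lem:rsoc}(2) applied to each $C_{I_i}$ (a rational simplicial cone whose boundary is contained in such hyperplanes after the change of basis of Lemma~\ref{lem:rsoc}(1)) together with Lemma~\ref{lem:hypQ}(4), one checks $\mathbf 1_{C_{I_i}^Q}(\gamma x)=0$, so $\Theta(x)=0=\chi(x)\mathbf 1_{T_w}(x)$. So it remains to treat $x\in T_w$, say $x\in T_{w,\mu}$, where the claim is $\Theta(x)=\chi_{\mu}$.

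For $x\in T_{w,\mu}$, the key is to relate $\Theta(x)$ to the local orientation formula \eqref{eqn:loc ori T}. Perturbing by $\exp(\varepsilon Q)$ and using Lemma~\ref{lem:rsoc}(2) again, choose $\varepsilon$ small enough (depending on $x$ and on the finitely many cones $C_{\gamma^{-1}I_i}$, $\gamma\in\Gamma_Q^+$, that can meet a neighborhood of $x$) so that $x':=\exp(\varepsilon Q)x$ avoids all the relevant cone boundaries $\partial C_{I_i}$ after translating by $\Gamma_Q^+$; then $\mathbf 1_{C_{I_i}^Q}(\gamma x)=\mathbf 1_{C_{I_i}}(\gamma x')$ for all contributing $\gamma$, so $\Theta(x)=\sum_\gamma\sum_i c_i\sgn(I_i)\mathbf 1_{C_{I_i}}(\gamma x')$. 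Now $x'$ still lies in $T_{w,\mu}$ (since $T_{w,\mu}$ is open and $\varepsilon$ small), and the right-hand side is precisely $o_{\bfx'}([\Phi])$ by the definition \eqref{eqn:loc ori T} of the local orientation, where $[\Phi]$ is the image in $H_{g-1}(\bfT,\bfT\setm\{\bfx'\},\Z)$ of the cycle $\Phi$ representing $\mfz_{\chi}$. By the commutative diagram \eqref{eqn:loc glo ori} (localization versus the global orientation isomorphism $o_\mu$), we get $o_{\bfx'}(\loc_{\bfx'}(\mfz_{\chi}))=o_\mu(\mfz_{\chi}|_{\bfT_\mu})=\chi_\mu$ since $o(\mfz_{\chi})=\chi$. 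Hence $\Theta(x)=\chi_\mu=\chi(x)$, completing the proof.

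I expect the main obstacle to be the careful bookkeeping in this last step: ensuring the perturbation parameter $\varepsilon$ can be chosen uniformly over the finitely many $\gamma\in\Gamma_Q^+$ that contribute, and matching the combinatorial sum $\sum_\gamma\sum_i c_i\sgn(I_i)\mathbf 1_{C_{I_i}}(\gamma x')$ with the singular-homology local orientation pairing $o_{\bfx'}([\Phi])$ exactly as normalized in \eqref{eqn:loc ori T} — in particular verifying that passing from the cycle $\Phi\in K_{g-1}$ downstairs to its image in the relative homology of $\bfT$ does not introduce sign or multiplicity errors, and that the finiteness/perturbation arguments are compatible with the $\Gamma_Q^+$-coinvariance used to pull $\mfz_\chi$ back to $K_{g-1}$. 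The part~(2) claim for $x\notin T_w$ is also slightly delicate because one must rule out contributions from cone boundaries, which is where Lemma~\ref{lem:hypQ}(4) ($H_\alpha^Q=\{0\}$) does the essential work.
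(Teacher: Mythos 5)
Your overall plan is the same as the paper's: part (1) is a direct consequence of Corollary~\ref{cor:qis}(1), and part (2) is handled by perturbing $x$ to $x'=\exp(\varepsilon Q)x$ for small $\varepsilon$ (using Lemma~\ref{lem:gen} and local finiteness, as in Proposition~\ref{prop:cocycle}) to replace $\mathbf 1_{C_{I_i}^Q}(\gamma x)$ by $\mathbf 1_{C_{I_i}}(\gamma x')$, and then identifying the resulting sum with the local-orientation pairing $o_{\bfx'}(\loc_{\bfx'}(\Phi))=\chi_\mu$ via \eqref{eqn:loc ori T} and \eqref{eqn:loc glo ori}. The main content is therefore correct.

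The one step that does not go through as written is your treatment of the case $x\notin T_w$. You assert that $x$ lies on the hyperplanes $H_{w^{(j)}}$ ``cut out by the eigenvectors'' and then invoke Lemma~\ref{lem:rsoc}(2) and Lemma~\ref{lem:hypQ}(4). But the vectors $w^{(j)}$ are complex, not rational; for a non-real embedding $\tau_j$ the locus $\brk{x,w^{(j)}}=0$ is a real codimension-$2$ subspace rather than a hyperplane, and in any case Lemma~\ref{lem:hypQ}(4) only applies to $H_\alpha$ with $\alpha\in\Q^g\setminus\{0\}$, which these are not. Also, the boundary faces of a rational cone $C_{I_i}$ are cut out by the rational functionals $\alpha^*_j$ of Lemma~\ref{lem:rsoc}(1), which have nothing to do with the $w^{(j)}$. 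The correct and much shorter argument (the one the paper uses, and which your perturbation already sets up) is: by Lemma~\ref{lem:Q-int simpl}, $Q$-admissibility of $I_i$ means $|\sigma_{I_i}|\subset T_w$, hence $C_{I_i}=\Rpos|\sigma_{I_i}|\subset T_w$; since $\exp(\delta Q)$ and $\Gamma_Q^+$ both preserve $T_w$ (and its components), if $x\notin T_w$ then $\gamma x'\notin T_w\supset C_{I_i}$ for every $\gamma$, so every term $\mathbf 1_{C_{I_i}}(\gamma x')=\mathbf 1_{C_{I_i}^Q}(\gamma x)$ vanishes. In other words, you should handle the $x\notin T_w$ case \emph{after} the perturbation, exactly as in the $x\in T_w$ case, and appeal to Lemma~\ref{lem:Q-int simpl} rather than to Lemma~\ref{lem:hypQ}(4).
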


\begin{proof}
(1) is a direct consequence of Corollary \ref{cor:qis} (1). 

(2) First note that we have
\[
\mathbf{1}_{C_{I_i}^Q}(\gamma x) =\mathbf{1}_{\gamma^{-1} C_{I_i}^Q}(x)=\mathbf{1}_{C_{\gamma^{-1}I_i}^Q}(x)
\]
for $\gamma \in \Gamma_Q^+$. 
Now, since the action of $\Gamma_Q^+$ on $T_w/\Rpos$ is properly discontinuous, the collection $\{\gamma^{-1} C_{I_i}\}_{i, \gamma}$ of subsets of $T_w$ is locally finite.
Therefore, as in the proof of Proposition \ref{prop:cocycle}, by using Lemma \ref{lem:gen}, we can find $\delta>0$ such that
\begin{align*}
\exp (\varepsilon Q)x \notin  \partial C_{\gamma^{-1}I_i}
\end{align*}
for all $\varepsilon \in (0, 2\delta)$, $i=1, \dots, {r}$, and $\gamma \in \Gamma_Q^+$. 
Set
\[
x':= \exp(\delta Q)x.
\]
Then we have
\begin{align*}
\mathbf{1}_{C_{I_i}^Q}(\gamma x) =\mathbf{1}_{C_{\gamma^{-1} I_i}^Q}(x) =\mathbf{1}_{C_{\gamma^{-1} I_i}}(x')=\mathbf{1}_{C_{I_i}}(\gamma x').
\end{align*}
Moreover, by using Lemma \ref{lem:rev} (5), we see that $\exp(\delta Q)$ preserves the connected components $T_{w, \mu}$ of $T_w$, and hence we have
\[
\chi(x) \mathbf{1}_{T_w}(x)= \chi(x') \mathbf{1}_{T_w}(x'). 
\]
Therefore, it suffices to show
\begin{align}\label{eqn:sum}
\sum_{\gamma \in \Gamma_Q^+} \sum_{i=1}^{r} c_i \sgn (I_i) \mathbf{1}_{C_{I_i}}(\gamma x') =\chi(x')\mathbf{1}_{T_w}(x').
\end{align}
First, by Lemma \ref{lem:Q-int simpl}, all of the terms in \eqref{eqn:sum} are $0$ if $x' \notin T_w$. Therefore, we assume $x' \in T_{w,\mu}$ for some $\mu \in \{\pm 1\}^{r_1}$.
Set
\begin{align*}
&\bfT_{\mu} :=T_{w, \mu}/\Rpos \Gamma_Q^+, \\
&\bfx' := \Rpos \Gamma_Q^+ x' \in \bfT_{\mu}. 
\end{align*}
%
Then by \eqref{eqn:loc ori T}, we see that the image of $\Phi$ under the localization map
\[
o_{\bfx'} \circ \loc_{\bfx'}: H_{g-1}(T_{w, \mu}/\Gamma_Q^+, \Z) \simeq H_{g-1} \big(\bfT_{\mu}, \Z \big)  \stackrel{\loc_{\bfx'}}{\longrightarrow} H_{g-1}\big(\bfT, \bfT \setm \{\bfx'\}, \Z \big) \stackrel{o_{\bfx'}}{\isomto} \Z
\]
is equal to
\begin{align*}
\sum_{\gamma \in \Gamma_Q^+} \sum_{i=1}^{r} c_i \sgn (I_i) \mathbf{1}_{C_{I_i}}(\gamma x').
\end{align*}
On the other hand, by \eqref{eqn:loc glo ori}, we have 
\[
o_{\bfx'} \circ \loc_{\bfx'}(\Phi)=o_{\mu}(\mfz_{\chi})=\chi_{\mu}
\]
because $\Phi$ represents $\mfz_{\chi}$. 
This completes the proof. 
\end{proof}

\begin{rmk}\label{rmk:sfd} 
In the case where $\mfz_{\chi}=\mfz_{\mu}$ is the fundamental class of a connected component $\bfT_{\mu}$, 
Proposition \ref{prop:sfd} says that
\[
\sum_{i=1}^{r} c_i \sgn (I_i) \mathbf{1}_{C_{I_i}^Q}
\]
gives a signed fundamental domain for $T_{w, \mu}/\Gamma_Q^+$ in the sense of Charollois, Dasgupta, and Greenberg~\cite[Definition 2.4]{CDG15:Int}, which is a ``weighted version'' of the Shintani cone decomposition, cf. also \cite{MR3198753}, \cite{MR4105945}. 
\end{rmk}

\begin{rmk}\label{rmk:eval comp}
Let the notations $\chi, \mfz_{\chi}$, and $\Phi=\sum_{i=1}^r c_i \sigma_{I_i}$ be the same as in Proposition \ref{prop:sfd}. 
We can compute the evaluation map 
\[
\brk{\mfz_{\chi}, \ }: H_Q^{g-1} (\Yo, \Gamma_Q, \shfC) \simeq H^{g-1}(T_w/\Gamma_Q, \C)\ra H^{g-1}(T_w/\Gamma_Q^+, \Z) \stackrel{\brk{\mfz_{\chi}, \ }}{\longrightarrow} \C, 
\]
cf. \eqref{eqn:pairing z}, explicitly as follows. Let 
\[
s=(s_I)_{I \in (\XQ)^{g}_Q} \in  C_Q^{g-1}(\mcX_{\Q}, \shfC) = \prod_{I\in (\XQ)^{g}_Q} \C 
\]
be a $\Gamma_Q$-invariant cocycle and let $[s] \in H_Q^{g-1}(\Yo, \Gamma_Q, \shfC)$ be the class represented by $s$. Then we have
\[
\brk{\mfz_{\chi}, [s]}=\sum_{i=1}^{r} c_i s_{I_i}. 
\]
\end{rmk}


\subsection{Values of the zeta functions}
Recall that $F$ is a number field of degree $g$, $\mcO$ is an order in $F$, and $\mfa \subset F$ is a proper fractional $\mcO$-ideal. 
\begin{dfn}
\begin{enumerate}
\item 
For a continuous map
\[
\chi: F_{\R}^{\times}=(F \otimes_{\Q} \R)^{\times} \ra \Z, 
\]
let
\[
\zeta_{\mcO}(\chi, \mfa^{-1}, s):= 
\sum_{x \in (\mfa- \{0\})/\mcO^{\times}_+} \frac{\chi(x)}{|N_{F/\Q}(x)|^s}, \quad \re(s)>1
\]
denote the partial zeta function associated to $\chi$ and a proper fractional $\mcO$-ideal $\mfa^{-1}$. 
Here, note that $\chi$ is constant on each connected component of $F_{\R}^{\times}$, and hence invariant under the action of $\mcO^{\times}_+$. 
\item Let
\[
\bm{\varepsilon}:  F_{\R}^{\times} \ra \{\pm 1\};\  x \mapsto \frac{N_{F/\Q}(x)}{|N_{F/\Q}(x)|}
\]
denote the sign character. 
\end{enumerate}
\end{dfn}

Now, let $k \geq 1$, and let $\chi \in \bigoplus_{\mu \in \{\pm 1\}^{r_1}}\Z$. 
Note that $\chi$ can be regarded as a continuous map $\chi: F_{\R}^{\times} \ra \Z$ via
\[
\chi: F_{\R}^{\times} \ra F_{\R}^{\times}/F_{\R, +}^{\times} \simeq \{\pm 1\}^{r_1} \stackrel{\chi}{\longrightarrow} \Z. 
\]
So far, we have defined the following series of maps between cohomology groups:
\begin{align}\label{eqn:spec}
\vcenter{
\xymatrix@C=0pt@R=20pt{
H^{g-1}(\Yo, SL_g(\Z), \msF_{kg}^{\Xi}) \ar[d]^{\ev_Q} & \ni &[\Psi_{kg}] \ar@{|->}[dddd]\\
H^{g-1}(\Yo, \Gamma_Q, \msF_{kg}) \ar[d]^{N_{w^*}^k} && \\
H^{g-1}(\Yo, \Gamma_Q, \msF_0) \ar[d]^{\int_Q} && \\
H_Q^{g-1}(\Yo, \Gamma_Q, \C) \ar[d]^{\brk{\mfz_{\chi}, \ }} && \\
\C & \ni &\left\langle \mfz_{\chi}, {\int_Q}  N_{w^*}^k  \ev_Q ([\Psi_{kg}]) \right\rangle
}
}
\end{align}
%
See Corollary \ref{cor:equiv coh}, Example \ref{ex:norm}, \eqref{map:intQ}, and Remark \ref{rmk:eval comp} for the definitions of these maps.

\begin{thm}\label{thm:main thm}
We have
\[
\left\langle \mfz_{\chi}, {\int_Q}  N_{w^*}^k  \ev_Q ([\Psi_{kg}]) \right\rangle
=
\frac{(k!)^g\det (w^{(1)}, \dots, w^{(g)})}{(g+gk-1)!}
\zeta_{\mcO}(\bm{\varepsilon}^{k+1}\chi, \mfa^{-1}, k+1), 
\]
where $\bm{\varepsilon}^{k+1}\chi (x)=\bm{\varepsilon}(x)^{k+1}\chi(x)$. 
\end{thm}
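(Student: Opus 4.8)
The plan is to unwind the chain of maps in \eqref{eqn:spec} step by step, applying the explicit formulas established earlier, and finally to recognize the resulting sum as a partial zeta value via a Shintani-type decomposition. First I would fix the notation: take $\theta \in F^{\times}$ with $F = \Q(\theta)$ and $Q = \rho_w(\theta) \in \Xi$, and let $\Phi = \sum_{i=1}^r c_i \sigma_{I_i} \in K_{g-1}$ be a representative of the homology class $\mfz_{\chi} \in H_{g-1}(T_w/\Gamma_Q^+, \Z)$ as provided by Proposition \ref{prop:sfd}(1), with $I_1, \dots, I_r \in (\XQ)^g_Q$. By Theorem \ref{thm:SB class} we have $\ev_Q([\Psi_{kg}]) = [\Psi_{kg}(Q)]$, and $\Psi_{kg}(Q)$ is represented by the cocycle $(\psi_{kg,I}^Q \omega)_{I \in (\XQ)^g}$. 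Applying $N_{w^*}^k$ (Corollary \ref{cor:eq coh1.5}, Example \ref{ex:norm}) gives the cocycle $(N_{w^*}(y)^k \psi_{kg,I}^Q(y)\omega(y))_I$ in $C^{g-1}(\mcX_{\Q}, \msF_0)^{\Gamma_Q}$.

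Next I would apply the integral map $\int_Q$. By Example \ref{ex:Hur formula}(2), for each $Q$-admissible $I$ we have
\[
\int_{Q,I} N_{w^*}(y)^k \psi_{kg,I}^Q(y)\omega(y) = \frac{(k!)^g \det(w^{(1)}, \dots, w^{(g)})}{(g+kg-1)!}\, \sgn(I) \sum_{x \in C_I^Q \cap \Z^g \setm \{0\}} \frac{1}{N_w(x)^{k+1}},
\]
so $\int_Q N_{w^*}^k \ev_Q([\Psi_{kg}])$ is the class in $H_Q^{g-1}(\Yo, \Gamma_Q, \C)$ represented by the $\Gamma_Q$-invariant cocycle whose $I$-component is the right-hand side above. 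Then, using Remark \ref{rmk:eval comp} to compute the pairing with $\mfz_{\chi}$, I get
\[
\left\langle \mfz_{\chi}, {\int_Q} N_{w^*}^k \ev_Q([\Psi_{kg}]) \right\rangle = \frac{(k!)^g \det(w^{(1)}, \dots, w^{(g)})}{(g+kg-1)!} \sum_{i=1}^r c_i \sgn(I_i) \sum_{x \in C_{I_i}^Q \cap \Z^g \setm \{0\}} \frac{1}{N_w(x)^{k+1}}.
\]

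The core of the argument is then to identify the double sum $\sum_{i=1}^r c_i \sgn(I_i) \sum_{x \in C_{I_i}^Q \cap \Z^g \setm \{0\}} N_w(x)^{-(k+1)}$ with $\sum_\mu \chi_\mu \sum_{x \in (\mfa \setm \{0\})/\mcO_+^\times, \, x \in T_{w,\mu}} |N_{F/\Q}(\brk{x,w})|^{-(k+1)}$ appropriately signed. I would proceed by absolute convergence (justified by Proposition \ref{prop:conv} applied to each cone piece), rewriting the sum over $\coprod_i c_i \cdot (C_{I_i}^Q \cap \Z^g)$ as a sum over $\Z^g \setm \{0\}$ weighted by $\sum_i c_i \sgn(I_i) \mathbf 1_{C_{I_i}^Q}(x)$, then folding in the $\Gamma_Q^+$-action: since $N_w(\gamma x) = N_w(x)$ for $\gamma \in \Gamma_Q$ (Lemma \ref{lem:rev}(7)) and the summand is $\Gamma_Q^+$-invariant, I can replace the sum over $\Z^g \setm \{0\}$ by a sum over $(\Z^g \setm \{0\})/\Gamma_Q^+$ times $\sum_{\gamma \in \Gamma_Q^+}$ inserted into the weight, i.e. the weight becomes $\sum_{\gamma \in \Gamma_Q^+}\sum_i c_i \sgn(I_i) \mathbf 1_{C_{I_i}^Q}(\gamma x)$. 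By Proposition \ref{prop:sfd}(2) this weight equals $\chi(x)\mathbf 1_{T_w}(x)$. Transporting via the isomorphism $w \colon \Z^g \setm \{0\} \isomto \mfa \setm \{0\}$ and $T_w \isomto F_{\R}^\times$, and noting $N_w(x) = N_{F/\Q}(\brk{x,w})$ while $\chi(x) = \chi_\mu$ on $T_{w,\mu}$, the sum becomes $\sum_{x \in (\mfa \setm \{0\})/\mcO_+^\times} \chi(x) N_{F/\Q}(x)^{-(k+1)}$. Finally, writing $N_{F/\Q}(x)^{-(k+1)} = \bm\varepsilon(x)^{k+1} |N_{F/\Q}(x)|^{-(k+1)}$ converts this into $\zeta_{\mcO}(\bm\varepsilon^{k+1}\chi, \mfa^{-1}, k+1)$ by definition, completing the proof.

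\textbf{Main obstacle.} The delicate point is the bookkeeping in the middle step: one must carefully match the measure-theoretic identity of Proposition \ref{prop:sfd}(2) — which is a pointwise identity of functions on $\R^g \setm \{0\}$ involving the full $\Gamma_Q^+$-orbit sum — with the rearrangement of the absolutely convergent triple sum (over $i$, over lattice points, and implicitly over $\Gamma_Q^+$-cosets), making sure the $\Gamma_Q^+$-invariance of $N_w(x)^{-(k+1)}$ is used correctly and that the cone pieces $C_{I_i}^Q$ (which are $Q$-closures, genuinely different from the $C_{I_i}$) do tile a fundamental domain up to sign as asserted. One should also check that summing over $x \in C_{I_i}^Q$ rather than $C_{I_i}$ is harmless here precisely because Proposition \ref{prop:sfd}(2) is stated for the $Q$-closures; no further perturbation argument is needed at this stage since it was already absorbed into the cocycle and the decomposition. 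Everything else — the identification $\ev_Q([\Psi_{kg}]) = [\Psi_{kg}(Q)]$, the Hurwitz computation, and the pairing formula — is a direct citation of earlier results.
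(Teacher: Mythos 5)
Your proposal is correct and follows essentially the same route as the paper's own proof: apply the Hurwitz evaluation (Example \ref{ex:Hur formula}(2)) to the cocycle $N_{w^*}^k \Psi_{kg}(Q)$, pair with the Shintani decomposition $\Phi=\sum c_i\sigma_{I_i}$ from Proposition \ref{prop:sfd}(1) via Remark \ref{rmk:eval comp}, then unfold over $\Gamma_Q^+$, invoke Proposition \ref{prop:sfd}(2) to replace the cone weight by $\chi\mathbf 1_{T_w}$, and transport via $w$ to the partial zeta function. The step-by-step order and all cited results match the paper's argument.
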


\begin{proof}
By Hurwitz' formula Example \ref{ex:Hur formula}, we see that the class \[
{\int_Q}  N_{w^*}^k  \ev_Q ([\Psi_{kg}]) \in H_Q^{g-1}(\Yo, \Gamma_Q, \C)
\] 
is represented by
\begin{align*}
&\left( \int_{Q, I} N_{w^*}(y)^k \psi_{kg, I}^Q(y)\omega(y) \right)_{I \in (\XQ)_Q^g} \\
=&
\left( \frac{(k!)^g\det (w^{(1)}, \dots, w^{(g)})}{(g+gk-1)!} \sgn(I) \sum_{x \in C_I^Q \cap \Z^g\setm \{0\}} \frac{1}{N_w(x)^{k+1}} \right)_{I \in (\XQ)_Q^g}.
\end{align*}

On the other hand, by Proposition \ref{prop:sfd} (1), we can take a representative
\[
\Phi=\sum_{i=1}^r c_i \sigma_{I_i} \in K_{g-1}=\Z \big[\sigma_I \ \big|\ I \in (\XQ)^g_Q \big] \subset S_{g-1}
\]
of $\mfz_{\chi} \in H_{g-1}(T_w/\Gamma_Q^+, \Z)$. 
Then by using Remark \ref{rmk:eval comp} and Proposition \ref{prop:sfd} (2), we find
\begin{align*}
&\left\langle \mfz_{\chi}, {\int_Q}  N_{w^*}^k  \ev_Q ([\Psi_{kg}]) \right\rangle \\
=&
\frac{(k!)^g\det (w^{(1)}, \dots, w^{(g)})}{(g+gk-1)!} \sum_{i=1}^{r} c_i \sgn(I_i) \sum_{x \in C_{I_i}^Q \cap \Z^g\setm \{0\}} \frac{1}{N_w(x)^{k+1}} \\
=&
\frac{(k!)^g\det (w^{(1)}, \dots, w^{(g)})}{(g+gk-1)!} \sum_{x \in \Z^g\setm \{0\}} \sum_{i=1}^{r} c_i \sgn(I_i) \mathbf{1}_{C_{I_i}^Q}(x) \frac{1}{N_w(x)^{k+1}} \\
=&
\frac{(k!)^g\det (w^{(1)}, \dots, w^{(g)})}{(g+gk-1)!} \sum_{x \in (\Z^g\setm \{0\})/\Gamma_Q^+} \sum_{\gamma \in \Gamma_Q^+} \sum_{i=1}^{r} c_i \sgn(I_i) \mathbf{1}_{C_{I_i}^Q}(\gamma x) \frac{1}{N_w(x)^{k+1}} \\
=&
\frac{(k!)^g\det (w^{(1)}, \dots, w^{(g)})}{(g+gk-1)!} \sum_{x \in (\Z^g \setm \{0\})/\Gamma_Q^+}  \frac{\chi(x)}{N_w(x)^{k+1}} \\
=&
\frac{(k!)^g\det (w^{(1)}, \dots, w^{(g)})}{(g+gk-1)!} \sum_{x \in (\mfa - \{0\})/\mcO^{\times}_+}  \frac{\bm{\varepsilon}(x)^{k+1} \chi(x)}{|N_{F/\Q}(x)|^{k+1}}. \qedhere
\end{align*}
\end{proof}

\begin{rmk}\label{rmk:main thm}
It is easy to see that
\[
\det (w^{(1)}, \dots, w^{(g)})^2 = D_{\mcO} N\mfa^2,  
\]
where $D_{\mcO}$ is the discriminant of the order $\mcO$. 
Moreover, we also know that $\sgn (D_{\mcO})=(-1)^{r_2}$, where $r_2$ is the number of complex places of $F$. 
Therefore, by permuting the order of the embeddings $\tau_1, \dots, \tau_g$ if necessary, we have
\[
\det (w^{(1)}, \dots, w^{(g)}) = \bmi^{r_2} \sqrt{|D_{\mcO}|} N\mfa, 
\]
where $\bmi \in \C$ is the imaginary unit. 
Hence, (under a suitable ordering of $\tau_1, \dots, \tau_g$,) Theorem \ref{thm:main thm} can be also written as
\[
\left\langle \mfz_{\chi}, {\int_Q}  N_{w^*}^k  \ev_Q ([\Psi_{kg}]) \right\rangle
=
\bmi^{r_2}\frac{\sqrt{|D_{\mcO}|}N\mfa (k!)^g}{(g+gk-1)!} 
\zeta_{\mcO}(\bm{\varepsilon}^{k+1}\chi, \mfa^{-1}, k+1). 
\]
\end{rmk}

\subsection*{Acknowledgements}
I would like to express my gratitude to Takeshi Tsuji for the constant encouragement and the precious advice during the study. 
I am also grateful to Kenichi Bannai, Kei Hagihara, Kazuki Yamada, and Shuji Yamamoto for answering my questions about their work and giving me many valuable comments. 
Thanks are also due to Ryotaro Sakamoto for the helpful discussions during the study. 
Some of the ideas in this paper were obtained during my stay at the Max Planck Institute for Mathematics in 2018. 
I would like to express my appreciation to Don Zagier and G\"unter Harder for the valuable discussions and for their great hospitality during the stay. 
This work is supported by JSPS Overseas Challenge Program for Young Researchers Grant Number 201780267, JSPS KAKENHI Grant Number JP18J12744, KAKENHI 18H05233, and JSPS KAKENHI Grant Number JP20J01008.

\end{document}